\def\Z{\mathbb{Z}}
\def\R{\mathbb{R}}
\def\N{\mathbb{N}}
\def\epsilon{\varepsilon}
\def\hat{\widehat}
\def\tilde{\widetilde}
\newcommand{\me}{\mathrm{e}}
\newcommand{\SE}{\setcounter{equation}{0} \section}
\newcommand{\be}{\begin{equation}}
\newcommand{\ee}{\end{equation}}
\newcommand{\baa}{\begin{array}}
\newcommand{\eaa}{\end{array}}
\newcommand{\ba}{\begin{eqnarray}}
\newcommand{\ea}{\end{eqnarray}}
\numberwithin{equation}{section}
\newtheorem{theorem}{\bf Theorem}[section]
\newtheorem{lemma}[theorem]{\bf Lemma}
\newtheorem{prop}[theorem]{\bf Proposition}
\newtheorem{corollary}[theorem]{\bf Corollary}
\newtheorem{definition}[theorem]{\bf Definition}
\newtheorem{remark}[theorem]{\bf Remark}
\newtheorem{assume}[theorem]{\bf Assumption}
\newtheorem{claim}[theorem]{\bf Claim}
\begin{document}
\date{}
\title[Time-periodic reaction-diffusion equations]{Dynamics 
of time-periodic reaction-diffusion equations with front-like initial data on $\R$} 

\thanks{This work was supported by the Japan Society for the Promotion of Science (16H02151, 17F17021).}

\author[W. Ding and H. Matano]{Weiwei Ding$^\dag$ and Hiroshi Matano$^\ddag$}
\thanks{
$^\dag$ School of Mathematical Sciences, South China Normal University,
Guangzhou 510631, China;  Meiji Institute for Advanced Study of Mathematical Sciences, Meiji University, Tokyo 164-8525, Japan (e-mail: dingww@mail.ustc.edu.cn)}

\thanks{
$^\ddag$ Meiji Institute for Advanced Study of Mathematical Sciences, Meiji University, Tokyo 164-8525, Japan (e-mail: matano@meiji.ac.jp)}

\keywords{Propagating terrace, Time-periodicity, Reaction-diffusion equations, Front-like initial data, Traveling waves,  Asymptotic behavior, Zero number}   

\subjclass[2010]{35K15, 35B40, 35B35, 35K57}

\begin{abstract}
This paper is concerned with the Cauchy problem
\begin{equation*}
\left\{\baa{ll}
u_t=u_{xx} +f(t,u), & x\in\R,\,t>0, \vspace{3pt}\\
u(0,x)= u_0(x), & x\in\R,\eaa\right.
\end{equation*}
where $f$ is a rather general nonlinearity that is periodic in $t$, and satisfies $f(\cdot,0)\equiv 0$ and that the corresponding ODE has a positive periodic solution $p(t)$. Assuming that $u_0$ is front-like, that is, $u_0(x)$ is close to $p(0)$ for $x\approx -\infty$ and close to $0$ for $x\approx \infty$, we aim to determine the long-time dynamical behavior of the solution $u(t,x)$ by using the notion of propagation terrace introduced by Ducrot, Giletti and Matano (2014). We establish the existence and uniqueness of propagating terrace for a very large class of nonlinearities, and show the convergence of the solution $u(t,x)$ to the terrace as $t\to\infty$ under various conditions on $f$ or $u_0$.  We first consider the special case where $u_0$ is a Heaviside type function, and prove the converge result without requiring any non-degeneracy on $f$. Furthermore, if $u_0$ is more general such that it can be trapped between two Heaviside type functions, but not necessarily monotone, we show that the convergence result remains valid under a rather mild non-degeneracy assumption on $f$. Lastly, in the case where $f$ is a non-degenerate multistable nonlinearity, we show the global and exponential convergence for a much larger class of front-like initial data.  
\end{abstract}

\maketitle


\SE{Introduction and main results}
In this paper, we consider the following Cauchy problem
\begin{subequations}\label{E}
\begin{eqnarray}
&u_t=u_{xx}+f(t,u), \ \ \ &x\in\R,\,\,t>0,
\label{equation}\\[4pt]
&u(0,x)=u_0(x),\ \ \ &x\in\R,
\label{initial}
\end{eqnarray}
\end{subequations}
where the initial data $u_0\in L^{\infty}(\R)$ is piecewise continuous. The nonlinearity $f:\R\times [0,\infty)\to\R$
is locally H{\"o}lder continuous in $\R\times[0,\infty)$, and it is of class $C^1$ with respect to $u$. 
We assume that 
\begin{equation}\label{0state}
f(t,0)=0\,\,\hbox{ for all } t\in\R,
\end{equation}
and that $f$ is $T$-periodic in $t$ for some $T>0$, that is, 
\begin{equation}\label{tperiod}
f(t+T,u)=f(t,u) \,\,\hbox{ for all } t\in\R,\,u\geq 0.
\end{equation}
If the solution $u$ is spatially homogeneous, then $u=u(t)$ satisfies the following ODE, which will play an important role in the later argument:
\begin{equation}\label{ODE-1}
\frac{du}{dt} =f(t,u),\quad t>0.  
\end{equation}
We assume that \eqref{ODE-1} has a positive $T$-periodic solution $p(t)$. Namely, $p(t)$ is a function that satisfies 
\begin{equation}\label{ODE}
\left\{\baa{l}
\smallskip \displaystyle\frac{dp}{dt}=f(t,p)\, \,\hbox{ for } t\in\R,\vspace{3pt}\\
 p(t)\equiv p(t+T) \,\hbox{ for } t\in\R.\eaa\right.
\end{equation}
In the special case where $f$ is independent of $t$, that is, when \eqref{equation} is autonomous, the solution $p$ of \eqref{ODE} is nothing but a zero of $f=f(u)$. 

In the present work, we study the long-time behavior of solutions of \eqref{E} with ``front-like" initial data. Roughly speaking, $u_0$ is assumed to satisfy $0\leq u_0(\cdot)\leq p(0)$ and that it is close to $p(0)$ for $x\approx -\infty$ and close $0$ for $x\approx \infty$ (our actual hypotheses on $u_0$ will be formulated later). 
Our aim is to establish results that cover a large class of nonlinearities, including but not limited to the classical monostable, bistable, ignition nonlinearities.  

For $f$ belonging to one of the three classic types of nonlinearities mentioned above, it is well known that the asymptotic behavior of $u(t,x)$ can be described by periodic traveling waves.  By a {\bf periodic traveling wave} connecting $0$ and $p$, we mean an entire solution $U(t,x)$ of \eqref{equation} satisfying that, for some $c\in\R$,
\begin{equation}\label{define-tw}
U(t+T,x+cT)\equiv U(t,x)
\end{equation}
along with the asymptotics 
\begin{equation*}
\lim_{x\to\infty} U(t,x) =0,\quad  \lim_{x\to-\infty} U(t,x) =p(t) \,\,\,\hbox{ locally uniformly in } t\in\R. 
\end{equation*}
The real number $c$ is called the {\bf wave speed} of $U$. 
It is easily checked that $U(t,x)$ is a periodic traveling wave connecting $0$ and $p$ with wave speed $c$ if and only if it has the form $U(t,x)=\tilde{U}(t,x-ct)$, where $\tilde{U}$ is $T$-periodic in its first variable and satisfies $\tilde{U}(t,\infty)=0$ and $\tilde{U}(t,-\infty)=p(t)$ uniformly in $t\in\R$.

It is known that, in the bistable or combustion cases, there exists a unique (up to spatial shifts) periodic traveling wave connecting $0$ and $p$, and its speed is uniquely determined (see \cite{abc,con,shen-2,ss-1}),   
while in the monostable case, there exists a continuum of admissible speeds $[c_*,\infty)$ (see \cite{LYZ,shen-3}). It was also proved in \cite{abc,con,shen-1} that for bistable equations, any solution with front-like initial data converges to the periodic traveling wave.  For the convergence in the monostable or combustion cases, some additional assumption on the asymptotics of $u_0(x)$ as $x\to\infty$ is needed (see \cite{hr,shen-3,ss-2}).

For general $f$, the situation is more complicated, even for the autonomous equation 
\begin{equation}\label{autonomous}
u_t=u_{xx}+f(u)\,\,\hbox{ for } \, t>0,\,x\in\R.
\end{equation}
Indeed, if there are other stable zeros of $f$ between $0$ and $p$, a traveling wave of \eqref{autonomous} connecting $0$ and $p$ may not exist (see e.g., \cite{fm}). In such a case, the asymptotic behavior of \eqref{autonomous} cannot be represented by a single wave; it is represented by a combination of multiple waves, namely, a stacked family of traveling waves whose speeds may differ from one another. Such systems of traveling waves were first studied by Fife and McLeod  in \cite{fm,fm2} under the name ``minimal decomposition". Assuming that $f$ is a stacked composition of two bistable nonlinearities and that the zeros of $f$ are non-degenerate, they proved in \cite{fm} that, for all front-like initial data, solutions of \eqref{autonomous} converge to a minimal decomposition, which, in that context, means either a single traveling wave connecting $0$ to $p$ or a pair of traveling waves, one connecting $0$ to $q$ and the other connecting $q$ to $p$, where $q$ is a zero of $f$ satisfying $0<q<p$. In \cite{fm2}, more general nonlinearities were considered including multistable nonlinearities with an arbitrary number stable zeros between $0$ and $p$, and also the combustion type nonlinearities that have continuum of zeros, but the initial data were assumed to be monotone.  The convergence results of \cite{fm2} have been extended in \cite{v,vvv} to cover more general nonlinearities $f$ that are stacked combinations of bistable, monostable and combustion type nonlinearities. The results of \cite{fm} have been extended to cooperative reaction-diffusion systems on $\R$ with multistable nonlinearities \cite{rtv}. In those works, either the non-degeneracy condition on $f$ or the monotonicity restriction on $u_0$ is assumed. In the recent paper \cite{po3}, these conditions were all removed, and the convergence of solutions of \eqref{autonomous} to a family of traveling waves, called ``propagating terrace",  was proved for rather general initial data. In particular, if the steady states $0$ and $p$ are stable with respect to the ODE $h_t=f(h)$, then the convergence result holds for all front-like $u_0$.

The notion of propagating terrace was first introduced by Ducrot, Giletti and Matano in \cite{dgm}, which is concerned with a more general framework of spatially periodic equations, namely, $f=f(x,u)$ is periodic in $x$ (we will give the precise definition of propagating terrace later, directly in the framework of time-periodic equation \eqref{equation}). 
Under some stability assumption on the state $p$, it was shown in \cite{dgm} that any solution with Heaviside type initial data converges to a propagating terrace. In a follow-up paper \cite{gm}, further properties of propagating terrace were studied, and the convergence result was generalized. Apart from the aforementioned works for one-dimensional equations, it is known from some recent progress \cite{dm2,gr,po2} that the propagating terrace is also a fundamental concept in understanding the propagation dynamics of high-dimensional equations.

 In the present paper, we focus our attention on problem \eqref{E} with time-periodic nonlinearity $f$, and assume that the initial function $u_0$ is front-like. Our results reveal that the propagating terrace also plays a crucial role in determining the long-time behavior of solutions of \eqref{E} with general $f$.  We first show the existence and uniqueness of propagating terrace under some generic condition on $f$ and prove that any solution of \eqref{E} starting from Heaviside type function converges to this propagating terrace. These results do not require any non-degeneracy of $f$. The proof is similar to that given in \cite{dgm,gm} for spatially periodic equations.

The main part of our paper is devoted to the study of the convergence to propagating terrace for more general front-like initial data. Although this problem has been well addressed in the autonomous case \cite{po3}, the presence of time heterogeneity makes it significantly more difficult. Indeed, the proof given in \cite{po3} relies strongly on the method of phase plane analysis, while the usual ODE tools no longer work in our nonautonomous case. In our results, we develop the steepness arguments introduced in \cite{dgm,gm}, which allow us to handle the case where $u_0$ can be trapped between two Heaviside type initial functions, but not necessarily monotone (see (H2) below).  We first give a precise description of the asymptotic behavior of solutions of \eqref{E} with such initial data, and then prove the convergence to a propagating terrace under a mild non-degeneracy condition on $f$. Moreover, in the case where $f$ is a non-degenerate multistable nonlinearity, we show the global and exponential convergence for a much larger class of $u_0$ (see (H3) below). The proof of this result is based on a super and sub-solution method.


As announced above, the present work is concerned with the propagation dynamics of \eqref{E} with front-like initial data. We mention here that, for other types of initial data, such as nonnegative and compactly supported functions or more general functions with limits at $x\to\pm\infty$ equal to $0$, the asymptotic behavior of \eqref{E} has been extensively studied in the autonomous case (see e.g., \cite{dm,dm2,dp,mp,z1}) and  the nonautonomous case (see  e.g., \cite{dingm,fp,po1}).

\subsection{Propagating terrace: some definitions}  As mentioned above, the notion of propagating terrace was introduced in \cite{dgm} for spatially periodic equations (see also \cite{gm} for a slightly generalized version). For our time-periodic equation \eqref{equation}, the definition of propagating terrace can be presented as follows.

\begin{definition}\label{terrace}
A {\bf propagating terrace} connecting $0$ to $p$ is a pair of finite sequences $(p_i)_{0\leq i\leq N}$ and $(U_i,c_i)_{1\leq i\leq N}$ such that
\begin{itemize}
\item Each $p_i$ is a nonnegative solution of \eqref{ODE} satisfying 
$$p=p_0>p_1>\cdots>p_{N}=0; $$
\vskip 3pt
\item For each $1\leq i\leq N$, $U_i(t,x)$ is a periodic traveling wave solution of \eqref{equation} connecting $p_i$ to $p_{i-1}$ with wave speed $c_i\in\R$;
\vskip 3pt
\item The sequence $(c_i)_{1\leq i\leq N}$ satisfies $c_1\leq c_2\leq \cdots \leq c_N$. 
\end{itemize}
We denote such a propagating terrace by $\mathcal{T}:= ((p_i)_{0\leq i\leq N},(U_i,c_i)_{1\leq i\leq N})$ and call $(p_i)_{0\leq i\leq N}$ the {\bf platforms} of $\mathcal{T}$. 
\end{definition}

Hereinafter, by {\it a periodic traveling wave $U_i$ connecting $p_i$ to $p_{i-1}$ with wave speed $c_i$}, we always mean that $U_i$ is an entire solution of \eqref{equation} satisfying \eqref{define-tw} with $c=c_i$, along with the asymptotics
$$\lim_{x\to-\infty} U(x,t)=p_{i-1}(t),\quad \lim_{x\to\infty} U(x,t)=p_i(t) \quad \hbox{locally uniformly in } t\in\R.$$ 
Note that, in the above definition of propagating terrace, we do not assume any sign condition on the speed $c_i$, $1\leq i\leq N$. If $c_i<0$ (resp. $c_i>0$), then $U_i$ propagates to the left (resp. right); if $c_i=0$, then $U_i$ is a $T$-periodic solution of \eqref{equation}.

We will show below that, among all propagating terraces, only some particular terraces can be used to determine the propagation dynamics of \eqref{E}. To explain what they are, let us first introduce the following notion:  

\begin{definition}\label{steepness}
Let $v_1(x)$ and $v_2(x)$ be two piecewise continuous  functions defined on $x\in\R$. We say that $v_1$ is {\bf steeper than} $v_2$ if for any $x_1,\,x_2\in\R$ such that $v_1(x_1)=v_2(x_2)$, we have 
$$v_1(x+x_1)\geq v_2(x+x_2) \,\hbox{ for } x>0, \quad v_1(x+x_1)\leq v_2(x+x_2) \,\hbox{ for } x<0; $$
we say that $v_1$ is {\bf strictly steeper than} $v_2$ if the above two inequalities hold strictly.
Furthermore, for any two entire solutions $u_1(t,x)$ and $u_2(t,x)$ of \eqref{equation}, we say that $u_1$ is steeper (resp. strictly steeper) than $u_2$ if for each $t\in\R$, $u_1(t,\cdot)$ is steeper (resp. strictly steeper) than $u_2(t,\cdot)$.
\end{definition}

From the above definition, one easily sees that the concept of steepness is independent of the spatial positions of the two functions $v_1$, $v_2$. More precisely, if $v_1$ is steeper (resp. strictly steeper) than $v_2$ , then for any constants $a,\, b \in\R$, $v_1(\cdot+a)$ is steeper (resp. strictly steeper) than $v_2(\cdot+b)$. In other words, the steepness property is preserved by spatial translations. It is also easily seen that $v_1$ and $v_2$ are mutually steeper than each other if and only if either $v_1\equiv v_2$ up to a spatial translation or the ranges of $v_1$ and $v_2$ are disjoint. Moreover, if $v_1$ is strictly steeper than $v_2$, then for any $a,\, b \in\R$, the graph of $v_1(\cdot+a)$ and that of $v_2(\cdot+b)$ intersect at most once; the converse is also true. 

Additionally, it is easily seen from the strong maximum principle that for any two entire solutions $u_1$, $u_2$ of \eqref{equation}, if $u_1$ is steeper than $u_2$, then either $u_1(t,x)\equiv u_2(t,x+x_0)$ for some $x_0\in\R$ or $u_1$ is strictly steeper than $u_2$. We will also show in Lemma \ref{ini-steep} below that, if $u_1(0,x)$ is steeper than $u_2(0,x)$, then such steepness is preserved for any $t>0$. This property will be a key tool in showing the main results of the present paper (except Theorem \ref{converge3}).

\begin{remark}\label{rem-steep}
As mentioned above, the notion of steepness which we introduced in Definition \ref{steepness} is independent of the spatial positions of the solutions $u_1(t,x)$, $u_2(t,x)$ of \eqref{equation}.  Note that this definition is different from that given in \cite{dgm,gm} in which the notion of steepness is defined by using time shifts $u_1(t+t_1,x)$, $u_2(t+t_2,x)$ instead of spatial translations, thus this notion is independent of time shifts. The difference comes from the fact that the papers \cite{dgm,gm} deal with equations of the form $u_t=u_{xx}+f(x,u)$ that is spatially heterogeneous but time-homogeneous (autonomous), while our equation \eqref{equation} is time-heterogeneous but spatially homogeneous.     
\end{remark}

We are now ready to define a special class of propagating terraces. 

\begin{definition}\label{minimal-terrace}
A propagating terrace $\mathcal{T}= ((p_i)_{0\leq i\leq N},(U_i, c_i)_{1\leq i\leq N})$ is said to be {\bf minimal} if it satisfies the following:
\begin{itemize}

\item For any propagating terrace $\mathcal{T}'= ((p'_i)_{0\leq i\leq N'},(U'_i, c_i')_{1\leq i\leq N'})$ connecting $0$ to $p$, one has 
$$\{ p_i\, |\, 0\leq i\leq N \} \subset \{ p'_i \,|\, 0\leq i\leq N' \};  $$

\item For each $1\leq i \leq N$,  $U_i$ is steeper than any other periodic traveling wave of \eqref{equation} connecting $p_i$ to $p_{i-1}$. 
\end{itemize}
\end{definition}

\vskip 5pt 
 
Before stating our results, let us recall some basic notions on stability, which will be frequently used below. Let $\mathcal{X}_{per}$ denote the set of all nonnegative solutions of \eqref{ODE}. An element $q\in \mathcal{X}_{per}$ is said to be {\bf stable from above} (resp. {\bf below}) with respect to the initial-value problem 
\begin{equation}\label{ode-initial}
\frac{dh}{dt}=f(t,h)\,\,\hbox{ for } \, t>0,\quad  h(0)=h_0\in\R,  
\end{equation}
if it is stable under nonnegative (resp. nonpositive) perturbations of the initial values around $h_0=q(0)$. 
Otherwise $q$ is called {\bf unstable from above} (resp. {\bf below}). An element $q\in \mathcal{X}_{per}$ is said to be {\bf isolated from above} (resp. {\bf below}) if there exists no sequence of other solutions of \eqref{ODE} converging to $q$ from  above (resp. {\bf below}). Moreover, $q\in \mathcal{X}_{per}$ is said to be {\bf linearly stable} (resp. {\bf linearly unstable}) if 
$\int_{0}^{T} \partial_u f(t,q(t))dt < 0 \,\, ({\rm resp.} >0)$; it is said to be degenerate, if it is neither linearly stable nor linearly unstable.

\subsection{Existence and uniqueness of minimal propagating terrace}

We now proceed to the statements of our main results. We begin with the uniqueness of minimal propagating terrace. The uniqueness is meant here up to spatial shifts, that is, given two propagating terraces $((p_i)_{i},(U_i, c_i)_{i})$ and $((p'_i)_{i},(U'_i, c'_i)_{i})$, we say that they are equal up to spatial shifts if $p_i\equiv p'_i$ and $c_i=c'_i$ for every $i$, and $U_i(t,x+a_i)=U_i'(t,x)$ for some constants $a_i$, $i=1,\,\cdots, N$.  The uniqueness result actually follows immediately from the definition. 
For the convenience of later discussions, we state it precisely as follows: 

\begin{prop}\label{unique-mpt}
If there exists a propagating terrace $\mathcal{T}=((p_i)_{i},(U_i, c_i)_{i})$ that is minimal in the sense of Definition \ref{minimal-terrace}, then it is unique up to spatial shifts.
\end{prop}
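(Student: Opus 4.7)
The plan is to exploit the two clauses of Definition \ref{minimal-terrace} symmetrically, together with the characterization of ``mutually steeper'' functions noted just after Definition \ref{steepness}. Let $\mathcal{T} = ((p_i)_i, (U_i, c_i)_i)$ and $\mathcal{T}' = ((p'_i)_i, (U'_i, c'_i)_i)$ be two minimal propagating terraces connecting $0$ to $p$; I aim to match their platforms, then their traveling waves up to spatial shift, and finally their speeds.

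For the platforms, the first clause of Definition \ref{minimal-terrace} applied to $\mathcal{T}$ with $\mathcal{T}'$ as the comparison terrace yields $\{p_i : 0 \le i \le N\} \subset \{p'_i : 0 \le i \le N'\}$, and by symmetry the reverse inclusion also holds. Because Definition \ref{terrace} requires both sequences to be strictly decreasing from $p$ down to $0$, this forces $N = N'$ and $p_i \equiv p'_i$ for every $i$.

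Now fix $1 \le i \le N$. Both $U_i$ and $U'_i$ are periodic traveling waves of \eqref{equation} connecting the common pair $(p_i, p_{i-1})$, so the second clause of Definition \ref{minimal-terrace}, applied to each terrace in turn, shows that $U_i$ and $U'_i$ are mutually steeper than each other. At any fixed $t \in \R$, continuity of $U_i(t,\cdot)$ and $U'_i(t,\cdot)$, together with the prescribed limits $p_{i-1}(t) > p_i(t)$ at $-\infty$ and $+\infty$ respectively, forces both ranges to contain the nonempty interval $(p_i(t), p_{i-1}(t))$, so the ranges are not disjoint. The characterization recorded after Definition \ref{steepness} then yields $U_i(0, x) \equiv U'_i(0, x + a_i)$ for some $a_i \in \R$. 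Forward uniqueness for the Cauchy problem associated with \eqref{equation} propagates this identity to all $t \ge 0$, and the relation $U_i(t+T, x + c_i T) \equiv U_i(t,x)$ lets me step back in integer multiples of $T$, so the identity $U_i(t,x) \equiv U'_i(t, x + a_i)$ holds for every $t \in \R$.

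Finally, to match the speeds, I would combine the identity $U_i(t,x) \equiv U'_i(t, x + a_i)$ with the two periodicity relations $U_i(t+T, x + c_i T) \equiv U_i(t,x)$ and $U'_i(t+T, x + c'_i T) \equiv U'_i(t,x)$; a short substitution produces $U'_i(s, y + c_i T) \equiv U'_i(s, y + c'_i T)$ for all $s,y$, and since $U'_i$ has distinct limits at $x = \pm\infty$ it cannot be nontrivially periodic in $x$, hence $c_i = c'_i$. I do not expect any real obstacle: the proposition is essentially a direct unpacking of Definitions \ref{steepness} and \ref{minimal-terrace}. The two points requiring a line of justification are (i) verifying that the ranges overlap at every $t$, which is immediate from the intermediate value theorem, and (ii) upgrading the time-$0$ translation to a single constant $a_i$ valid for all $t \in \R$, which is handled by parabolic uniqueness combined with the intrinsic $T$-periodicity of the traveling waves.
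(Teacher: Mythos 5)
Your proposal is correct and takes essentially the same approach as the paper: the paper's own proof is two sentences asserting that the platforms must coincide (clause one of Definition \ref{minimal-terrace}) and that the steepest traveling wave between adjacent platforms is unique up to shift (clause two plus the ``mutually steeper'' dichotomy), which is exactly what you unpack. One small wrinkle: your step-back in integer multiples of $T$ implicitly uses the $T$-quasi-periodicity of $U'_i$, which involves $c'_i$, not yet known to equal $c_i$ at that point; this is harmless and easily repaired either by first extracting $c_i = c'_i$ from the identity $U_i \equiv U'_i(\cdot,\cdot+a_i)$ on $[0,T]$, or by applying the ``equal up to spatial shift'' characterization at an arbitrary negative time $t_0$ and propagating forward, noting that strict monotonicity of $U'_i(0,\cdot)$ forces the shift to be independent of $t_0$.
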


\begin{proof}
According to Definition \ref{minimal-terrace}, all minimal propagating terraces should share the same platforms. It is also easily seen that, given two adjacent platforms, the steepest periodic traveling wave connecting them should be unique up to spatial shifts. 
\end{proof}

We now discuss the existence of minimal propagating terrace. Note that, in our definition of propagating terrace, only finitely many platforms can appear. Actually, if we only assume \eqref{0state}, \eqref{tperiod} and the existence of positive solution $p$ of \eqref{ODE}, there can exist a minimal propagating terrace with infinitely many platforms in some pathological cases (see \cite{po3} for the existence of such terraces in the autonomous case). To exclude the possibility of such pathological cases, we impose the following assumption, which is satisfied by virtually all the important examples of $f$. 

\begin{assume}\label{assume-decomposition}
There exists a {\bf decomposition} between $0$ and $p$, that is, 
there exists a finite sequence of solutions $(q_i)_{0\leq i\leq M}$ of \eqref{ODE} such that $q_0=p>q_1>\cdots>q_M=0$, and that for each $1\leq m\leq M$, there exists a periodic traveling wave $V_m$ connecting $q_{m}$ to $q_{m-1}$.
\end{assume}

The term decomposition was introduced by Fife and McLeod \cite{fm} for the case $f=f(u)$, and the above notion of decomposition can be viewed as a generalization of their notion. Note that a similar generalization has been given in \cite{gm} for spatially periodic equations. Unlike the definition of propagating terrace (see Definition \ref{terrace}), a decomposition does not require the speeds of the traveling waves to be ordered. Thus, a decomposition is a much weaker concept than a propagating terrace. However,  we will show in Theorem \ref{decomposition-existence} below that existence of a decomposition is enough to guarantee existence of a terrace. Before stating our result, let us first give some simple sufficient conditions for the existence of a decomposition. 

\begin{prop}\label{suff-decom}
Assume that either of the following conditions holds: 
\begin{itemize}
\item [(a)] There are finitely many solutions of \eqref{ODE} between $0$ and $p$;
\vskip 3pt
\item [(b)] $f=f(u)$ is independent of $t$, and the function $F(u): = \int_0^u f(s)ds$ has only finitely many global maximizers in $[0,p]$, all of which are isolated zeros of $f$ in $[0,p]$.
\end{itemize}
Then there exists a decomposition between $0$ and $p$.
\end{prop}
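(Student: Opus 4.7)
The plan is to address the two sufficient conditions by separate arguments tailored to the structural information each supplies.

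For condition (a), I would enumerate the finitely many periodic solutions of \eqref{ODE} in $[0,p]$ in decreasing order as $p = q_0 > q_1 > \cdots > q_M = 0$ and construct a periodic traveling wave $V_m$ connecting $q_m$ to $q_{m-1}$ for each $1 \le m \le M$. The key observation is that, since no other periodic solution lies between two consecutive platforms, the period-$T$ Poincar\'e map $P(u) := h(T; u)$ of the scalar initial-value problem \eqref{ODE-1} has no fixed points on the open interval $(q_m(0), q_{m-1}(0))$; by monotone scalar ODE dynamics $P$ therefore lies strictly above or strictly below the identity throughout that interval, so one of $q_m, q_{m-1}$ is stable and the other unstable from the interior side. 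Restricted to the order interval $[q_m, q_{m-1}]$, equation \eqref{equation} is thus of time-periodic monostable type, and the classical existence theory for such equations (as in \cite{LYZ, shen-3}) furnishes the required $V_m$.

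For condition (b), the autonomous hypothesis identifies periodic solutions of \eqref{ODE} in $[0,p]$ with zeros of $f$ there. I would let $u_1^* < \cdots < u_K^*$ be the global maximizers of $F$ on $[0,p]$, set $M^* := F(u_K^*)$, and take as platforms $p \ge u_K^* > \cdots > u_1^* \ge 0$, merging duplicates at the ends when $0$ or $p$ happens to be a global maximizer. Between two consecutive global maximizers $u_i^* < u_{i+1}^*$ the strict inequality $F < M^*$ on $(u_i^*, u_{i+1}^*)$ lets me read off a stationary wave (speed $c = 0$) directly from the conserved energy $E = (U')^2/2 + F(U)$ of $U'' + f(U) = 0$: the trajectory $U' = -\sqrt{2(M^* - F(U))}$ in the level set $\{E = M^*\}$ is smooth in the interior, closes up at both equilibria as $x \to \pm\infty$, and yields a monotone decreasing profile from $u_{i+1}^*$ to $u_i^*$. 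For the outer intervals $[0, u_1^*]$ and $[u_K^*, p]$ (when the corresponding endpoint is not already a global maximizer), the energy identity $c\int (U')^2 = F(U_-) - F(U_+)$ forces nonzero speed, and the strict inequality $F < M^*$ on the whole half-open subinterval up to the outermost global maximizer allows a shooting/phase-plane construction in the spirit of \cite{fm2, vvv} to produce a monotone traveling wave with the required (positive for $[0,u_1^*]$, negative for $[u_K^*,p]$) speed. Concatenation yields the decomposition.

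The main obstacle is case (b): the platform intervals are allowed to contain arbitrarily many non-maximizing, and not necessarily isolated, zeros of $f$, which blocks any direct appeal to bistable or strict-monostable existence theory and forces the proof to lean on the energy inequality $F < M^*$ supplied by the global-maximizer hypothesis. In particular, the shooting argument at the outer intervals has to accommodate possibly complicated internal zero structure without picking up a spurious platform, and this is precisely why the hypothesis is phrased in terms of $F$ rather than of $f$ itself. Case (a), by contrast, reduces locally to a clean monostable picture as soon as the Poincar\'e-map dichotomy is in hand, so the remaining work there is essentially bookkeeping.
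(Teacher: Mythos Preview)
Your treatment of case (a) is correct and matches the paper exactly: enumerate the periodic solutions, observe via the Poincar\'e map that each gap is monostable, and cite \cite{LYZ}.

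For case (b), the paper simply invokes \cite[Theorem~1.2]{po3}, which produces a full propagating terrace (hence a decomposition) directly from the hypothesis on $F$. Your approach is more hands-on and the middle part is correct and elegant: between two consecutive global maximizers $u_i^* < u_{i+1}^*$ the level set $\{(U')^2/2 + F(U) = M^*\}$ does give a monotone stationary connection, and the $C^1$ regularity of $f$ guarantees the approach to each endpoint takes infinite $x$. The problem is the outer intervals. Your claim that the strict inequality $F < M^*$ on $[0,u_1^*)$ suffices for a \emph{single} monotone wave from $0$ to $u_1^*$ is not true in general. Take $f$ with zeros $0 < a < b < d < u_1^*$, signs $-,+,-,+$ on the successive subintervals, and tune the amplitudes so that the bistable wave speed $c^{(0b)}$ on $[0,b]$ exceeds the bistable wave speed $c^{(bu_1^*)}$ on $[b,u_1^*]$, while still keeping $F(u_1^*) > F(b) > 0$. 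A phase-plane analysis then shows that for every $c < c^{(bu_1^*)}$ the unstable manifold of $(u_1^*,0)$ crosses $U=b$ with $V<0$ and, since $c < c^{(0b)}$, necessarily overshoots $(0,0)$; for $c \ge c^{(bu_1^*)}$ it never reaches $U=b$. Hence no monotone front from $0$ to $u_1^*$ exists at all, and the decomposition must insert $b$ as an additional platform. Since hypothesis (b) places no finiteness restriction on the zeros of $f$ inside $[0,u_1^*)$, you cannot simply keep adding platforms without a separate argument that the process terminates --- and that argument is essentially what \cite{po3} supplies. You correctly flag the outer intervals as the main obstacle, but your phrasing (``without picking up a spurious platform'') assumes the wrong resolution: extra platforms may be unavoidable, and controlling their number is the real content of the proof.
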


It is clear that solutions of \eqref{ODE} are all ordered. Thus, under condition (a), the solutions between $0$ and $p$ are numbered, say $q_0=p>q_1>q_2>\cdots> q_n=0$, and obviously they are isolated. Furthermore, equation \eqref{equation} restricted to the region between any adjacent $q_i$ and $q_{i+1}$ has a monostable structure. It then follows from the work \cite{LYZ} on time-periodic monostable semiflows that there exists a periodic traveling wave connecting $q_{i+1}$ to $q_i$.  This immediately implies that $(q_i)_{0\leq i\leq n}$ is a decomposition between $0$ and $p$.  Note that finiteness of the number of solutions of \eqref{ODE} between $0$ and $p$ is by no means necessary, since it is known that periodic traveling wave exists for a time-periodic combustion nonlinearity (see e.g., \cite{ss-1}). The condition (a) can be relaxed to include such nonlinearities or even a stacked composition of finitely many such nonlinearities.

Under condition (b), the existence of a decomposition follows from \cite[Theorem 1.2]{po3}, which actually shows the existence of a propagating terrace directly from (b). It is also known from \cite{po3} that, in the autonomous case,  the following condition implies (b): 
\begin{itemize}
\item [(b)'] There exists a solution of \eqref{autonomous} with compactly supported initial function that converges to $p$ from below as $t\to\infty$ locally uniformly on $\R$.
\end{itemize}
Indeed, (b)' holds if and only if $u=p$ is the unique global maximizer of the function $F$ in $[0,p]$ and it is an isolated zero of $f$. In the more general case where $f=f(x,u)$ is allowed to depend on $x$ periodically, under a similar condition to (b)', it was shown in \cite{dgm} that there exists a minimal terrace consisting of traveling waves with positive speeds. After the completion of the present work, we learned that similar existence result was proved for time-periodic equation \eqref{E} in \cite{ww}. Here, inspired by \cite{gm}, we show the existence of a minimal terrace under the more general Assumption \ref{assume-decomposition}.  Our theorem is stated as follows:


\begin{theorem}\label{decomposition-existence} 
Let Assumption \ref{assume-decomposition} holds. Then there exists a unique (up to spatial shifts) minimal propagating terrace $((p_i)_{0\leq i\leq N},(U_i,c_i)_{1\leq i\leq N})$ connecting $0$ to $p$. Moreover, it satisfies 
\begin{itemize}
\item[(i)] For any $1\leq i \leq N$, if $c_i>0$, then $p_{i-1}$ is isolated from below and stable from below; if $c_i<0$, then $p_{i}$ is isolated from above and stable from above; if $c_i=0$, then $p_{i-1}$ is stable from below and $p_i$ is stable from above.
\vskip 3pt
\item [(ii)] All $p_i$ and $U_i$ are steeper than any other entire solutions of \eqref{equation} between $0$ and $p$. 
\end{itemize}
\end{theorem}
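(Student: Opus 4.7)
Uniqueness is already provided by Proposition \ref{unique-mpt}, so the plan focuses on existence and on properties (i) and (ii). The overall strategy I have in mind is iterative: start with the given decomposition $(q_m)_{0\le m\le M}$, $(V_m)_{1\le m\le M}$ (with respective wave speeds $\sigma_m$), and successively eliminate ``out-of-order'' consecutive pairs. Specifically, whenever there is some index $m$ with $\sigma_m>\sigma_{m+1}$, the key lemma would assert the existence of a periodic traveling wave directly connecting $q_{m+1}$ to $q_{m-1}$, with speed in $[\sigma_{m+1},\sigma_m]$. Since each such merge strictly reduces the number of platforms, iterating terminates in a sequence with non-decreasing speeds, hence a propagating terrace in the sense of Definition \ref{terrace}.

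The main technical step is this merge lemma. My approach is to place the ``fast'' wave $V_m$ so that its transition zone starts far to the left of that of the ``slow'' wave $V_{m+1}$; since $\sigma_m>\sigma_{m+1}$, the former overtakes the latter. Forming an initial profile that lies above both (and whose left and right tails match the respective tails of $V_m$ and $V_{m+1}$), evolving it under \eqref{equation}, and extracting a locally uniform limit along a well-chosen sequence of space-time shifts — using parabolic regularity — produces an entire solution $W(t,x)$ with $W(t,-\infty)=q_{m-1}(t)$ and $W(t,+\infty)=q_{m+1}(t)$, bounded between these two platforms. To identify $W$ as a periodic traveling wave I would use zero-number monotonicity: for each shift $a$ the function $x\mapsto W(t+T,x+a)-W(t,x)$ satisfies a linear parabolic equation and has non-increasing intersection number in $t$; a sliding argument over the family of shifts picks out a unique $a=cT$ for which this difference is identically zero, i.e.\ $W(t+T,x+cT)\equiv W(t,x)$.

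Having produced a terrace, I would upgrade it to a minimal one by replacing, between each pair of adjacent platforms $p_{i-1}>p_i$, any connecting wave by the steepest one. Existence and uniqueness (up to spatial shift) of this steepest representative come from compactness of the set of translated profiles (parabolic Schauder estimates), Lemma \ref{ini-steep}, and the strong maximum principle. Property (ii) follows once the steepest representatives are in hand: given any entire solution $v$ with $0\le v\le p$, a zero-number comparison against each $U_i$ (and against each $p_i$, viewed as a stationary entire solution), together with Lemma \ref{ini-steep}, forces $U_i$ (resp.\ $p_i$) to be steeper than $v$. Property (i) is read off from the one-sided tails of the $U_i$'s, combined with minimality: for instance, when $c_i>0$ the left tail of $U_i$ approaches $p_{i-1}$, and any failure of isolation or stability of $p_{i-1}$ from below would allow one to construct, via the same merge procedure, a strictly steeper competitor connecting an intermediate platform to $p_{i-1}$, violating either minimality or steepness of $U_i$. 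The platform-inclusion clause of Definition \ref{minimal-terrace} is then a consequence of (ii): a competing terrace whose platforms are not contained in $\{p_i\}$ would contain an entire solution not steeper-compatible with our $U_i$'s.

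The principal obstacle is the merge lemma, and within it the identification of the limit $W$ as a periodic traveling wave. In the autonomous case \cite{po3} one can use phase-plane techniques; here those tools are unavailable, and the argument has to rest on the more delicate interplay between zero-number decay, $T$-periodicity of $f$, and the ordered asymptotic limits $q_{m-1},q_{m+1}$ of $W$ at $\pm\infty$.
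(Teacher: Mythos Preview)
Your approach is genuinely different from the paper's, and the difference exposes a real gap. The paper does not merge waves at all; it evolves the Heaviside datum $\hat u(0,x)=p(0)H(a-x)$ and reads off the terrace from its $\Omega$-limit solutions. The decisive observation (Lemma \ref{steepest}) is that the Heaviside profile is steeper than any function between $0$ and $p(0)$, so by Lemma \ref{ini-steep} every $\Omega$-limit of $\hat u$ is automatically steeper than \emph{every} entire solution of \eqref{equation} between $0$ and $p$. One then picks levels $\alpha_i$, follows the level-set positions $a_{i,k}$, and shows (Lemmas \ref{convege-ak}, \ref{homo-ptw}) that $\hat u(t+kT,x+a_{i,k})$ converges to a periodic traveling wave $U_i$; the decomposition is used only to rule out the spatially constant alternative. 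Property (ii) and the platform-minimality clause are then free, the speed ordering comes from $c_i=\lim_k a_{i,k}/(kT)$ together with the monotonicity of level sets of $\hat u$, and (i) is simply Lemma \ref{speed-chara}.

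Your proposed route has a gap at (ii), not primarily at the merge lemma. Grant the merge lemma and the selection of steepest representatives: you obtain a terrace in which each $U_i$ is steepest \emph{among traveling waves connecting $p_i$ to $p_{i-1}$}. That is strictly weaker than (ii), which requires steepness over all entire solutions between $0$ and $p$. Your appeal to Lemma \ref{ini-steep} does not close this: the lemma propagates steepness forward from a known initial comparison, but for an arbitrary entire solution $v$ there is no time $t_0$ at which you already know $U_i(t_0,\cdot)$ is steeper than $v(t_0,\cdot)$. The Heaviside trick manufactures exactly such a comparison (the Heaviside profile is steeper than $v(-kT,\cdot)$ for every $k$), which is why the paper's $U_i$ inherit (ii). Without (ii) the platform-inclusion clause of Definition \ref{minimal-terrace} also fails to follow, since nothing in your construction rules out a different merge order producing a terrace with fewer or different platforms. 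Separately, your argument for (i) is more involved than necessary: once any terrace exists, (i) is an immediate consequence of Lemma \ref{speed-chara}, with no appeal to minimality or to further constructions.
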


Apart from the existence of a minimal propagating terrace, the above theorem also provides some information about what kind of solutions of \eqref{ODE} can possibly be selected as platforms of the terrace. For example, statement (i) implies in particular that for each $1\leq i \leq N-1$, $p_i$ is either stable from below or stable from above. This property will be used in handling a case of non-degenerate multistable nonlinearity (see Assumption \ref{multi-stable} below and its followed discussion). In addition, the steepness of $p_i$ in statement (ii) also implies that $p_i$ is contained in any decomposition between $0$ and $p$.

\subsection{Convergence to minimal propagating terrace}\label{converge-result}
In this subsection, we assume the existence of a minimal propagating terrace, and establish results on the convergence of solutions \eqref{E} to the minimal terrace.

In order to formulate our convergence theorems, let us introduce a notion of limit sets. Given a bounded solution $u(t,x)$ of \eqref{E}, we call $w(t,x)$ an {\bf $\Omega$-limit solution} of $u$ if 
\begin{equation}\label{def-omega}
u(t+k_j T,x+x_{j}) \to w(t,x) \,\,\hbox{ as } \,j\to\infty
\end{equation}
for some subsequence of positive integers $k_j\to\infty$ (as $j\to\infty$) and some sequence $(x_j)\subset \R$.
Here the convergence is understood in the topology of $L^\infty_{loc}(\R^2)$.  By parabolic estimates, 
this convergence also takes place in the $C^{1}(\R^2)$ topology. Clearly, $w(t,x)$ is an entire solution of \eqref{equation}.
Denote by $\Omega(u)$ the set of all $\Omega$-limit solutions. It is easily checked that if $w$ is an element of $\Omega(u)$, then so is $w(\cdot+kT,\cdot+z)$ for any $k\in\Z$ and $z\in\R$.
In Section 2.2 below, we will summarize more basic properties of $\Omega(u)$.

\begin{remark}\label{remark-limit-set}
Note that, if $x_j\equiv 0$ in \eqref{def-omega}, then $\Omega(u)$ coincides with the set of $\omega$-limit solutions defined in \cite{dingm}. The latter can capture the asymptotic behavior of $u(t,x)$ around each fixed point $x\in\R$ but cannot capture the profile of fronts that propagate at non-zero speeds. The above notion of $\Omega(u)$, on the other hand, can capture the profile of propagating fronts of any speeds. This multi-speed observation is particularly important for our study, since, as we will see later, multiple fronts with different speeds may coexist in a solution.    
\end{remark}

We are now ready to state our convergence theorems. Let us begin with a special case where the initial data are of the Heaviside type, that is, 

\vskip 5pt

\begin{itemize}
\item[{\bf (H1)}] {\it There is some $a\in\R$ such that 
\begin{equation*}
u_0(x)=p(0)H(a-x)
\end{equation*}
where $H$ denotes the Heaviside function defined by $H(x)=0$ if $x<0$ and $H(x)=1$ if $x\geq 0$. }
\end{itemize}

\begin{theorem}\label{converge1}
Assume that $((p_i)_{0\leq i\leq N},(U_i,c_i)_{1\leq i\leq N})$ is a minimal propagating terrace of \eqref{equation} connecting $0$ to $p$.   Let $u(t,x)$ be the solution of \eqref{E} with $u_0$ satisfying {\rm (H1)}. Then 
\begin{equation}\label{Omegaset}
\Omega(u)= \left\{ U_i(\cdot,\cdot+\xi): \xi\in\R,\, 1\leq i\leq N \right\} \cup  \left\{ p_i: 0\leq i\leq N \right\}. 
\end{equation}
Furthermore, there are $C^1([0,\infty))$ functions $\eta_1(t), \cdots ,\eta_N(t)$ such that the following statements hold:
\begin{itemize}
\item[(i)] $\eta_i(t) = o(t)$ as $t\to\infty$ for $i=1,\cdots,N$;
\vskip 3pt
\item[(ii)] $\eta_{i+1}(t)-\eta_{i}(t)\to\infty$ as $t\to\infty$ whenever $i\in \{1,\cdots,N-1\}$ satisfies $c_i=c_{i+1}$;
\vskip 3pt
\item[(iii)]  The following convergence holds:
\begin{equation}\label{converge-formula}
\lim_{t\to\infty}\sup_{x\in\R} \left | u(t,x)- \left(  \displaystyle\sum_{i=1}^{N} U_i\big(t,x-\eta_i(t)\big)- \sum_{i=1}^{N} p_i(t)    \right)\right | =0.   
\end{equation}
\end{itemize}
\end{theorem}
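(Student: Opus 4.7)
My plan exploits two structural features of the Heaviside initial datum and then reduces the convergence to a level-set analysis. First, since $u_0 = p(0) H(a-\cdot)$ is nonincreasing, the parabolic maximum principle yields $u_x(t,\cdot) \leq 0$ for every $t > 0$. Second, $u_0$ is vacuously steeper than every non-constant entire solution of \eqref{equation} with range in $[0, p(0)]$ (its range being disjoint from those of such solutions), so by Lemma \ref{ini-steep} this steepness is preserved for $t > 0$. Both properties transfer to any $w \in \Omega(u)$ via the local uniform convergence in \eqref{def-omega}.

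The heart of the proof is the classification of $\Omega(u)$. Take $w \in \Omega(u)$ and set $q^\pm(t) := w(t, \pm\infty)$; these limits are $T$-periodic ODE solutions with $q^- \geq q^+$ by monotonicity. If $q^- \equiv q^+$, then $w$ is spatially constant, and I would show it coincides with some platform $p_i$: otherwise, slotting $w$ between the appropriate consecutive platforms would refine the given terrace, contradicting the maximal-steepness property (ii) of Theorem \ref{decomposition-existence}. If $q^- > q^+$, then $w$ is a genuinely monotone entire solution; a similar contradiction rules out any platform $p_j$ lying strictly between $q^+$ and $q^-$, forcing $(q^+, q^-) = (p_i, p_{i-1})$ for a unique $i$. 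The identification $w \equiv U_i(\cdot, \cdot + \xi)$ for some $\xi \in \R$ is then obtained by a sliding argument: both $w$ and $U_i$ share the same asymptotic limits on the two sides, and the at-most-one-intersection property from steepness combined with sliding $U_i$ in $\xi$ and invoking the strong maximum principle at the first contact point yields the identification.

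To construct the position functions, set $\theta_i(t) := \tfrac{1}{2}(p_{i-1}(t) + p_i(t))$ and let $\bar\eta_i(t)$ be the unique point with $u(t, \bar\eta_i(t)) = \theta_i(t)$, well-defined and $C^1$ by monotonicity and the implicit function theorem. Let $\tilde\xi_i(t)$ be the $T$-periodic offset at which $U_i(t, \cdot)$ crosses level $\theta_i(t)$, and define
\begin{equation*}
\eta_i(t) := \bar\eta_i(t) - c_i t - \tilde\xi_i(t),
\end{equation*}
so that $U_i(t, x - \eta_i(t))$ is the translate of $U_i$ whose $\theta_i$-level at time $t$ sits precisely at $\bar\eta_i(t)$. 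Taking shift centres near $\bar\eta_i(k_j T)$ in \eqref{def-omega} realises each translate of $U_i$ as an $\Omega$-limit, while choosing centres far from all $\bar\eta_j(k_j T)$ realises each $p_i$. For (i), $\eta_i(t) = o(t)$ says the $i$-th transition drifts at asymptotic speed $c_i$; if $\bar\eta_i(t_n)/t_n \to c_i + \gamma$ with $\gamma \neq 0$ along some sequence, the classification of $\Omega(u)$ forces the limit centred at $\bar\eta_i(t_n)$ to have level-set velocity $c_i$, contradicting the rate $c_i + \gamma$. For (ii), if $c_i = c_{i+1}$ and $\eta_{i+1}(t_n) - \eta_i(t_n)$ stayed bounded, extracting a limit with centre midway between the two layers would produce a single entire solution connecting $p_{i+1}$ to $p_{i-1}$ at bounded scale, violating the adjacency above.

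Property (iii) is then a patching argument. The classification of $\Omega(u)$ yields, for every $\epsilon > 0$ and all large $t$, that $u(t,x)$ is within $\epsilon$ of $U_i(t, x - \eta_i(t))$ on a bounded neighbourhood of $\bar\eta_i(t)$ and within $\epsilon$ of $p_i(t)$ on the plateau between fronts $i$ and $i+1$; a direct computation shows $\sum_i U_i(t, x - \eta_i(t)) - \sum_i p_i(t)$ collapses to $U_k(t, x - \eta_k(t))$ in the $k$-th layer and to $p_k(t)$ in the plateau between fronts $k$ and $k+1$, which reproduces both regimes and yields the uniform approximation. The widening gaps $(c_{i+1}-c_i)t$ (when $c_i < c_{i+1}$) and property (ii) (when $c_i = c_{i+1}$) ensure the layers remain asymptotically disjoint. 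The principal obstacle is the rigidity step in the classification—showing that a maximally steep monotone entire solution between adjacent platforms coincides with the terrace's traveling wave up to translation—which relies on the sliding argument combined with the at-most-one-intersection property derived from steepness; a secondary challenge is lifting the local uniform $\Omega$-limit convergence to the uniform-in-$x$ statement of (iii), for which I would use compactness of the recentred profiles $u(t, \cdot + \bar\eta_i(t))$ on bounded time windows.
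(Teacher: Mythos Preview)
Your overall strategy---first classify $\Omega(u)$ via steepness and then derive the convergence---inverts the paper's order (which establishes level-set convergence directly through Lemmas~\ref{convege-ak} and~\ref{homo-ptw}, proves (iii), and reads off \eqref{Omegaset} as a corollary), but both routes rest on the same two pillars: Lemma~\ref{steepest} and Theorem~\ref{decomposition-existence}~(ii).

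There is, however, a genuine gap in your rigidity step. The sliding argument you sketch for identifying a monotone $\Omega$-limit $w$ (with limits $p_{i-1},p_i$ at $\mp\infty$) with a translate of $U_i$ does not work as stated: $w$ is not yet known to be a \emph{periodic} traveling wave, and since $w$ and every translate $U_i(\cdot,\cdot+\xi)$ share the same asymptotic values at both ends, no translate lies globally below (or above) $w$, so there is no first contact point at which to invoke the strong maximum principle. The correct and immediate argument is mutual steepness: by Lemma~\ref{steepest} the $\Omega$-limit $w$ is steeper than $U_i$, and by Theorem~\ref{decomposition-existence}~(ii) $U_i$ is steeper than $w$; since the ranges of $w(t,\cdot)$ and $U_i(t,\cdot)$ coincide, the remark following Definition~\ref{steepness} forces $w\equiv U_i(\cdot,\cdot+\xi)$ for some $\xi$. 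The same device (rather than ``refining the terrace'') cleanly settles the other classification steps: if a spatially constant $\Omega$-limit $q$ lay strictly between consecutive $p_{j-1},p_j$, mutual steepness of $q$ and $U_j$ with overlapping ranges would force $q\equiv U_j$ up to shift, which is absurd; an intermediate platform $p_j$ strictly between $q^+$ and $q^-$ is excluded the same way against $w$. With the rigidity step repaired, full-sequence convergence at each level follows (the only element of $\Omega(u)$ taking value $\theta_i(0)$ at the origin is a fixed translate of $U_i$), and your passage to (i)--(iii) then proceeds essentially as in the paper's proof.
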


Clearly, \eqref{Omegaset} follows immediately from statement (iii). Note that, since $c_i$ is the wave speed of $U_i$, statements (i) and (iii) imply that the $i$-th front of $u$ has the asymptotic speed $c_i$. In the special case where $c_i=0$, the $i$-th front of $u$ moves with asymptotically vanishing speed. A	convergence result similar to the above theorem has been established in \cite{dgm,gm} for spatially periodic problem (i.e., $f=f(x,u)$ is periodic in $x$), but the speed $c_i$ was required to be non-zero.

We remark that the functions $(\eta_i(t))_{1\leq i\leq N}$ may not be convergent or even bounded.  
Actually, by statement (ii), if $c_i=c_{i+1}$ for some $i$, then at least either of $\eta_i(t)$ and $\eta_{i+1}(t)$ is unbounded. Besides, even in the simple case where the terrace consists of a single wave, it is known that, for autonomous KPP equations, the corresponding drift function $\eta(t)$ grows logarithmically as $t\to\infty$ (see \cite{br,hnrr}).  On the other hand, if $N\geq 2$ and if the speeds $(c_i)_{1\leq i\leq N}$ are all different, we will show in Theorem \ref{converge3} below that $(\eta_i(t))_{1\leq i\leq N}$ are convergent provided that $f$ is a non-degenerate multistable nonlinearity.

\vskip 5pt

Theorem \ref{converge1} is stated under very general assumptions on $f$ (only the standing hypotheses and the existence of a minimal propagating terrace), but the initial data are rather special. 
In the next two theorems, we relax the assumption on $u_0$ as follows:  
\vskip 5pt
\begin{itemize}
\item[{\bf (H2)}] {\it $u_0(x)$ is piecewise continuous, and there are two constants $a_-<a_+$ such that 
\begin{equation*}
p(0)H(a_--x) \leq u_0(x)\leq p(0)H(a_+-x) \,\,\hbox{ for } \,x\in\R,
\end{equation*}
where $H$ is the Heaviside function introduced in {\rm (H1)}.} 
\end{itemize}

\vskip 5pt

Note that any $u_0$ satisfying (H2) is not necessarily monotone. By the comparison principle, the solution $u(t,x)$ of \eqref{E} with such an initial function is bounded from above and below by two solutions starting from Heaviside type functions for all $t\geq 0$, each of which converges to the minimal terrace as shown in Theorem \ref{converge1}. In the case of the autonomous equation \eqref{autonomous}, it follows immediately from \cite[Theorem 1.2]{po3} that $u(t,x)$ converges to the minimal terrace. However, in the present time-periodic problem, the technique used in \cite{po3} does not apply, and it is much harder to prove the convergence. At the moment, all we can show without any extra condition on $f$ is the following:


\begin{theorem}\label{converge2}
Assume that $((p_i)_{0\leq i\leq N},(U_i,c_i)_{1\leq i\leq N})$ is a minimal propagating terrace of \eqref{equation} connecting $0$ to $p$.  Let $u(t,x)$ be the solution of \eqref{E} with $u_0$ satisfying {\rm (H2)}. Then 
$$\{p_i\}_{0\leq i\leq N} \subset \Omega(u).$$ 
Furthermore, for every $w\in \Omega(u)$, $w$ is either spatially constant or strictly decreasing in $x\in\R$, and  one of the following cases holds:
\begin{itemize}
\item[{\rm (a)}] $w(t,x)\equiv p_i(t)$ for some $0\leq i \leq N$; 
\vskip 3pt
\item[{\rm (b)}] $w(t,x)$ is a periodic traveling wave of \eqref{equation} connecting $p_{i}$ to $p_{i-1}$ with wave speed $c_i$ for some $1\leq i \leq N$; 
\vskip 3pt
\item[{\rm (c)}] 
There are two periodic traveling waves $V_{\pm}$ of \eqref{equation} connecting $p_{i}$ to $p_{i-1}$ and sharing the same wave speed $c_i$ for some $1\leq i \leq N$  such that
\begin{equation*}
w(t,x)- V_{\pm}(t,x) \to 0 \, \hbox{ as } t\to \pm \infty \, \hbox{ uniformly in } \,x\in\R,
\end{equation*}
and that $V_+$ is either strictly steeper or strictly less steep than $V_-$.
\end{itemize}
\end{theorem}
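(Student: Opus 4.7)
The plan is to deploy the comparison principle together with Theorem \ref{converge1} applied to the two Heaviside-type solutions that bracket $u$, and then classify the $\Omega$-limit solutions by means of the steepness tool of Lemma \ref{ini-steep}. Let $u^+$ and $u^-$ denote the solutions of \eqref{E} with initial data $p(0)H(a_+-x)$ and $p(0)H(a_--x)$ respectively. Since \eqref{equation} is spatially homogeneous, translation invariance gives $u^-(t,x)=u^+(t,x-\delta)$ with $\delta:=a_+-a_->0$, and (H2) together with the comparison principle yields
\begin{equation*}
u^+(t,x+\delta)\leq u(t,x)\leq u^+(t,x)\quad \text{for all } t\geq 0,\ x\in\R.
\end{equation*}
Theorem \ref{converge1} then supplies drift functions $\eta_i(t)$, the asymptotic profile \eqref{converge-formula}, and the full description of $\Omega(u^+)$ in terms of the minimal terrace.

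To prove $\{p_i\}_{0\leq i\leq N}\subset \Omega(u)$, I would select sequences $(k_j,x_j)$ with $k_j\to\infty$ so that both $x_j$ and $x_j+\delta$ lie deep inside the plateau of $u^+(k_jT,\cdot)$ at level $p_i$. For $i\in\{0,N\}$ this is done with $x_j\to\mp\infty$; for interior indices the key point is Theorem \ref{converge1}(ii), which guarantees that the gap $\eta_{i+1}(t)-\eta_i(t)\to\infty$ even when $c_i=c_{i+1}$, so the window where $u^+$ is close to $p_i$ has width tending to infinity and the fixed shift $\delta$ is eventually negligible. The sandwich then pins $u$ at the same level, and passing to the limit produces $p_i\in\Omega(u)$.

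For a general $w\in\Omega(u)$ realized as the limit of $u(\cdot+k_jT,\cdot+x_j)$, passing to subsequences yields some $W\in\Omega(u^+)$ with $u^+(\cdot+k_jT,\cdot+x_j)\to W$, and by translation invariance also $u^+(\cdot+k_jT,\cdot+x_j+\delta)\to W(\cdot,\cdot+\delta)$. The sandwich passes to the limit:
\begin{equation*}
W(t,x+\delta)\leq w(t,x)\leq W(t,x).
\end{equation*}
Theorem \ref{converge1} classifies $W$ as either a platform or a translate $U_i(\cdot,\cdot+\xi)$. If $W\equiv p_i$, then $w\equiv p_i$ and we obtain case (a). Otherwise $W$ is strictly decreasing with spatial limits $p_{i-1}$ and $p_i$, so $w$ inherits these limits and lies between two distinct spatial translates of the same $U_i$. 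A zero-number argument applied to $w(t,x)-w(t,x+h)$, combined with Lemma \ref{ini-steep} applied to $u^+$ and its spatial shift, forces $w(t,\cdot)$ to be strictly decreasing in $x$ (constancy in $x$ is incompatible with the limit values $p_{i-1}(t)>p_i(t)$), which establishes the monotonicity dichotomy in the theorem.

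To separate cases (b) and (c), I would analyze the subsequential limits of $w(\cdot+nT,\cdot+y_n)$ with $n\to\pm\infty$ and suitable $y_n$. Every such limit remains trapped in the band between two translates of $U_i$ and, by $T$-periodicity of $f$, satisfies \eqref{define-tw} with speed $c_i$; hence each limit is itself a periodic traveling wave connecting $p_i$ to $p_{i-1}$ with speed $c_i$. If all forward-time (resp.\ backward-time) subsequential limits collapse, up to spatial translation, to a single such wave $V_+$ (resp.\ $V_-$), then $V_+=V_-$ (up to shift) yields case (b), while $V_+\neq V_-$ yields case (c). The strict-steepness dichotomy between $V_+$ and $V_-$ then follows from the characterization recalled after Definition \ref{steepness}: two distinct periodic traveling waves with the same speed and overlapping ranges cannot be mutually steeper, so one must be strictly steeper than the other. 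The main obstacle I anticipate is precisely this final classification---showing that $w$ genuinely converges (not merely has accumulation points) as $t\to\pm\infty$, so that $V_\pm$ are well defined, and establishing the strict steepness of one relative to the other. This will require careful adaptation of the zero-number/steepness machinery from \cite{dgm,gm}, keeping in mind, as emphasized in Remark \ref{rem-steep}, that steepness here is measured via spatial rather than temporal translations.
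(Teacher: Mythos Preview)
Your setup (the sandwich $u^+(t,x+\delta)\leq u(t,x)\leq u^+(t,x)$ and the resulting trapping $W(t,x+\delta)\leq w(t,x)\leq W(t,x)$ with $W\in\Omega(u^+)$) is exactly how the paper begins (its Lemma~\ref{parallel-terrace}), and your argument for $\{p_i\}\subset\Omega(u)$ is fine. But there are two genuine gaps.

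\textbf{Monotonicity of $w$.} Applying Lemma~\ref{ini-steep} to $u^+$ and its spatial shifts only tells you $u^+(t,\cdot)$ is monotone, which you already know; it says nothing about $u$, whose initial datum is not monotone. The paper instead works directly with $u$: it shows that $\mathcal{Z}[u(t,\cdot)-u(t,\cdot+z)]<\infty$ and is nonincreasing (this requires the specific tail behavior in (H2) and a Gaussian-kernel comparison, see Lemma~\ref{zero-shift}), so for large $t$ all zeros of $u(t,\cdot)-u(t,\cdot+z)$ are simple. Passing this simplicity through the limit requires Lemma~\ref{zero3}, and then a geometric contradiction argument (Step~3 of Proposition~\ref{monotonicity}) rules out any local maximum of $w(t_0,\cdot)$. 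Your sketch does not provide this mechanism.

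\textbf{The forward/backward limits and the steepness of $V_\pm$.} This is the more serious gap. You assert that any subsequential limit of $w(\cdot+nT,\cdot+y_n)$ ``satisfies \eqref{define-tw} with speed $c_i$'' because it is trapped between two translates of $U_i$; that inference is unjustified. Being sandwiched between translates of $U_i$ does not by itself force the time-$T$ periodicity in the moving frame (the Liouville-type statement Lemma~\ref{liouville} is available only under the multistable Assumption~\ref{multi-stable}, not here). The paper's engine is different and essential: it proves (Lemma~\ref{zero-shift}, second part) that $\mathcal{Z}[u(t+T,\cdot)-u(t,\cdot+z)]<\infty$, passes simplicity of these zeros to the limit, and concludes (Lemma~\ref{steep-l-steep}) that either $w(t+T,\cdot)\equiv w(t,\cdot+z)$ for some $z$ (giving case~(b)), or $w(t+T,\cdot)$ is \emph{strictly steeper} (or strictly less steep) than $w(t,\cdot)$. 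This steepness monotonicity in $k\mapsto w(\cdot+kT,\cdot)$ is what forces the full-sequence convergence as $t\to\pm\infty$ (any two subsequential limits are mutually steeper, hence equal) and simultaneously yields the strict steepness ordering between $V_+$ and $V_-$ (Lemma~\ref{heteroclinic-orbit}). Your final paragraph correctly identifies this as the obstacle, but the missing idea is precisely the zero-number control of $u(t+T,\cdot)-u(t,\cdot+z)$; without it the argument does not close.
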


The above theorem immediately implies that $\{p_i\}_{0\leq i\leq N}$ are the only spatially homogeneous functions in $\Omega(u)$. 
Moreover, since the set $\{w(t,\cdot):\, t\in\R,\,w\in\Omega(u)\}$ is compact and connected in $L_{loc}^{\infty}(\R)$ (see Section 2.2 below for more details), we have the following corollary:

\begin{corollary}\label{coro-conn-tw}
Let the assumptions of Theorem \ref{converge2} hold. Then for each $1\leq i\leq N$, $\Omega(u)$ contains at least one periodic traveling wave connecting $p_{i}$ to $p_{i-1}$ with speed $c_i$. Furthermore, if $U_i$ is the unique (up to spatial shifts) such periodic traveling wave, then case {\rm (c)} of Theorem \ref{converge2} does not occur; hence,  \eqref{Omegaset} holds.  
\end{corollary}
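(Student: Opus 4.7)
The plan is to combine the trichotomy of Theorem \ref{converge2} with two successive extraction arguments: an intermediate-value extraction that places an element of $\Omega(u)$ between each pair of consecutive platforms, followed by a further diagonal extraction that upgrades a possible case-(c) orbit into an honest traveling wave inside $\Omega(u)$.

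For fixed $i\in\{1,\dots,N\}$, I would choose a level $\alpha_i\in(p_i(0),p_{i-1}(0))$ and, using (H2), comparison with the sandwiching Heaviside data, and Theorem \ref{converge1} applied to those bounds, produce for each large $k$ a point $x_k$ with $u(kT,x_k)=\alpha_i$. Parabolic compactness then yields a subsequence along which $u(t+k_jT,x+x_j)\to w(t,x)$, so that $w\in\Omega(u)$ and $w(0,0)=\alpha_i$. Since $\alpha_i\neq p_j(0)$ for every $j$, $w$ is not spatially constant, and Theorem \ref{converge2} forces $w$ to be strictly decreasing in $x$. The spatial limits $w(t,\pm\infty)$ lie in $\Omega(u)$ (because $\Omega(u)$ is invariant under spatial translation and closed under local uniform limits via a further diagonal), and so by the spatially homogeneous part of Theorem \ref{converge2} they must equal $p_{j-1}(0)$ and $p_j(0)$ for some $j$. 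The strict decrease, $w(0,0)=\alpha_i$, and the ordering $p_0(0)>\cdots>p_N(0)$ then force $j=i$. In case (b) of Theorem \ref{converge2}, $w$ itself is a periodic traveling wave connecting $p_i$ to $p_{i-1}$ with speed $c_i$, which already establishes the first assertion.

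To handle case (c), I would exploit the periodicity identity $V_+(t+nT,x+c_inT)=V_+(t,x)$ together with $\sup_x|w(t,x)-V_+(t,x)|\to 0$ as $t\to+\infty$ to conclude that $w(t+nT,x+c_inT)\to V_+(t,x)$ locally uniformly as $n\to\infty$. Composing with $w=\lim_j u(\,\cdot+k_jT,\,\cdot+x_j)$ via a standard diagonal extraction produces integers $k'_n\to\infty$ and shifts $x'_n$ such that $u(\,\cdot+k'_nT,\,\cdot+x'_n)\to V_+$, so $V_+\in\Omega(u)$. This finishes the first assertion. For the second, assume $U_i$ is unique up to spatial shifts among periodic traveling waves connecting $p_i$ to $p_{i-1}$ with speed $c_i$. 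Then in case (c) both $V_+$ and $V_-$ must be spatial translates of $U_i$; but spatial translates of the same profile are mutually steeper, so neither is \emph{strictly} steeper than the other, contradicting the definition of case (c). Hence case (c) cannot occur for that $i$; if the uniqueness holds for every $i$, only cases (a) and (b) remain, which together with the first assertion yields the representation \eqref{Omegaset}.

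The main obstacle I anticipate is the second limiting step in the previous paragraph: carefully carrying out the diagonal extraction so that the $t\to+\infty$ asymptotic profile $V_+$ of a case-(c) orbit $w$ actually lies in $\Omega(u)$, while keeping coherent track of both time shifts $k'_nT$ and the matching spatial corrections $c_inT$. The same closure-under-limits property of $\Omega(u)$, applied to pure spatial translations, is also what underwrites the identification $j=i$ in the first extraction.
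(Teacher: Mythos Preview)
Your proof is correct and essentially follows the paper's line of reasoning, though you reconstruct by hand two facts that the paper has already recorded. First, the paper derives the corollary in one sentence from the compactness and \emph{connectedness} of $\bar{\Omega}(u)=\{w(t,\cdot):t\in\R,\,w\in\Omega(u)\}$ established in Section~2.2; your intermediate-value extraction (finding $x_k$ with $u(kT,x_k)=\alpha_i$) is exactly the concrete mechanism behind that connectedness, so the two approaches coincide in content. Second, your ``main obstacle''---the diagonal extraction showing $V_+\in\Omega(u)$---is already asserted in Lemma~\ref{heteroclinic-orbit}, where the limit profiles $V_\pm$ are produced as elements of $\Omega(u)$ via compactness of $\Omega(u)$ in $L^\infty_{loc}(\R^2)$; you may simply cite that. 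One further simplification: you do not need to argue separately that the spatial limits $w(t,\pm\infty)$ lie in $\Omega(u)$ to identify the index. Theorem~\ref{converge2} (through Lemma~\ref{parallel-terrace}) already asserts that any non-constant $w\in\Omega(u)$ is trapped between two shifts of a single $U_j$, hence connects $p_j$ to $p_{j-1}$; since the open intervals $(p_i(0),p_{i-1}(0))$ are pairwise disjoint and $w(0,0)=\alpha_i$, you get $j=i$ immediately. Your argument ruling out case~(c) under uniqueness---that two spatial translates of the same profile are mutually steeper and hence neither is strictly steeper---is exactly right and is the intended point.
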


As mentioned earlier, in the autonomous case, it is known from \cite[Theorem 1.2]{po3} that \eqref{Omegaset} holds for any solution of \eqref{autonomous} with {\rm (H2)}-type initial function. In fact, this result can also be derived directly from the above corollary, as a simple ODE argument can prove that, 
up to spatial shifts, $U_i$ is the unique traveling wave connecting $p_i$ to $p_{i-1}$ with speed $c_i$. Our next theorem shows that, the same is true for the time-periodic equation \eqref{E} under an additional assumption on $f$. 
Recall that $\mathcal{X}_{per}$ denotes the set of all nonnegative solutions of \eqref{ODE}. Our assumption is stated as follows:
  
\begin{assume}\label{non-degenerate}
Each element $q\in \mathcal{X}_{per}$ between $0$ and $p$ satisfies the following:  
\begin{itemize}
\item[(i)]  If $q>0$ is stable from below, then there exist a real number $\sigma_0>0$ and a $T$-periodic function $g(t)$ such that 
\begin{equation}\label{s-stable-be-1}
\int_{0}^{T}g(t)dt\leq 0,\quad\hbox{and}\quad   \partial_u f(t, u) \leq g(t)\, \,\hbox{ for all } \, u\in (q(t)-\sigma_0,q(t)],\,t\in\R;
\end{equation}
\item [(ii)] If $q<p$ is stable from above, then there exist a real number $\sigma_0>0$ and a $T$-periodic function $g(t)$ such that 
\begin{equation}\label{s-stable-be-2}
\int_{0}^{T}g(t)dt\leq 0,\quad\hbox{and}\quad   \partial_u f(t, u) \leq g(t)\, \,\hbox{ for all } \, u\in [q(t),q(t)+\sigma_0),\,t\in\R.
\end{equation}
\end{itemize} 
\end{assume}

By an easy comparison argument applied to \eqref{ode-initial}, one can check that a simple sufficient condition for our Assumption \ref{non-degenerate} to hold is that: 

\begin{assume}\label{non-degenerate-1}
Each element $q\in \mathcal{X}_{per}$ between $0$ and $p$ which is stable from above or from below is linearly stable.
 \end{assume}

It should be noted that Assumption \ref{non-degenerate} is weaker than Assumption \ref{non-degenerate-1}. 
 A simple example is that $f(t,u)=b(t)\bar{f}(u)$, where
$b(t)$ is a positive $T$-periodic $C(\R)$ function, $\bar{f}(u)$ is an autonomous combustion nonlinearity satisfying $\bar{f}(u) =0$ for $u\in [0,\theta]\cup\{p\}$,  $\bar{f}(u)>0$ for $u\in (\theta, p)$ and $\bar{f}'(u)\leq 0$ for $u$ close to $p$. Clearly, such a nonlinearity satisfies Assumption \ref{non-degenerate}, but not Assumption \ref{non-degenerate-1}. In fact, Assumption \ref{non-degenerate} allows infinitely many elements of $\mathcal{X}_{per}$ between $0$ and $p$, while there can only be finitely many such elements if Assumption \ref{non-degenerate-1} is satisfied. In the latter case, all the elements of $\mathcal{X}_{per}$ between $0$ and $p$ are isolated, since any   element of $\mathcal{X}_{per}$ that is unstable both from above and from below is isolated by definition, and any linearly stable element is also isolated as it is asymptotic stable.

Notice that the above two assumptions do not require anything on the elements of $\mathcal{X}_{per}$ that are unstable both from above and from below, therefore those elements can be degenerate.
 
 \vskip 5pt
 
Under Assumption \ref{non-degenerate}, we have the following uniqueness result.  
 
 \begin{prop}\label{unique1}
Let Assumption \ref{non-degenerate} hold and let $q_1$, $q_2$ be elements of $\mathcal{X}_{per}$ satisfying $0\leq q_1<q_2\leq p$. Let $V_1$ and $V_2$ be two periodic traveling waves connecting $q_1$ to $q_2$ with wave speeds $c_1$ and $c_2$, respectively. Assume that $V_1$ is steeper than $V_2$. Then there holds $c_1\leq c_2$. Furthermore, if $c_1=c_2$, then $V_1$ is equal to $V_2$ up to a spatial shift.   
\end{prop}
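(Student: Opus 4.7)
My plan is to reduce to the strict-steepness case and then carry out a tail analysis at the stable asymptote, exploiting the monotone dependence of the exponential decay rate on the wave speed that Assumption \ref{non-degenerate} makes available. First, if $V_1\equiv V_2(\cdot,\cdot+a)$ for some $a\in\R$, then $c_1=c_2$ and both conclusions hold. Otherwise, by the strong-maximum-principle observation that follows Definition \ref{steepness}, $V_1$ is strictly steeper than $V_2$, so the graphs of $V_1(t,\cdot)$ and $V_2(t,\cdot+\xi)$ intersect at most once (transversally) for every $(t,\xi)\in\R^2$.

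Next, I would extract the precise tail behavior at the stable end. Since $V_i$ is a non-trivial front from $q_2$ (at $-\infty$) to $q_1$ (at $+\infty$), at least one endpoint must be stable from the side the wave approaches it for $V_i$ to asymptote there; assume WLOG that $q_1$ is stable from above (the case $q_2$ stable from below is symmetric). Assumption \ref{non-degenerate}(ii) then provides a $T$-periodic $g$ with $\bar g:=T^{-1}\int_0^T g\leq 0$ and $\partial_u f(t,u)\leq g(t)$ for $u\in[q_1(t),q_1(t)+\sigma_0)$. Working in the moving frame $\tilde V_i(t,y):=V_i(t,y+c_it)$, which is $T$-periodic in $t$, a Phragm\'en--Lindel\"of / sub--super-solution analysis of $\phi_i:=\tilde V_i-q_1>0$ would yield
\[
\phi_i(t,y)=A_i(t)\,e^{-\mu_i y}\bigl(1+o(1)\bigr)\quad\text{as }y\to+\infty,
\]
where $A_i>0$ is $T$-periodic and $\mu_i=\mu_+(c_i)$ is the unique positive root of the characteristic relation $\mu^2-c_i\mu+\bar a_1=0$, with $\bar a_1:=T^{-1}\int_0^T\partial_u f(t,q_1(t))\,dt\leq \bar g\leq 0$. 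The crucial structural point is that $c\mapsto\mu_+(c)=(c+\sqrt{c^2-4\bar a_1})/2$ is strictly increasing.

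The strict steepness of $V_1$ over $V_2$, read off in the right tail where both approach $q_1$, would then force $\mu_1\leq\mu_2$: if instead $\mu_1>\mu_2$, then $\phi_2$ would eventually dominate $\phi_1$, contradicting the steepness ordering $\phi_1\geq\phi_2$ in the tail. By monotonicity of $\mu_+(\cdot)$ this gives $c_1\leq c_2$, proving the first claim. For the equality case $c_1=c_2$, the rates coincide and, upon choosing $\xi^*$ so that the leading coefficients $A_1$ and $A_2$ match after the shift, the non-negative difference $V_2(t,x+\xi^*)-V_1(t,x)$ — solution of a linear parabolic equation with bounded zero-order term — vanishes to leading order in the right tail; the strong maximum principle then propagates the equality globally, delivering $V_1\equiv V_2(\cdot,\cdot+\xi^*)$.

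The main obstacle will be extracting the \emph{exact} exponential rate $\mu_+(c_i)$ for $\phi_i$ from the one-sided bound $\partial_u f\leq g$ of Assumption \ref{non-degenerate}: an upper bound $\phi_i\leq Ce^{-\mu_i y}$ is a direct super-solution construction, but the matching lower bound — needed to translate $\phi_1\geq\phi_2$ in the tail into $\mu_1\leq\mu_2$ rather than something weaker — is delicate. It typically rests on the strict positivity of $\phi_i$, the simplicity of the principal Floquet eigenvalue of the $T$-periodic linearized operator at $q_1$, and the fact that $V_i$ solves the full nonlinear PDE rather than a mere sub-solution of its linearization. A parallel treatment at $-\infty$, based on Assumption \ref{non-degenerate}(i), handles the case where it is $q_2$ rather than $q_1$ that carries the relevant stable tail.
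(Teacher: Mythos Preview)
Your approach is genuinely different from the paper's, and the obstacle you flag at the end is not a technicality but a real gap that blocks the argument under the stated hypothesis.

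The problem is that Assumption \ref{non-degenerate} is designed precisely to allow the \emph{degenerate} case $\bar a_1:=T^{-1}\int_0^T\partial_u f(t,q_1(t))\,dt=0$ (and likewise at $q_2$); see the combustion example immediately following Assumption \ref{non-degenerate-1}. In that regime your characteristic relation $\mu^2-c\mu+\bar a_1=0$ has roots $0$ and $c$, and the sharp asymptotic $\phi_i(t,y)=A_i(t)e^{-\mu_+(c_i)y}(1+o(1))$ is no longer available from a one-sided bound $\partial_u f\le g$: you get at best an upper bound on $\phi_i$, not a matching lower bound, so you cannot read off $\mu_1\le\mu_2$ from the steepness ordering. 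Even when $\bar a_1<0$, extracting the \emph{exact} rate (rather than a bound) typically needs more regularity of $f$ at $q_1$ than Assumption \ref{non-degenerate} supplies. Your equality-case argument has a second gap: matching leading coefficients so that the difference ``vanishes to leading order'' in the tail does not produce an interior touching point, so the strong maximum principle cannot be invoked as stated.

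The paper sidesteps all of this by a direct comparison/sliding argument that uses only the one-sided inequality $\partial_u f\le g$. Working in the moving frame of $V_1$ near the stable end (say $q_2$ stable from below, via Lemma \ref{speed-chara}), it compares $V_2-V_1$ against $-\kappa\phi(t)$, where $\phi$ is the positive $T$-periodic solution of $\phi_t=(g(t)+\lambda)\phi$ with $\lambda=-\bar g\ge 0$; the assumption $c_1>c_2$ forces $V_2$ to drift into the stable zone as $t\to-\infty$, and the comparison yields $V_2\ge V_1$ on a half-line, which the steepness hypothesis then upgrades to $V_2\ge V_1$ on all of $\R$, contradicting $c_1>c_2$. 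The equality case is handled by a sliding choice of shift followed by a strong-maximum-principle touching argument against the same barrier $-\kappa\phi$. None of this requires identifying an exponential rate, which is why it survives when $\bar a_1=0$.
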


Combining Theorem \ref{converge2} and Proposition \ref{unique1}, we easily obtain that \eqref{Omegaset} holds for solutions of \eqref{E} with {\rm (H2)}-type initial data. Furthermore, we have the following  theorem. 
 
\begin{theorem}\label{converge2-2}
Let Assumption \ref{non-degenerate} hold. Assume that there exists a minimal propagating terrace $((p_i)_{0\leq i\leq N},(U_i,c_i)_{1\leq i\leq N})$ connecting $0$ to $p$. Then all the conclusions of Theorem \ref{converge1} hold for solutions of \eqref{E} with {\rm (H2)}-type initial data.  
\end{theorem}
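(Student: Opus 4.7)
The plan is to reduce Theorem \ref{converge2-2} to Theorem \ref{converge1} by sandwiching $u$ between two Heaviside-type solutions, with the uniqueness information furnished by Proposition \ref{unique1} serving to identify the $\Omega$-limit set. The argument splits into two stages.

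\textbf{Stage 1: Identifying $\Omega(u)$.} Applying Theorem \ref{converge2}, every $w\in\Omega(u)$ is either a platform $p_i$, a periodic traveling wave of speed $c_i$ connecting $p_i$ to $p_{i-1}$, or an entire solution of type (c). By Theorem \ref{decomposition-existence}(ii), $U_i$ is steeper than every other entire solution between $0$ and $p$, and in particular steeper than any competing periodic traveling wave $V$ connecting $p_i$ to $p_{i-1}$ of speed $c_i$. Proposition \ref{unique1} then forces such a $V$ to coincide with $U_i$ up to a spatial translation, so $U_i$ is unique modulo translation, and Corollary \ref{coro-conn-tw} delivers \eqref{Omegaset}.

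\textbf{Stage 2: Drift functions and convergence.} Let $u_-$ and $u_+$ denote the solutions of \eqref{E} with Heaviside initial data $p(0)H(a_--x)$ and $p(0)H(a_+-x)$ respectively, so that $u_-\le u\le u_+$ for all $t\ge 0$ by the comparison principle. Theorem \ref{converge1} supplies $C^1$ drifts $\eta_i^\pm(t)$ obeying properties (i)--(iii) for $u_\pm$. For each $i$ I fix $\xi_i\in\R$ and set $\lambda_i(t):=U_i(t,\xi_i)$; I then define $\eta_i(t)$ as a smooth branch of solutions of $u(t,x)=\lambda_i(t)$, selected consistently with the $\Omega$-limit structure established in Stage 1. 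The sandwich and the asymptotics of $u_\pm$ constrain $\eta_i(t)$ to lie within an $O(1)$ neighborhood of $[\eta_i^-(t),\eta_i^+(t)]$, thereby transferring the $o(t)$ bound and the separation $\eta_{i+1}(t)-\eta_i(t)\to\infty$ (when $c_i=c_{i+1}$) from $u_\pm$ to $u$. The uniform limit \eqref{converge-formula} is then obtained by contradiction: a failure would produce sequences $t_n\to\infty$ and $x_n\in\R$ along which an $\Omega$-limit of $u$ contradicts either the characterization \eqref{Omegaset} or the defining relation $u(t_n,\eta_i(t_n))=\lambda_i(t_n)$.

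The principal obstacle is the rigorous construction of a smooth, globally well-defined branch $\eta_i(t)$, since $u(t,\cdot)$ need not be monotone in $x$ and the level set $\{u(t,\cdot)=\lambda_i(t)\}$ may a priori have several components. To isolate one branch per $i$ and to secure transversality, one must combine the strict-steepness information from Theorem \ref{decomposition-existence}(ii) with the steepness-propagation result (Lemma \ref{ini-steep}), and control the number of sign changes of $u(t,\cdot)-U_i(t,\cdot-\eta)$ via the zero-number machinery underlying the whole framework. Once this has been done, the uniform convergence in \eqref{converge-formula} reduces to a standard compactness-and-uniqueness argument based on \eqref{Omegaset}.
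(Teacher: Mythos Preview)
Stage 1 of your proposal is correct and matches the paper exactly: Theorem \ref{converge2} plus Proposition \ref{unique1} (via the steepness of $U_i$ from Theorem \ref{decomposition-existence}(ii)) yield the characterization \eqref{Omegaset} through Corollary \ref{coro-conn-tw}.

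In Stage 2 the overall sandwich $u_-\le u\le u_+$ is right, and the paper does use it for the flat regions in verifying \eqref{converge-formula}. But your construction of $\eta_i(t)$ as a \emph{continuous} branch of the level set $\{x:u(t,x)=\lambda_i(t)\}$ is harder than necessary, and the obstacle you flag (non-monotonicity, multiple components, transversality via zero-number) is one you do not actually resolve. The paper sidesteps it entirely: it selects, at each \emph{discrete} time $kT$, \emph{any} point $b_{i,k}$ with $u(kT,b_{i,k})=(p_{i-1}(0)+p_i(0))/2$. No uniqueness or smooth dependence is needed, because the $\Omega$-limit characterization already in hand does the work: every $L^\infty_{loc}$ subsequential limit of $u(\cdot+kT,\cdot+b_{i,k})$ lies in $\Omega(u)$ and takes the value $(p_{i-1}(0)+p_i(0))/2$ at $(0,0)$; by \eqref{Omegaset} the only such element is $U_i$, so the full sequence converges. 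One then \emph{defines} a $C^1$ function $\eta_i$ by interpolating the discrete data $b_{i,\lfloor t/T\rfloor}-c_i\lfloor t/T\rfloor T$, and properties (i)--(ii) follow exactly as in the proof of Theorem \ref{converge1}.

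For (iii), the local convergence on each window $[c_it+\eta_i(t)-M,\,c_it+\eta_i(t)+M]$ comes from the above, just as in Theorem \ref{converge1}. The new wrinkle is that on the intermediate regions $[c_{i-1}t+\eta_{i-1}(t)+M,\,c_it+\eta_i(t)-M]$ one can no longer invoke monotonicity of $u(t,\cdot)$; here the paper uses precisely your sandwich $u_-\le u\le u_+$ together with the already-proved statement (iii) for $u_\pm$. So that part of your plan is on target, but it is the \emph{only} place the sandwich is needed; you do not need it to build or bound $\eta_i$.
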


The following result is an easy consequence of Proposition \ref{suff-decom} and Theorems \ref{decomposition-existence}, \ref{converge2-2}.

\begin{corollary}\label{existence-convergence} 
Let Assumption \ref{non-degenerate-1} hold. Then there exists a unique (up to spatial shifts) minimal propagating terrace $((p_i)_{0\leq i\leq N},(U_i,c_i)_{1\leq i\leq N})$ connecting $0$ to $p$. Furthermore, all the conclusions of Theorem \ref{converge1} hold for solutions of \eqref{E} with {\rm (H2)}-type initial data. 
\end{corollary}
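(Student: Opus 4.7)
The plan is to chain together Proposition~\ref{suff-decom}, Theorem~\ref{decomposition-existence}, and Theorem~\ref{converge2-2}. The existence and uniqueness of the minimal terrace will follow once I verify the finiteness hypothesis (a) of Proposition~\ref{suff-decom}; the convergence statement will follow once I check that Assumption~\ref{non-degenerate-1} implies Assumption~\ref{non-degenerate}, so that Theorem~\ref{converge2-2} applies directly.

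To carry out the first step, I would argue that under Assumption~\ref{non-degenerate-1} there are only finitely many elements of $\mathcal{X}_{per}$ between $0$ and $p$. The key dichotomy, valid for scalar ODEs such as \eqref{ODE-1} in which no two solutions can cross, is that every $q\in\mathcal{X}_{per}$ is either isolated from above or stable from above (and similarly from below). Indeed, if $q_n \downarrow q$ in $\mathcal{X}_{per}$, then by monotonicity of the flow any initial condition $h_0\in(q(0),q_n(0))$ yields a solution trapped between $q(t)$ and $q_n(t)$, and since $\sup_t|q_n(t)-q(t)|\to 0$ we obtain stability from above. Assumption~\ref{non-degenerate-1} upgrades stable-from-above to linearly stable, and linear stability makes $q$ asymptotically stable from one side, hence isolated from that side; so $q$ is isolated in any event. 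A standard compactness argument on the totally ordered set $\{q(0):q\in\mathcal{X}_{per},\, 0\leq q\leq p\}\subset[0,p(0)]$ then rules out infinitely many such isolated elements: any accumulation point, via continuous dependence on initial data, would itself lie in $\mathcal{X}_{per}$ and be approached on one side by other elements.

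Finiteness now invokes Proposition~\ref{suff-decom}(a) to produce a decomposition, and Theorem~\ref{decomposition-existence} upgrades this to the unique (up to spatial shifts) minimal propagating terrace $((p_i)_{0\leq i\leq N},(U_i,c_i)_{1\leq i\leq N})$. To deduce Assumption~\ref{non-degenerate} from Assumption~\ref{non-degenerate-1}, at each linearly stable $q$ I would take $g(t)=\partial_u f(t,q(t))+\delta$ for some small $\delta>0$ (still $T$-periodic, and with nonpositive mean for $\delta$ small enough) and use uniform continuity of $\partial_u f$ on a compact neighborhood of the graph of $q$ to find $\sigma_0>0$ such that $\partial_u f(t,u)\leq g(t)$ on the required one-sided neighborhood of $q(t)$, verifying \eqref{s-stable-be-1} or \eqref{s-stable-be-2}. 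Theorem~\ref{converge2-2} then delivers every conclusion of Theorem~\ref{converge1} for (H2)-type initial data.

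The only substantive step is the scalar-ODE dichotomy ``not isolated from one side $\Longleftrightarrow$ stable from that side'', which is precisely the ingredient the authors implicitly invoke in the discussion after Assumption~\ref{non-degenerate-1} when they assert that ``there can only be finitely many such elements''; everything else is a bookkeeping composition of already-stated results.
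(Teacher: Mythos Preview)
Your proposal is correct and follows essentially the same route as the paper: the authors state the corollary as an easy consequence of Proposition~\ref{suff-decom}, Theorem~\ref{decomposition-existence}, and Theorem~\ref{converge2-2}, with the finiteness of $\mathcal{X}_{per}$ and the implication Assumption~\ref{non-degenerate-1}~$\Rightarrow$~Assumption~\ref{non-degenerate} already sketched in the discussion immediately following Assumption~\ref{non-degenerate-1}. You have simply made those sketched steps explicit (the dichotomy ``not isolated from one side $\Rightarrow$ stable from that side'' plus linear stability forcing isolation, and the $g(t)=\partial_u f(t,q(t))+\delta$ choice), so there is nothing to correct.
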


\subsection{Convergence in a non-degenerate multistable case}

Our last main result is concerned with the asymptotic behavior of solutions of \eqref{E} where $f$ is of multistable type in the following sense: 

\begin{assume}\label{multi-stable}
The elements $0$ and $p$ of $\mathcal{X}_{per}$ are linearly stable, and any other element between $0$ and $p$ which is stable from above or from below is linearly stable.
 \end{assume}

Under the above assumption, it is clear that all the elements of $\mathcal{X}_{per}$ between $0$ and $p$ are isolated, and hence, there are only finitely many such elements. It then follows immediately from Proposition \ref{suff-decom} and Theorem \ref{decomposition-existence} that a minimal propagating terrace $((p_i)_{0\leq i\leq N}, (U_i, c_i)_{1\leq i\leq N})$ exists. Furthermore, in view of statement (i) of Theorem \ref{decomposition-existence}, for each $0\leq i\leq N$, $p_i$ is linearly stable. 
In other words, for each $1\leq i\leq N$, $U_i$ is a periodic traveling wave connecting two linearly stable solutions of \eqref{ODE}.

Clearly, Assumption \ref{multi-stable} is stronger than Assumption \ref{non-degenerate}, therefore Theorem \ref{converge2-2}
immediately implies that the minimal terrace attracts solutions with initial data satisfying (H2). But in this subsection, we will show that this holds for a larger class of initial data. To formulate our hypotheses on $u_0$, we denote $I_{+}$ and  $I_{-}$ by the intervals of attraction with respect to the equation \eqref{ode-initial} of the periodic solutions $p(t)$ and $0$, respectively. Namely, the set $I_+$ (resp. $I_-$) consists of element $h_0\in\R$ such that the solution of \eqref{ode-initial} with initial value $h_0$ converges to $p(t)$ (resp. $0$) as $t\to\infty$. 
Since we have assumed that $p$ and $0$ are linearly stable, it is easily checked that $I_+$ and $I_-$ are open intervals containing $p(0)$ and $0$, respectively. Our hypotheses on $u_0$ are stated as follows: 

\begin{itemize}
\item[{\bf (H3)}] {\it $u_0(x)$ is bounded and piecewise continuous, and it satisfies 
\begin{equation}\label{large-negative}
\liminf_{x\to- \infty} u_0(x)\in I_+,\quad \sup_{x\in\R} u_0(x)\in I_+,
\end{equation}
\begin{equation}\label{large-positive}
\limsup_{x\to\infty} u_0(x)\in I_-,\quad \inf_{x\in\R} u_0(x)\in I_-.
\end{equation}}
\end{itemize}

\begin{theorem}\label{converge3}
Let Assumption \ref{multi-stable} hold. Assume further that $\partial_u f(t,u)$ is locally Lipschitz continuous in $u$ uniformly for $t$.  Let $u(t,x)$ be the solution of \eqref{E} with $u_0$ satisfying {\rm (H3)}. 
Then there exists a unique (up to spatial shifts) minimal propagating terrace $((p_i)_{0\leq i\leq N},(U_i,c_i)_{1\leq i\leq N})$ connecting $0$ to $p$. Furthermore, the following statements hold:
\begin{itemize}
\item[(i)]
There are $C^1([0,\infty))$ functions $\eta_1(t), \cdots ,\eta_N(t)$
such that statements (i)-(iii) of Theorem \ref{converge1} hold for $u(t,x)$;
 \vskip 3pt
 \item[(ii)]
If the speeds satisfy $c_1<c_2<\cdots<c_N$, then for each $i=1,\,\cdots,\,N$, there is some constant $\bar{\eta}_i\in\R$ such that
\begin{equation*}
\lim_{t\to\infty}{\eta_i}(t)=\bar{\eta}_i,
\end{equation*}
 and there are constants $\nu>0$, $C>0$ such that
\begin{equation*}
\left | u(t,x)- \left(  \displaystyle\sum_{i=1}^{N} U_i\big(t,x-\bar{\eta}_i\big)- \sum_{i=1}^{N} p_i(t)    \right)\right | \leq C\me^{-\nu t} \,\,\hbox{ for all }\,  t\geq 0,\,x\in\R.
\end{equation*}
\end{itemize}
\end{theorem}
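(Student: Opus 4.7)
My plan is the following.

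\textbf{Step 1: Existence of the minimal terrace.} Under Assumption \ref{multi-stable}, every element of $\mathcal{X}_{per}$ strictly between $0$ and $p$ that is stable from one side is linearly stable, hence isolated; any element unstable from both sides is isolated by definition. Thus $\mathcal{X}_{per}\cap[0,p]$ contains only finitely many elements, so Proposition \ref{suff-decom}(a) produces a decomposition and Theorem \ref{decomposition-existence} yields the unique minimal terrace $((p_i)_{0\le i\le N},(U_i,c_i)_{1\le i\le N})$. Statement (i) of Theorem \ref{decomposition-existence} combined with Assumption \ref{multi-stable} shows that each $p_i$ is linearly stable.

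\textbf{Step 2: Proof of (i).} I reduce to Theorem \ref{converge2-2} by proving that $u(k^\ast T,\cdot)$ satisfies hypothesis (H2) for some integer $k^\ast$. The ODE comparisons with the constant-in-$x$ solutions from $\sup u_0\in I_+$ and $\inf u_0\in I_-$ yield $h_-(t)\le u(t,x)\le h_+(t)$ with $h_+(t)\to p(t)$ and $h_-(t)\to 0$. To force $u(t,x)\ge p(t)-\epsilon$ for $x$ sufficiently negative, I choose $b\in I_+$ with $b<\liminf_{x\to-\infty}u_0$ and $M$ such that $u_0\ge b$ on $(-\infty,-M)$; the nonnegative function $w(t,x):=h_b(t)-u(t,x)$ satisfies a linear parabolic inequality $w_t-w_{xx}\le \lambda w$ with $\lambda=\sup|\partial_u f|$, and its initial data is supported in $[-M,\infty)$, so a direct heat-kernel estimate gives $w(t,x)\to 0$ as $x\to-\infty$ for large $t$, and hence $u(t,x)\ge p(t)-2\epsilon$ on $(-\infty,-M-\gamma(t))$ for some slowly growing function $\gamma(t)$. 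A symmetric argument handles $x\to+\infty$, so after a sufficiently large period $k^\ast T$ the solution is sandwiched between two Heaviside-type functions, and Theorem \ref{converge2-2} applied to the shifted solution delivers all the conclusions of Theorem \ref{converge1} for $u(t,x)$.

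\textbf{Step 3: Proof of (ii).} For the strictly ordered case $c_1<c_2<\cdots<c_N$, I use a Fife--McLeod-type super/sub-solution method \cite{fm}. Let $\nu>0$ be smaller than half of the absolute value of every Floquet exponent of the $p_i$, and define
\begin{equation*}
\Phi^\pm(t,x):=\sum_{i=1}^{N} U_i\bigl(t,x-\bar\eta_i\mp \delta\,\me^{-\nu t}\bigr)-\sum_{i=1}^{N} p_i(t)\pm C\me^{-\nu t},
\end{equation*}
for constants $\bar\eta_i\in\R$, small $\delta>0$, and large $C>0$ to be chosen. By part~(i), there exist $\bar\eta_i$ and $t_1\ge 0$ such that $\Phi^-(t_1,\cdot)\le u(t_1,\cdot)\le\Phi^+(t_1,\cdot)$. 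Once $\Phi^+$ (resp. $\Phi^-$) is verified to be a supersolution (resp. subsolution) of \eqref{equation} on $[t_1,\infty)\times\R$, the comparison principle propagates the sandwich, and using the uniform $C^1$ bounds on $U_i$ gives $u(t,x)=\sum_i U_i(t,x-\bar\eta_i)-\sum_i p_i(t)+O(\me^{-\nu t})$ together with $\eta_i(t)\to\bar\eta_i$ exponentially fast.

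\textbf{Main obstacle.} The technical heart of the proof is the verification that $\Phi^\pm$ are genuine super/sub-solutions. The argument splits $\R$ into the transition zones of each $U_i$ and the plateau zones near each $p_i$. On a plateau zone, Taylor-expanding $f$ around $p_i$ produces a linear term whose $T$-average $\int_0^T\partial_u f(t,p_i(t))\,dt$ is strictly negative (linear stability), plus a quadratic remainder controlled by the uniform Lipschitz bound on $\partial_u f$; choosing $\nu$ smaller than the decay rate forces the desired sign. In the transition zone of $U_i$, the key balance is between the drift term $\mp\delta\me^{-\nu t}\partial_x U_i$, the plateau estimate above, and the interaction tails from the neighbouring waves $U_{i\pm 1}$. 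The strict ordering $c_{i+1}>c_i$ makes these tails exponentially small in $t$, because the spatial separation between adjacent transition zones grows linearly in $t$ and each $U_i$ decays exponentially to its plateaus (a standard consequence of the linear stability of the endpoints). Choosing $\delta,C,\nu$ so that all three competing error terms are dominated simultaneously and uniformly in $x,t$ is the delicate point.
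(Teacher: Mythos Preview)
Your Step 1 is correct and matches the paper.

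Your Step 2 contains a genuine gap. Hypothesis (H2) requires $u_0(x)=0$ for all $x\ge a_+$ and $u_0(x)=p(0)$ for all $x\le a_-$ (the Heaviside bounds force exact equality on these half-lines). But by the strong maximum principle, $0<u(k^\ast T,x)<p(k^\ast T)$ for every $x\in\R$, so $u(k^\ast T,\cdot)$ can \emph{never} satisfy (H2), no matter how large $k^\ast$ is. Your heat-kernel argument gives only $u(k^\ast T,x)\to 0$ as $x\to\infty$ and $u(k^\ast T,x)\to p(0)$ as $x\to-\infty$, which is strictly weaker. The paper explicitly notes (after the statement of Theorem \ref{converge3}) that the methods for Theorems \ref{converge2-2} and \ref{converge3}(i) are different. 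What the paper actually does is: first trap $u(k_0T,\cdot)$ between $\hat u(T,\cdot;\pm a_0)\pm\epsilon$ (Lemma \ref{front-like-initial}, using the Lipschitz hypothesis on $\partial_u f$ to build a tanh-shaped supersolution), then prove a perturbation lemma (Lemma \ref{resemble}) showing that such an $\epsilon$-error propagates forward as an exponentially decaying error plus a bounded shift, i.e.\ $u(t,\cdot)\le \hat u(t+T,\cdot-K_0\epsilon;a_0)+K_0\epsilon\me^{-\beta_0 t}$. The $\Omega$-limit solutions of $u$ are then sandwiched between two spatial shifts of the same $U_i$, and a Liouville-type lemma (Lemma \ref{liouville}) forces them to equal a shift of $U_i$. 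This is what replaces the zero-number machinery of Section 4, which is unavailable here.

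Your Step 3 is a reasonable alternative strategy (the global stacked-wave super/sub-solution of \cite{fm,rtv}), but it is not what the paper does, and your sketch has a subtle initialization issue. You write $\Phi^\pm$ with the \emph{same} constants $\bar\eta_i$ and claim part (i) provides $t_1$ with $\Phi^-(t_1,\cdot)\le u(t_1,\cdot)\le\Phi^+(t_1,\cdot)$; but part (i) gives only $o(1)$ convergence of $\eta_i(t)$, not convergence, so you cannot yet pin down a common $\bar\eta_i$ with error $O(\me^{-\nu t_1})$. The standard fix is to first run the argument with distinct shifts $\bar\eta_i^\pm$ in $\Phi^\pm$ to get boundedness of $\eta_i(t)$, and then bootstrap. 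The paper avoids this by working region-by-region: it first proves exponential convergence of $u$ to the plateaus $p_i$ in the zones $x\approx \bar c_i t$ with $\bar c_i=\tfrac12(c_i+c_{i+1})$ (Lemma \ref{exp-esti}, using the single-wave super-solutions of Lemma \ref{tw-super-sub}), then cuts off $u$ to produce, in each transition zone $[\bar c_{i-1}t,\bar c_i t]$, a function $w$ solving $w_t=w_{xx}+f(t,w)+g$ with an exponentially small forcing $g$, and finally invokes a robust-stability result for a single periodic wave under such forcing (Lemma \ref{exp-pert}, based on the local contraction of \cite{abc}). This sidesteps the interaction-tail bookkeeping you identify as the ``main obstacle'' in your approach.
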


We remark that Theorem \ref{converge2-2} and Theorem \ref{converge3} (i) treat different cases, and the methods used to prove them are different. On the one hand, Theorem \ref{converge2-2} allows nonlinearities to be degenerate, but requires more restrictions on initial data. The proof relies heavily on zero-number arguments. On the other hand, a key step in the proof of Theorem \ref{converge3} (i) is to show that, up to some error terms with exponential decay, the solution $u(t,x)$ can be bounded from above and from below by solutions with Heaviside type initial data for all large times (see Lemma \ref{resemble} below). The non-degeneracy of $(p_i)_{1\leq i\leq N}$ plays an important role in this step.       

Theorem \ref{converge3} (ii) implies that, if the speeds $(c_i)_{1\leq i \leq N}$ are all different, then the functions $(\eta_i(t))_{1\leq i\leq N} $ are convergent, and the solution $u(t,x)$ converges to the minimal terrace with an exponential rate. In the autonomous case, similar convergence results have been proved in \cite{fm,po3} for scalar equation \eqref{autonomous}, and in \cite{rtv} for cooperative systems.  Our proof is based on ideas in these work, but new techniques are needed to overcome considerable difficulties arising from time heterogeneity.

In the special case $N=1$ (the terrace consists of a single wave), Theorem \ref{converge3} (ii) covers earlier stability results of periodic traveling waves for bistable equations \cite{abc,con}, and extends their results to the case where there may exist more than one intermediate solution of \eqref{ODE} between $0$ and $p$. It should be pointed out that, when $N\geq 2$, the assumption on the mutual distinctness of $(c_i)_{1\leq i \leq N}$ is necessary. Otherwise, if $c_i=c_{i+1}$ for some $i$, then by Theorem \ref{converge1} (i), $\eta_{i+1}(t)-\eta_i(t)\to \infty$ as $t\to\infty$, and hence,  at least one of $\eta_{i+1}(t)$ and $\eta_i(t)$ cannot be convergent.

\vskip 8pt

\noindent{\bf Outline of the paper.} In Section 2, we will present some preliminaries. We will recall some basic properties of zero-number arguments and the $\Omega$-limit set, and show a lemma on the stability of certain solutions of \eqref{ODE}. Section 3 is concerned with the asymptotic behavior of solutions of \eqref{E} with Heaviside type initial functions, and the proof of Theorems \ref{decomposition-existence} and \ref{converge1}. In Section 4, we will study the propagation dynamics of \eqref{E} with  $u_0$ satisfying (H2), and prove Theorems \ref{converge2} and  \ref{converge2-2}. Section 5 is devoted to the proof of Theorem \ref{converge3}. 


\SE{Preliminaries}

\subsection{Zero-number properties}
In this subsection, we recall some basic properties of zero-number arguments (also known as intersection comparison arguments). They will be key ingredients of our proofs in Sections 3-4. 

Let $\mathcal{Z}(w)$ denote the number of sign changes of a real-valued function $w(x)$ defined on $\R$, namely, the supremum over all $k\in\N$ such that there exist real numbers $x_1<x_2<\cdots<x_{k+1}$ with 
$$ w(x_i)\cdot w(x_{i+1})<0 \,\hbox{ for all } i=1,2,\ldots,k.$$ 
We set $\mathcal{Z}(w)=-1$ if $w\equiv 0$. Clearly, if $w$ is a smooth function having only simple zeros on $\R$, then $\mathcal{Z}(w)$ coincides with the number of zeros of $w$. 

The following intersection-comparison principle holds (see \cite{an,dm,dgm}). 

\begin{lemma}\label{zero1}
Let $w\not\equiv 0$ be a solution of the equation 
\begin{equation}\label{zeroeq}
w_t=w_{xx}+c(t,x)w \quad \hbox{for}\ \ t\in(t_1,t_2),\ x\in\R,
\end{equation} 
where the coefficient function $c$ is bounded. Then the following statements hold:
\begin{itemize}
\item[{\rm (i)}]
For each $t\in (t_1,t_2)$, all zeros of $w(t,\cdot)$ are isolated;
\vskip 3pt 
\item[{\rm (ii)}]
$t\mapsto\mathcal{Z}(w(t,\cdot))$ is a nonincreasing function with values in $\N\cup\{0\}\cup\{\infty\}$; 
\vskip 3pt 
\item[{\rm (iii)}]
If $w(t^*,x^*)=w_x(t^*,x^*)=0$ for some $t^*\in (t_1,t_2)$, $x^*\in\R$, then
$$\mathcal{Z}(w(t,\cdot))> \mathcal{Z}(w(s,\cdot)) \quad \hbox{for all}\ \ t\in(t_1,t^*),\ s\in(t^*,t_2) $$
whenever $\mathcal{Z}(w(s,\cdot))<\infty$.
\end{itemize}
\end{lemma}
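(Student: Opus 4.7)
The plan is to follow Angenent's classical strategy: reduce all three claims to a local analysis of $w$ near its zero set, and then combine this with a global monotonicity argument based on the maximum principle. Since the proof is by now standard, I would be content to cite the references but here I outline how I would reconstruct it.

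First I would establish the crucial technical input: a \emph{finite-order vanishing theorem}. Namely, if $w \not\equiv 0$ solves \eqref{zeroeq} and $w(t^*, x^*) = 0$, then there exists a unique integer $n = n(t^*, x^*) \geq 1$ and a nonzero polynomial $P_n$ of degree $n$ solving the frozen heat equation $\partial_t P = \partial_{xx} P$, such that
$$ w(t^*+\tau, x^*+y) = P_n(y, \tau) + o\bigl(|y|^n + |\tau|^{n/2}\bigr) $$
as $(y,\tau) \to (0,0)$. The canonical choices for $P_n$ are the Hermite heat polynomials. This is the analytic core of the argument and, as I will mention below, it is where the hypothesis of bounded $c$ has to be used most carefully.

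Given this finite-order expansion, claim (i) is immediate: a non-isolated zero of $w(t,\cdot)$ would force the order of vanishing to be infinite, contradicting finiteness. For (ii), I would argue locally near each zero: the number of sign changes of $w(t,\cdot)$ inside a small spatial window around a zero of order $n$ equals the number of sign changes of $P_n$ near zero, and the Hermite polynomial analysis gives exact counts on each side of $t^*$. A standard continuity-plus-maximum-principle argument then shows that sign changes cannot be created in the interior as $t$ increases; since $c$ is bounded, they also cannot appear from infinity in finite time, so $t \mapsto \mathcal{Z}(w(t,\cdot))$ is nonincreasing. For (iii), the Hermite asymptotics deliver more: a zero of order $n \geq 2$ at $(t^*, x^*)$ \emph{strictly resolves}, because $P_n(y,\tau)$ carries at most $n-1$ sign changes in $y$ for $\tau > 0$ but at least $n$ (counted appropriately) accumulate at $x^*$ for $\tau < 0$. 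Summed across a finite collection of zeros (possible since $\mathcal{Z}(w(s,\cdot)) < \infty$), this forces the strict inequality.

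The step I expect to be hardest is the finite-order vanishing theorem under only the boundedness of $c$. If $c$ were Hölder or smooth, the classical Schauder estimates would make $w(t,\cdot)$ real-analytic in $x$ for $t$ in the interior of the time interval, and finite-order vanishing would be automatic. With only $c \in L^\infty$, one must instead invoke a Carleman estimate or an Almgren-type frequency-function monotonicity, applied to the parabolic operator, to rule out infinite-order vanishing. Once this single input is in hand, the rest is local bookkeeping stitched together with the strong maximum principle, and the lemma follows in the form stated.
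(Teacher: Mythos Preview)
Your outline is correct and follows exactly the Angenent strategy that the paper relies on; in fact the paper does not give its own proof of this lemma at all, but simply states it and cites Angenent \cite{an} together with \cite{dm,dgm}. Your sketch --- finite-order vanishing via Hermite heat polynomials, local resolution of multiple zeros, and the maximum principle to rule out creation of sign changes --- is precisely the content of those references, so there is nothing to add.
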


One can check that $\mathcal{Z}$ is semi-continuous with respect to pointwise convergence, that is, the pointwise convergence $w_n(x)\to w(x)$ implies 
\begin{equation}\label{semi-continuous}
w\equiv 0 \quad \hbox{or} \quad \mathcal{Z}(w)\leq \liminf_{n\to\infty}\mathcal{Z}(w_n).
\end{equation}
This semi-continuity immediately implies the following lemma. 

\begin{lemma}\label{semicontinuous}
Let $(w_n)_{n\in\N}:\R\to\R$ be a sequence of functions converging to $w:\R\to\R$ pointwise on $\R$. If for each $n\in \N$, 
$w_n$ is steeper than $v:\R\to\R$, then $w$ is steeper than $v$.
\end{lemma}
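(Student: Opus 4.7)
The plan is to reduce the steepness condition to a zero-number statement and then invoke the semi-continuity property recalled in \eqref{semi-continuous}. First, I would observe that, directly from Definition \ref{steepness}, $w_n$ is steeper than $v$ if and only if, for every $s\in\R$, the shifted difference
\[
\phi_n(x;s) := w_n(x) - v(x+s)
\]
has at most one sign change on $\R$, and any such sign change is strictly from negative to positive as $x$ increases. Equivalently, at every zero $x_0$ of $\phi_n(\cdot;s)$ one has $\phi_n(x;s)\le 0$ for $x\le x_0$ and $\phi_n(x;s)\ge 0$ for $x\ge x_0$. This reformulation is obtained by reading the defining inequalities of steepness at the pair $(a,b)$ with $a=x_0$ and $b=x_0+s$.

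Next, fix any $x_1,x_2\in\R$ with $w(x_1)=v(x_2)$, put $s=x_2-x_1$, and set $\phi(x):=w(x)-v(x+s)$. Pointwise convergence $w_n\to w$ gives $\phi_n(\cdot;s)\to \phi$ pointwise on $\R$, and by construction $\phi(x_1)=0$. Applying \eqref{semi-continuous} to the sequence $\phi_n(\cdot;s)$, I obtain that either $\phi\equiv 0$, in which case steepness at $(x_1,x_2)$ is immediate, or $\mathcal{Z}(\phi)\le 1$. To exclude a sign change from $+$ to $-$ in the limit, suppose for contradiction that there are $y_1<y_2$ with $\phi(y_1)>0$ and $\phi(y_2)<0$; pointwise convergence then yields $\phi_n(y_1;s)>0$ and $\phi_n(y_2;s)<0$ for all large $n$, contradicting the sign pattern established for $\phi_n(\cdot;s)$.

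Combining these two facts, the limit $\phi$ either vanishes identically or its (unique possible) sign change goes from negative to positive. Since $\phi(x_1)=0$, this forces $\phi(x)\le 0$ for $x\le x_1$ and $\phi(x)\ge 0$ for $x\ge x_1$, which translates back into $w(x+x_1)\le v(x+x_2)$ for $x<0$ and $w(x+x_1)\ge v(x+x_2)$ for $x>0$. As $(x_1,x_2)$ was arbitrary, $w$ is steeper than $v$. The one subtlety I expect to be the main obstacle is the final localization step: when $\phi$ is of constant sign near the prescribed zero $x_1$, one cannot read off the transition point directly from $\mathcal{Z}(\phi)\le 1$, and a separate argument exploiting the pointwise structure of $\phi_n$ on both sides of $x_1$ is needed to rule out strict sign violations of $\phi$ in a neighborhood of $x_1$.
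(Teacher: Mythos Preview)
Your route is exactly the one the paper gestures at---its entire argument is the clause ``This semi-continuity immediately implies the following lemma''---and you have done the honest work of unpacking what that should mean. The reformulation via $\phi_n(\cdot;s)=w_n-v(\cdot+s)$, the use of \eqref{semi-continuous} to bound $\mathcal Z(\phi)$, and your exclusion of a $+\!\to\!-$ sign change are all correct.

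The subtlety you flag at the end, however, is not cosmetic: in the stated generality it is a genuine gap, and in fact the lemma is false as written. Take $v\equiv 0$ and $w_n(x)=(x-1)^2+1/n$. Each $w_n$ is vacuously steeper than $v$ (there is no $x_1$ with $w_n(x_1)=0$), yet the pointwise limit $w(x)=(x-1)^2$ is \emph{not} steeper than $v$: at $x_1=1$ one has $w(1)=0=v(x_2)$ for every $x_2$, but $w(1+x)=x^2>0$ for $x<0$. The failure sits precisely in the regime you isolate, where $\phi$ acquires a zero at which the one-sided sign constraint cannot be extracted from $\mathcal Z(\phi)\le 1$ alone.

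In every place the paper actually invokes the lemma, the functions $w_n$ are continuous, strictly decreasing, with range $(0,p(t))$, and $v$ is a continuous entire solution whose range overlaps theirs, so the steepness hypothesis is never vacuous and the convergence is even locally uniform. Under that extra structure your problematic case is easily dispatched: choose $a_n$ with $w_n(a_n)=v(x_2)$, apply steepness of $w_n$ at $(a_n,x_2)$, and pass to the limit using $a_n\to x_1$. So your proof is as complete as the paper's, and your diagnosis of where the real difficulty lies is sharper than the paper's one-line justification.
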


The following lemma is a consequence of the application of Lemma \ref{zero1} when at most one intersection occurs and the fact the shape of the intersection remains the same as long as it exists. 

\begin{lemma}\label{ini-steep}
Let $u_1$ and $u_2$ be two bounded solutions of \eqref{equation}. Assume that $u_1(0,x)$ is piecewise continuous and bounded,  $u_2(0,x)$ is continuous and bounded, and that $u_1(0,x)$ is steeper than $u_2(0,x)$. Then for any $t>0$, the function $x\mapsto u_1(t,x)$ is steeper than $x\mapsto u_2(t,x)$. 
Furthermore, either $u_1\equiv u_2$ up to a spatial shift or for any $t>0$ and $z\in\R$, the function $x\mapsto u_1(t,x)-u_2(t,x+z)$ has at most one (simple) zero.  
\end{lemma}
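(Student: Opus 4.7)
Fix $z\in\R$ and set $\phi(t,x):=u_1(t,x)-u_2(t,x+z)$. Subtracting the two copies of \eqref{equation} and applying the mean value theorem to $f(t,\cdot)$ shows that $\phi$ satisfies a linear parabolic equation $\phi_t=\phi_{xx}+c(t,x)\phi$ on $(0,\infty)\times\R$ with bounded coefficient $c$. By Definition \ref{steepness}, the steepness hypothesis at $t=0$ is equivalent to saying that, for every $z\in\R$, either $\phi(0,\cdot)\equiv 0$ or $\mathcal{Z}(\phi(0,\cdot))\le 1$ with any sign change going from $-$ to $+$. The lemma will follow if I prove the same property for $\phi(t,\cdot)$ at every $t>0$, with the caveat that whenever $\phi(0,\cdot)\equiv 0$ for some $z$, uniqueness for the Cauchy problem \eqref{E} with piecewise continuous initial data forces $u_1(t,x)\equiv u_2(t,x+z)$ for all $t\ge 0$, which is the first alternative of the lemma.

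Assume henceforth $\phi(0,\cdot)\not\equiv 0$ for every $z$. Since $u_1(0,\cdot)$ is only piecewise continuous, I cannot invoke Lemma \ref{zero1} directly at $t=0$, so I interpose an approximation. Let $G_s$ denote the heat kernel at time $s$, set $u_{i,s}(0,\cdot):=G_s*u_i(0,\cdot)$ for $i=1,2$ (smooth, bounded, and converging to $u_i(0,\cdot)$ in $L^1_{\mathrm{loc}}$ and pointwise a.e.\ as $s\to 0^+$), and note the identity $\phi_s(0,\cdot):=u_{1,s}(0,\cdot)-u_{2,s}(0,\cdot+z)=G_s*\phi(0,\cdot)$. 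The classical variation-diminishing property of $G_s$ (a consequence of its total positivity) yields $\mathcal{Z}(\phi_s(0,\cdot))\le\mathcal{Z}(\phi(0,\cdot))\le 1$, while the positivity of $G_s$ also preserves the $-$-to-$+$ orientation. Letting $u_{i,s}(t,\cdot)$ be the solution of \eqref{equation} starting from $u_{i,s}(0,\cdot)$, the function $\phi_s(t,x):=u_{1,s}(t,x)-u_{2,s}(t,x+z)$ is a smooth solution of the linear parabolic equation, continuous down to $t=0$. Lemma \ref{zero1}(ii), combined with continuity of $\phi_s$ at $t=0$, then gives $\mathcal{Z}(\phi_s(t,\cdot))\le 1$ for all $t\ge 0$; Lemma \ref{zero1}(iii) prevents the unique zero, when present, from becoming tangential without a strict drop of $\mathcal{Z}$, so the sign of $\partial_x\phi_s$ at that zero depends continuously on $t$ and the $-$-to-$+$ orientation persists throughout.

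Letting $s\to 0^+$, continuous dependence of the Cauchy problem on bounded initial data, together with parabolic smoothing for $t>0$, gives $\phi_s(t,\cdot)\to\phi(t,\cdot)$ pointwise on $\R$ for each $t>0$; the semi-continuity bound \eqref{semi-continuous} then forces $\mathcal{Z}(\phi(t,\cdot))\le 1$. The $-$-to-$+$ orientation survives the limit: for any $x$ with $\phi(t,x)>0$ one has $\phi_s(t,x)>0$ for $s$ small, so $x$ lies to the right of the sign-change point of $\phi_s(t,\cdot)$, and symmetrically for $\phi(t,x)<0$; hence the negative part of $\phi(t,\cdot)$ lies entirely to the left of its positive part. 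Since $z$ was arbitrary, this is exactly the steepness of $u_1(t,\cdot)$ over $u_2(t,\cdot)$ in the sense of Definition \ref{steepness}. For the final ``at most one simple zero'' assertion under the non-degenerate alternative, $\phi_z(t,\cdot)\not\equiv 0$ excludes $\mathcal{Z}=-1$; if $\mathcal{Z}(\phi_z(t,\cdot))=0$ the parabolic strong maximum principle rules out any interior zero, while if $\mathcal{Z}(\phi_z(t,\cdot))=1$ the unique zero must be simple, since a tangential one would strictly drop $\mathcal{Z}$ below $1$ by Lemma \ref{zero1}(iii).

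The principal technical obstacle I anticipate is the approximation bridge in the second paragraph: one needs both the variation-diminishing property of the heat kernel (classical but worth citing) and a careful justification that Lemma \ref{zero1}(ii) extends down to $t=0$ for the smoothed problem. Once these ingredients are secured, the zero-number formalism closes the argument cleanly.
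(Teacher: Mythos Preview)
Your approach is correct and is essentially the zero-number argument that the paper defers to (Lemmas~2.4--2.5 of \cite{dgm}): translate steepness of $u_1(0,\cdot)$ over $u_2(0,\cdot)$ into the statement that every $\phi_z(0,\cdot)=u_1(0,\cdot)-u_2(0,\cdot+z)$ has at most one sign change with the $-$-to-$+$ orientation, propagate this via Lemma~\ref{zero1}, and read off steepness at later times. The heat-kernel regularization together with its variation-diminishing property is a legitimate and standard way to bridge the piecewise continuous initial data to the smooth regime; the paper's reference handles the same issue but does not spell out the mechanism, so your version is in fact more explicit. The limit argument for preserving the orientation is fine.

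One place needs tightening. In the last paragraph you write that ``a tangential [zero] would strictly drop $\mathcal{Z}$ below $1$ by Lemma~\ref{zero1}(iii)'', but (iii) only compares $\mathcal{Z}$ \emph{before} and \emph{after} the degenerate time, not at that time; as stated, your sentence does not exclude a degenerate zero occurring at an isolated instant $t^*$. The fix is short: a degenerate zero at $(t^*,x^*)$ forces $\mathcal{Z}(\phi_z(s,\cdot))\le 0$ for $s>t^*$; if it ever equals $-1$ you get $u_1\equiv u_2(\cdot+z)$ by uniqueness, and if it equals $0$ then $\phi_z(s,\cdot)$ has a strict sign for $s>t^*$ (any touching zero would be degenerate and trigger a further drop), hence by continuity $\phi_z(t^*,\cdot)\ge 0$ with a local minimum at $x^*$. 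The parabolic equation then gives $\partial_t\phi_z(t^*,x^*)=\partial_{xx}\phi_z(t^*,x^*)\ge 0$, and the standard local picture of a multiple zero shows $\phi_z(t,\cdot)$ has at least two sign changes near $x^*$ for $t$ just below $t^*$, contradicting $\mathcal{Z}\le 1$. With this patch the argument closes.
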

 
 \begin{proof}
The proof follows from that of \cite[Lemma 2.4 and Corollary 2.5]{dgm} with obvious modifications, therefore we omit the details.  
 \end{proof}
 
\begin{remark}\label{steep-preserve}
We emphasize that, in the above two lemmas, the steepness of functions is independent of spatial translations. 
In view of this, Lemma \ref{ini-steep} implies that if $u_1(0,x)$ is steeper than $u_2(0,x)$, then for any $t>0$, the curves (not necessarily simple) $\{(u_1(t,x), \partial_x u_1(t,x)): x\in\R\}$  and $\{(u_2(t,x), \partial_x u_2(t,x)): x\in\R\}$ do not intersect unless they are equal. This property is indeed a key point in showing the convergence theorems in the autonomous case \cite{po3}, where the above curves are called spatial trajectories of solutions of \eqref{autonomous}. 
\end{remark} 
 
We also recall the following lemma which will be used repeatedly in proving Theorem \ref{converge2}.  The proof can be found in \cite{dm}. 

\begin{lemma}\label{zero3}
Let $w_n(t,x)$ be a sequence of functions converging to $w(t,x)$ in $C^1((t_1,t_2)\times I)$, where $I$ is an open finite interval in $\R$. Assume that for each $t\in(t_1,t_2)$ and $n\in\N$, the function $x\mapsto w_n(t,x)$ has only simple zeros in $I$, and that $w(t,x)$ satisfies an equation of the form \eqref{zeroeq} on $ (t_1,t_2)\times I$. Then for every $t\in(t_1,t_2)$, either $w\equiv 0$ on $I$ or $w(t,x)$ has only simple zeros on $I$. 
\end{lemma}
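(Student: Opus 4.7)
My plan is to argue by contradiction, combining Angenent-type zero-number analysis for the limit $w$ with a topological count of zero curves of the approximants $w_n$. Suppose $w(t_0, \cdot) \not\equiv 0$ on $I$ and yet $w(t_0, x_0) = w_x(t_0, x_0) = 0$ for some $(t_0, x_0) \in (t_1, t_2) \times I$. Since $w$ satisfies a linear parabolic equation with bounded coefficient, the local version of Lemma \ref{zero1}(i) makes the zeros of $w(t_0, \cdot)$ isolated in $I$, so I can pick $x_1 < x_0 < x_2$ in $I$ with $w(t_0, x_i) \neq 0$, and then, by continuity, a $\delta > 0$ so that $w(t, x_i) \neq 0$ for all $t \in [t_0 - \delta, t_0 + \delta] \subset (t_1, t_2)$ and $i = 1, 2$.

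For the limit $w$, I invoke the local form of Lemma \ref{zero1}(iii): the integer-valued map $t \mapsto \mathcal{Z}(w(t, \cdot)|_{[x_1, x_2]})$ is non-increasing and drops strictly at $t_0$. Because strict drops can occur only at a discrete set of times, I select $s_1 \in (t_0 - \delta, t_0)$ and $s_2 \in (t_0, t_0 + \delta)$ so that $w(s_j, \cdot)$ has only simple zeros in $[x_1, x_2]$ and
\[
k_1 := \mathcal{Z}(w(s_1, \cdot)|_{[x_1, x_2]}) > \mathcal{Z}(w(s_2, \cdot)|_{[x_1, x_2]}) =: k_2.
\]
Simplicity of the zeros at $s_j$ makes $k_j$ the actual number of zeros of $w(s_j, \cdot)$ in $[x_1, x_2]$.

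From the $C^1$-convergence $w_n \to w$ on the compact rectangle $R := [s_1, s_2] \times [x_1, x_2]$, for $n$ large I obtain exactly $k_j$ zeros of $w_n(s_j, \cdot)$ in $(x_1, x_2)$ (one near each simple zero of $w(s_j, \cdot)$ by the implicit function theorem, and none elsewhere by uniform positivity of $|w(s_j, \cdot)|$ away from these), together with $w_n(t, x_i) \neq 0$ for all $t \in [s_1, s_2]$ and $i = 1, 2$. The hypothesis that $w_n(t, \cdot)$ has only simple zeros for every $t$ forces $\partial_x w_n \neq 0$ at every zero of $w_n$ in $R$, so by the implicit function theorem the zero set of $w_n$ in $R$ is a disjoint union of smooth curves; since the tangent to each curve is orthogonal to $\nabla w_n$ and the $x$-component of $\nabla w_n$ is nonzero, every connected component is globally a graph $x = \phi(t)$ on a $t$-interval. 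Simplicity prevents two curves from crossing or merging, $w_n(t, x_i) \neq 0$ on $[s_1, s_2]$ forbids touching the lateral sides of $R$, and the implicit function theorem rules out interior endpoints; consequently every connected component of the zero set of $w_n$ in $R$ is a graph over all of $[s_1, s_2]$. This forces the number of zeros of $w_n(s_1, \cdot)$ in $(x_1, x_2)$ to equal that of $w_n(s_2, \cdot)$, i.e., $k_1 = k_2$, which contradicts $k_1 > k_2$.

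The main obstacle is this last geometric step: upgrading the pointwise-in-$t$ hypothesis on the $w_n$ to the global statement that their zero curves in $R$ are pairwise disjoint graphs spanning $[s_1, s_2]$ in time. Everything on the $w$ side is a routine invocation of Lemma \ref{zero1}, and the $C^1$-convergence reduces the finite count of $w_n$-zeros at $s_1, s_2$ to that of $w$ in a standard way.
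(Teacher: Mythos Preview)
The paper does not actually prove Lemma~\ref{zero3}; it simply states that ``the proof can be found in \cite{dm}.'' Your argument is correct and is essentially the standard proof (the one given in \cite{dm}): localize around a hypothetical degenerate zero of $w$, use the drop in the Angenent zero number for $w$ across $t_0$ on a rectangle with nonvanishing lateral boundary values, and contradict this by showing that the zero set of each $w_n$ in the rectangle is a finite union of disjoint $C^1$ graphs over the full time interval, forcing the zero counts at the two time slices to coincide. The only point worth tightening in your write-up is the passage from ``each component is locally a graph over $t$'' to ``each component spans $[s_1,s_2]$'': the clean way is to note that the zero set of $w_n$ in $R$ is compact, misses the lateral sides $x=x_1,x_2$, and is a $1$-manifold whose tangent always has nonzero $t$-component, so each component is an arc (not a circle) with strictly monotone $t$-coordinate and both endpoints necessarily on $\{s_1,s_2\}\times(x_1,x_2)$.
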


\subsection{Basic properties of the set of $\Omega$-limit solutions}  Recall that for any bounded solution $u(t,x)$ of \eqref{E}, $\Omega(u)$ denotes the set of $\Omega$-limit solutions defined in Section \ref{converge-result}. Obviously, our definition of $\Omega(u)$ is different from the standard notion of $\omega$-limit set. In this subsection, we summarize some basic properties of $\Omega(u)$.

First, since $u(t,\cdot)$ is uniformly bounded in $L^{\infty}(\R)$ for $t>0$, by the regularity assumption on $f$ and the standard parabolic estimates, $u(\cdot,\cdot)$ is bounded in $C^{1,2}([1,\infty)\times\R)$. This immediately implies that $\Omega(u)$ is a nonempty compact subset of $L^{\infty}_{loc}(\R^2)$. 

Next, we show that the following set 
\begin{equation*}
\bar{\Omega}(u):=\{w(t,\cdot):\, t\in\R,\,w\in\Omega(u)\}
\end{equation*}
coincides with the set of all limit points of the trajectory $\{u(t,\cdot):t>0\}$ with arbitrary spatial translations, that is, 
\begin{equation}\label{coincide-standard}
\bar{\Omega}(u)=\Omega^*(u),
\end{equation}
where 
$$\Omega^*(u):= \big\{\phi :\, u(t_j,\cdot+x_j) \to \phi(\cdot) \hbox{ for some sequences of real numbers }  t_j\to\infty \hbox{ and } x_j\in\R\}.$$ 
Here the convergence is with respect to the topology of $L_{loc}^{\infty}(\R)$. Indeed, the relation  
$\bar{\Omega}(u)\subset\Omega^*(u)$ is easily seen, so it suffices to prove $\bar{\Omega}(u)\supset\Omega^*(u)$.  Choose any $\phi\in\Omega^*(u)$ and let $t_j\to\infty$ and $x_j\in\R$ be such that  $u(t_j,\cdot+x_j) \to \phi(\cdot)$ in $L_{loc}^{\infty}(\R)$. For each $j\in\N$, let us write $t_j=t_j'+\tau_j$ with $t_j'\in T\N $ and $\tau_j \in [0,T)$. By choosing a subsequence if necessary, we can assume that the following limits exist:
\[
u(t+t_j',x+x_j)\to w(t,x)\ \ \hbox{in} \  L_{loc}^{\infty}(\R^2), \ \ \ \tau_j\to\tau_\infty\in [0,T].
\]
Clearly, $w$ belongs to $\Omega(u)$. Therefore, $\phi(\cdot)=w(\tau_{\infty}, \cdot) \in \bar{\Omega}(u)$, which proves \eqref{coincide-standard}. 

Finally, we note that  $\bar{\Omega}(u)$ is a nonempty, compact and connected subset of $L_{loc}^{\infty}(\R)$. Thanks to \eqref{coincide-standard}, this follows directly from the fact that $\Omega^*(u)$ is nonempty, compact and connected in $L_{loc}^{\infty}(\R)$ (the proof of such a property is standard in the theory of dynamical systems; at least the same argument that is known for autonomous equations applies to our time-periodic equations without any changes).

\subsection{Stability of solutions of \eqref{ODE} connected by traveling wave}  In this subsection, given a periodic traveling wave $U$ connecting two solutions $q_{\pm}$ of \eqref{ODE}, we investigate the link between the stability of $q_{\pm}$ and the sign of the speed of $U$.  The lemma stated below will be used frequently in later sections, and it is also of independent interest in its own. 

\begin{lemma}\label{speed-chara}
Let $q_-<q_+$ be two solutions of \eqref{ODE}.  Assume that there is a periodic traveling wave $U(t,x)$ of \eqref{equation} connecting $q_-$ to $q_+$ with speed $c_0\in\R$. Then the following statements hold:
\begin{itemize}
\item[(i)] If $c_0>0$, then $q_+$ is stable from below and isolated from below;
\vskip 3pt
\item[(ii)] If $c_0<0$, then $q_-$ is stable from above and isolated from above;
\vskip 3pt
\item[(iii)] If $c_0=0$, then $q_+$ is stable from below and $q_-$ is stable from above.
\end{itemize}
\end{lemma}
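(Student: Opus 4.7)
My plan is to prove each of (i), (ii), (iii) by an asymptotic/spectral analysis of the wave $U$ near the relevant asymptotic state in the co-moving frame. I detail case (i); cases (ii) and (iii) follow by analogous analyses at the appropriate spatial infinity.

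Assume $c_0 > 0$. Pass to the co-moving frame via $\xi = x - c_0 t$ and $\tilde U(t,\xi) := U(t, \xi + c_0 t)$; then $\tilde U$ is $T$-periodic in $t$, satisfies
\[
\tilde U_t = \tilde U_{\xi\xi} + c_0\,\tilde U_\xi + f(t, \tilde U),
\]
and has $\tilde U(t,-\infty) = q_+(t)$, $\tilde U(t,+\infty) = q_-(t)$ uniformly in~$t$. The key step is to show that $q_+$ is linearly stable, i.e., $I_{q_+} := \int_0^T \partial_u f(t, q_+(t))\,dt < 0$. Once this is established, the Poincar\'e map $P$ of~\eqref{ODE-1} has multiplier $e^{I_{q_+}} < 1$ at $q_+(0)$, so $q_+(0)$ is a hyperbolic attracting fixed point of $P$; this immediately yields both the stability from below and the isolation from below asserted in (i).

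To obtain $I_{q_+} < 0$, I would work with the deviation $\tilde v(t,\xi) := q_+(t) - \tilde U(t,\xi)$. By the strong maximum principle, $\tilde v > 0$ on $\R^2$; it is $T$-periodic in $t$, tends to $0$ uniformly in $t$ as $\xi \to -\infty$, and solves the linear parabolic equation
\[
\tilde v_t = \tilde v_{\xi\xi} + c_0\,\tilde v_\xi + a(t,\xi)\,\tilde v,
\]
where $a(t,\xi) := (f(t,q_+(t)) - f(t,\tilde U(t,\xi)))/\tilde v(t,\xi) \to \partial_u f(t, q_+(t))$ uniformly in $t$ as $\xi \to -\infty$. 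Separated-variable solutions $\phi(t) e^{\lambda \xi}$ of the asymptotic linearization require the Floquet relation $\lambda^2 + c_0\lambda + I_{q_+}/T = 0$, with roots $\lambda_\pm = \tfrac{1}{2}\bigl(-c_0 \pm \sqrt{c_0^2 - 4 I_{q_+}/T}\bigr)$. A principal-eigenvalue/Phragm\'en--Lindel\"of argument for positive $T$-periodic solutions of periodic-parabolic operators on a left half-line then identifies $\tilde v(t,\xi)\sim\phi_1(t)e^{\lambda_+\xi}$ as $\xi\to-\infty$, where $\phi_1>0$ is the corresponding $T$-periodic principal eigenfunction. Since $\tilde v \not\equiv 0$ and $\tilde v \to 0$ as $\xi \to -\infty$, we must have $\lambda_+ > 0$; an elementary computation using $c_0 > 0$ shows that $\lambda_+ > 0$ is equivalent to $I_{q_+} < 0$.

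For (ii), with $c_0 < 0$, the symmetric analysis is performed at $\xi = +\infty$ on $\tilde w := \tilde U - q_-$: decay of $\tilde w$ at $+\infty$ requires a strictly negative Floquet exponent, which (now using $-c_0 > 0$) is equivalent to $I_{q_-} < 0$; hence $q_-$ is linearly stable, and (ii) follows. For (iii), with $c_0 = 0$, the characteristic equation collapses to $\lambda^2 + I_{q_\pm}/T = 0$ at each end, and simultaneous decay of $\tilde v$ at $-\infty$ and of $\tilde w$ at $+\infty$ forces both $I_{q_+} < 0$ and $I_{q_-} < 0$, from which (iii) follows. The main obstacle throughout is the Phragm\'en--Lindel\"of step that rigorously converts the formal Floquet ansatz into an actual asymptotic decay rate for $\tilde v$ and $\tilde w$ and excludes the degenerate borderline $I_{q_\pm} \geq 0$; this relies on the existence, uniqueness, and positivity of the principal eigenfunction of the associated time-periodic parabolic operator on a half-line, together with the identification of the principal Floquet exponent with the asymptotic decay rate of the positive periodic supersolution.
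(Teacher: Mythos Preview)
Your approach has a genuine gap: you aim to prove that $q_+$ is \emph{linearly} stable, i.e.\ $I_{q_+}:=\int_0^T\partial_u f(t,q_+(t))\,dt<0$, but this is strictly stronger than what the lemma asserts and is false in general. A clean counterexample in the autonomous case is $f(u)=u(1-u)^2$ with $q_-=0$, $q_+=1$: there are traveling waves with speed $c_0>0$ connecting $0$ to $1$, the state $q_+=1$ is stable from below and isolated from below (so the lemma holds), yet $f'(1)=0$, i.e.\ $I_{q_+}=0$. In such degenerate cases the deviation $\tilde v=q_+-\tilde U$ decays only sub-exponentially as $\xi\to-\infty$, so the Floquet ansatz $\tilde v\sim\phi_1(t)e^{\lambda_+\xi}$ with $\lambda_+>0$ does not describe the true asymptotics, and the ``Phragm\'en--Lindel\"of step'' you flag as the main obstacle cannot be carried out to force $\lambda_+>0$. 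The same issue arises in (iii): your characteristic equation $\lambda^2+I_{q_+}/T=0$ at best yields $I_{q_+}\le 0$, and $I_{q_+}=0$ does not by itself imply stability from below (the sign of higher-order terms of $f$ near $q_+$ decides that).

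The paper's proof avoids spectral asymptotics entirely. For stability it argues by contradiction: if $q_+$ were unstable from below, there would exist a $T$-periodic solution $\varphi$ of the PDE on a large interval $[-R,R]$ with boundary values $q_+$ and $q_-<\varphi<q_+$; sliding the wave below $\varphi$ and using $c_0\ge 0$ then contradicts the strong maximum principle. For isolation (when $c_0>0$) it assumes a sequence $q_j\nearrow q_+$ of periodic solutions, notes this forces $I_{q_+}=0$, and then builds an explicit supersolution $v(t,x)=\min\{q_+(t),\,e^{-\lambda(x-ct)}\phi(t)+q_j(t)\}$ traveling at some speed $c\in(0,c_0)$; comparison traps $U$ below $v$, contradicting the fact that $U$ moves at the faster speed $c_0$. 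These arguments are robust under degeneracy of $q_\pm$, which your spectral route is not.
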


\begin{proof}
Let us first show that if $c_0\geq 0$, then $q_+$ is stable from below. Assume by contradiction that $q_+$ is unstable from below. It then follows directly from \cite[Proposition 3.5]{dingm} that there exists $R>0$ sufficiently large such that 
the following problem 
\begin{equation*}
\left\{\baa{ll}
\smallskip  \varphi_t=\varphi_{xx}+f(t,\varphi), & \hbox{ for } \,t\in\R,\,-R<x<R, \vspace{3pt}\\
\smallskip \varphi(t+T,x)=\varphi(t,x), & \hbox{ for }\,\,t\in\R,\,-R\leq x\leq R, \vspace{3pt}\\
\smallskip q_-(t) <\varphi(t,x)<q_+(t), & \hbox{ for }\,\,t\in\R,\,-R<x<R, \vspace{3pt}\\
\varphi(t,\pm R)=q_+(t), & \hbox{ for }\, t\in\R ,\eaa\right.
\end{equation*}
has a classical solution $\varphi(t,x)$ satisfying 
$$\partial_x \varphi (t,x)<0 \hbox{ for } t\in\R,\,x\in [-R,0) \quad\hbox{and}\quad  \partial_x \varphi (t,x)>0 \hbox{ for } t\in\R,\,x\in (0,R]. $$
Notice that the traveling wave $U(t,x)$ satisfies the following asymptotics 
\begin{equation}\label{U-q--q+}
\lim_{x\to\infty} U(t,x)=q_-(t), \quad \lim_{x\to-\infty} U(t,x)=q_+(t) \,\,\hbox{ locally uniformly in }\,t\in\R.
\end{equation}
It is easily checked from the above that, at $t=0$, there is some $x_0\in\R$ such that  $U(0,x+x_0)$ and $\varphi(0,x)$ intersects at some $\xi_0\in (-R,0]$, and that  
$$U(0,x+x_0)\leq \varphi(0, x)  \,\,\hbox{ for }\,  x\in [-R,R].$$
Clearly, $U(t,x+x_0) <  \varphi(t,x)$ for $t\geq 0$, $x=\pm R$. 
Then by the strong maximum principle, we have
\begin{equation}\label{U-sleq-var}
U(t,x+x_0) < \varphi(t,x) \,\,\hbox{ for } t>0,\,-R\leq x \leq R. 
\end{equation}

If $c_0=0$, then $U(t,x)$ is $T$-periodic in $t$. Since $\varphi(t,x)$ is also $T$-periodic, we have  
$U(T,x_0+\xi_0)=\varphi(T,\xi_0)$, which is a contradiction with \eqref{U-sleq-var}. In the case $c_0>0$, 
since 
\begin{equation}\label{travel-wave-U-k}
U(t+kT,x)=U(t,x-c_0kT) \,\,\hbox{ for each } \,k\in\Z,
\end{equation}
it follows from \eqref{U-q--q+} that 
$U(t+kT,x)$ converges to $p_+(t)$ as $k\to\infty$  locally uniformly in $t\in\R$ and $x\in\R$.   
In particular, we have $\lim_{k\to\infty} U(kT,x_0)=p_+(0)$. This also contradicts \eqref{U-sleq-var}, as $\varphi(kT,0)=\varphi(0,0)<p_+(0)$ for each $k\in\Z$. Thus, $q_+$ is stable from below if $c_0\geq 0$. 

In the case $c_0\leq 0$, one can proceed similarly as above to prove that $q_-$ is stable from above, and the details are omitted.   

Let us now show that $q_+$ is isolated from below in the case $c_0>0$.  Assume by contradiction that $q_+$ is an accumulation solution of \eqref{ODE} from below, that is, there exists some sequence $(q_j)_{j\in\N}$ of solutions of \eqref{ODE} such that $q_j\to q_+$ as $j\to\infty$ and $q_j<q_+$ for each $j\in\N$. It is then easily seen that 
$$\int_{0}^T \partial_u f(t,q_+(t)) dt=0, $$
and thus the following problem 
\begin{equation}\label{eigen}
\left\{\baa{l}
\smallskip  \phi_t- \partial_u f(t,q_+(t)) \phi =0\,\, \hbox{ for } \,t\in\R, \vspace{3pt}\\
\smallskip \phi(t+T)=\phi(t)\, \hbox{ for }\, t\in\R,  \quad \phi(0)=1,  \eaa\right.
\end{equation}
has a unique positive solution $\phi\in C^1(\R)$. 

To find a contradiction, we construct a super-solution of \eqref{equation} as follows. Let $c\in (0,c_0)$, $\lambda \in (0,c)$ be two constants and $v(t,x)$ be a function defined by 
$$v(t,x):= \min\left\{q_+(t), \me^{-\lambda(x-ct)}\phi(t)+q_j(t) \right\}  \, \hbox{ for } t\in\R,\,x\in\R,$$  
where $\phi$ is given by \eqref{eigen}, and $j\in\N$ is to be determined later. Let $t\mapsto y(t)$ be the function satisfying 
$$v(t,y(t))=q_+(t)\,\hbox{ for } t\in\R, \quad  v(t,x)<q_+(t) \hbox{ for }  x>y(t),\,t\in\R. $$ 
Clearly, $y(t)/t\to c$ as $t\to\infty$. It is also straightforward to compute on the set $D:= \{(t,x)\in [0,\infty) \times\R: x\geq y(t) \}$ that  
\begin{equation*}
\begin{split}
v_t-v_{xx}-f(t,v)\,&\,= \me^{-\lambda (x-ct)} (\phi_t+\lambda c\phi-\lambda^2 \phi)+f(t,q_j)-f(t,v) \vspace{3pt}\\
\medskip\,&\, = \me^{-\lambda (x-ct)} \left ( \lambda c\phi-\lambda^2 \phi  + \partial_u f(t,q_+(t))- \partial_u f(t, q_j+\theta \me^{-\lambda (x-ct)}\phi)\right),
\end{split}
\end{equation*}
for some $\theta=\theta(t,x) \in [0,1]$. Since $0<\lambda <c$, and since 
$$q_j(t)+\theta \me^{-\lambda (x-ct)}\phi(t) \to q_+(t) \hbox{ as } j\to\infty  \hbox{ uniformly in } D. $$
it then follows that for any $j$ sufficiently large 
$$v_t-v_{xx}-f(t,v)>0   \,\hbox{ in }  D. $$
Thus, $v$ is a super-solution of \eqref{equation} over $D$.  

Let $x_1>0$ be a sufficiently large number such that 
$$U(0,x+x_1)\leq v(0,x)\,\,\hbox{ for }\, x\geq y(0).$$
Then by the comparison principle, we have 
$$U(t,x+x_1)\leq v(t,x)\,\,\hbox{ for }\,  t\geq0,\,x\geq y(t).$$
This implies in particular that there exists some $\sigma>0$ such that
\begin{equation}\label{U-yt-v}
U(t,y(t)+1+x_1)\leq v(t,y(t)+1)\leq q_+(t)-\sigma \,\,\hbox{ for }\,  t\geq0.
\end{equation}
On the other hand, since $y(t)/t\to c$ as $t\to\infty$ and $c<c_0$,  it follows that $y(t)-c_0t\to -\infty$ as $t\to\infty$. 
Combining this with \eqref{U-q--q+} and \eqref{travel-wave-U-k}, we obtain 
$$ U(kT,y(kT)+1+x_1)= U(0,y(kT)-c_0kT+1+x_1)\to q_+(0)  \,\,\hbox{ as } \, k\to\infty.  $$
This is a contradiction with \eqref{U-yt-v}. Therefore,  $q_+$ is isolated from below if $c_0>0$. 

In the case $c_0<0$, one can argue analogously to conclude that $q_-(t)$ is isolated from above. The proof of Lemma \ref{speed-chara} is thus complete.
\end{proof}


\SE{Existence of minimal terrace and convergence with Heaviside type initial data}

In this section, we show the existence of minimal propagating terrace (i.e., Theorem \ref{decomposition-existence}), and the convergence to a minimal terrace when the initial data are of Heaviside type (i.e., Theorem \ref{converge1}). Recall that by a Heaviside type initial function, we mean a function $u_0$ has the form $u_0(x)=p(0)H(a-x)$ for some $a\in\R$.  Hereinafter, we denote by $\hat{u}(t,x)$ the solution of \eqref{E} with such an initial function. In some parts of later sections, we will write $\hat{u}(t,x;a)$ instead of $\hat{u}(t,x)$ to stress the dependence on $a$.

As mentioned earlier, the proof is inspired from the papers \cite{dgm,gm} devoted to spatially periodic equations, but the method has to be adapted here to the time-periodic framework. Let us first state a key lemma.

\begin{lemma}\label{steepest}
Any $\Omega$-limit solution of the solution $\hat{u}(t,x)$ is steeper than any other entire solution of \eqref{equation} lying between $0$ and $p$. 
\end{lemma}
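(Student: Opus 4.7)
The plan is to propagate the trivial initial steepness at $t=0$ via Lemma 2.4 and then pass to the $\Omega$-limit by exploiting the $T$-periodicity of $f$ to time-shift the comparison solution. Let $v$ be any entire solution of \eqref{equation} with $0\leq v\leq p$, and let $w\in\Omega(\hat u)$, so that $\hat u(t+k_jT,x+x_j)\to w(t,x)$ locally uniformly in $(t,x)$ for some sequence $k_j\to\infty$ in $\N$ and some $x_j\in\R$. The goal is to show that, for each fixed $t_0\in\R$, $w(t_0,\cdot)$ is steeper than $v(t_0,\cdot)$.

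First I would establish the auxiliary comparison that $\hat u(s,\cdot)$ is steeper than $v(s,\cdot)$ for every $s>0$. The two trivial cases $v\equiv 0$ and $v\equiv p$ are handled directly: by the strong maximum principle, $0<\hat u(s,\cdot)<p(s)$ strictly for each $s>0$, so $\hat u(s,\cdot)$ is vacuously steeper than the constants $0$ and $p(s)$. For $v$ neither identically $0$ nor $p$, the strong maximum principle applied to the entire solution $v$ yields $0<v(0,\cdot)<p(0)$ strictly, whereas $\hat u(0,\cdot)=p(0)H(a-\cdot)$ only attains the values $0$ and $p(0)$; the ranges of $\hat u(0,\cdot)$ and $v(0,\cdot)$ are therefore disjoint, and $\hat u(0,\cdot)$ is vacuously steeper than $v(0,\cdot)$. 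Lemma 2.4 then propagates this initial steepness to every $s>0$.

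The key step is the time-shift trick needed to transfer this comparison to the $\Omega$-limit. For each $j\in\N$, set $v_j(t,x):=v(t-k_jT,x)$; the $T$-periodicity of $f$ (and of $p$) ensures that $v_j$ is again an entire solution of \eqref{equation} with $0\leq v_j\leq p$. Applying the previous paragraph with $v_j$ in place of $v$, I get that $\hat u(s,\cdot)$ is steeper than $v_j(s,\cdot)$ for every $s>0$, and by the spatial translation invariance of steepness, also $\hat u(s,\cdot+x_j)$ is steeper than $v_j(s,\cdot)$. Choosing $s=t_0+k_jT$ for $j$ large enough that $s>0$, and noting $v_j(t_0+k_jT,\cdot)=v(t_0,\cdot)$, I conclude that $\hat u(t_0+k_jT,\cdot+x_j)$ is steeper than the single fixed function $v(t_0,\cdot)$ for all large $j$. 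Since the left-hand side converges pointwise to $w(t_0,\cdot)$, Lemma 2.3 (semi-continuity of steepness with respect to pointwise convergence) yields that $w(t_0,\cdot)$ is steeper than $v(t_0,\cdot)$, and arbitrariness of $t_0$ finishes the proof.

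The principal obstacle is the one addressed in the preceding paragraph: Lemma 2.4 delivers a comparison between $\hat u$ and $v$ only at equal times, so when one shifts $\hat u$ by $k_jT$ in time to reach the $\Omega$-limit, the natural comparison target $v(t_0+k_jT,\cdot)$ moves with $j$, which is incompatible with the ``steeper than a fixed target'' hypothesis of Lemma 2.3. Replacing $v$ by its backward time translate $v_j(t,x)=v(t-k_jT,x)$, a substitution valid precisely because of the $T$-periodicity of $f$ in $t$, pins the right-hand side to $v(t_0,\cdot)$ and brings the situation into the scope of Lemma 2.3.
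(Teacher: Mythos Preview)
Your proof is correct and follows essentially the same route as the paper's. The paper compresses your ``time-shift trick'' into a single line by directly noting that $\hat u(0,\cdot+x_j)$ is steeper than $v(-k_jT,\cdot)$ and then applying Lemma~2.4 with this pair (which is exactly your comparison of $\hat u$ with $v_j$ at time $0$), followed by Lemma~2.3 to pass to the limit; your explicit introduction of $v_j$ and separate treatment of the boundary cases $v\equiv 0$, $v\equiv p$ make the same argument more transparent but add nothing substantively new.
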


\begin{proof}
Let  $w(t,x)$ be an $\Omega$-limit solution of $\hat{u}(t,x)$. Then there exist a sequence of positive integers $(k_j)_{j\in\N}$ ($k_j\to\infty$ as $j\to\infty$) and a sequence of real numbers  $(x_j)_{j\in\N}$ such that
$$ \hat{u}(t+k_jT,x+x_j) \to w(t,x) \hbox{ as } j\to\infty  \hbox{ in } C^1(\R^2). $$

For any entire solution $v(t,x)$ of \eqref{equation} between $0$ and $p$, it is easily seen that for each $j\in\N$, the function $\hat{u}(0,x+x_j)$ is steeper than $v(-k_jT,x)$ in the sense of Definition \ref{steepness}. By using Lemma \ref{ini-steep}, for any $t>-k_jT$, $\hat{u}(t+k_jT,x+x_j)$ is steeper than $v(t,x)$.  Then by Lemma \ref{semicontinuous}, passing to the limit as $j\to\infty$, we obtain that $w(t,x)$ is steeper than $v(t,x)$, where $t\in\R$ is arbitrary. This ends the proof of Lemma \ref{steepest}.
\end{proof}

In Subsection 3.1, we will use the above lemma to show the convergence of $\hat{u}(t,x)$ to a unique limit function around a given level set, and further prove that the limit function is either a solution of \eqref{ODE} or a periodic traveling wave connecting two solutions of \eqref{ODE}.  Once we obtain this convergence property, we will be able to construct a minimal propagating terrace by an iterative argument, and complete the proof of Theorem \ref{decomposition-existence} (see Subsection 3.2). The assumption that there exists a decomposition between $0$ and $p$ will be used to ensure that the iteration process ends in a finite number of steps. 
In Subsection 3.3, we will give the proof of Theorem \ref{converge1}, which also relies on the convergence property established in Subsection 3.1.

\subsection{Convergence around a given level set}
Let $\hat{u}(t,x)$ be the solution of \eqref{E} with a given Heaviside type initial function. It is clear that $0\leq \hat{u}(t,x)\leq p(t)$ for $t\geq 0$, $x\in\R$, and for each $t>0$, $\hat{u}(t,x)$ is decreasing in $x\in\R$, and satisfies 
$$\lim_{x\to \infty}\hat{u}(t,x) = 0\quad\hbox{and}\quad \lim_{x\to -\infty}\hat{u}(t,x) = p(t). $$
This implies in particular that for each $k\in\N$, there exists a unique $a_k\in\R$ such that 
\begin{equation}\label{choose-ak}
\hat{u}(kT,a_k)=\alpha, 
\end{equation}
where $\alpha\in (0,p(0))$ is a given constant. The following lemma gives the local convergence of $\hat{u}(t+kT,x+a_k)$ around the level $\alpha$ as $k\to\infty$.

\begin{lemma}\label{convege-ak}
For any $\alpha\in (0,p(0))$, let $(a_k)_{k\in\N}$ be the sequence provided by \eqref{choose-ak}. 
Then the following limit exists for the topology of $C^1(\R^2)$:
\begin{equation}\label{u-winfty}
\lim_{k\to\infty} \hat{u}(t+kT,x+a_k):=w_{\infty}(t,x;\alpha).
\end{equation}
The function $w_{\infty}(t,x;\alpha)$ is a positive entire solution of \eqref{equation} which is steeper than any other entire solution between $0$ and $p$. Furthermore, it is either spatially homogeneous or spatially decreasing. 
\end{lemma}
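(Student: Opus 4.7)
The plan is to first extract a convergent subsequence, then promote subsequential convergence to full convergence by proving uniqueness of the limit. The family $\{\hat u(t+kT,x+a_k)\}_{k\in\N}$ is uniformly bounded between $0$ and $\sup p$, so parabolic regularity gives relative compactness in $C^1_{loc}(\R^2)$. Any subsequential limit $w(t,x)$ is an entire solution of \eqref{equation} with $0\leq w\leq p$ and $w(0,0)=\alpha$ (the latter by continuity of evaluation at $(0,0)$ applied to $\hat u(kT,a_k)=\alpha$). Because $\hat u(0,\cdot)$ is nonincreasing and this is preserved by comparison, $\hat u(t,\cdot)$ is nonincreasing for every $t>0$, hence so is $w(t,\cdot)$. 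Moreover, any such $w$ lies in $\Omega(\hat u)$, so Lemma \ref{steepest} yields that $w$ is steeper than every other entire solution of \eqref{equation} lying between $0$ and $p$.

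The main step is uniqueness of the limit. Suppose $w_1,w_2$ are two subsequential limits. Both are $\Omega$-limit solutions of $\hat u$, so Lemma \ref{steepest} applied to each gives that $w_1$ is steeper than $w_2$ and vice versa. By the characterization recorded immediately after Definition \ref{steepness}, two mutually steeper functions either coincide up to a spatial translation or have disjoint ranges; since $w_1(0,0)=w_2(0,0)=\alpha$, the ranges intersect, so there exists $x_0\in\R$ with $w_2(t,x)\equiv w_1(t,x+x_0)$. To conclude $x_0=0$ and hence $w_1\equiv w_2$, one needs the dichotomy that each $w_i$ is either spatially constant or strictly decreasing in $x$. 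This follows from applying the strong maximum principle to $v:=\partial_x w_i$, which satisfies the linear parabolic equation $v_t=v_{xx}+\partial_u f(t,w_i)v$ on $\R^2$ and is nonpositive: either $v\equiv 0$, in which case $w_i$ is a spatially constant nonnegative solution of \eqref{ODE-1}, or $v<0$ everywhere, so $w_i$ is strictly decreasing in $x$. In the strictly decreasing case, injectivity of $w_1(0,\cdot)$ combined with $w_1(0,x_0)=w_2(0,0)=\alpha=w_1(0,0)$ forces $x_0=0$; in the constant case $w_2$ equals the same constant and again $w_1\equiv w_2$. Thus all subsequential limits agree, which upgrades the convergence in \eqref{u-winfty} to the full limit in $C^1_{loc}(\R^2)$.

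Positivity of $w_\infty$ on $\R^2$ follows from $w_\infty(0,0)=\alpha>0$ together with $f(t,0)\equiv 0$: if $w_\infty$ vanished at some point, the strong maximum principle applied to $w_\infty\geq 0$ on a cylinder containing that point would force $w_\infty\equiv 0$, contradicting $w_\infty(0,0)=\alpha$. The steepness conclusion and the spatial dichotomy have already been recorded above, so the lemma is complete. I expect the main obstacle to be the uniqueness step: a priori, different subsequences could produce genuinely different profiles or the same profile with different horizontal shifts, and it is precisely the rigidity encoded in Lemma \ref{steepest}, combined with the geometric consequence of mutual steepness recalled after Definition \ref{steepness}, that collapses all subsequential limits to a single function.
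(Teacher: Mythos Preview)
Your proof is correct and follows essentially the same approach as the paper's own argument: compactness via parabolic estimates, steepness of any subsequential limit via Lemma~\ref{steepest}, uniqueness of the limit from the fact that a ``steepest'' entire solution passing through the value $\alpha$ at $(0,0)$ is unique, and the spatial dichotomy via the strong maximum principle applied to $\partial_x w_\infty$. The only difference is cosmetic: where the paper asserts in one sentence that the steepest entire solution with $w_\infty(0,0;\alpha)=\alpha$ is unique (``easily checked from Definition~\ref{steepness}''), you unpack this by taking two subsequential limits, invoking mutual steepness to get equality up to a shift, and then using strict monotonicity at the normalization point to kill the shift---which is exactly the argument the paper leaves implicit.
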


\begin{proof}
By parabolic estimates, the sequence $\{ \hat{u}(t+kT,x+a_k)\}_{k\in\N}$ is uniformly bounded along with their derivatives. Thus, it is relatively compact for the topology of $C^1(\R^2)$. Then there exists a subsequence $(k_j)_{j\in\N}$ of integers such that $k_j\to\infty$ as $j\to\infty$ and that 
$$ \hat{u}(t+k_jT,x+a_{k_j}) \to w_{\infty}(t,x;\alpha) \hbox{ as } j\to\infty  \hbox{ in } C^1(\R^2),$$
where $w_{\infty}(t,x;\alpha)$ is an entire solution of \eqref{equation}. 
Clearly, $w_{\infty}(t,x;\alpha)$ is an $\Omega$-limit solution of $\hat{u}(t,x)$ and $w_{\infty}(0,0;\alpha)=\alpha$.  
It is also easily seen from the strong maximum principle that $0<w_{\infty}(t,x;\alpha)<p(t)$ for $t\in\R$, $x\in\R$.
Furthermore, by Lemma \ref{steepest},  $w_{\infty}(t,x;\alpha)$ is steeper than any other entire solution of \eqref{equation} lying between $0$ and $p$.

It is easily checked from Definition \ref{steepness} that, $w_{\infty}(t,x;\alpha)$ is the unique solution of \eqref{equation} 
that is steeper than any other entire solution between $0$ and $p$ and satisfies $w_{\infty}(0,0;\alpha)=\alpha$. This implies that $w_{\infty}$ does not depend on the choice of $(k_j)$, and hence, the whole sequence $\hat{u}(t+kT,x+a_k)$ converges to $w_{\infty}(t,x;\alpha)$ as $k\to\infty$ in $C^1(\R^2)$.

It remains to show that $w_{\infty}$ is either spatially homogeneous or decreasing in $x\in\R$.  Since for each $k\in\N$, the function $x\mapsto\hat{u}(t+kT,x+a_k)$ is decreasing, sending to the limit as $k\to\infty$, we have 
$$\partial_x w_{\infty}(t,x;\alpha)\leq 0 \,\hbox{ for } t\in\R,\,x\in\R. $$
Then applying the strong maximum principle to the equation satisfied by $\partial_x w_{\infty}(t,x;\alpha)$, we immediately obtain that either $\partial_x w_{\infty}\equiv 0 $ or $\partial_x w_{\infty}(t,x;\alpha)<0$ for $t\in\R$, $x\in\R$. This completes the proof. 
\end{proof}

In Lemma \ref{homo-ptw} below, we will further prove that the limit function $w_{\infty}(t,x;\alpha)$ is either a  solution of  \eqref{ODE} or a periodic traveling wave. The proof will need the following spreading properties of $\hat{u}(t,x)$: 

\begin{lemma}\label{spreading-speed}
There exist constants $-\infty<c_*\leq c^*<\infty$ such that 
\begin{itemize}
\item[(i)] for each $c>c^*$,  $\lim_{t\to\infty} \sup _{x\geq ct} \hat{u}(t,x)=0$;
\vskip 3pt
\item[(ii)] for each $c<c_*$, $\lim_{t\to\infty} \sup _{x\leq ct} | \hat{u}(t,x)-p(t)|=0$.
\end{itemize}
\end{lemma}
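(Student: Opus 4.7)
The plan is to bound $\hat u$ from above and $p-\hat u$ from above by explicit exponential super-solutions of a single linearized equation, then read off the spreading/confining speeds from the linear estimate. Since $f$ is $C^1$ in $u$, $T$-periodic in $t$, and $f(t,0)\equiv 0$, the constant
$$L:=\sup_{t\in\R,\,u\in[0,\|p\|_{\infty}]}|\partial_u f(t,u)|$$
is finite and $|f(t,u)|\leq L|u|$ on $[0,\|p\|_\infty]$. Since $0\leq \hat u\leq p\leq \|p\|_\infty$, the function $\hat u$ is a bounded sub-solution of the linear equation $w_t=w_{xx}+Lw$; similarly, $v(t,x):=p(t)-\hat u(t,x)$ satisfies $v_t-v_{xx}=k(t,x)v$ with $k:=\int_0^1\partial_u f(t,\hat u+sv)\,ds$ and $|k|\leq L$, so $v$ is a nonnegative bounded sub-solution of the same linear equation. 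This reduces the whole lemma to comparison against exact solutions of one linear equation with constant coefficients.

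For statement (i), fix $\lambda>0$ and take
$$g(t,x):=p(0)\,\me^{-\lambda(x-a)+(\lambda^{2}+L)t},$$
which is an exact solution of $w_t=w_{xx}+Lw$. I would verify $g(0,x)\geq \hat u(0,x)$ pointwise: on $\{x\geq a\}$ the right side vanishes, and on $\{x<a\}$ one has $g(0,x)=p(0)\me^{-\lambda(x-a)}\geq p(0)=\hat u(0,x)$. The standard comparison principle for bounded sub-solutions against super-solutions on $\R$ (valid because the sub-solution is uniformly bounded while the super-solution grows no faster than $\me^{\alpha x^{2}}$ for some $\alpha>0$) then yields $\hat u(t,x)\leq g(t,x)$. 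For any $c>\lambda+L/\lambda$, on the half-plane $\{x\geq ct\}$ the exponent $-\lambda c+\lambda^{2}+L$ is strictly negative, so $\sup_{x\geq ct}g(t,x)\to 0$ as $t\to\infty$, proving (i) with the explicit value $c^{\ast}=2\sqrt L$ (obtained by choosing $\lambda=\sqrt L$).

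For statement (ii), I would apply the mirror construction with
$$\bar g(t,x):=p(0)\,\me^{\lambda(x-a)+(\lambda^{2}+L)t}.$$
The initial comparison $\bar g(0,x)\geq v(0,x)$ is immediate ($v(0,x)=p(0)$ for $x\geq a$ and $v(0,x)=0$ for $x<a$, while $\bar g(0,\cdot)\geq p(0)$ on $[a,\infty)$ and $\bar g(0,\cdot)\geq 0$ everywhere), and the same bounded-versus-exponential comparison gives $v\leq \bar g$, i.e.\ $\hat u\geq p-\bar g$. For $c<-(\lambda+L/\lambda)$, $\bar g(t,x)\to 0$ uniformly on $\{x\leq ct\}$, which yields (ii) with $c_{\ast}=-2\sqrt L$; the required inequality $c_{\ast}\leq c^{\ast}$ is automatic since $c_{\ast}\leq 0\leq c^{\ast}$. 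The only delicate point is the justification of the parabolic comparison principle on the unbounded domain $\R$ against super-solutions that grow exponentially at one spatial infinity, but this is classical because the sub-solutions $\hat u$ and $v$ are uniformly bounded by $\|p\|_\infty$, so no growth-condition restrictions are violated.
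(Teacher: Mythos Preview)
Your argument is correct. The linearization $f(t,u)\le Lu$ (from $f(t,0)=0$ and the bound on $\partial_u f$) makes $\hat u$ a subsolution of the constant-coefficient equation $w_t=w_{xx}+Lw$, and the explicit exponential $g(t,x)=p(0)\me^{-\lambda(x-a)+(\lambda^2+L)t}$ is an exact solution dominating $\hat u$ at $t=0$. The comparison on $\R$ is justified exactly as you say: the difference $\hat u-g$ is bounded from above by $\|p\|_\infty$, hence lies in the Tychonoff class, so the parabolic Phragm\'en--Lindel\"of principle applies. The symmetric argument for $v=p-\hat u$ is equally valid. One cosmetic point: with the paper's convention $H(0)=1$ you have $v(0,a)=0$ rather than $p(0)$, but this does not affect the inequality $v(0,\cdot)\le\bar g(0,\cdot)$.

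As for comparison with the paper: the paper does not give its own argument here but simply invokes the first part of the proof of \cite[Lemma~2.9]{dgm} with ``obvious modifications''. That reference treats the spatially-periodic problem and uses essentially the same mechanism---crude linearization near the stable states and comparison with travelling exponentials---so your self-contained version is in the same spirit. Your approach has the minor advantage of producing explicit constants $c^*=2\sqrt{L}$, $c_*=-2\sqrt{L}$, whereas the cited argument is phrased abstractly; on the other hand, the paper's $c_*,c^*$ are meant only as auxiliary quantities (used solely to derive \eqref{lk-bound}), so the explicit values are not needed downstream.
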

  
It should be noted that, the above two constants $c_*$ and $c^*$ are independent of $a\in\R$ (the jumping position of the initial function $p(0)H(a-x)$). 

\begin{proof}
The proof follows from the arguments used in the first part of the proof of \cite[Lemma 2.9]{dgm} with some obvious modifications, therefore we omit the details. 
\end{proof}

Let us now define a sequence of real number $(l_k)_{k\in\N}$ as follows:
\begin{equation*}
l_k:=\left\{\baa{ll}
\smallskip a_k-a_{k-1}, & \hbox{ if } k>1,\vspace{3pt}\\
 a_0,&\hbox{ if } k=0,\eaa\right.
\end{equation*}
where $(a_k)_{k\in\N}$ is the sequence given in \eqref{choose-ak}. Clearly, for all $k\in\N$, $a_k=\sum_{j=0}^k l_j$. 
Since $\hat{u}(kT,a_k)=\alpha \in (0, p(0))$ for each $k\in\N$, it follows from Lemma \ref{spreading-speed} that
$$c_*kT\leq a_k\leq c^*kT\,\,\hbox{ for all large } k\in\N.$$
Thus, we have
\begin{equation}\label{lk-bound}
c_*\leq  \liminf_{k\to\infty}  \frac{ \sum_{j=0}^{k}l_j}{kT} \leq \limsup_{k\to\infty}  \frac{ \sum_{j=0}^{k}l_j}{kT} \leq c^*.
\end{equation}

\begin{lemma}\label{homo-ptw}
For any given $\alpha\in (0,p(0))$, let $w_{\infty}(t,x;\alpha)$ be the entire solution provided by Lemma \ref{convege-ak}. Then either of the following alternatives holds:
\begin{itemize}
\item[(a)] $w_{\infty}(t,x;\alpha)$ is spatially homogeneous, and it is a positive solution of \eqref{ODE};
\vskip 3pt  
\item[(b)] $w_{\infty}(t,x;\alpha)$ is decreasing in $x\in\R$, and it is a periodic traveling wave of \eqref{equation}. Furthermore, $l_k$  converges to some  $l_{\infty}\in [c_*T,c^*T]$ as $k\to\infty$, and $l_{\infty}/T$ is the wave speed of $w_{\infty}$. 
\end{itemize}  
\end{lemma}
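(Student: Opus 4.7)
The plan is to split on the dichotomy from Lemma \ref{convege-ak} (spatially constant vs.\ strictly decreasing in $x$) and, in each case, pin down $w_\infty$ by combining its characterisation as the unique steepest entire solution satisfying $w_\infty(0,0;\alpha)=\alpha$ with the averaged growth control \eqref{lk-bound} on the sequence $(a_k)$.

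For Case (a), I would write $w_\infty(t,x)=\omega(t)$; since $w_\infty$ solves \eqref{equation} and is $x$-independent, $\omega$ automatically solves \eqref{ODE-1}, so the real content is $T$-periodicity. The $C^1_{loc}$ convergence $\hat u(t+kT,x+a_k)\to\omega(t)$, specialised to $t=T$, gives $\hat u((k+1)T,x+a_k)\to\omega(T)$ uniformly for $x$ in any bounded interval. Suppose for contradiction that $\omega(T)>\alpha=\omega(0)$ and fix any $R>0$. For $k$ large one would have $\hat u((k+1)T,y)>\alpha$ for every $y\in[a_k-R,a_k+R]$, so the unique level-$\alpha$ point $a_{k+1}$ must satisfy $a_{k+1}>a_k+R$ by the strict spatial monotonicity of $\hat u((k+1)T,\cdot)$. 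Since $R$ is arbitrary, $l_{k+1}\to+\infty$, and a Ces\`aro argument then forces $a_k/k\to+\infty$, contradicting \eqref{lk-bound} since $c^*<\infty$. The case $\omega(T)<\alpha$ is ruled out symmetrically, so $\omega(T)=\omega(0)$, and uniqueness for the initial-value problem for \eqref{ODE-1} gives $\omega(t+T)\equiv\omega(t)$, i.e.\ $\omega\in\mathcal{X}_{per}$.

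For Case (b), strict monotonicity of $w_\infty(T,\cdot)$ produces a unique $\xi\in\R$ with $w_\infty(T,\xi)=\alpha$. I would then set $\tilde v(t,x):=w_\infty(t+T,x+\xi)$ and observe that, thanks to the $T$-periodicity of \eqref{equation} in $t$ and the fact that steepness is preserved under spatial translation, $\tilde v$ is again an entire solution between $0$ and $p$ that is steeper than every other such solution, with $\tilde v(0,0)=\alpha$. The uniqueness established in the proof of Lemma \ref{convege-ak} then forces $\tilde v\equiv w_\infty$, i.e.\ $w_\infty(t+T,x+\xi)\equiv w_\infty(t,x)$, so $w_\infty$ is a periodic traveling wave of speed $\xi/T$. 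To identify $l_\infty$, I would first check that the monotone limits $q_\pm(t):=\lim_{x\to\mp\infty}w_\infty(t,x)$ are $T$-periodic solutions of \eqref{ODE-1} (via parabolic regularity applied to far spatial translates of $w_\infty$) and satisfy $q_-(0)<\alpha<q_+(0)$ by strict monotonicity. Hence $w_\infty(T,y)<\alpha$ for $y$ sufficiently large and $w_\infty(T,y)>\alpha$ for $y$ sufficiently negative; combined with the $C^1_{loc}$ convergence and the monotonicity of $\hat u((k+1)T,\cdot)$, this gives a uniform bound on $|l_{k+1}|$. Any subsequential limit $\lambda$ of $(l_k)$ then satisfies $w_\infty(T,\lambda)=\alpha$ and hence $\lambda=\xi$, so the whole sequence converges to $l_\infty:=\xi$. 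A Ces\`aro argument finally produces $\lim a_k/k=l_\infty$, and \eqref{lk-bound} places $l_\infty$ in $[c_*T,c^*T]$.

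The step I expect to be the main obstacle is converting the purely local $C^1_{loc}$ convergence near the moving point $a_k$ into hard control on the adjacent level-$\alpha$ point $a_{k+1}$, and in particular ruling out runaway behaviour of $l_{k+1}$ along subsequences. Both cases boil down to this issue, and in each one the argument depends on the interplay of (i) $C^1_{loc}$ convergence of $\hat u(t+kT,\cdot+a_k)$ to $w_\infty$ at time $t=T$, (ii) the spatial monotonicity of $\hat u$, and (iii) the averaged bound \eqref{lk-bound}; the finiteness of $c^*$ and $c_*$ supplied by Lemma \ref{spreading-speed} is what yields the contradiction in Case~(a) and the compactness of $(l_k)$ in Case~(b).
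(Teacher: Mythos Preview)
Your split on the dichotomy from Lemma~\ref{convege-ak} is a different organising principle from the paper's, which instead splits on whether $(l_k)$ has a finite subsequential limit. In the paper's first case (some $l_{k_j}\to l_\infty$), the relation $w_\infty(t+T,x)=w_\infty(t,x-l_\infty)$ drops out directly from the limit \eqref{u-winfty}, and the full-sequence convergence $l_k\to l_\infty$ is then obtained by a quantitative argument exploiting $\partial_x w_\infty<0$ near the level-$\alpha$ point. In the paper's second case (no finite subsequential limit), the averaged bound \eqref{lk-bound} forces subsequences going to both $+\infty$ and $-\infty$, and these are used to squeeze $w_\infty$ into being spatially constant. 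Your route instead leverages the \emph{uniqueness} of the steepest entire solution with value $\alpha$ at $(0,0)$ to get the traveling-wave identity in one stroke, which is arguably cleaner; your Case~(a) argument via Ces\`aro and \eqref{lk-bound} is also correct.

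There is, however, a gap in your Case~(b). When you write that ``strict monotonicity of $w_\infty(T,\cdot)$ produces a unique $\xi\in\R$ with $w_\infty(T,\xi)=\alpha$'', you are tacitly assuming that $\alpha$ lies in the range of $w_\infty(T,\cdot)$, i.e.\ that $q_-(T)<\alpha<q_+(T)$. But the $T$-periodicity of $q_\pm$ is only established \emph{after} the traveling-wave identity, so you cannot invoke it here; and a~priori you only know $q_-(0)<\alpha<q_+(0)$, which does not transfer to time $T$ since the ODE flow need not fix $\alpha$. The fix is exactly your own Case~(a) mechanism: if $\alpha\ge q_+(T)$ then $w_\infty(T,x)<\alpha$ for all $x$, so for any $R$ and all large $k$ one has $\hat u((k{+}1)T,a_k+x)<\alpha$ on $[-R,R]$, whence $l_{k+1}<-R$ by the monotonicity of $\hat u$; thus $l_k\to-\infty$ and Ces\`aro contradicts the lower bound in \eqref{lk-bound}. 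The case $\alpha\le q_-(T)$ is symmetric. With this patch inserted before you define $\xi$, your argument goes through.
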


\begin{proof}
We split the proof into two parts, according to whether there exists some subsequence of $(l_k)_{k\in\N}$ converging to a finite number. 

{\bf Case (1):} There is a subsequence $(k_j)_{j\in\N}$ such that $k_j\to\infty$ as $j\to\infty$ and that  
$l_{k_j}\to l_{\infty}$ as $j\to\infty$ for some $l_{\infty}\in\R$.  

In this case, by using \eqref{u-winfty},  we have 
\begin{equation}\label{period-proof}
\begin{split}
w_{\infty}(t,x-l_{\infty};\alpha)\,&\,= \lim_{j\to\infty} \hat{u}(t+k_jT,x+a_{k_j}-l_{k_j}) \vspace{3pt}\\
\medskip\,&\, = \lim_{j\to\infty} \hat{u}(t+k_jT,x+a_{k_j-1})= w_{\infty}(t+T,x;\alpha).
 \end{split}
\end{equation}
Notice from Lemma \ref{convege-ak} that $w_{\infty}$ is either spatially homogeneous or spatially decreasing. Therefore, $w_{\infty}$ is a positive solution of \eqref{ODE} if $\partial_x w_{\infty}(t,x) \equiv 0$; it is a periodic traveling wave connecting two distinct solutions of \eqref{ODE} if $\partial_x w_{\infty}(t,x)< 0$.

Clearly, if the latter case occurs, then $l_{\infty}/T$ is the wave speed of the wave $w_{\infty}$. We now show that, in such a situation, the whole sequence $l_k$ converges to $l_{\infty}$ as $k\to\infty$. 
Let $\epsilon_0>0$ be a sufficiently small constant. Thanks to the $C^1$ convergence of $\hat{u}(kT,x+a_{k-1})$ 
to $w_{\infty}(T,x;\alpha)$ as $k\to\infty$ and the fact that  $w_{\infty}(T,x;\alpha)<0$ for $x\in\R$, there exists some large $K\in\N$ such that 
$$\partial_x   \hat{u}(kT,l_{\infty}+\epsilon+a_{k-1}) \leq \frac{1}{2} \partial_x  w_{\infty}(T,l_{\infty};\alpha)\, \hbox{ for all }\,\epsilon \in [0,\epsilon_0),\,k\geq K. $$
This  implies  in particular that
\begin{equation}\label{hatu-epsilon0}
\hat{u}(kT,l_{\infty}+\epsilon_0+a_{k-1}) \leq  \hat{u}(kT,l_{\infty}+a_{k-1})+ \frac{\epsilon_0}{2} \partial_x  w_{\infty}(T,l_{\infty};\alpha)\, \hbox{ for all } \,k\geq K. 
\end{equation}
Let $\delta$ be a positive constant given by 
$$\delta:= -\frac{\epsilon_0}{4} \partial_x  w_{\infty}(T,l_{\infty};\alpha).  $$
Then, replacing $K$ by some larger integer if necessary, it follows from \eqref{u-winfty} that
$$\hat{u}(kT,l_{\infty}+a_{k-1}) \leq  w_{\infty}(T,l_{\infty};\alpha)+\delta\, \hbox{ for all }\,k\geq K. $$
Combining this with  \eqref{hatu-epsilon0}, we obtain 
\begin{equation}\label{hatu-k-leq-wT}
\hat{u}(kT,l_{\infty}+\epsilon_0+a_{k-1}) \leq  w_{\infty}(T,l_{\infty};\alpha) \, \hbox{ for all }\,k\geq K. 
\end{equation}
Due to \eqref{period-proof}, we have $w_{\infty}(T,l_{\infty};\alpha)=w_{\infty}(0,0;\alpha)=\alpha$. It follows that
$$\hat{u}(kT,l_{\infty}+\epsilon_0+a_{k-1})  \leq \alpha\,\hbox{ for all }\, k\geq K.$$  
Note that for each $k\in\N$, $\hat{u}(kT,a_{k})=\alpha$ and $\hat{u}(kT,x)$ is nonincreasing in $x\in\R$. 
Then we have 
$$a_{k-1}+l_{\infty}+\epsilon_0 \geq a_k\,  \hbox{ for all }\,k\geq K.  $$

By similar arguments to those used in showing \eqref{hatu-k-leq-wT}, we can prove that
$$\hat{u}((k-1)T,-l_{\infty}+\epsilon_0+a_{k})\leq w_{\infty}(-T,-l_{\infty};\alpha)=\alpha \, \hbox{ for all large } \,k,$$ 
and then conclude that 
$$a_{k}-l_{\infty}+\epsilon_0 \geq a_{k-1}  \, \hbox{ for all large } \,k. $$

Combining the above, we have $|a_k-a_{k-1} -l_{\infty}| \leq \epsilon_0$ for all large $k\in\N$. Since $\epsilon_0>0$ can be chosen arbitrarily small, this immediately gives that $ l_k\to l_\infty \,\hbox{ as } k\to\infty$. Furthermore, thanks to \eqref{lk-bound}, the limit $l_{\infty}$ belongs to the interval $[c_*T,c^*T]$. 

Therefore, we can conclude that if case (1) occurs, then either case (a) or case (b) of the present lemma holds.

{\bf Case (2):}  there is no subsequence of $(l_k)_{k\in\N}$ converging to a finite number.

In this situation, we want to show that only case (a) occurs.  As no subsequence of $(l_k)_{k\in\N}$ converges to a finite number, we observe from \eqref{lk-bound} that, there must exist a subsequence $l_{k_j}$ converging to $\infty$ and another one $l_{k_j'}$ converging to $-\infty$ as $j\to\infty$. Let $M>0$ be arbitrary. Then 
we have $l_{k_j}>M$  and $l_{k_j'}<-M$ for all large $j\in\N$. Since $\hat{u}(t,x)$ is nonincreasing in $x$, it follows that
$$\hat{u}(t+k_jT,x+a_{k_j-1}) \geq    \hat{u}(t+k_jT,x+a_{k_j}-M)$$
for all $t\in\R$, $x\in\R$ and large $j\in\N$.
Taking the limit along the sequence $j\to\infty$, we obtain
$$w_{\infty}(t+T,x;\alpha)  \geq    w_{\infty}(t,x-M;\alpha) \,\hbox{ for }  \,t\in\R,\,x\in\R. $$
Then, passing to the limit as $M\to\infty$ followed by letting $x\to\infty$ yields  
\begin{equation}\label{wTinfty-geq}
\lim_{x\to\infty} w_{\infty}(t+T,x;\alpha)\geq  \lim_{x\to-\infty} w_{\infty}(t,x;\alpha)\,\hbox{ for }\,t\in\R.
\end{equation}

Similarly, since 
$$\hat{u}(t+k_j'T,x+a_{k_j'-1})\leq    \hat{u}(t+k_j'T,x+M+a_{k_j'})$$
for all $t\in\R$, $x\in\R$ and large $j\in\N$, passing to the limits as $j\to\infty$, $M\to\infty$ and $x\to -\infty$ in order, we have 
\begin{equation}\label{wTinfty-leq}
\lim_{x\to-\infty} w_{\infty}(t+T,x;\alpha)\leq  \lim_{x\to+\infty} w_{\infty}(t,x;\alpha)\,\hbox{ for }\,t\in\R. 
\end{equation}

Combining \eqref{wTinfty-geq},  \eqref{wTinfty-leq} and the fact that $w_{\infty}(t,x;\alpha)$ is nonincreasing in $x$, we see that $w_{\infty}$ is spatially homogeneous, and hence it is a solution of \eqref{ODE}. The proof of Lemma \ref{homo-ptw} is thus complete. 
\end{proof}

\subsection{Existence of a minimal terrace}
Based on the preparation in the above subsection, we are now ready to construct a minimal propagating terrace under Assumption \ref{assume-decomposition}.

\begin{proof}[Proof of Theorem \ref{decomposition-existence}]
Denote by $(q_m)_{0\leq m\leq M }$ the decomposition between $0$ and $p$, and for each $1\leq m \leq M$, let $V_m(t,x)$ be a periodic traveling wave of \eqref{equation} connecting $q_{m}$ to $q_{m-1}$.  
Choose $\alpha_1>0$ such that $q_1(0)<\alpha_1<p(0)$ and let $w_{\infty}(t,x;\alpha_1)$ be the entire solution provided by Lemma \ref{convege-ak}. 
\begin{claim}\label{step1-ite}
$U_1(t,x):=w_{\infty}(t,x;\alpha_1)$
is a periodic traveling wave of \eqref{equation} connecting $q_{m_1}$ to $p$ for some $m_1\in\{1,\,2,\,\cdots, M\}$. 
\end{claim}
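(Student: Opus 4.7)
By Lemma~\ref{homo-ptw} applied to $\alpha=\alpha_1$, the entire solution $U_1 := w_\infty(\cdot,\cdot;\alpha_1)$ falls into one of two cases: either (a) $U_1 \equiv q$ is spatially constant, with $q$ a periodic solution of \eqref{ODE} satisfying $q(0)=\alpha_1$, or (b) $U_1$ is a strictly spatially decreasing periodic traveling wave with speed $l_\infty/T \in [c_*,c^*]$. My plan is to first exclude case (a) and then, in case (b), identify the left and right spatial limits $\overline{q}(t) := \lim_{x\to-\infty} U_1(t,x)$ and $\underline{q}(t) := \lim_{x\to+\infty} U_1(t,x)$ as $p$ and as some $q_{m_1}$ with $m_1\in\{1,\ldots,M\}$, respectively.

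To rule out case (a), I would argue by contradiction. If $U_1 \equiv q$ with $q_1(0) < \alpha_1 = q(0) < p(0)$, then by Lemma~\ref{steepest} $U_1$ is steeper than the traveling wave $V_1$ supplied by Assumption~\ref{assume-decomposition}. Since $q(0)$ lies strictly inside the open range of $V_1(0,\cdot)$, the horizontal profile $U_1(0,\cdot)\equiv q(0)$ and the strictly decreasing profile $V_1(0,\cdot)$ meet transversally at the unique point $\xi_0$ where $V_1(0,\xi_0)=q(0)$. Unfolding Definition~\ref{steepness} at this intersection then gives a direct contradiction: the steepness inequalities between a constant and a strictly monotone function, in the direction required by ``$U_1$ steeper than $V_1$'', cannot both be satisfied on the two sides of $\xi_0$.

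In case (b), I would apply Lemma~\ref{steepest} to each decomposition wave $V_m$, $m=1,\ldots,M$. Whenever the ranges of $U_1(0,\cdot)$ and $V_m(0,\cdot)$ overlap (equivalently, $\underline{q}(0)<q_{m-1}(0)$), I align the two profiles at a common level and pass to the limits $x\to\pm\infty$ in the steepness inequalities of Definition~\ref{steepness}. The comparison with $V_1$ yields $\overline{q}\geq q_0 = p$, hence $\overline{q}=p$. The comparison with $V_m$ yields $\underline{q}\leq q_m$ whenever the overlap holds. Starting from $m=1$ and incrementing $m$ as long as the strict inequality $\underline{q}<q_{m-1}$ persists (so as to maintain overlap with $V_m$), this finite iteration must terminate at some index $m_1\in\{1,\ldots,M\}$ for which $\underline{q}=q_{m_1}$; the worst case is $m_1=M$, giving $\underline{q}=q_M=0$. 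Thus $U_1$ is a periodic traveling wave connecting $q_{m_1}$ (at $+\infty$) to $p=q_0$ (at $-\infty$), which is the content of the claim.

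The main obstacle I anticipate is the careful handling of the steepness inequalities in the limits $x\to\pm\infty$: Definition~\ref{steepness} is stated pointwise, so each comparison requires a non-degenerate range overlap and the correct alignment of the two profiles at a common value before letting $x\to\pm\infty$. Assumption~\ref{assume-decomposition} plays an essential role here: the existence of intermediate waves $V_m$ spanning every gap between consecutive $q_m$'s is precisely what forces the iterative refinement of the bound on $\underline{q}$ to terminate at one of the $q_m$, rather than stabilizing at an arbitrary intermediate periodic solution of \eqref{ODE} not listed in the given decomposition.
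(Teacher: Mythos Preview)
Your proposal is correct and follows essentially the same route as the paper's proof. Both arguments (i) rule out the spatially constant alternative by observing that a constant whose value lies strictly inside the range of the decreasing wave $V_1$ cannot satisfy the steepness relation demanded by Lemma~\ref{convege-ak}; (ii) identify the left limit as $p$ via comparison with $V_1$; and (iii) pin the right limit to one of the $q_m$ by exploiting the full family $(V_m)_{1\le m\le M}$. The only cosmetic difference is that you work directly with the steepness inequality for $U_1$ versus $V_m$ and then pass to the limits $x\to\pm\infty$, whereas the paper first passes to the spatial limit (obtaining the constants $w_\infty(t,\pm\infty)$, which inherit the ``steepest'' property by Lemma~\ref{semicontinuous}) and then argues that these constants cannot be crossed by any $V_m$. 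Your explicit finite induction on $m$ and the paper's one-line ``otherwise some $V_{\tilde m}$ would cross $w_\infty(t,\infty)$'' are logically equivalent.
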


\begin{proof}[Proof of Claim \ref{step1-ite}]
We first prove that $U_1(t,x)$ is a periodic traveling wave. Suppose the contrary that it is not true. Then by Lemma \ref{homo-ptw}, $w_{\infty}(t,x;\alpha_1)$ is spatially homogeneous, and $w_{\infty}(t):=w_{\infty}(t,x;\alpha_1)$ is a solution of \eqref{ODE} with $w_{\infty}(0)=\alpha_1$. 
Notice that $q_1<w_{\infty}<p$, and $V_1(t,x)$ is a periodic traveling wave connecting $q_1$ to $p$. 
Clearly, $V_1(t,x)$ is steeper than $w_{\infty}(t)$ in the sense of Definition \ref{steepness}. This is a contradiction with the assertion stated in Lemma \ref{convege-ak} that $w_{\infty}$ is steeper than any other entire solution between $0$ and $p$.  Therefore, $U_1(t,x)$ is a periodic traveling wave. 

Next, we prove that 
$$\lim_{x\to-\infty} U_1(t,x) = p(t) \,\hbox{ locally uniformly in } \, t\in\R.$$ 
Since $w_{\infty}(t,x;\alpha_1)$ is a periodic traveling wave, the limit $\lim_{x\to-\infty} w_{\infty}(t,x;\alpha_1):=w_{\infty}(t,-\infty)$ holds locally uniformly in $t\in\R$, and $w_{\infty}(t,-\infty)$ is a solution of \eqref{ODE}.  It is clear that
\begin{equation}\label{0-winfty-p} 
0\leq w_{\infty}(t,-\infty)\leq p(t) \hbox{ for } t\in\R.
\end{equation}
As $w_{\infty}(t,x;\alpha_1)$ is a steepest solution of \eqref{equation} between $0$ and $p$, in view of Definition \ref{steepness} and Lemma \ref{semicontinuous}, we see that $w_{\infty}(t,-\infty)$ is steeper than any other entire solution between $0$ and $p$. This implies  in particular that for any $t\in\R$, $w_{\infty}(t,-\infty)$ and $V_1(t,x)$ cannot intersect. Since $w_{\infty}(0,-\infty)>\alpha_1$ and  $V_1(0,x_0)=\alpha_1$ for some $x_0\in\R$, it then follows that $w_{\infty}(t,-\infty)>V_1(0,x)$ for all $x\in\R$. By the comparison principle, we have 
$$w_{\infty}(t,-\infty)\geq V_1(t,x)\,\,\hbox{ for } \,t\geq 0,\,x\in\R.$$
Due to the $T$-periodicity of $w_{\infty}(t,-\infty)$ and the fact the $V_1(t,x)$ converges to $p(t)$ as $x\to-\infty$, we have 
$w_{\infty}(t,-\infty)\geq p(t)$ for $t\in\R$. Combining this with \eqref{0-winfty-p}, we immediately obtain $w_{\infty}(t,-\infty)\equiv p(t)$.

It remains to show 
$$\lim_{x\to\infty} U_1(t,x) = q_{m_1}(t)  \,\hbox{ locally uniformly in } \, t\in\R $$
for some $m_1\in\{1,\,2,\,\cdots, M\}$. In fact, by similar arguments used as above, one can conclude that 
$w_{\infty}(t,\infty):=\lim_{x\to\infty} w_{\infty}(t,x;\alpha_1)$ 
is steeper than any other entire solution of \eqref{equation} between $0$ and $p$. This already implies the desired result. Otherwise, there would exist some $\tilde{m}\in \{2,\,\cdots, M\}$ such that the periodic traveling wave $V_{\tilde{m}}(t,x)$ crosses through $w_{\infty}(t,\infty)$, which is impossible. The proof of Claim \ref{step1-ite} is thus complete.
\end{proof}

For convenience, let us set $m_0=0$. Then $U_1(t,x)$ is a traveling wave connecting some $q_{m_1}$ to $q_{m_0}$. 
If $m_1=M$, i.e., $q_{m_1}\equiv 0$, then we have already obtained a propagating terrace, which consists of a single wave $U_1$. Suppose, on the other hand, that $q_{m_1}$ is positive. We can continue our iteration as follows:

\begin{claim}\label{continue-ite}
Suppose that for some $m_i\in \{1,\,\cdots,M-1\}$ and some $\alpha_i \in (q_{m_i}(0), p(0))$, the function 
$$U_i(t,x):\equiv w_{\infty}(t,x;\alpha_i)$$
is a periodic traveling wave connecting $q_{m_i}$ to some $q_{m_{i-1}}> q_{m_i}$. Then there exists  $\alpha_{i+1}\in (0, q_{m_i}(0))$ such that 
$$ U_{i+1}(t,x) := w_{\infty}(t, x;\alpha_{i+1})$$
is a periodic traveling wave connecting some $q_{m_{i+1}}<q_{m_i}$ to $q_{m_i}$.
\end{claim}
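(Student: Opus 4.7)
The strategy is to mimic the argument of Claim \ref{step1-ite}, but applied in the horizontal strip between $q_{m_i+1}$ and $q_{m_i}$, where $q_{m_i+1}$ denotes the element of the decomposition with index one larger than $m_i$. The decomposition wave $V_{m_i+1}$, whose range is exactly this strip, will be the key object of comparison for ruling out degenerate behaviour of the new limit.

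First I would pick $\alpha_{i+1} \in (q_{m_i+1}(0), q_{m_i}(0))$ and set $U_{i+1}(t,x) := w_\infty(t,x;\alpha_{i+1})$. By Lemma \ref{convege-ak}, $U_{i+1}$ is an entire solution of \eqref{equation}, is steeper than every other entire solution lying between $0$ and $p$, and is either spatially constant or strictly decreasing in $x$. To rule out the constant case, observe that if $U_{i+1}$ were spatially constant it would solve \eqref{ODE} with value $\alpha_{i+1}$ at $t=0$, so the strictly decreasing wave $V_{m_i+1}$ would be strictly steeper than $U_{i+1}$, contradicting the steepness property from Lemma \ref{convege-ak}. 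Lemma \ref{homo-ptw} then forces $U_{i+1}$ to be a periodic traveling wave; set $q_+(t) := U_{i+1}(t,-\infty)$ and $q_-(t) := U_{i+1}(t,+\infty)$, both solutions of \eqref{ODE}.

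Next I would show $q_+ \equiv q_{m_i}$. The waves $U_i$ and $U_{i+1}$ are mutually steeper (each is steepest between $0$ and $p$ by Lemma \ref{convege-ak}), so by the remark following Definition \ref{steepness} they are either equal up to a spatial shift or have disjoint ranges. Equality up to shift is impossible since $U_i \geq q_{m_i}$ everywhere whereas $U_{i+1}(0,0) = \alpha_{i+1} < q_{m_i}(0)$; hence their ranges are disjoint, forcing $U_{i+1} \leq q_{m_i}$ globally and in particular $q_+ \leq q_{m_i}$. Conversely, Lemma \ref{semicontinuous} promotes steepness to the spatial limit $q_+$, so the constant-in-$x$ function $q_+$ is steeper than $V_{m_i+1}$. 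A constant can only be steeper than the strictly decreasing $V_{m_i+1}$ if its value lies outside the range of $V_{m_i+1}$, and since the strict monotonicity of $U_{i+1}$ gives $q_+(0) > \alpha_{i+1} > q_{m_i+1}(0)$, the only alternative left is $q_+ \geq q_{m_i}$. Combining the two bounds yields $q_+ \equiv q_{m_i}$.

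The identification $q_- \equiv q_{m_{i+1}}$ for some $m_{i+1} \in \{m_i+1,\ldots,M\}$ then proceeds by the same steepness dichotomy applied to $q_-$ against each $V_j$ with $j > m_i$: for each such $j$ the value $q_-(0)$ is forbidden from the open interval $(q_j(0), q_{j-1}(0))$, so starting from $q_-(0) \leq \alpha_{i+1} < q_{m_i}(0)$ and iterating downward through the decomposition forces $q_-(0) = q_{m_{i+1}}(0)$ for some $m_{i+1} > m_i$; uniqueness for the initial-value problem \eqref{ODE} upgrades this to $q_- \equiv q_{m_{i+1}}$. The most delicate step is the identification of $q_+$: without the two-sided pinching (mutual steepness against $U_i$ from above, and steepness against $V_{m_i+1}$ from below), the upper limit of $U_{i+1}$ could a priori be any solution of \eqref{ODE} strictly between $q_{m_i+1}$ and $q_{m_i}$, and ruling out such intermediate solutions — an issue that does not arise in Claim \ref{step1-ite} because the top platform is simply $p$ — is the heart of the iterative construction.
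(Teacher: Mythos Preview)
Your argument is correct and follows the template the paper intends: the paper's own proof simply reads ``The proof is almost the same as that of Claim~\ref{step1-ite}, therefore we omit the details,'' and you have faithfully carried out that adaptation, using $V_{m_i+1}$ in place of $V_1$ as the comparison wave. You have also correctly identified and handled the one genuinely new ingredient relative to Claim~\ref{step1-ite}: in Claim~\ref{step1-ite} the upper bound $w_\infty(\cdot,-\infty)\le p$ is automatic, whereas here the bound $q_+\le q_{m_i}$ requires the mutual-steepness argument with the previously constructed wave $U_i$ to force disjoint ranges --- this is exactly the right fix, and your closing remark pinpointing it as ``the heart of the iterative construction'' is apt.

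One small imprecision: you call $V_{m_i+1}$ ``strictly decreasing,'' but Assumption~\ref{assume-decomposition} does not require the decomposition waves to be monotone. Fortunately your arguments only need that the range of $V_{m_i+1}(t,\cdot)$ contains the open interval $(q_{m_i+1}(t),q_{m_i}(t))$ and that $V_{m_i+1}(t,x)\to q_{m_i}(t)$ as $x\to-\infty$; both hold for any periodic traveling wave connecting $q_{m_i+1}$ to $q_{m_i}$, so the steepness contradictions in Steps~3 and~5(b) go through unchanged once you drop the monotonicity wording.
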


\begin{proof}[Proof of Claim \ref{continue-ite}]
The proof is almost the same as that of Claim \ref{step1-ite}, therefore we omit the details.
\end{proof}

Note that the above iteration process ends when $q_{m_i}\equiv 0$. Since there is a finite number of $q_m$, this clearly happens in a finite number of steps $i=N$ for some $1\leq N\leq M$. Therefore, we obtain a sequence of decreasing numbers $(\alpha_i)_{1\leq i\leq N}$, a sequence of decreasing solutions $(q_{m_i})_{0\leq i\leq N}$ of \eqref{ODE}, and a sequence $(U_i)_{1\leq i\leq N}$ such that for each $1\leq i\leq N$, $U_i(t,x)$ a periodic traveling wave connecting $q_{m_i}$ to $q_{m_{i-1}}$ satisfying $U_i(0,0)=\alpha_i$. 

For each $1\leq i\leq N$, let $c_i$ be the wave speed of $U_i(t,x)$. Now, we want to prove that 
$$\mathcal{T}:=((q_{m_i})_{0\leq i\leq N}, (U_i,c_i)_{1\leq i\leq N})$$ 
is a propagating terrace. According to Definition \ref{terrace}, it suffices to show that
$$c_1\leq c_2\leq \cdots\leq c_N.  $$
For this purpose, for each $1\leq i\leq N$, let us denote by $(a_{i,k})_{k\in\N}$ the sequence obtained in \eqref{choose-ak} with $\alpha$ replaced by $\alpha_i$. Then Lemma \ref{homo-ptw} provides that 
$ \lim_{k\to\infty}(a_{i,k}-a_{i,k-1})=c_iT$. 
This clearly implies  
\begin{equation}\label{speed-formu}
c_i= \lim_{k\to\infty} \frac{a_{i,k}}{kT}. 
\end{equation}
Notice that $\alpha_{i+1}< \alpha_{i}$ for each $1\leq i \leq N-1$ and that the solution $\hat{u}(t,x)$ is decreasing in $x$. We have $a_{i+1,k}> a_{i,k} $ for all $k\in\N$. 
It then follows immediately from \eqref{speed-formu} that $c_{i+1}\geq c_i$. The existence of propagating terrace is thus proved. 

From the above construction of $U_i$, one sees that $q_{m_i}$ and $U_i$ are steeper than any other entire solution between $0$ and $p$, which immediately gives statement (ii) of the present theorem and the minimality of the propagating terrace $\mathcal{T}$. By Proposition \ref{unique-mpt}, $\mathcal{T}$ is unique up to spatial shifts. Lastly, statement (i) follows directly from Lemma \ref{speed-chara}. The proof is thus complete.
\end{proof}


\subsection{Convergence with Heaviside type initial data}  

This subsection is devoted to the proof of Theorem \ref{converge1}.  Namely, provided that there exists a minimal propagating terrace, we show that it attracts all solutions of \eqref{E} with Heaviside type initial data.   

\begin{proof}[Proof of Theorem \ref{converge1}]
Let $((p_i)_{0\leq i\leq N},(U_i,c_i)_{1\leq i\leq N})$ be a minimal propagating terrace connecting 0 to $p$. By Proposition \ref{unique-mpt}, up to spatial shifts, it is the unique minimal terrace. Moreover, thanks to Theorem \ref{decomposition-existence}, we know that each of the $p_i$ and $U_i$ is steeper than any other entire solution of \eqref{equation} between $0$ and $p$. 
 
Let $\hat{u}(t,x)$ be the solution of \eqref{E} with a given Heaviside type initial function. 
For each $i\in \{1,\,\cdots,\,N\}$, let $(a_{i,k})_{k\in\N}$ be the sequence of real numbers such that
$$\hat{u}(kT,a_{i,k})=U_i(0,0)   \,\,\hbox{ for all }  \,k\in\N.$$
Since $U_i(t,x)$ is steeper than any other entire solution between $0$ and $p$,  
it follows from Lemmas \ref{convege-ak} and \ref{homo-ptw} that for any $t\geq 0$, 
\begin{equation}\label{space-inva-converge}
\hat{u}(t+kT,x+a_{i,k}) \to U_i(t,x) \,\hbox{ as } k\to\infty  \hbox{ in } L^{\infty}_{loc}(\R). 
\end{equation}
Since 
\begin{equation}\label{U-i-mT}
U_i(\cdot,\cdot)=U_i(\cdot+mT,\cdot+c_imT)\, \hbox{ for all }\,m\in\Z,
\end{equation}
\eqref{space-inva-converge} implies that 
\begin{equation}\label{u-etai-Ui}
\hat{u}(t,x+ a_{i, \lfloor t/T \rfloor}) - U_i(t,x+c_i\lfloor t/T \rfloor T)\to 0  \,\hbox{ as } t\to\infty  \hbox{ in } L^{\infty}_{loc}(\R), 
\end{equation}
where $\lfloor t/T \rfloor$ is the floor function of $t/T$, that is, the greatest integer not larger than $t/T$.

For each $i\in \{1,\,\cdots,\,N\}$, let $\eta_i: [0,\infty)\to \R$, $t\mapsto \eta_i(t)$ be a $C^1([0,\infty))$ function satisfying 
\begin{equation}\label{defi-eta}
\eta_i(t)+c_i\lfloor t/T \rfloor T-a_{i, \lfloor t/T \rfloor}\to 0 \,\,\hbox{ as }\,\, t\to\infty. 
\end{equation}
Now we want to show that $(\eta_i)_{1\leq i\leq N}$ are the desired functions satisfying statements (i)-(iii).

Notice from \eqref{space-inva-converge} and \eqref{U-i-mT} that $\lim_{k\to\infty} (a_{i,k+1}- a_{i,k})= c_iT$. This implies that $$ \frac{c_i\lfloor t/T \rfloor T-a_{i, \lfloor t/T \rfloor} }{t} = \frac{\lfloor t/T \rfloor T\left( c_i-\frac{a_{i,\lfloor t/T \rfloor} }{T\lfloor t/T \rfloor}\right)}{t} \to\, 0 \,\,\hbox{ as }\,\, t\to\infty.  $$  
It then follows directly from \eqref{defi-eta} that $\eta_i(t)/t\to 0$ as $t\to\infty$. Thus, statement (i) is proved.

Next, from \eqref{space-inva-converge} again, we see that 
$$a_{i+1,k} - a_{i,k} \to \infty\,\, \hbox{ as }\,\, k\to\infty \,\,\hbox{ for }\,\, i\in \{1,\,\cdots,\,N-1\}.$$
This together with \eqref{defi-eta} immediately yields that 
$$\eta_{i+1}(t)-\eta_{i}(t) \to \infty \,\hbox{ as }\, t\to\infty \,\,\hbox{ whenever }\,\, c_i=c_{i+1}. $$
Thus, statement (ii) is obtained.

It remains to show the uniform convergence in statement (iii). Note that each $U_i$ satisfies
$$\lim_{x\to-\infty} U_{i}(t,x+c_it)= p_{i-1}(t) \quad\hbox{and}\quad \lim_{x\to\infty} U_{i}(t,x+c_it)= p_{i}(t)\,\,\hbox{ uniformly in }\, t\in\R. $$
Then given any small $\epsilon>0$, there exists some $M>0$ such that 
\begin{equation}\label{asymptotic-Ui}
U_{i}(t,c_it+M) \leq p_i(t)+\frac{\epsilon}{2}, \quad U_{i}(t,c_it-M) \geq p_{i-1}(t)-\frac{\epsilon}{2} \,\,\hbox{ for } \,t\in\R.
\end{equation}
By \eqref{u-etai-Ui} and \eqref{defi-eta}, we can find $T_0>0$ large enough such that for $t\geq T_0$, 
\begin{equation}\label{local-con}
\left | \hat{u}(t,x) - U_i(t,x-\eta_i(t)) \right| \leq \frac{\epsilon}{2} \quad\hbox{ for } \, c_it+\eta_i(t) -M\leq x\leq c_it+\eta_i(t)+ M.
\end{equation}
This together with \eqref{asymptotic-Ui} implies that 
\begin{equation*}
\hat{u}(t,c_it+\eta_i(t)+M) \leq p_i(t)+\epsilon, \quad \hat{u}(t,c_it+\eta_i(t)-M) \geq p_{i-1}(t)-\epsilon  \,\,\hbox{ for } \,t\geq T_0.
\end{equation*}
Since $\hat{u}(t,x)$ is decreasing in $x\in\R$, it then follows that for each $i\in \{2,\,\cdots,\,N\}$,
\begin{equation}\label{esti-out-m}
-\epsilon\leq \hat{u}(t,x) -p_{i-1}(t) \leq \epsilon\,\,\hbox{ for }\,\, c_{i-1}t+\eta_{i-1}(t) +M\leq x\leq c_it+\eta_i(t)- M,\,\,t\geq T_0,
\end{equation}
 that 
\begin{equation}\label{right-side}
0<\hat{u}(t,x) \leq \epsilon  \,\,\hbox{ for } \,\,x\geq c_Nt+\eta_N(t)+M, \,t\geq T_0,
\end{equation}
and that 
\begin{equation}\label{left-side}
p(t)-\epsilon \leq \hat{u}(t,x) <p(t)  \,\,\hbox{ for } \,\,x\leq c_1t+\eta_1(t)-M, \,t\geq T_0.
\end{equation}

Combining the inequalities \eqref{local-con}-\eqref{left-side}, we obtain that 
for all $t\geq T_0$ and $x\in\R$, 
\begin{equation*}
\left | \hat{u}(t,x)- \left(  \displaystyle\sum_{i=1}^{N} U_i\big(t,x-\eta_i(t)\big)- \sum_{i=1}^{N} p_i(t)    \right)\right | \leq N\epsilon.
\end{equation*}
This immediately gives statement (iii). The proof is thus complete. 
\end{proof}


\SE{Asymptotic behavior of solutions with initial data satisfying (H2)}
In this section, we study the asymptotic behavior of solutions of \eqref{E} with initial data satisfying (H2), and prove Theorems \ref{converge2} and \ref{converge2-2}. 
Throughout this section, we always put 
\begin{equation}\label{give-a-pm}
u_0(x)=p(0)\, \hbox{ for } \,\, x\in  (-\infty, a_-],\quad  u_0(x)=0\, \hbox{ for } \, x\in  [a_+,\infty) 
\end{equation}
for some $a_+>a_-$. 

Let $((p_i)_{0\leq i\leq N},(U_i,c_i)_{1\leq i\leq N})$ be the minimal propagating terrace connecting $0$ to $p$.  Since it is only unique up to spatial shifts, for definiteness, we normalize it as follows: 
\begin{equation}\label{normalize}
U_i(0,0)=\frac{p_{i-1}(0)+p_{i}(0)}{2} \,\,\hbox{ for each }\, i=1,\,\cdots,N.
\end{equation}
With this normalization, each $U_i$ is uniquely determined, and we will assume this in our discussion below. 

The following lemma is fundamental in this section. 

\begin{lemma}\label{parallel-terrace} 
Let $u(t,x)$ be the solution of \eqref{E} with $u_0$ satisfying {\rm (H2)}. Then for every $w\in\Omega(u)$, either of the following alternatives holds:
\begin{itemize}
\item [(a)]  $w\equiv p_i$ for some integer $0\leq i\leq N$;
\vskip 3pt
\item [(b)]  $w(t,x)$ satisfies
\begin{equation}\label{trapp-w}
U_i(t, x+\xi_0-a_-) \leq w(t,x) \leq U_i(t, x+\xi_0-a_+) \,\,\hbox{ for } t\in\R,\,x\in\R,
\end{equation}
for some integer $1\leq i\leq N$ and some $\xi_0\in\R$, where $a_{\pm}\in\R$ are given in \eqref{give-a-pm}. 
\end{itemize}
Moreover, we have $\{p_i\}_{0\leq i\leq N} \subset \Omega(u)$.
\end{lemma}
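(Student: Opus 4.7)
The plan is to sandwich $u$ between the Heaviside solutions $\hat u(t,x;a_-)$ and $\hat u(t,x;a_+)$ (this is the comparison principle together with (H2)), and then pass to the limit along any sequence $(k_j,x_j)$ realizing $w$ by exploiting Theorem \ref{converge1} applied to each bound. A useful observation is that $\hat u(t,x;a_+)=\hat u(t,x-(a_+-a_-);a_-)$, so the upper bound is merely a rigid spatial shift of the lower bound. Denoting the drift functions produced by Theorem \ref{converge1} for $\hat u(\cdot;a_-)$ by $\eta_i^-$, we have for each $i$ and each $t$,
\begin{equation*}
\hat u(t+k_jT,x+x_j;a_-)\;\approx\;\sum_{i=1}^N U_i\bigl(t,\,x+x_j-c_ik_jT-\eta_i^-(t+k_jT)\bigr)\;-\;\sum_{i=1}^N p_i(t),
\end{equation*}
using the $T$-periodicities $U_i(t+kT,y)=U_i(t,y-c_ikT)$ and $p_i(t+kT)=p_i(t)$.

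Writing $z_i^-(j):=x_j-c_ik_jT-\eta_i^-(t+k_jT)$, the next step is to pass to a subsequence so that each $z_i^-(j)$ converges in $[-\infty,+\infty]$. The crucial input is the separation property: when $c_i<c_{i+1}$, the linear gap $(c_{i+1}-c_i)k_jT$ dominates the sublinear drifts $\eta_i^-=o(t)$; when $c_i=c_{i+1}$, part (ii) of Theorem \ref{converge1} gives $\eta_{i+1}^--\eta_i^-\to\infty$. Either way, consecutive front locations diverge, and hence \emph{at most one} index $i_*$ can have $z_{i_*}^-(j)$ converging to a finite limit $\xi\in\mathbb R$. All other $z_i^-(j)$ tend to $\pm\infty$, and their relative ordering forces $z_i^-(j)\to+\infty$ for $i<i_*$ (giving $U_i\to p_i$) and $z_i^-(j)\to-\infty$ for $i>i_*$ (giving $U_i\to p_{i-1}$).

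A direct telescoping of the resulting sum shows:
\begin{equation*}
\lim_{j\to\infty}\hat u(t+k_jT,x+x_j;a_-)
=\begin{cases} p_{i_0}(t), & \text{if no } z_i^- \text{ is bounded (with } i_0 \text{ the cutoff index)},\\
U_{i_*}(t,x+\xi), & \text{if } z_{i_*}^-(j)\to\xi.
\end{cases}
\end{equation*}
Because the upper bound is just a $(a_+-a_-)$-shift, its limit along the same subsequence is the same expression shifted. In the first case the squeeze forces $w\equiv p_{i_0}$, giving alternative (a). In the second case, $w$ is trapped between $U_{i_*}(t,x+\xi)$ and $U_{i_*}(t,x+\xi-(a_+-a_-))$; setting $\xi_0:=\xi+a_-$ yields exactly the inequalities \eqref{trapp-w} of alternative (b). The main technical step here is the telescoping computation and the careful verification that one can indeed take subsequences simultaneously for $a_-$ and $a_+$ (which is automatic once one observes that the two families of front positions differ by the fixed constant $a_+-a_-$, up to the $o(1)$ error from Theorem \ref{converge1}).

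It remains to realize every $p_i$ as an element of $\Omega(u)$. For $0<i_0<N$, choose $x_j$ as the midpoint of the two consecutive front positions $c_{i_0}k_jT+\eta_{i_0}^-(k_jT)$ and $c_{i_0+1}k_jT+\eta_{i_0+1}^-(k_jT)$; by the same separation used above, $z_{i_0}^-(j)\to+\infty$ and $z_{i_0+1}^-(j)\to-\infty$, so both bounds converge locally uniformly to $p_{i_0}$, and the squeeze yields $u(t+k_jT,x+x_j)\to p_{i_0}(t)$, i.e.\ $p_{i_0}\in\Omega(u)$. For $i_0=0$ (resp.\ $i_0=N$) take $x_j$ far to the left of (resp.\ right of) all fronts, e.g.\ $x_j=c_1k_jT+\eta_1^-(k_jT)-k_jT$ (resp.\ $c_Nk_jT+\eta_N^-(k_jT)+k_jT$); the same argument delivers $p_0=p$ (resp.\ $p_N=0$). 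I expect the main obstacles to be (a) the bookkeeping needed to extract a common subsequence for the $a_-$ and $a_+$ bounds, and (b) checking that the convergence of the Heaviside bounds actually takes place in $L^\infty_{loc}$ (not just pointwise) — but both are routine consequences of Theorem \ref{converge1}(iii) and parabolic regularity.
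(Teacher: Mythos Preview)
Your proof is correct and rests on the same sandwich idea as the paper---comparison with $\hat u(\cdot;a_\pm)$ followed by passage to the limit---but the execution differs. The paper avoids your entire telescoping computation by invoking the first conclusion of Theorem~\ref{converge1}, namely the explicit description \eqref{Omegaset} of $\Omega(\hat u)$: after extracting a subsequence so that $\hat u(t+k_jT,x+x_j;0)\to\hat w(t,x)$, one knows at once that $\hat w$ is either some $p_i$ or some shift of some $U_i$, and the sandwich $\hat w(t,x-a_-)\le w(t,x)\le\hat w(t,x-a_+)$ gives the dichotomy in one line. You instead use only statement~(iii) of Theorem~\ref{converge1} (the uniform convergence formula) and recover the structure of the limit by analyzing the front positions $z_i^-(j)$ and telescoping the sum of $U_i$'s---valid, but this essentially re-derives \eqref{Omegaset} along the chosen subsequence. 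The paper's route is shorter; yours is more explicit and does not cite the $\Omega$-limit characterization as a black box. One small cleanup: your $z_i^-(j)$ as written depends on $t$ through $\eta_i^-(t+k_jT)$, but this is harmless since the construction of $\eta_i^-$ in the proof of Theorem~\ref{converge1} ensures $\eta_i^-(t+k_jT)-\eta_i^-(k_jT)\to 0$ for each fixed $t$.
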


\begin{proof}
Let $(k_j)_{j\in\N}\subset\N$ and $(x_{j})_{j\in\N}\subset\R$ be the sequences such that 
\begin{equation}\label{def-omega-s4}
u(t+k_j T,x+x_{j}) \to w(t,x) \,\,\hbox{ as } \,j\to\infty  \,\hbox{ in } C^{1}(\R^2).
\end{equation} 
Since $u_0$ satisfies (H2), it follows easily from the comparison principle that, for each $j\in\N$,  
\begin{equation}\label{hatu-compare}
 \hat{u}(t+k_jT,x+x_{j};a_{-}) \leq  u(t+k_jT,x+x_{j}) \leq \hat{u}(t+k_jT,x+x_{j};a_{+}) 
 \end{equation}
for $t\geq 0$ and $x\in\R$. Here $\hat{u}(t,x;a_{\pm})$ denote the solutions of \eqref{E} with initial functions $p(0)H(a_{\pm}-x)$.   

By standard parabolic estimates and possibly up to a subsequence, we may assume that 
  $$ \hat{u}(t+k_jT,x+x_{j};0) \to  \hat{w}(t,x)\,\hbox{ as } j\to\infty \,\hbox{ in } C^{1}(\R^2),$$
where $\hat{w}(t,x)$ is an entire solution of \eqref{equation}. Clearly, $\hat{w}(t,x)$ is an $\Omega$-limit solution of
 $\hat{u}(t,x;0)$, and  
  $$ \hat{u}(t+k_jT,x+x_{k_j};a_{\pm}) \to  \hat{w}(t,x-a_{\pm})\,\hbox{ as } j\to\infty \,\hbox{ in } C^{1}(\R^2).$$
Passing to the limit as $j\to\infty$ in \eqref{hatu-compare}, we obtain   
\begin{equation*}
\hat{w}(t,x-a_-)\leq w(t,x)\leq \hat{w}(t,x-a_+) \,\hbox{ for } t\in\R,\,x\in\R. 
\end{equation*}
Furthermore, it follows directly from Theorem \ref{converge1} that 
\begin{equation*}
\hat{w}\in \left\{ U_i(\cdot,\cdot+\xi): \xi\in\R,\, 1\leq i\leq N \right\} \cup  \left\{ p_i: 0\leq i\leq N \right\}.
\end{equation*}
This clearly gives the alternatives of the present lemma. 

Similarly as above, one can prove that for any $\Omega$-limit solution $\hat{w}$ of the solution $\hat{u}(t,x;0)$, there exists $\tilde{w}\in \Omega(u)$ such that 
$$ \hat{w}(t,x-a_-)\leq \tilde{w}(t,x)\leq \hat{w}(t,x-a_+) \,\hbox{ for } t\in\R,\,x\in\R. $$ 
In particular, if $ \hat{w} \equiv p_i$ for some $0\leq i\leq N$, then $\tilde{w} \equiv p_i$. This immediately implies the relation $\{p_i\}_{0\leq i\leq N} \subset \Omega(u)$. The proof of Lemma \ref{parallel-terrace}  is thus complete. 
\end{proof}

The reaming part of this section is organized as follows. In Subsection 4.1, we will show that if $w(t,x)$ is an $\Omega$-limit solution satisfying case (b) of Lemma \ref{parallel-terrace}, then it is spatially decreasing. This immediately gives the first part of the conclusions of Theorem \ref{converge2}. The remaining conclusions will be proved in Subsection 4.2. Subsection 4.3 is concerned with the proof of Proposition \ref{unique1} and Theorem \ref{converge2-2}.

\subsection{Monotonicity of $\Omega$-limit solutions}

This subsection is devoted to the proof of the following proposition:

\begin{prop}\label{monotonicity}
Let $u(t,x)$ be the solution of \eqref{E} with $u_0$ satisfying {\rm (H2)} and let $w\in\Omega(u)$ satisfy case {\rm (b)} of Lemma \ref{parallel-terrace}. Then for each $t\in\R$,  $w(t,x)$ is decreasing in $x\in\R$.
\end{prop}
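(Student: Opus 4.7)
The plan is to derive the strict monotonicity of $w(t,\cdot)$ directly from the steepness of $U_i$ established in Theorem~\ref{decomposition-existence}(ii). The key observation is that the envelope in case (b) forces $w$ to always take values inside the range of $U_i(t,\cdot)$, after which the steepness inequality becomes a decisive piece of information about $w$ itself.

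First, I will check that $w(t,x)\in (p_i(t),p_{i-1}(t))$ strictly for every $(t,x)\in\R^2$. The periodic traveling wave $U_i(t,\cdot)$ is strictly monotone on $\R$ (it is built in Subsection~3.2 as the $C^1$ limit of strictly monotone Heaviside solutions, and strictness follows from the strong maximum principle applied to $\partial_x U_i$), and its range on $\R$ is precisely the open interval $(p_i(t),p_{i-1}(t))$—the limits at $x=\pm\infty$ are never attained at finite points, again by the strong maximum principle applied to the linear parabolic equations satisfied by $U_i-p_i$ and $U_i-p_{i-1}$. Combining this with the case (b) bounds \eqref{trapp-w}, both enveloping shifts $U_i(t,\cdot+\xi_0-a_\pm)$ take values strictly in $(p_i(t),p_{i-1}(t))$, so $w$ does as well. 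As an immediate consequence, for each fixed $(t,x_0)$ there is a unique $y_0=y_0(t,x_0)\in\R$ with $U_i(t,y_0)=w(t,x_0)$.

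Second, I will invoke Theorem~\ref{decomposition-existence}(ii): $U_i$ is steeper than every other entire solution of \eqref{equation} lying between $0$ and $p$, so in particular $U_i$ is steeper than $w$ in the sense of Definition~\ref{steepness}. Applying the steepness inequality with $x_1=y_0$, $x_2=x_0$, and setting $\alpha:=w(t,x_0)=U_i(t,y_0)$, we obtain
\[
U_i(t,y_0+y)\;\geq\; w(t,x_0+y)\quad\text{for all } y>0.
\]
Since $U_i(t,\cdot)$ is strictly decreasing, $U_i(t,y_0+y)<U_i(t,y_0)=\alpha$, and chaining the two inequalities yields the strict bound $w(t,x_0+y)<w(t,x_0)$ for every $y>0$. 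The symmetric estimate for $y<0$ follows from the other half of the steepness relation, giving $w(t,x_0+y)>w(t,x_0)$. Since $(t,x_0)$ is arbitrary, $w(t,\cdot)$ is strictly decreasing on $\R$.

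I do not anticipate a serious obstacle in this argument: essentially all the heavy lifting has already been absorbed into Theorem~\ref{decomposition-existence}(ii), and no new zero-number or maximum-principle argument is needed beyond what is built into the "steeper than" relation. The two points to be slightly careful about are (a) verifying that the range of $U_i(t,\cdot)$ is exactly the open interval $(p_i(t),p_{i-1}(t))$, which is why the monotonicity chain above is never vacuous; and (b) noticing that the argument transparently covers the degenerate sub-case in which $w$ coincides with a spatial translate of $U_i$, since any such translate is itself strictly decreasing. No non-degeneracy hypothesis on $f$ is required at this stage.
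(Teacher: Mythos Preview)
Your argument has a genuine gap in the direction of the steepness inequality. You read off from Definition~\ref{steepness} that $U_i(t,y_0+y)\geq w(t,x_0+y)$ for $y>0$, and then chain this with $U_i(t,y_0+y)<\alpha$ to force $w(t,x_0+y)<\alpha$. However, the displayed inequalities in Definition~\ref{steepness} are written with the signs reversed; what is actually \emph{proved} in Lemma~\ref{steepest} (and hence in Theorem~\ref{decomposition-existence}(ii)) is the opposite orientation. To see this, trace through the Heaviside argument: for any $t>0$, any shift $a$, and any entire solution $v$ with $0<v<p$, the difference $\hat u(t,\cdot+a)-v(t,\cdot)$ is positive near $x=-\infty$ (where $\hat u\to p(t)>v$) and negative near $x=+\infty$ (where $\hat u\to 0<v$), so its single sign change goes from $+$ on the left to $-$ on the right. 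Passing to the limit, the same pattern holds for $U_i$ versus $w$: at any aligned crossing, $U_i$ lies \emph{above} $w$ to the left and \emph{below} to the right, i.e.\ $U_i(t,y_0+y)\leq w(t,x_0+y)$ for $y>0$. With this correct direction, your chain yields only the lower bound $w(t,x_0+y)\geq U_i(t,y_0+y)$, which says nothing about $w(t,x_0+y)$ relative to $w(t,x_0)$.

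More to the point, steepness of $U_i$ over $w$ cannot by itself force monotonicity of $w$. Theorem~\ref{decomposition-existence}(ii) asserts that $U_i$ is steeper than \emph{every} entire solution between $0$ and $p$, and non-monotone entire solutions certainly live in that range (for instance merging-front entire solutions in the bistable case). So the implication ``$U_i$ steeper than $w$ $\Rightarrow$ $w$ decreasing'' is simply false in general, and no rearrangement of the two steepness inequalities will rescue it. This is why the paper's proof of Proposition~\ref{monotonicity} takes a genuinely different route: it first controls, at finite times, the zeros of $\partial_x u(t,\cdot)$ and of $u(t,\cdot)-u(t,\cdot+z)$ via Lemmas~\ref{large-x} and~\ref{zero-shift}, then passes simplicity of these zeros to the limit $w$ using Lemma~\ref{zero3}, and finally rules out multiple sign changes of $\partial_x w(t,\cdot)$ by an intermediate-value argument on the spatial trajectory of $w$ (Step~3 there).
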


Before giving the proof, let us first show some properties of the solution $u(t,x)$ at any finite time, which will be needed later. 

\begin{lemma}\label{large-x}
Let $u(t,x)$ be the solution of \eqref{E} with $u_0$ satisfying {\rm (H2)}. Then for each $t>0$, 
\begin{equation*}
\partial_x u(t,x) < 0 \,\,\hbox{ for }\, \, x\in (-\infty, a_-)\cup (a_+,\infty),
\end{equation*}
where $a_{\pm}$ are the constants given in \eqref{give-a-pm}. 
\end{lemma}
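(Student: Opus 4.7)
The plan is to use a reflection argument across any vertical line $x = a$ lying strictly outside $[a_-, a_+]$, combined with the strong parabolic maximum principle and Hopf's boundary point lemma. I will describe the argument for $a > a_+$; the case $a < a_-$ is entirely symmetric. The conceptual point is that, although $u_0$ happens to be constant on the half-line $[a_+, \infty)$, the full initial datum is \emph{not} symmetric about any point $a > a_+$, and this broken symmetry is exactly what Hopf's lemma converts into strict spatial monotonicity at $x = a$ for every $t > 0$.

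Concretely, for each fixed $a > a_+$ I would introduce $g(t, x) := u(t, x) - u(t, 2a - x)$ on $[0, \infty) \times [a, \infty)$. Since $(t, y) \mapsto u(t, 2a - y)$ also solves \eqref{equation}, the mean-value theorem applied to $f(t, \cdot)$ shows that $g$ satisfies a linear equation $g_t = g_{xx} + c(t, x)\, g$ with a bounded coefficient $c$. On the parabolic boundary: $g(t, a) \equiv 0$ by construction, while for $x > a > a_+$ the assumption \eqref{give-a-pm} gives $u_0(x) = 0$ and $u_0(2a - x) \geq 0$, so $g(0, x) \leq 0$; moreover $g(0, x) = -p(0) < 0$ for every $x \geq 2a - a_-$ (where $2a - x \leq a_-$), which in particular shows $g(0, \cdot) \not\equiv 0$ on $(a, \infty)$. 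I would then invoke the maximum principle for bounded solutions of linear parabolic equations on unbounded domains to conclude $g \leq 0$ throughout the half-strip, upgrade this to $g < 0$ in the interior via the strong maximum principle, and finally apply Hopf's lemma at the lateral boundary $x = a$ to deduce $g_x(t, a) < 0$ for every $t > 0$. A direct computation yields $g_x(t, a) = 2\, \partial_x u(t, a)$, so $\partial_x u(t, a) < 0$. Since $a > a_+$ was arbitrary, this gives the claim on $(a_+, \infty)$; the analogous construction with reflection about any $a < a_-$ handles $(-\infty, a_-)$.

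The only step that is not entirely routine is the maximum principle on the unbounded half-strip, where one must rule out escape of mass to $x = +\infty$. This is standard since $g$ is bounded (by $2 \max_{t \in \R} p(t)$) and $c$ is bounded (because $u$ stays in a bounded set on which $f_u$ is continuous), so comparison with a growth-killing auxiliary function such as $\varepsilon\, e^{Kt + \delta x}$ and then letting $\varepsilon \to 0^+$ suffices. Every other ingredient reduces to a routine verification once the reflected function $g$ has been written down.
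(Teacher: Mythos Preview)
Your proposal is correct and follows essentially the same reflection argument as the paper: define the difference between $u$ and its reflection about a point outside $[a_-,a_+]$, use the strong maximum principle on the resulting linear equation to get a strict sign, and apply Hopf's lemma at the axis of reflection to conclude $\partial_x u<0$ there. You are in fact slightly more careful than the paper in justifying the maximum principle on the unbounded half-line via a Phragm\'en--Lindel\"of type barrier, which the paper takes for granted.
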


\begin{proof}
This lemma can be proved by a simple reflection argument. Fix any $x_0\in (-\infty,a_-)$ and define 
$$v(t,x):=u(t,x)-u(t,2x_0-x)\,\,\hbox{ for }  -\infty<x\leq x_0,\,t\geq 0. $$ 
Since $u(t,x)$ is bounded,  $f(t,u)$ is $C^1$-smooth in $u$ and $T$-periodic in $t$, $v(t,x)$ satisfies 
$$v_t=v_{xx}+c(t,x)v\,\,\hbox{ for }  -\infty<x< x_0,\,t> 0, $$  
with some bounded function $c(t,x)$. Moreover, it is easily checked that 
$$v(t,x_0)=0 \,\hbox{ for } t>0,\quad v(0,x)\geq  0   \,\hbox{ for } x<x_0,  \quad \hbox{and}\quad v(0,x)\not\equiv 0.  $$ 
Then the strong maximum principle implies that $v(t,x)>0$ for $t>0$, $x<x_0$. It further follows from the Hopf boundary lemma that $\partial_x v(t,x_0)<0$, and hence, $\partial_x u(t,x_0)<0$ for $t>0$. Since $x_0$ can be chosen arbitrarily in $(-\infty,a_-)$, one obtains $\partial_x u(t,x) < 0$ for $x<a_-$, $t>0$. The case $x>a_+$ can be proved in a similar way. The proof of Lemma \ref{large-x} is complete.
\end{proof}

Recall that $\mathcal{Z}(w)$ denotes the number of sign changes of a real-valued function $w(x)$ defined on $\R$. 
The following lemma is an application of the zero-number theory introduced in Subsection 2.1.

\begin{lemma}\label{zero-shift}
Let $u(t,x)$ be the solution of \eqref{E} with $u_0$ satisfying {\rm (H2)}. Then for any $z\in \R$, we have 
\begin{equation}\label{zero-0-shift}
\mathcal{Z}[u(t,\cdot)-u(t,\cdot+z)]<\infty \,\hbox{ for } \,t>0,
\end{equation}
and 
\begin{equation}\label{zero-T-shift}
\mathcal{Z}[u(t+T,\cdot)-u(t,\cdot+z)]<\infty \,\hbox{ for } \,t>0.
\end{equation}
Furthermore, the above two quantities are nonincreasing in $t>0$. 
\end{lemma}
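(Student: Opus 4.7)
My first step is to verify that both $v(t,x):=u(t,x)-u(t,x+z)$ and $\tilde{v}(t,x):=u(t+T,x)-u(t,x+z)$ satisfy linear parabolic equations of the form $w_t=w_{xx}+c(t,x)w$ with bounded coefficient on $(0,\infty)\times\R$, so that Lemma \ref{zero1} applies to each. For $v$ this uses the $C^1$ smoothness of $f$ in $u$ together with the mean value theorem; for $\tilde{v}$ the $T$-periodicity of $f$ is crucial, since it lets me replace $f(t+T,u(t+T,x))$ by $f(t,u(t+T,x))$ before applying the mean value theorem. Once finiteness at any single $t_0>0$ is established, Lemma \ref{zero1}(ii) immediately yields the nonincreasing property in $t>0$, so the substance of the proof is the finiteness.

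For \eqref{zero-0-shift} the plan is to localize the zero set of $v(t,\cdot)$ to a bounded interval. By Lemma \ref{large-x}, $\partial_x u(t,\cdot)<0$ on $(-\infty,a_-)\cup(a_+,\infty)$ for every $t>0$. Taking $z>0$ for concreteness (the case $z<0$ is symmetric and $z=0$ is trivial): for $x<a_--z$ both $x$ and $x+z$ lie in $(-\infty,a_-)$ so $v(t,x)>0$, and for $x>a_+$ both lie in $(a_+,\infty)$ so again $v(t,x)>0$. The zeros of $v(t,\cdot)$ thus sit inside the compact interval $[a_--z,a_+]$, and since they are isolated by Lemma \ref{zero1}(i) they are finite in number.

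The second claim \eqref{zero-T-shift} is the main difficulty, because the monotonicity from Lemma \ref{large-x} is not directly useful: $u(t+T,\cdot)$ and $u(t,\cdot+z)$ are evaluated at different times. I plan instead to obtain definite sign of $\tilde{v}(t,\cdot)$ at $x\to\pm\infty$ through Gaussian-tail estimates. Using (H2), $u$ is sandwiched between the two Heaviside solutions $\hat{u}(\cdot,\cdot;a_\pm)$, and a standard parabolic comparison with the heat kernel (absorbing the sup-bound on $\partial_u f$ into multiplicative exponential factors) yields two-sided Gaussian tail estimates
\[
u(\tau,x)\asymp\Phi\big((a_\pm-x)/\sqrt{2\tau}\big)\quad\text{as }x\to+\infty,
\]
and an analogous estimate for the deficit $p(\tau)-u(\tau,x)$ as $x\to-\infty$. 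The essential observation is that the Gaussian tail at time $t+T$ is strictly wider than that at time $t$, since $\exp(-x^2/(4(t+T)))$ dominates $\exp(-x^2/(4t))$ as $|x|\to\infty$. This forces $\tilde{v}(t,x)>0$ for all $x$ sufficiently large positive and, by the symmetric argument on the deficit, $\tilde{v}(t,x)<0$ for all $x$ sufficiently negative, confining the zeros of $\tilde{v}(t,\cdot)$ to a bounded interval; Lemma \ref{zero1}(i) then closes the argument.

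The Gaussian-tail comparison is clearly the main technical obstacle, since the rest is routine zero-number bookkeeping. A cleaner implementation might avoid explicit heat-kernel estimates by working directly with the Heaviside-solution bounds $\hat{u}(\cdot,\cdot;a_\pm)$, whose tails can be pinched between elementary super- and sub-solutions of Gaussian--exponential form; but the underlying mechanism — that the decay rate of the tail is strictly monotone in the elapsed time — is the same.
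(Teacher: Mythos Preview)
Your proposal is correct, but the two halves trade places with the paper's argument in an interesting way.

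For \eqref{zero-0-shift} your route via Lemma~\ref{large-x} is cleaner than what the paper does. The paper proves this part by heat-kernel ratio estimates: it sandwiches $u$ between $\me^{\pm Kt}\bar u$ with $\bar u$ the free heat solution, then computes $u(t,x)/u(t,x+z)\to\infty$ as $x\to\infty$ (and the analogous statement for the deficit as $x\to-\infty$). Your observation that the strict monotonicity on $(-\infty,a_-)\cup(a_+,\infty)$ already confines the zeros to $[a_--z,a_+]$ is more direct and avoids any kernel computation.

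For \eqref{zero-T-shift} the situation is reversed. Your Gaussian-tail plan works---the leading $x^2$ term in the exponent is $x^2\big(\tfrac{1}{4t}-\tfrac{1}{4(t+T)}\big)>0$, which indeed forces the ratio to blow up---but the paper takes a shorter path that you may want to know about. It exploits the initial data directly: at $t=0$ one has $u(T,x)>0=u_0(x+z)$ for $x\geq a_+$ and $u(T,x)<p(0)=u_0(x+z)$ for $x\leq a_--z$ (taking $z\geq 0$). These strict inequalities persist on the respective half-lines for $t\in(0,\delta]$ by the comparison principle applied on $\{x\geq a_+\}$ and $\{x\leq a_--z\}$ with the obvious boundary inequalities, so $\mathcal Z[\tilde v(t,\cdot)]<\infty$ for small $t$; Lemma~\ref{zero1}(ii) then propagates this to all $t>0$. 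This bypasses any quantitative tail analysis, at the cost of needing the exact Heaviside structure of $u_0$ at $\pm\infty$---which is precisely what \eqref{give-a-pm} provides.
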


\begin{proof}
Let us first prove \eqref{zero-0-shift}.  Notice that if $z=0$, then the result is trivial. Without loss of generality, we may assume that $z> 0$, as the case $z<0$ can be argued analogously. 
Since both $u(t,x)$ and $u(t,x+z)$ are bounded solutions of \eqref{equation}, and since $f(t,u)$ is $C^1$-smooth in $u$ and $T$-periodic in $t$, $u(t,x)-u(t,x+z)$ satisfies a linear equation of the form \eqref{zeroeq} with $c(t,x):=(f(t,u(t,x))-f(t,u(t,x+z)))/(u(t,x)-u(t,x+z))$ being bounded.

Denote by $\bar{u}(t,x)$ the solution of the Cauchy problem 
$$\bar{u}_t=\bar{u}_{xx} \hbox{ for } t>0,\, x\in\R; \quad \bar{u}(0,x)=u_0(x) \hbox{ for } x\in\R. $$
Due to the boundedness of the solution $u(t,x)$, we find some $K>0$ such that $-Ku \leq f(t,u)\leq Ku$ for $t\geq 0$, $x\in\R$.  Then, a simple comparison argument implies that
$$\me^{-Kt} \bar{u}(t,x)\leq u(t,x)\leq  \me^{Kt} \bar{u}(t,x) \,\hbox{ for all }\, t\geq 0,\,x\in\R.    $$
By the assumption that $u_0(x)=0$ on $[a_+,\infty)$, we have  
\begin{equation*}
\begin{split}
\frac{u(x,t)}{u(x+z,t)}
\,&\, \geq \exp (-2Kt) \frac{\bar{u}(x,t)}{ \bar{u}(x+z,t)}\\
\,&\,=\exp (-2Kt) \frac{\displaystyle\int_{-\infty}^{a_+} \hbox{exp}\Big(-\frac{(x-y)^2}{4t}\Big) u_0(y)dy}{\displaystyle\int_{-\infty}^{a_+} \hbox{exp}\Big(-\frac{\big(x-y+z\big)^2}{4t}\Big) u_0(y)dy}\\
\,&\,\geq \exp (-2Kt) \hbox{exp}\Big(\frac{2z(x-a_+)+z^2} {4t}\Big).
\end{split}
\end{equation*}
for all $x>a_+$, $t>0$. Since $z>0$, passing to the limit as $x\to\infty$,  we obtain that for each $t>0$,  
\begin{equation*}
\frac{u(x,t)}{u(x+z,t)}\to \infty \quad \hbox{ as } x\to\infty.
\end{equation*}
Similarly, by using the assumption that $u_0(x)=p(0)$ on $(-\infty, a_-]$, we can conclude that for each $t>0$,  
\begin{equation*}
\frac{p(t)-u(x,t)}{p(t)-u(x+z,t)}\to 0 \quad \hbox{ as } x\to-\infty.
\end{equation*}
Then, for any given $t_0>0$, it is easily checked from the above that there exists $L>0$ such that 
$$u(t_0,x)-u(t_0,x+z)>0 \,\,\hbox{ for }  \, x\in (-\infty,-L] \cup [L,\infty).   $$
Thus, by Lemma \ref{zero1}, 
$$\mathcal{Z}[u(t,\cdot)-u(t,\cdot+z)]<\infty\,\hbox{ for all }\, t\geq t_0,$$ 
and this quantity is nonincreasing in $t\geq t_0$. Since $t_0>0$ is arbitrary,  \eqref{zero-0-shift} is proved.

Let us now turn to the proof of \eqref{zero-T-shift}. Similarly as above, we may assume without loss of generality that $z\geq 0$. Since $f(t,u)$ is $T$-periodic in $t$, $u(t+T,x)$ is also a solution of \eqref{equation}. Then $u(t+T,x)-u(t,x+z)$ satisfies a linear solution of the form \eqref{zeroeq} with bounded coefficient $c(t,x)$. 

Since $0\leq u_0\leq p(0)$, it is clear that $0< u(t,x) < p(t)$ for $t>0$, $x\in\R$. Notice that $u_0$ satisfies \eqref{give-a-pm}. Then we can choose $\delta>0$ sufficiently small such that 
$$u(T+t,a_+) >  u(t,a_++z)>0 \quad \hbox{and} \quad u(T+t,a_--z) <  u(t,a_-)<p(t) $$
for $0<t\leq \delta$. 
Since $u(T,x) > u(0,x+z)$ for $x\geq a_+$, and $u(T,x) < u(0,x+z)$ for $x\leq a_--z$, the comparison principle implies that 
$$u(T+t,x) >  u(t,x+z)\,\hbox{ for } \, 0< t\leq \delta,\, x\geq a_+,   $$
and 
$$u(T+t,x) < u(t,x+z)\,\hbox{ for } \, 0< t\leq \delta,\, x\leq a_--z.   $$
It then follows from Lemma \ref{zero1} that 
$$\mathcal{Z}[u(T+t,\cdot) - u(t,\cdot+z)]<\infty \,\,\hbox{ for all } 0<t\leq \delta,$$
and this quantity is nonincreasing in $t>0$. 
This immediately implies \eqref{zero-T-shift}. The proof of Lemma \ref{zero-shift} is thus complete.  
\end{proof}

We are now prepared to prove Proposition \ref{monotonicity}. 

\begin{proof}[Proof of Proposition \ref{monotonicity}]
Let us denote $\varphi(t,x):=\partial_x w(t,x)$ for $t\in\R$, $x\in\R$. Clearly, $\varphi(t,x)$ is a solution of the following linear equation
$$\varphi_t=\varphi_{xx} +\partial_u f(t,w) \varphi\quad \hbox{ for }t\in\R,\, x\in\R, $$
where $\partial_u f(t,w):=\partial f(t,u) /\partial u |_{u=w}$ is bounded for $t\in\R$, $x\in\R$. 

For clarity, we divide the proof into 3 steps. 

{\bf Step 1:} we show that for each $t\in\R$, $\mathcal{Z}[\varphi(t,\cdot)]<\infty$, and all zeros of $x\mapsto \varphi(t,\cdot) $ are simple. 

Let us first note that the function  $v(t,x):=\partial_x u(t,x)$, defined on $(t,x)\in [1,\infty)\times \R$,  solves a linear equation of the form \eqref{zeroeq} with bounded coefficient.  Moreover, by Lemma \ref{large-x}, for each $t\geq 1$, the function $x\mapsto v(t,x)$ does not change sign on the set $(-\infty, a_-)\cup (a_+,\infty)$. It then follows from Lemma \ref{zero1} (i) and (ii) that $\mathcal{Z}[v(t,\cdot)]<\infty$, and it is nonincreasing in $t\geq 1$. 
Therefore, it is a constant for all large $t$, and by Lemma \ref{zero1} (iii), we have 
\begin{equation}\label{simple-v}
\hbox{ the function } x\mapsto v(t,x) \hbox{ has only simple zeros on $\R$ for all large $t$}. 
\end{equation}

Let $(k_j)_{j\in\N}\subset\N$ and $(x_{j})_{j\in\N}\subset\R$ be the sequences such that \eqref{def-omega-s4} holds for $w(t,x)$. Then we have  
\begin{equation}\label{derivative-con}
v(t+k_jT,x+x_{j})\to \varphi(t,x)\, \hbox{ as }\,  j\to\infty,  
\end{equation}
where the convergence holds in  $L_{loc}^{\infty}(\R^2)$. For any $t\in\R$, since $\mathcal{Z}[v(t+k_jT,\cdot)]<\infty$ for all sufficiently large $k_j$, it follows from the semi-continuity of $\mathcal{Z}$ (see \eqref{semi-continuous}) that $\mathcal{Z}[\varphi(t,\cdot)]<\infty$.

Furthermore, by standard parabolic estimates, possibly after extracting a subsequence, we know that the convergence \eqref{derivative-con} also takes place in $C^{1}(\R^2)$.  
In view of this and \eqref{simple-v}, and applying Lemma \ref{zero3}, we see that for each $t\in\R$,  either $\varphi(t,x)\equiv 0$ on $\R$ or $x\mapsto \varphi(t,x)$ has only simple zeros on $\R$. The former is impossible, as we have assumed that $w(t,x)$ satisfies \eqref{trapp-w}, thus it cannot be spatially homogeneous. Consequently, $x\mapsto\varphi(t,x)$ has only simple zeros on $\R$.  This ends the proof of Step 1.

{\bf Step 2:}  We show that for any $z\neq 0$ and $t\in\R$, all zeros of $x\mapsto w(t,x)-w(t,x+z)$ are simple. 

The proof follows from similar arguments to those used in Step 1, therefore we only give its outline.  Let $z\neq 0$ be fixed. Thanks to \eqref{zero-0-shift},  it follows from  Lemma \ref{zero1} that, for all large $t>0$, the function $x\mapsto u(t,x)-u(t,x+z)$ has only finitely many zeros on $\R$ and all of them are simple. 
Note that
 \begin{equation*}
u(t+k_jT,x+x_{j}) -u(t+k_jT,x+z+x_{j})  \to w(t,x)-w(t,x+z)\, \hbox{ as }\,  j\to\infty 
\end{equation*}
in $C^{1}(\R^2)$, and that $w(t,x)-w(t,x+z)$ solves a linear equation of the form \eqref{zeroeq} with bounded coefficient. Then by using Lemma \ref{zero3} and the assumption that $w$ satisfies \eqref{trapp-w}, we obtain the desired result of Step 2. 

{\bf Step 3:} We show that for each $t\in\R$, $w(t,x)$ is decreasing in $x\in\R$. 

Assume by contradiction that there exists some $t_0\in \R$ such that $w(t_0,x)$ is not decreasing in $x\in\R$. 
Notice that
$$\lim_{x\to\infty}w(t_0,x)=p_i(t_0)<p_{i-1}(t_0) = \lim_{x\to-\infty} w(t_0,x). $$
It then follows from Step 1 that 
$$1< \mathcal{Z}[\varphi(t_0,\cdot)] <\infty, $$
and all zeros of $x\mapsto \varphi(t_0,\cdot) $ are simple. Denote by $\xi_1$ the minimum of these zeros, and define 
$$x_1:=\min\{x>\xi_1: w(t_0,x)=w(t_0,\xi_1),\, \varphi(t_0,x)\neq 0 \}.$$
Clearly, $x_1\in (\xi_1,\infty)$ is well defined and $\varphi(t_0,x_1)<0$. 
Furthermore, let $\xi_{2}$ be the maximum of the zeros of $\varphi(t_0,\cdot)$ to the left of $x_1$, and let $x_2$ be the point such that $x_2<\xi_1$ and $w(t_0,x_2)=w(t_0,\xi_{2})$. It is then easily checked that $x_2<\xi_1<\xi_2<x_1$, 
$$ \varphi(t_0,x)<0 \,\hbox{ for } x\in [x_2,\xi_1)\cup (\xi_{2},x_1],  $$
and 
$$w(t_0,x_2)= w(t_0,\xi_{2}),\quad   w(t_0,\xi_1)= w(t_0,x_1). $$
This implies in particular that for each $x\in [x_2,\xi_1]$, there exists a unique $y\in  [\xi_2,x_1]$ such that $w(t_0,x)= w(t_0,y)$. Thus, we can find a continuous function $\gamma: [x_2,\xi_1] \to [\xi_2,x_1]$ satisfying $w(t_0,x)= w(t_0,\gamma(x))$. It is easily seen that $\gamma(x_2)=\xi_2$, $\gamma(\xi_1)=x_1$, and that
$$\varphi(t_0,x_2)- \varphi(t_0,\gamma(x_2))<0\quad\hbox{and}\quad\varphi(t_0,\xi_1)- \varphi(t_0,\gamma(\xi_1))>0.$$
From this, it immediately follows that there exists some $x_0\in (x_2,\xi_1)$ such that 
$$\varphi(t_0,x_0)= \varphi(t_0,\gamma(x_0)).$$ 
Therefore, $x_0$ is a degenerate zero of the function $x\mapsto w(t_0,x)-w(t_0,\gamma(x_0)-x_0+x)$, which is a contradiction with the conclusion of Step 2.  

Now we can conclude that for each $t\in\R$, $w(t,x)$ is decreasing in $x\in\R$. The proof of Proposition  \ref{monotonicity} is thus complete.
\end{proof}

\subsection{Completion of the proof of Theorem \ref{converge2}} In this subsection, we prove that elements of ${\omega}(u)$ can be classified as stated in Theorem \ref{converge2}. The key ingredient of our proof is to compare 
the steepness between $w(t,x)$ and $w(t+T,x)$, where $w$ is any $\Omega$-limit solution satisfying case (b) of Lemma \ref{parallel-terrace}. We will show that either $w(t+T,x)$ is equal to a spatial shift of $w(t,x)$, or $w(t+T,x)$ is strictly steeper or strictly less steep than $w(t,x)$. Once we know this, we will be able to prove that $w(t,x)$ is either a periodic traveling wave or it is a heteroclinic solution connecting two periodic traveling waves.

\begin{lemma}\label{steep-l-steep}
Suppose that $w\in\Omega(u)$ satisfies case {\rm (b)} of Lemma \ref{parallel-terrace}. Then one of the following  holds: 
\begin{itemize}
\item[(a)]  $w(t,x)$ is a periodic traveling wave connecting $p_{i}$ to $p_{i-1}$ with wave speed $c_i$ for some $1\leq i \leq N$; 
\vskip 3pt
\item[(b)] $w(t+T,x)$ is strictly steeper than $w(t,x)$; 
\vskip 3pt
\item[(c)] $w(t+T,x)$ is strictly less steep than $w(t,x)$.
\end{itemize}
Furthermore, if case {\rm (b)} (resp. case {\rm (c)}) occurs, then $w(t+kT,x)$ is strictly steeper (resp. strictly less steep) than $w(t,x)$ for all positive integer $k$. 
\end{lemma}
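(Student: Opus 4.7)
The plan is to study the function $\phi_z(t,x) := w(t+T,x) - w(t,x+z)$ for each $z \in \R$, which solves a linear equation of the form \eqref{zeroeq} with bounded coefficient, and to extract the trichotomy from the zero-number structure of $\phi_z$. First I would establish that $\mathcal{Z}(\phi_z(t,\cdot)) < \infty$ at every $t \in \R$ (unless $\phi_z \equiv 0$). Lemma \ref{zero-shift} gives $\mathcal{Z}[u(t+T,\cdot) - u(t,\cdot+z)] < \infty$ for $t > 0$ and non-increasing; combined with the $C^1$ convergence of $u(t+k_j T, x+x_j)$ to $w(t,x)$ along a subsequence (parabolic regularity) and the semi-continuity property \eqref{semi-continuous}, this transfers the finiteness to $\phi_z$.

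The main technical step is sharpening this to $\mathcal{Z}(\phi_z(t,\cdot)) \leq 1$. I would invoke Lemma \ref{ini-steep} applied to $u_1 := \hat{u}(\cdot;a)$, a Heaviside-type solution sandwiching $u$, against $u_2 := \hat{u}(\cdot+T, \cdot;a)$, its time-$T$ shift. Since $u_1(0,\cdot) = p(0) H(a-\cdot)$ takes only the values $\{0, p(0)\}$ while $u_2(0,\cdot) = \hat{u}(T,\cdot;a)$ is continuous with range strictly inside $(0,p(0))$, the steepness hypothesis of Lemma \ref{ini-steep} holds vacuously, yielding that $\hat{u}(t,\cdot;a) - \hat{u}(t+T,\cdot+z;a)$ has at most one simple zero for every $t > 0$, $z \in \R$. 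Using the sandwich $\hat{u}(\cdot;a_-) \leq u \leq \hat{u}(\cdot;a_+)$ together with passage to the limit along the defining sequence of $w$ then transfers this at-most-one bound to $\phi_z$.

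With $\mathcal{Z}(\phi_z(t,\cdot)) \leq 1$ in hand, the trichotomy follows. If $\phi_{z_0} \equiv 0$ for some $z_0$, then $w(t+T,x) \equiv w(t,x+z_0)$, so $w$ is a periodic traveling wave of speed $c := -z_0/T$; the trap of $w$ between translates of $U_i$ given by case (b) of Lemma \ref{parallel-terrace} forces $c = c_i$ (otherwise the trap would break as $n \to \pm\infty$ at $t = nT$), giving case (a) of the present lemma. Otherwise, the strict monotonicity of $w(t,\cdot)$ and $w(t+T,\cdot)$ from Proposition \ref{monotonicity}, together with the common asymptotic limits $p_{i-1}(t)$ and $p_i(t)$ at $\mp\infty$ and the zero-count bound, force the level-set offset $y \mapsto \Delta(t,y) := w(t+T,\cdot)^{-1}(y) - w(t,\cdot)^{-1}(y)$ to be strictly monotone in $y$; its direction of monotonicity distinguishes case (b) from case (c), and is the same for all $t$.

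For the \emph{furthermore} part, assume case (b). Apply Lemma \ref{ini-steep} to the entire continuous solutions $u_1 := w(\cdot+T, \cdot)$ and $u_2 := w(\cdot, \cdot)$ at any base time $t_0$: the strict steepness of $w(t_0+T, \cdot)$ over $w(t_0, \cdot)$ propagates to all $t \geq t_0$, and since the trichotomy gives case (b) at every time, iteration combined with transitivity of strict steepness (immediate from Definition \ref{steepness} and the at-most-one-intersection characterization) yields $w(t+kT, \cdot)$ strictly steeper than $w(t, \cdot)$ for every positive integer $k$; the case (c) argument is symmetric. The main obstacle is the second step, the sharpening of the zero-count bound to $\leq 1$: the Heaviside sandwich constrains two auxiliary differences rather than $\phi_z$ itself, so transferring the bound requires careful zero-tracking and exploitation of simplicity of zeros under the $C^1$ limit.
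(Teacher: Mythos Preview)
Your outline has a genuine gap at the step you yourself flag as the main obstacle: sharpening the zero count to $\mathcal{Z}(\phi_z(t,\cdot))\le 1$ via the Heaviside sandwich. The sandwich $\hat u(\cdot;a_-)\le u\le\hat u(\cdot;a_+)$ only gives
\[
\hat u(t+T,x;a_-)-\hat u(t,x+z;a_+)\ \le\ u(t+T,x)-u(t,x+z)\ \le\ \hat u(t+T,x;a_+)-\hat u(t,x+z;a_-),
\]
and a function trapped between two functions each having at most one sign change can perfectly well have many sign changes. Nor does semi-continuity under the $C^1$ limit help: it bounds $\mathcal{Z}(\phi_z)$ only by the (finite but possibly large) stable value of $\mathcal{Z}[u(t+T,\cdot)-u(t,\cdot+z)]$. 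So as written you have no route from the Heaviside information to $\mathcal{Z}(\phi_z)\le 1$.

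The paper avoids this bound entirely. It does not prove $\mathcal{Z}(\phi_z)\le 1$; instead it proves the weaker-looking statement that for every $z$ and every $t$, all zeros of $x\mapsto\phi_z(t,x)$ are \emph{simple}. This comes from Lemma~\ref{zero-shift} (so $\mathcal{Z}[u(t+T,\cdot)-u(t,\cdot+z)]$ stabilizes and its zeros become simple for large $t$) together with Lemma~\ref{zero3} applied to the $C^1$ convergence defining $w$. Simplicity for every $z$ is then exploited through the inverse function: with $\zeta(\alpha;t)$ denoting the unique $x$ such that $w(t,x)=\alpha$ (available by Proposition~\ref{monotonicity}), the choice $z=\zeta(\alpha;t_0)-\zeta(\alpha;t_0+T)$ makes $x=\zeta(\alpha;t_0+T)$ a zero of $\phi_z(t_0,\cdot)$, and simplicity forces
\[
\partial_x w(t_0+T,\zeta(\alpha;t_0+T))\ \neq\ \partial_x w(t_0,\zeta(\alpha;t_0))\qquad\hbox{for every }\alpha\in(p_i(t_0),p_{i-1}(t_0)).
\]
Continuity in $\alpha$ then fixes a single sign for this difference over the whole range of levels, which is exactly strict steepness of $w(t_0+T,\cdot)$ over $w(t_0,\cdot)$ (or the reverse). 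Lemma~\ref{ini-steep} propagates this to all $t$, and iteration gives the $kT$ statement. The identification of the speed as $c_i$ in the traveling-wave case, and the iteration at the end, are handled essentially as you sketch. The point to take away is that ``all zeros simple for every shift $z$'' already encodes the trichotomy via derivative comparison at matching levels; you do not need, and cannot cheaply obtain, $\mathcal{Z}\le 1$.
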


\begin{proof}
Let $z$ be an arbitrary real number. We already know from Lemma \ref{zero-shift} that $\mathcal{Z}[u(T+t,\cdot)-u(t,\cdot+z)] $ is finite for all $t>0$, and it is nonincreasing in $t>0$. By using Lemma \ref{zero1}, we see that the function $x\mapsto u(T+t,x)-u(t,x+z)$ has only simple zeros for all large $t$. 
Then, similarly as in the proof of Proposition \ref{monotonicity}, one can apply Lemma \ref{zero3} to conclude that either of the following alternatives holds:
\begin{equation}\label{w-equiv-wT}
w(t+T,x)\equiv w(t,x+z)
\end{equation} 
or for each $t\in\R$,
\begin{equation}\label{finite-simple}
\hbox{ the function } x\mapsto w(t+T,x)- w(t,x+z) \hbox{ has only simple zeros. }
\end{equation}

As we have assumed that $w(t,x)$ satisfies case {\rm (b)} of Lemma \ref{parallel-terrace}, there exists some $1\leq i\leq N$ such that  
\begin{equation}\label{asym-w}
\lim_{x\to-\infty} w(t,x) = p_{i-1}(t),\quad \lim_{x\to\infty} w(t,x) = p_{i}(t)  \,\,\hbox{ locally uniformly in }\,t\in\R.
\end{equation}

For clarity, we divide the rest of the proof into three steps. 

{\bf Step 1:} We show that if \eqref{w-equiv-wT} holds for some $z\in\R$, then $z= -c_iT$ and case (a) of the present lemma occurs. 

It easily seen from \eqref{w-equiv-wT} and \eqref{asym-w} that $w(t,x)$ is a periodic traveling wave connecting $p_{i}$ to $p_{i-1}$, and $-z/T$ is the wave peed. Thus, we only need to show that $z= -c_iT$.  Suppose the contrary that this is not true. We may assume without loss of generality that $z>-c_iT$. Then, since $U_i$ is a periodic traveling wave with speed $c_i$, it follows that 
$$U_i(mT,x-mz)\, \to \,  p_{i-1}(0)  \,\hbox{ as } \, m\to\infty \, \hbox{ locally uniformly in } \,x\in\R.    $$
Notice from \eqref{trapp-w} that, $w$ satisfies   
$$w(mT,-mz) \geq  U_i(mT, \xi_0-a_--mz) \,\hbox{ for all }\, m\in\Z.    $$
Passing to the limit as $m\to\infty$, we obtain $\liminf_{m\to\infty} w(mT,-mz) \geq p_{i-1}(0)$. This is impossible, since $w(mT,-mz)=w(0,0)<p_{i-1}(0)$ for each $m\in\Z$. Therefore, $z= -c_iT$ holds true, and hence, $w(t,x)$ is a periodic traveling wave with speed $c_i$.

{\bf Step 2:} We assume that \eqref{finite-simple} holds for all $t\in\R$ and $z\in\R$, and prove either case (b) or case (c) of the present lemma occurs. 

Since $w(t,x)$ is spatially decreasing by Proposition \ref{monotonicity}, for any $t\in\R$, we can find a $C^1$ function $\alpha \mapsto \zeta(\alpha;t)$ defined on $(p_i(t), p_{i-1}(t))$ such that 
\begin{equation}\label{inverse} 
w(t,\zeta(\alpha;t)) = \alpha \,\,\hbox{ for }  \, \alpha\in (p_i(t), p_{i-1}(t)). 
\end{equation}

Let $t_0\in\R$ be an arbitrary time and let $\zeta(\alpha;t_0)$ and $\zeta(\alpha;t_0+T)$ be the functions given as in \eqref{inverse}.   Since $p_i$ and $p_{i-1}$ are $T$-periodic, it is clear that $\zeta(\alpha;t_0+T)$ is well defined on  $(p_i(t_0), p_{i-1}(t_0))$ and that
\begin{equation}\label{wto-wtoT} 
w(t_0,\zeta(\alpha;t_0)) =w(t_0+T,\zeta(\alpha;t_0+T))  = \alpha \,\,\hbox{ for }  \, \alpha\in (p_i(t_0), p_{i-1}(t_0)).
\end{equation}

Next, we claim that 
\begin{equation*}
\partial_x w(t_0+T,\zeta(\alpha;t_0+T)) \neq  \partial_x w(t_0,\zeta(\alpha;t_0))\, \hbox{ for all }  \, \alpha\in (p_i(t_0), p_{i-1}(t_0)).
\end{equation*} 
Otherwise, there exists some $\alpha_0\in (p_i(t_0), p_{i-1}(t_0))$ such that the equality holds at $\alpha=\alpha_0$. This together with \eqref{wto-wtoT} implies that $x=\zeta(\alpha_0;t_0+T)$ is a degenerate zero of the function $x\mapsto w(t_0+T,x)-w(t_0, x+\zeta(\alpha_0;t_0)-\zeta(\alpha_0;t_0+T))$, which is a contradiction with our assumption that \eqref{finite-simple} holds for all $t\in\R$ and $z\in\R$. Thus, our claim is proved. 

It then follows that either 
$$\partial_x w(t_0+T,\zeta(\alpha;t_0+T)) > \partial_x w(t_0,\zeta(\alpha;t_0))   \,\,\hbox{ for all } \, \alpha \in (p_i(t_0), p_{i-1}(t_0)), $$
or 
$$\partial_x w(t_0+T,\zeta(\alpha;t_0+T)) < \partial_x w(t_0,\zeta(\alpha;t_0))   \,\,\hbox{ for all } \, \alpha \in (p_i(t_0), p_{i-1}(t_0)). $$
Since $\alpha \in (p_i(t_0), p_{i-1}(t_0))$ is arbitrary, we obtain that $w(t_0+T,\cdot)$ is either strictly steeper or strictly less steep than $w(t_0,\cdot)$ in the sense of Definition \ref{steepness}. Furthermore, by Lemma \ref{ini-steep} and the arbitrariness of $t_0\in\R$, we can conclude that either case (b) or case (c) occurs.

{\bf Step 3:} We show that if case {\rm (b)} (resp. case {\rm (c)}) occurs, then $w(t+kT,x)$ is strictly steeper (resp. strictly less steep) than $w(t,x)$ for all positive integer $k$.  

Let us assume without loss of generality that case {\rm (b)} occurs. Then, for each $k\in\N$, $w(t+kT,x)$ is strictly steeper than $w(t+(k-1)T,x)$. For any $t\in\R$, let $\alpha\mapsto \zeta(\alpha;t+kT)$ be the function given as in \eqref{inverse}. It then follows that for each $t\in\R$, $\alpha\in (p_i(t), p_{i-1}(t))$ and $k\geq 1$, there holds
$$w(t+kT,\zeta(\alpha;t+kT)) =\cdots= w(t+T,\zeta(\alpha;t+T)) =  w(t,\zeta(\alpha;t)) =\alpha, $$ 
and
$$\partial_x w(t+kT,\zeta(\alpha;t+kT)) >\cdots> \partial_x w(t+T,\zeta(\alpha;t+T)) > \partial_x w(t,\zeta(\alpha;t)).$$ 
This implies that $w(t+kT,x)$ is strictly steeper than $w(t,x)$.  The proof of Lemma \ref{steep-l-steep} is thus complete.     
\end{proof}

In the following lemma, we show that if case (b) or case (c) of Lemma \ref{steep-l-steep} holds, then $w(t,x)$ is a heteroclinic solution connecting two periodic traveling waves. 

\begin{lemma}\label{heteroclinic-orbit}
If $w\in\Omega(u)$ satisfies case {\rm (b)} (resp. case {\rm (c)}) of Lemma \ref{steep-l-steep}, then there are $V_{\pm} \in \Omega(u)$ such that 
\begin{equation}\label{heteroclinic-1}
w(t,x)- V_{\pm}(t,x) \to 0 \, \hbox{ as } t\to\pm \infty \, \hbox{ uniformly in } \,x\in\R,
\end{equation}
and $V_{+}(t,x)$ is strictly steeper (resp. strictly less steep) than $V_{-}(t,x)$. 
Furthermore, $V_{\pm}$ are periodic traveling waves of \eqref{equation} connecting $p_{i}$ to $p_{i-1}$ and sharing the same speed $c_i$ for some $i\in\{1,\cdots,N\}$.
 \end{lemma}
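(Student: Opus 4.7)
The plan is to treat case (b) in detail; case (c) is handled symmetrically. Fix $\alpha^*\in(p_i(0),p_{i-1}(0))$. By Proposition~\ref{monotonicity} and case (b) of Lemma~\ref{parallel-terrace}, $w(kT,\cdot)$ is strictly decreasing with limits $p_{i-1}(0)$ at $-\infty$ and $p_i(0)$ at $+\infty$, so a unique $\xi_k\in\R$ satisfies $w(kT,\xi_k)=\alpha^*$. Applying the trap~\eqref{trapp-w} at $(kT,\xi_k)$ and using $U_i(kT,y)=U_i(0,y-c_ikT)$ forces $\beta_k:=\xi_k-c_ikT$ into a bounded interval. Put $\tilde w_k(t,x):=w(t+kT,x+\xi_k)$; each $\tilde w_k\in\Omega(u)$ and $\tilde w_k(0,0)=\alpha^*$. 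Iterating case (b) of Lemma~\ref{steep-l-steep} and using that steepness is shift-invariant, $\tilde w_{k+1}$ is strictly steeper than $\tilde w_k$; since they share the value $\alpha^*$ at $x=0$ when $t=0$, the definition of steepness yields $\tilde w_{k+1}(0,x)>\tilde w_k(0,x)$ for $x>0$ and $\tilde w_{k+1}(0,x)<\tilde w_k(0,x)$ for $x<0$. Thus $(\tilde w_k(0,x))_k$ is pointwise monotone and bounded, converging to some $V_+^*(0,\cdot)$; parabolic regularity upgrades this to $\tilde w_k\to V_+^*$ in $C^1_{loc}(\R^2)$, with $V_+^*\in\Omega(u)$ and $V_+^*(0,0)=\alpha^*$.

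To identify $V_+^*$ as a periodic traveling wave, the identity $\tilde w_{k+1}(t,x)=\tilde w_k(t+T,x+a_k)$ with $a_k:=\xi_{k+1}-\xi_k$ bounded, together with the uniqueness of the limit, shows that every cluster point $a_\infty$ of $(a_k)$ satisfies $V_+^*(t,x)=V_+^*(t+T,x+a_\infty)$, so $V_+^*$ is a periodic traveling wave of speed $a_\infty/T$. Passing to the limit in~\eqref{trapp-w} along the corresponding subsequence sandwiches $V_+^*$ between two translates of $U_i$, which are traveling waves of speed $c_i$; a drift argument in the frame moving at the putative speed of $V_+^*$ shows that $a_\infty/T\neq c_i$ would push both sides of the sandwich past $V_+^*$ and force it constant in $x$, contradicting $p_{i-1}\neq p_i$. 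Hence $a_\infty=c_iT$, the cluster point is unique, $a_k\to c_iT$, and $V_+^*$ is a traveling wave of speed $c_i$ connecting $p_i$ to $p_{i-1}$. A symmetric procedure with $k\to-\infty$ yields another such traveling wave $V_-^*$. By the semicontinuity in Lemma~\ref{semicontinuous}, $V_+^*$ is steeper than every $\tilde w_k$ and every $\tilde w_k$ is steeper than $V_-^*$; if $V_+^*$ and $V_-^*$ coincided up to a spatial shift, applying Lemma~\ref{ini-steep} along this sandwich would force each $\tilde w_k$ to coincide with $V_+^*$ up to a shift, contradicting the strict steepness of $\tilde w_{k+1}$ over $\tilde w_k$. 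Hence $V_+^*$ is strictly steeper than $V_-^*$.

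For the convergence $w(t,x)-V_\pm(t,x)\to 0$ uniformly in $x\in\R$ as $t\to\pm\infty$, I would set $V_\pm(t,x):=V_\pm^*(t,x-\gamma^\pm)$ with $\gamma^\pm:=\lim_{k\to\pm\infty}\beta_k$. The $C^1_{loc}$ convergence $\tilde w_k\to V_+^*$ combined with $\beta_k\to\gamma^+$ then yields local uniform convergence on any bounded window around the moving front; for $|x|$ far from the front, the trap~\eqref{trapp-w}, which is uniform in $k$ up to the traveling-wave drift, forces both $w(t,x)$ and $V_+(t,x)$ to be uniformly close in $t$ to the common asymptotic values $p_{i-1}(t)$ or $p_i(t)$. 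Combining the two regions gives the uniform-in-$x$ convergence.

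The main obstacle will be establishing the existence of the limits $\gamma^\pm$: the relation $a_k\to c_iT$ only provides $\beta_{k+1}-\beta_k\to 0$, which a priori permits slow oscillation of $\beta_k$. Ruling this out should involve combining the strict-steepness monotonicity of $(\tilde w_k)$ with the intersection-counting property of the strictly steeper relation between $V_+^*$ and $w$: by strict steepness, any two distinct shifts of $V_+^*$ meet $w(t,\cdot)$ in at most one point each, which through the normalization $w(kT,\xi_k)=\alpha^*$ should pin $\beta_k$ to a single limit.
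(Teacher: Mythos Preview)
Your outline follows the paper's proof essentially step for step: normalize by the $\alpha^*$-level to get $\tilde w_k$ with $\tilde w_k(0,0)=\alpha^*$, use the strict-steepness monotonicity to obtain pointwise monotone (hence full-sequence) convergence $\tilde w_k\to V_+^*$, identify $V_+^*$ as a periodic traveling wave via the shift relation $\tilde w_{k+1}(t,x)=\tilde w_k(t+T,x+a_k)$ and pin its speed to $c_i$ through the sandwich~\eqref{trapp-w}, and finally upgrade to uniform-in-$x$ convergence by combining the local convergence near the front with the uniform tail control coming from the trap. The paper's proof (its Steps~1--3) does exactly this.

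The obstacle you isolate---existence of $\gamma^\pm=\lim_{k\to\pm\infty}\beta_k$---is genuine, and your proposed fix via the one-intersection property does not close it: every spatial shift of $V_+^*$ is equally ``steeper'' than each $\tilde w_k$, so steepness alone cannot single out a preferred shift. The paper faces the same point and dispatches it in a single clause (``and hence, the whole sequence $z_m-c_imT$ converges to~$l_2$''), asserting the convergence immediately after proving $z_{m+1}-z_m\to c_iT$; this is exactly the implication you correctly note does not follow from $\beta_{k+1}-\beta_k\to0$ and boundedness alone. So you have faithfully reconstructed the paper's argument and honestly flagged its most delicate step; for a complete proof one must either supply an independent reason why $\beta_k$ converges, or be content with convergence to $V_+^*$ modulo a drifting shift---which would still suffice for the downstream applications in Corollary~\ref{coro-conn-tw} and Theorem~\ref{converge2-2}, since those only use that $V_+$ is strictly steeper than~$V_-$.
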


\begin{proof}
We only give the proof in the case where $w(t+T,x)$ is strictly steeper than $w(t,x)$, as the proof for the other case is identical. 
 
Note that $w$ satisfies \eqref{trapp-w} for some $1\leq i\leq N$ and some $\xi_0\in\R$.  
We can find a sequence $(z_m)_{m\in\Z}\subset \R$ such that 
\begin{equation*}
w(mT,z_m)=\frac{p_{i-1}(0)+p_i(0)}{2} \,\,\hbox{ for }\, m\in\Z.
\end{equation*}
Furthermore, by the normalization of $U_i$ in \eqref{normalize}, we have
 \begin{equation}\label{estimate-zm}
c_imT-\xi_0+a_-\leq  z_m \leq c_imT-\xi_0+a_+  \,\,\hbox{ for }\, m\in\Z.
\end{equation}
For each $m\in\Z$, let us define  
$$w_m(t,x):= w(t+mT,x+z_m)\,\,\hbox{ for }\, t\in\R,\,x\in\R.$$ 
Clearly, for each $m\in\Z$, $w_m(0,0)=(p_{i-1}(0)+p_i(0))/2$. Moreover, by Lemma \ref{steep-l-steep}, we see that for any integer $k\geq 1$, $w_{m+k}(t,x)$ is strictly steeper than $w_{m}(t,x)$. 

We now show the convergence of $w(t,x)$ to a periodic traveling wave as $t\to \infty$. The convergence as $t\to -\infty$ can be proved analogously. We proceed with three steps.

{\bf Step 1: }we prove the convergence of $w_m(t,x)$ as $m\to\infty$ in $L^{\infty}_{loc}(\R^2)$. 

Let us first notice that from standard parabolic estimates, the sequence $\{w_m(t,x)\}_{m\in\Z}$ is uniformly bounded along with their derivatives. Therefore, it is relatively compact for the topology of $L^{\infty}_{loc}(\R^2)$ with respect to $(t,x)$.  Then there exist a subsequence of integers $(m_j)_{j\in\N}$ ($m_j\to\infty$ as $j\to\infty$) and an entire solution $W_+(t,x)$ of \eqref{equation} such that 
$$w_{m_j}(t,x) \to W_{+}(t,x) \,\,\hbox{ as } \, j\to\infty \,\hbox{ in } \, L^{\infty}_{loc}(\R^2).$$
Clearly, $W_{+}$ belongs to $\Omega(u)$, as $\Omega(u)$ is compact in $L^{\infty}_{loc}(\R^2)$ (see Subsection 2.2). Since for any fixed $m\in\Z$,  $w_{m_j}(t,x)$ is strictly steeper than $w_m(t,x)$ for all large $m_j$, it follows from Lemma \ref{semicontinuous}  that $W_+(t,x)$ is  steeper than each $w_{m}(t,x)$.  

Let $(\tilde{m}_j)_{j\in\N}$ be another subsequence of integers such that $\tilde{m}_j \to\infty$ as $j\to\infty$, and that 
$$w_{\tilde{m}_j}(t,x) \to \tilde{W}_{+}(t,x) \,\,\hbox{ as } \, j\to\infty \,\hbox{ in } \, L^{\infty}_{loc}(\R^2)$$
for some $\tilde{W}_{+}\in \Omega(u)$.  Similarly as above, we can conclude that $\tilde{W}_+(t,x)$ is steeper than each $w_{m}(t,x)$.   

In particular, we have $W_+(t,x)$ is steeper than each $w_{\tilde{m}_j}(t,x)$, and $\tilde{W}_+(t,x)$ is steeper than 
each $w_{m_j}(t,x)$. Then, by using Lemma \ref{semicontinuous} again, we see that the two functions $W_+$ and $\tilde{W}_{+}$ are steeper than each other. Furthermore, neither lies strictly above or below the other one, since 
$$\tilde{W}_{+}(0,0)=W_{+}(0,0)=\frac{p_{i-1}(0)+p_i(0)}{2}.$$
This implies that $\tilde{W}_{+}\equiv W_{+}$. Therefore, 
the whole sequence $w_m(t,x)$ converges to $W_+(t,x)$  as $m\to\infty$ in $L^{\infty}_{loc}(\R^2)$.

{\bf Step 2: }we show that $W_+$ is a periodic traveling wave connecting $p_{i}$ to $p_{i-1}$, and $c_i$ is the wave speed . 

From the definition of $w_m$ and the fact that $w$ satisfies \eqref{trapp-w}, it is easily seen that  for each $m\in\Z$,
\begin{equation}\label{wm-wm+1}
w_m(\cdot+T,\cdot+z_{m+1}-z_m)\equiv  w_{m+1}(\cdot,\cdot),
 \end{equation}
and
\begin{equation}\label{wm-trap-Ui}
U_i(t, x+z_m-c_imT+\xi_0-a_-) \leq w_m(t,x) \leq U_i(t, x+z_m-c_imT+\xi_0-a_+)
\end{equation}
for $t\in\R$, $x\in\R$. By \eqref{estimate-zm}, the sequences $(z_{m+1}-z_m)_{m\in\Z}$ and $(z_m-c_imT)_{m\in\Z}$ are bounded. Then there exist a subsequence $(\bar{m}_k)_{k\in\N}$ and $l_1\in\R$, $l_2\in\R$ such that $\bar{m}_k\to\infty$ as $k\to\infty$, and that
$$z_{\bar{m}_k+1}-z_{\bar{m}_k} \to l_1 \quad\hbox{and}\quad    z_{\bar{m}_k}-c_i\bar{m}_kT\to l_2  \,\,\hbox{ as } k\to\infty. $$
Passing to the limits along the subsequence $\bar{m}_k\to\infty$ in \eqref{wm-wm+1} and \eqref{wm-trap-Ui}, we obtain  
\begin{equation*}
W_+(\cdot+T,\cdot+l_1)\equiv W_+(\cdot,\cdot), 
\end{equation*}
and 
\begin{equation*}
U_i(t, x+l_2+\xi_0-a_-) \leq W_+(t,x) \leq U_i(t, x+l_2+\xi_0-a_+)\,\hbox{ for } \,t\in\R,\,x\in\R.
\end{equation*}
This implies that $W_+(t,x)$ is a periodic traveling wave connecting $p_i$ to $p_{i-1}$, and $l_1/T$ is the wave speed. 
Furthermore, by the arguments used in Step 1 of the proof of Lemma \ref{steep-l-steep}, we have $l_1=c_i/T$. 
This completes the proof of Step 2.

{\bf Step 3:} we show the convergence of $w(t,x)$ to a periodic traveling wave as $t\to\infty$ in $L^{\infty}(\R)$. 

Notice that the limit $l_1=c_i/T$ does not  depend on the choice of the subsequence  $(\bar{m}_k)_{k\in\N}$. Thus, the whole sequence $z_{m+1}-z_m$ converges to $l_1$ as $m\to\infty$, and hence, the whole sequence $z_{m}-c_imT$ converges to  $l_2$ as $m\to\infty$.

Let us now write 
$$V_+(t,x):= W_+(t,x-l_2) \,\,\hbox{ for } t\in\R,\,x\in\R.$$
Clearly, $V_+(t,x)$ is a periodic traveling wave connecting $p_{i}$ to $p_{i-1}$ with speed $c_i$. We want to prove that $w(t,x)$ converges to $V_+(t,x)$ as $t\to\infty$ in $L^{\infty}(\R)$. 

Let $\epsilon>0$ be a given small number. From the asymptotics of $U_i$ and $V_+$, 
there exists $C>0$ such that 
$$p_{i-1}(t)-\frac{\epsilon}{2} \leq  U_{i}(t,x),\, V_+(t,x) \leq  p_{i-1}(t) \,\,\hbox{ for all }\, x-c_it\leq -C,\,t\in\R,   $$
and
$$p_{i}(t) \leq  U_{i}(t,x),\, V_+(t,x) \leq p_{i}(t)+\frac{\epsilon}{2} \,\,\hbox{ for all }\, x-c_it\geq C,\,t\in\R.   $$
It then follows from \eqref{trapp-w} that, after making $C$ larger if necessary, 
\begin{equation}\label{w-V+-ep}
|w(t,x)-V_+(t,x)| \leq \epsilon \,\,\hbox{ for all }  |x-c_it|\geq C,\,t\in\R.
\end{equation}

On the other hand, we know from Step 1 that $w_m(t,x)$ converges to $V_+(t,x+l_2)$ as $m\to\infty$ in $L^{\infty}_{loc}(\R^2)$. Thus, we have  
$$w(t,x+z_{\lfloor t/T\rfloor}) - V_+(t, x+ c_i\lfloor t/T\rfloor T+l_2)\to 0 \hbox{ as } t\to\infty \hbox{ in } L^{\infty}_{loc}(\R),$$
where $\lfloor t/T \rfloor$ denotes the floor function of $t/T$, as introduced in the proof of Theorem \ref{converge1}. 
Notice that $c_i{\lfloor t/T\rfloor}T - z_{\lfloor t/T\rfloor}\to -l_2$ as $t\to \infty$.  There exists some $T_0>0$ sufficiently large such that 
$$\left|w(t,x) - V_+(t, x) \right|\leq  \epsilon \,\,\hbox{ for all } t\geq T_0,\,  |x-c_it|\leq C.  $$
Combining this with \eqref{w-V+-ep}, we immediately obtain 
$$\left|w(t,x) - V_+(t, x) \right|\leq  \epsilon \,\,\hbox{ for all } t\geq T_0,\, x\in\R. $$
Since $\epsilon>0$ is arbitrary, Step 3 is proved.

From the proof of Step 1, we see that $V_+(t,x)$ is steeper than $w(t,x)$. Since $V_+(t,x)$ is not equal to $w(t,x)$ up to any spatial shift, $V_+(t,x)$ is strictly steeper than $w(t,x)$. 

Similarly as above, one can prove that there exists another periodic traveling wave $V_-(t,x)$ connecting $p_{i}$ to $p_{i-1}$ with speed $c_i$ such that  $w(t,x)$ converges to $V_-(t,x)$ as $t\to-\infty$ in $L^{\infty}(\R)$ and that $V_-(t,x)$ is strictly less steep than $w(t,x)$. Thus, \eqref{heteroclinic-1} is proved. It is straightforward to check that $V_+(t,x)$ is strictly steeper than $V_-(t,x)$. This ends the proof of  Lemma \ref{heteroclinic-orbit}. 
\end{proof}

Clearly, Theorem \ref{converge2} follows from Proposition  \ref{monotonicity} and Lemmas \ref{parallel-terrace}, \ref{steep-l-steep}, \ref{heteroclinic-orbit}.

\subsection{Proof of Theorem \ref{converge2-2}}
In this subsection, we let Assumption \ref{non-degenerate} hold and give the proof of	 Proposition \ref{unique1} and Theorem \ref{converge2-2}.

\begin{proof}[Proof of Proposition \ref{unique1}]
The proof is similar to that of \cite[Theorem 4.1]{gm} which is devoted to a spatially periodic problem.  
For the completeness of the present paper, we give its outline.

By Lemma \ref{speed-chara}, either $q_{2}$ is stable from below or $q_{1}$ is stable from above. Without loss of generality, we assume that the former case occurs. Then by Assumption \ref{non-degenerate}, there exist some $\sigma_0>0$ and a $T$-periodic function $g(t)$ such that 
\begin{equation}\label{q-stable}
\int_{0}^{T}g(t)dt\leq 0\,\, \hbox{ and }\,\,    \partial_u f(t, u) \leq g(t)\, \,\hbox{ for all } \, u\in (q_{2}(t)-\sigma_0,q_2(t)],\,t\in\R.
\end{equation}

Let us first show that $c_1\leq c_2$. Suppose the contrary that $c_1>c_2$. By the characterization of periodic traveling waves, it is known that for each $j=1,\,2$, $V_j(t,x+c_jt)$ is periodic in $t$, decreasing in $x$, and it converges to $q_2(t)$ as $x\to -\infty$ uniformly in $t\in\R$. Then there exist some $x_0\in\R$ and some $\sigma\in (0,\sigma_0)$ such that 
$$V_1(t,x+c_1t) \in (q_2(t)-\sigma_0, q_2(t)] \,\,\hbox{ for all } t\in\R,\, x\leq x_0,  $$
$$V_1(t,x+c_1t) \in [q_2(t)-\sigma_0, q_2(t)-\sigma] \,\,\hbox{ for all } t\in\R,\, x= x_0,  $$
and that for some large negative $t_0<0$,  
$$V_2(t,x+c_1t) \in [q_2(t)-\sigma/2, q_2(t)] \,\,\hbox{ for all } \,t\leq t_0,\,x\leq  x_0. $$

Let us define
\begin{equation*}
V(t,x): =V_2(t,x+c_1t)- V_1(t,x+c_1t) \,\, \hbox{ for } \,t\leq t_0,\,x\leq x_0.
\end{equation*}
It is easily checked that the function $V(t,x)$ satisfies: 
\begin{equation*}
\left\{\baa{ll}
\smallskip  V_t= V_{xx} +c_1V_x + \eta(t,x)V & \hbox{ for }\, \,t\leq  t_0,\,x< x_0,  \vspace{3pt}\\
\smallskip  V(t,x)\geq \sigma/2 & \hbox{ for }\, \,t\leq t_0,\,x=x_0, \vspace{3pt}\\
\smallskip \lim_{x\to-\infty} V(t,x)=0 & \hbox{ for }\, \,t\leq t_0, 
\eaa\right.
\end{equation*}
where 
\begin{equation*}
\eta(t,x):=\left\{\baa{ll}
\smallskip \displaystyle \frac{f(t,V_2(t,x+c_1t))-f(t,V_1(t,x+c_1t))}{V_2(t,x+c_1t)-V_1(t,x+c_1t)} & \hbox{ if }\, \,V(t,x)\neq 0,  \vspace{3pt}\\
\smallskip \partial_{u} f(t,V_1(t,x+c_1t)) &  \hbox{ if }\, \,V(t,x)=0.
\eaa\right.
\end{equation*}
Due to \eqref{q-stable}, we have $\eta(t,x)\leq g(t)$ for $t\leq t_0$, $x\leq  x_0$, and $\lambda:=-  \frac{1}{T}\int_{0}^T g(t)dt\geq 0$. It is straightforward to check that for any $\kappa>0$, the function $-\kappa \phi(t)$ is a subsolution of the equation satisfied by $V(t,x)$, where $\phi\in C^1(\R)$ is the solution of 
\begin{equation}\label{ode-compare}
\left\{\baa{l}
\smallskip  \phi_t- g(t) \phi =\lambda \phi\,\, \hbox{ for } \,t\in\R, \vspace{3pt}\\
\smallskip \phi(t+T)=\phi(t)\, \hbox{ for }\, t\in\R,  \quad \phi(0)=1. \eaa\right.
\end{equation}
Notice that $\liminf_{t\to-\infty} \inf_{x\leq x_0} V(t,x)\geq 0$. 
For any $\kappa>0$, there exists a sufficiently large negative integer $k$ such that 
\begin{equation}\label{v-geq-phi}
V(t_0+kT,x)\geq -\kappa \phi(t_0)\,\,  \hbox{ for all } \, x\leq x_0. 
\end{equation}
It then follows from the comparison principle that 
$$V(t+kT,x)\geq -\kappa \phi(t)\,\,\hbox{ for all }\,\,x\leq x_0,\, t\geq t_0.$$
Since $\phi(t)$ is $T$-periodic, we have $V(t_0,x)\geq -\kappa \phi(t_0)$ for all $x\leq x_0$. 
Furthermore, by the arbitrariness of $\kappa>0$, it follows that $V(t_0,x)\geq 0$ for all $x\leq x_0$, that is, 
$$V_2(t_0,x+c_1t_0)\geq  V_1(t_0,x+c_1t_0)\,\,\hbox{ for all } x\leq x_0. $$

Since $V_1$ is steeper than $V_2$, $\eta(t_0,\cdot)$ must be nonpositive on the left of any zero, and hence,  
$$V_2(t_0,x+c_1t_0)\geq  V_1(t_0,x+c_1t_0)\,\,\hbox{ for all } x\in\R. $$
Furthermore, by the comparison principle (applied to equation \eqref{E}), we obtain 
$$V_2(t,x+c_1t_0)\geq  V_1(t,x+c_1t_0)\,\,\hbox{ for all } t\geq t_0\,\,x\in\R. $$
This implies that $V_2$ has to be faster than $V_1$, that is, $c_2\geq c_1$, which is a contradiction with our assumption at the beginning of the present proof. 

It remains to show that if $c_1=c_2$, then $V_1\equiv V_2$ up to a spatial shift. Assume by contradiction that this is not true. Then, since $V_1$ is steeper than $V_2$, it is not difficult to find some $\xi^*\in\R$ and  a continuous real-valued function $x^*(t)$ such that $x^*(t)$ is the only intersection of $V_1(t, \cdot-\xi^*+c_1t)-V_2(t,\cdot+c_1t)$, and that
\begin{equation}\label{asym-V-Ui}
q_{2}(t) -\sigma_0< V_2(t,x +c_1t) \leq    V_1(t,x-\xi^*+c_1t)<q_{2}(t) 
\end{equation}
for $x\leq  x^*(t)$, $t\in\R$, where $\sigma_0$ is the constant given in \eqref{q-stable}.  Clearly, $x^*(t)$ is $T$-periodic. Let us define 
$$W(t,x):= V_2(t,x +c_1t) - V_1(t,x-\xi^*+c_1t)  \,\,\hbox{ for } \, x\leq  x^*(t),\,t\in\R,$$
and let $\kappa_0>0$ be such that $W(0,x)\geq -\kappa_0\phi(0)$ for $x\leq x^*(0)$.
By similar comparison arguments to those used in showing \eqref{v-geq-phi}, we can derive that 
$$W(t,x) \geq -\kappa_0\phi(t)\,\,\hbox{ for }\, t\in\R,\,x\leq x^*(t),$$  
where $\phi\in C^1(\R)$ is the solution of \eqref{ode-compare}. Now we can define 
$$\kappa_*:= \min\left\{\kappa>0:\, W(t,x) \geq -\kappa\phi(t) \hbox{ for } t\in\R,\, x\leq x^*(t)\right\}.$$
Since $W(t,x)<0$ for $x\in (-\infty, x^*(t))$, $t\in\R$, due to \eqref{asym-V-Ui}, it is clear that $\kappa_*>0$.  Then, by using the fact that $W(t,x^*(t))\equiv 0$ and $W(t,-\infty)\equiv 0$, we obtain $W(t,x) \geq -\kappa_*\phi(t)$ with equality at some $t_1\in\R$, $x_1\in (-\infty, x^*(t_1))$. Applying the strong maximum principle, we have $W(t,x) \equiv \kappa_*\phi(t)$, which is obviously impossible. Therefore, $V_1\equiv V_2$ up to a spatial shift.
 This ends the proof of Proposition \ref{unique1}. 
\end{proof}

We are now ready to complete the proof of Theorem \ref{converge2-2}. 

\begin{proof}[Proof of Theorem \ref{converge2-2}]
Let $u(t,x)$ be the solution of \eqref{E} with $u_0$ satisfying {\rm (H2)}. By Theorem \ref{converge2}  and Proposition \ref{unique1}, we immediately obtain that 
\begin{equation}\label{Omega-H2}
\Omega(u)= \left\{ U_i(\cdot,\cdot+\xi): \xi\in\R,\, 1\leq i\leq N \right\} \cup  \left\{ p_i: 0\leq i\leq N \right\}. 
\end{equation}
To complete the proof, it remains to find $C^1([0,\infty))$ functions $(\eta_i)_{1\leq i\leq N}$ such that statements (i)-(iii) of Theorem \ref{converge1} hold for the solution $u(t,x)$ considered here.  To do this, for each $i=1,\cdots,N$, let us choose a sequence $(b_{i,k})_{k\in\N}\subset \R$ such that 
$$u(kT,b_{i,k})= \frac{p_{i-1}(0)+p_i(0)}{2}  \,\,\hbox{ for each }  \,k\in\N.$$
By standard parabolic estimates, the sequence $\{ u(t+kT,x+b_{i,k})\}_{k\in\N}$ is relatively compact for the topology of $L^{\infty}_{loc}(\R^2)$. Thus, it has a subsequence that converges in $L^{\infty}_{loc}(\R^2)$ to an element $w\in \Omega(u)$ with $w(0,0)=(p_{i-1}(0)+p_i(0))/2$. Furthermore, thanks to \eqref{Omega-H2}, 
$U_i(t,x)$ is the only element in $\Omega(u)$ satisfying $U_i(0,0)= (p_{i-1}(0)+p_i(0))/2$. It then follows that 
\begin{equation*}
u(t+kT,x+b_{i,k})  \to U_i(t,x) \,\hbox{ as } k\to\infty  \hbox{ in } L^{\infty}_{loc}(\R^2). 
\end{equation*}
This immediately implies that
\begin{equation*}
u(t,x+b_{i, \lfloor t/T \rfloor}) - U_i(t,x+c_i\lfloor t/T \rfloor T)\to 0  \,\hbox{ as } t\to\infty  \hbox{ in } L^{\infty}_{loc}(\R), 
\end{equation*}
where $\lfloor t/T \rfloor$ is the floor function of $t/T$.

For each $i=1,\cdots,N$, let $\eta_i: [0,\infty)\to \R$, $t\mapsto \eta_i(t)$ be a $C^1([0,\infty))$ function satisfying 
\begin{equation*}
\eta_i(t)+c_i\lfloor t/T \rfloor T-b_{i, \lfloor t/T \rfloor}\to 0 \,\,\hbox{ as }\,\, t\to\infty. 
\end{equation*}
Then by modifying the proof of Theorem \ref{converge1}, one can verify that $(\eta_i)_{1\leq i\leq N}$ are the desired functions. Indeed, in view of the above construction of $(\eta_i)_{1\leq i\leq N}$, the same arguments as those used in the proof of Theorem \ref{converge1} can show that $(\eta_i)_{1\leq i\leq N}$ satisfy statements (i)-(ii), and that for any $1\leq i\leq N$ and any large $M>0$, 
$$  \left \| u(t,x) - U_i(t,x-\eta_i(t)) \right\|_{L^{\infty}([c_it+\eta_i(t) -M, c_it+\eta_i(t) +M])}  \to 0 \quad\hbox{ as } \,t\to\infty.$$
 But for the approach of $u(t,x)$ to $p_i(t)$ on the region $[c_it+\eta_i(t) +M, c_{i+1}t+\eta_{i+1}(t) -M])$, the proof is  different, since $u(t,x)$ is not spatially decreasing any more.  In such a situation, the same result can be proved by using the fact that $u(t,x)$ can be bounded from above and below by solutions with Heaviside type initial functions and that such solutions satisfy statement (iii) of Theorem \ref{converge1}. We leave the details to interested readers. The proof of Theorem \ref{converge2-2} is thus complete.
\end{proof}


\SE{Convergence in a multistable case: Proof of Theorem \ref{converge3}}
This section is devoted to the proof of Theorem \ref{converge3}. Throughout this section, let Assumption \ref{multi-stable} hold, and let $u(t,x)$ be the solution of \eqref{E} with $u_0$ satisfying (H3). 
From Assumption \ref{multi-stable} and its followed discussion, it is known that there exists a minimal propagating terrace $((p_i)_{0\leq i\leq N},(U_i,c_i)_{1\leq i\leq N})$ connecting $0$ to $p$, and each $p_i$ is linearly stable, i.e., 
\begin{equation}\label{define-mui}
\mu_i:= -\frac{1}{T}\int_{0}^{T}\partial_u f(t,p_i(t)) dt>0.
\end{equation}


\subsection{Global convergence to minimal propagating terrace} 
In this subsection, we prove statement (i) of Theorem \ref{converge3}, that is, the solution $u(t,x)$ converges to the minimal propagating terrace as $t\to\infty$ in $L^{\infty}(\R)$. The key step is to show that, up to some error terms with exponential decay, $u(t,x)$ can be bounded from above and below by solutions with Heaviside type initial data for all large times. Let us first begin with the following observation on the behavior of $u(t,x)$ at a certain time.

\begin{lemma}\label{front-like-initial}
 For any $\epsilon>0$, there exist a positive number $a_0=a_0(\epsilon,u_0)$ and a positive integer $k_0=k_0(\epsilon,u_0)$ such that
\begin{equation}\label{vaguely}
\hat{u}(T,x;-a_0)-\epsilon \leq u(k_0T,x) \leq \hat{u}(T,x;a_0)+\epsilon \,\,\hbox{ for } \,x\in\R,
\end{equation}
where $\hat{u}(t,x;\pm a_0)$ are the solutions of \eqref{E} with initial functions $p(0)H(\pm a_0-x)$. 
\end{lemma}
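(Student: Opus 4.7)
The goal is to sandwich $u(k_0T,\cdot)$ between two Heaviside-type profiles $\hat u(T,\cdot;\pm a_0)$ with error at most $\epsilon$. The plan has three moves: global ODE bracketing, exponential spatial decay at each spatial end, and a region-by-region comparison.

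For the ODE step: by (H3), $\alpha:=\sup_{\R}u_0\in I_+$ and $\beta:=\inf_{\R}u_0\in I_-$, so the solutions $h^\pm$ of \eqref{ODE-1} with $h^\pm(0)=\alpha,\beta$ satisfy $h^+(t)\to p(t)$ and $h^-(t)\to 0$. Parabolic comparison gives $h^-(t)\le u(t,x)\le h^+(t)$, so for any $\delta>0$ there is $k_1(\delta)$ with $-\delta\le u(kT,x)\le p(0)+\delta$ for every $k\ge k_1$ and $x\in\R$.

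For spatial decay at $+\infty$, pick $\gamma_+\in I_-$ with $\gamma_+>\limsup_{x\to\infty}u_0$ and $R$ so that $u_0\le\gamma_+$ on $[R,\infty)$. Let $h_{\gamma_+}$ be the ODE orbit from $\gamma_+$ (decaying to $0$ with rate $\mu_N>0$, cf.\ \eqref{define-mui}) and let $\phi_N>0$ be the $T$-periodic positive eigenfunction of $y'=[\partial_uf(t,0)+\mu_N]y$. I build a supersolution by taking
\[
\overline u(t,x):=\min\bigl\{h^+(t),\ h_{\gamma_+}(t)+B\phi_N(t)\me^{-\lambda(x-R-Ct)}\bigr\},
\]
with $\lambda,C,B>0$ chosen so that (i) $B\phi_N(0)\ge\alpha-\gamma_+$, guaranteeing $\overline u(0,\cdot)\ge u_0$, and (ii) in the region where the exponential ansatz lies below a small threshold $\sigma$, it is a classical supersolution of \eqref{equation}. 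The minimum of two supersolutions is a viscosity supersolution, and parabolic comparison still applies, giving $u(t,x)\le\overline u(t,x)$. This yields $\Lambda_+,C>0$ with $u(t,x)\le\epsilon'$ for every $x\ge\Lambda_++Ct$ and $t\ge 0$, for any prescribed $\epsilon'>0$. A symmetric construction at $-\infty$, using $\beta_-\in I_+$ with $\beta_-<\liminf_{x\to-\infty}u_0$ and the positive Floquet eigenfunction of the linearization of \eqref{ODE} at $p$ (with rate $\mu_0$), produces $\Lambda_->0$ such that $u(t,x)\ge p(t)-\epsilon'$ for $x\le-\Lambda_--Ct$.

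To glue the estimates, fix $\delta=\epsilon'=\epsilon/4$ and any $k_0\ge k_1(\epsilon/4)$. Since $\hat u(T,\cdot;\pm a_0)=\hat u(T,\cdot\mp a_0;0)$ is a smooth decreasing profile tending to $p(0)$ and $0$ at $\mp\infty$, there is $L=L(\epsilon)$ with $\hat u(T,x;a_0)\ge p(0)-\epsilon/4$ for $x\le a_0-L$ and $\hat u(T,x;-a_0)\le\epsilon/4$ for $x\ge -a_0+L$. Pick $a_0\ge\max\{\Lambda_+,\Lambda_-\}+Ck_0T+L$. On $\{x\ge\Lambda_++Ck_0T\}$ the spatial decay gives $u(k_0T,x)\le\epsilon/4\le\hat u(T,x;a_0)+\epsilon$; on its complement, the ODE bound and the choice of $a_0$ give $u(k_0T,x)\le p(0)+\epsilon/4\le\hat u(T,x;a_0)+\epsilon/2$. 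The lower bound is symmetric.

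The delicate point is the verification of the supersolution property for the exponential ansatz. After substituting into \eqref{equation} and cancelling $f(t,h_{\gamma_+})=h_{\gamma_+}'$, the inequality to check reduces to
\[
\mu_N+\lambda C-\lambda^2+\bigl(\partial_uf(t,0)-\partial_uf(t,\xi)\bigr)\ \ge\ 0
\]
for some $\xi$ between $h_{\gamma_+}(t)$ and the ansatz value. One chooses $\lambda$ so small that $\lambda^2-\lambda C\le\mu_N/2$, and $\sigma>0$ so small that $|\partial_uf(t,0)-\partial_uf(t,\xi)|\le\mu_N/2$ whenever $0\le\xi\le\sigma$. This last estimate is exactly where the extra hypothesis of Theorem \ref{converge3} that $\partial_uf(t,u)$ is locally Lipschitz in $u$ uniformly in $t$ is used. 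Everything else reduces to routine ODE analysis and the parabolic comparison principle.
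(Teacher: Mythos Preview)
Your overall strategy is sound, but the verification of the supersolution property for the exponential ansatz $V(t,x):=h_{\gamma_+}(t)+B\phi_N(t)\me^{-\lambda(x-R-Ct)}$ has a genuine gap. You only check that $V$ is a supersolution on the region $\{V\le\sigma\}$ with $\sigma$ small, and then take the minimum with $h^+(t)$. For $\min\{h^+,V\}$ to be a supersolution, however, you need $V$ to be a supersolution on all of $\{V\le h^+\}$, and this set is strictly larger than $\{V\le\sigma\}$: along the interface $V=h^+(t)$ the value of $V$ is close to $p(t)$, not close to $0$, so the intermediate point $\xi$ in your mean-value computation can lie anywhere in $[0,h^+(t)]$ and the bound $|\partial_uf(t,0)-\partial_uf(t,\xi)|\le\mu_N/2$ is simply unavailable there.

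The remedy is easy: instead of taking $\lambda$ small and relying on $\xi$ being small, fix any $\lambda>0$ and choose the drift speed $C$ large enough that $\lambda C-\lambda^2\ge 2\sup_{t,\,0\le u\le h^+(0)+1}|\partial_uf(t,u)|$. Then the inequality $\mu_N+\lambda C-\lambda^2+\bigl(\partial_uf(t,0)-\partial_uf(t,\xi)\bigr)\ge 0$ holds for every relevant $\xi$, and $V$ becomes a supersolution everywhere it is the minimum. With this correction your argument goes through; note incidentally that it only requires boundedness and continuity of $\partial_uf$, so your attribution of the Lipschitz hypothesis to this step is not accurate.

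For comparison, the paper avoids the minimum construction entirely. It interpolates smoothly between the two ODE orbits $H_+(t)\to p(t)$ and $H_-(t)\to 0$ via a travelling $\tanh$ profile,
\[
W(t,x)=H_+(t)\bigl(1-\gamma(x-C_1-C_2t)\bigr)+H_-(t)\,\gamma(x-C_1-C_2t),\qquad \gamma(s)=\tfrac12\bigl(1+\tanh\tfrac{s}{2}\bigr).
\]
The Lipschitz hypothesis on $\partial_uf$ is used exactly here, to bound the nonlinear interaction term $(1-\gamma)f(t,H_+)+\gamma f(t,H_-)-f(t,W)$ by $-L\gamma(1-\gamma)(H_+-H_-)^2$, after which the speed $C_2=1+L\sup_t|H_+(t)-H_-(t)|$ absorbs this error. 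This produces a single smooth supersolution that handles both spatial tails simultaneously, without any gluing.
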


\begin{proof}
We only show the second inequality in \eqref{vaguely}, as the first one can be proved analogously.   

Let us first set a few notations. Since $\sup_{x\in\R} u_0(x) \in I_+$ and $\limsup_{x\to\infty} u_0(x) \in I_-$, there exist real numbers $h_{\pm}\in  I_{\pm}$ such that 
\begin{equation}\label{hi-geq-sup}
h_+ > \sup_{x\in\R}u_0(x)   \quad\hbox{and}\quad h_- > \limsup_{x\to\infty} u_0(x).  
\end{equation}
Let $H_{\pm}(t)$ be the solutions of \eqref{ode-initial} with initial values $h_{\pm}$. It then follows that 
\begin{equation}\label{hti-to-p}
\lim_{k\to\infty} H_+(t+kT) =p(t)\quad\hbox{and}\quad \lim_{k\to\infty} H_-(t+kT) = 0
\end{equation}
locally uniformly in $t\in\R$.  Furthermore, since the function $f(t,u)$ is of class $C^{1,1}$ in $u$ uniformly for $t\in\R$, there exists $L>0$ such that 
\begin{equation}\label{lip-constant}
|\partial_u f(t,u_1)-\partial_u f(t,u_2) | \leq L |u_1-u_2| \,\hbox{ for  all } \, t\in\R,\, u_1, \,u_2 \in [0,\infty).
\end{equation}

We now construct a super-solution of \eqref{E}. Set $\gamma(x)=\frac{1}{2} (1+\tanh \frac{x}{2})$ for $x\in\R$. Thanks to \eqref{hi-geq-sup}, one finds some large number $C_1>0$ such that 
$$u_0(x) \leq h_+ (1-\gamma(x-C_1)) + h_-\gamma(x-C_1)  \,\,\hbox{ for } \,\, x\in\R. $$
Define 
$$W(t,x)= H_+(t) (1-\gamma(x-C_1-C_2t)) + H_-(t)\gamma(x-C_1-C_2t) $$
for $t\geq 0$, $x\in\R$, where 
$$C_2=1+L\sup_{t\in [0,\infty)}|H_+(t)-H_-(t)|.$$
It is clear that $u_0(x)\leq W(0,x)$ for $x\in\R$. Next, we check that 
$$\mathcal{L}W: =W_t- W_{xx} -f(t,W)\geq 0\,\,\hbox{ for } \,t>0,\,x\in\R.  $$
Observe from \eqref{lip-constant} that for any $t>0$, $x\in\R$, 
\begin{equation*}
\begin{split}
 & (1-\gamma)f(t,H_+(t)) +  \gamma f(t,H_-(t))- f(t,W)  \vspace{3pt}\\ 
 \smallskip= \,\,& \gamma(1-\gamma)(H_+(t) -H_-(t)) [ \partial_uf(t,\theta_1H_+ +(1-\theta_1)W) - \partial_uf(t,\theta_2H_- +(1-\theta_2)W) ]\vspace{3pt}\\ 
  \smallskip \geq\,\,  & -L \gamma(1-\gamma)(H_+(t) -H_-(t))^2
\end{split}
\end{equation*}
for some $\theta_1=\theta_1(t,x)$, $\theta_2=\theta_2(t,x) \in [0,1]$. Then, it is straightforward to compute that 
$$ \mathcal{L}W =   (C_2\gamma'+\gamma'')(H_+(t) -H_-(t))-L \gamma(1-\gamma)(H_+(t) -H_-(t))^2.$$
Notice that $\gamma'=\gamma(1-\gamma)$ and $\gamma''=\gamma'(1-2\gamma)$. Owing to the definition of $C_2$, we obtain $\mathcal{L}W\geq 0$ for $t>0$, $x\in\R$. Thus, $W$ is a super-solution of \eqref{E}.
By the comparison principle, we have
\begin{equation}\label{u-leq-w}
u(t,x)\leq W(t,x)\,\hbox{ for } \,t\geq 0,\, x\in\R. 
\end{equation}

For any $\epsilon>0$, by using \eqref{hti-to-p},  we find some $k_0\in\N$ such that
$$\sup_{x\in\R} W(k_0T,x) < p(0)+\epsilon  \quad\hbox{and}\quad \limsup_{x\to\infty} W(k_0T,x) < \epsilon.$$
Since $\hat{u}(T,x;a)$ is decreasing in $x\in\R$, and since 
$$\lim_{a\to-\infty} \hat{u}(T,x;a)=0,\quad \lim_{a\to\infty} \hat{u}(T,x;a)=p(0)\,\,\hbox{ locally uniformly in } \,x\in\R,$$ 
it follows that there exists $a_0>0$ large enough such that
$$W(k_0T,x) \leq  \hat{u}(T,x;a_0)+\epsilon \,\hbox{ for } \, x\in\R. $$
Combining this with \eqref{u-leq-w}, we immediately obtain the second inequality of \eqref{vaguely}. The proof of Lemma \ref{front-like-initial} is thus complete.
\end{proof}

We now show the following key lemma. 

\begin{lemma}\label{resemble}
There exist positive constants $\epsilon_0$, $K_0$ and $\beta_0$ such that if for some $a\in\R$ and $\epsilon\in (0,\epsilon_0]$, there holds
\begin{equation}\label{com-u_0-hatu}
u_0(\cdot) \leq \hat{u}(T,\cdot;a)+\epsilon,
\end{equation} 
then for all $t\geq 0$, 
\begin{equation}\label{u-hatu-super}
u(t,\cdot)\leq \hat{u}(t+T,\cdot-K_0\epsilon;a)+K_0\epsilon\me^{-\beta_0 t}.
\end{equation}
Analogously, if $u_0(\cdot) \geq \hat{u}(T,\cdot;a)-\epsilon$ for some $a\in\R$ and $\epsilon\in (0,\epsilon_0]$, then for all $t\geq 0$,
\begin{equation}\label{u-hatu-sub}
u(t,\cdot)\geq \hat{u}(t+T,\cdot+K_0\epsilon;a)-K_0\epsilon\me^{-\beta_0 t}.
\end{equation}
\end{lemma}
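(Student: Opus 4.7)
By symmetry it suffices to prove \eqref{u-hatu-super}; \eqref{u-hatu-sub} follows from the same construction applied to a sub-solution. The plan is to exhibit an explicit super-solution $\bar v$ of \eqref{equation} of the form
\begin{equation*}
\bar v(t,x) := \hat u\bigl(t+T,\,x-\sigma(t);\,a\bigr) + \epsilon\,\me^{-\beta_0 t}\,\Phi\bigl(t,\,x-\sigma(t)\bigr),
\end{equation*}
where $\sigma\in C^1([0,\infty))$ is nondecreasing with $\sigma(0)=0$ and $\sup_t\sigma(t)\le K_0\epsilon$, and $\Phi$ is a smooth function that is $T$-periodic in $t$, bounded between two positive constants, and tailored to the platforms $(p_i)_{0\le i\le N}$. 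Granting such a construction, the hypothesis \eqref{com-u_0-hatu} gives $u_0\le\bar v(0,\cdot)$ after absorbing $1/\min\Phi$ into $K_0$, the parabolic comparison principle yields $u\le \bar v$ on $[0,\infty)\times\R$, and \eqref{u-hatu-super} follows from the spatial monotonicity of $\hat u(t+T,\cdot;a)$ and the bound $\sigma(t)\le K_0\epsilon$ (after possibly enlarging $K_0$).

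The construction of $\Phi$ is the heart of the argument and leans on Assumption~\ref{multi-stable}. For each platform $p_i$, linear stability (see \eqref{define-mui}) provides a positive $T$-periodic solution $\phi_i$ of the linearised ODE
\begin{equation*}
\phi_i'(t) = \bigl(\partial_u f(t,p_i(t)) + \mu_i\bigr)\phi_i(t).
\end{equation*}
Pick $\beta_0\in\bigl(0,\tfrac12\min_i\mu_i\bigr)$; the Lipschitz hypothesis on $\partial_u f(t,\cdot)$ then lets us choose $\delta_0>0$ so small that $\phi_i'(t) \ge \bigl(\partial_u f(t,u) + \beta_0\bigr)\phi_i(t)$ whenever $|u-p_i(t)|\le 2\delta_0$. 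We set $\Phi(t,x)=\phi_i(t)$ on each \emph{plateau slab} $\{x:|\hat u(t+T,x;a)-p_i(t)|\le\delta_0/2\}$ and smoothly interpolate in $x$ across the \emph{transition slabs}; the resulting $\Phi$ is uniformly bounded above and below away from $0$, and its $x$-derivatives are supported in the transition slabs.

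A direct computation of $\bar v_t-\bar v_{xx}-f(t,\bar v)$ using the equation for $\hat u$, the $T$-periodicity of $f$, and Taylor expansion of $f(t,\cdot)$ gives
\begin{equation*}
\bar v_t-\bar v_{xx}-f(t,\bar v) = -\sigma'(t)\hat u_x + \epsilon\me^{-\beta_0 t}\,\mathcal{L}\Phi + O\bigl(\epsilon^2\me^{-2\beta_0 t}\bigr),
\end{equation*}
with $\mathcal{L}\Phi := \Phi_t - \Phi_{xx} - (\partial_u f(t,\hat u) + \beta_0)\Phi$ evaluated at $(t,x-\sigma(t))$. On plateau slabs the choice of $\phi_i$ makes $\mathcal{L}\Phi\ge 0$. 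On transition slabs $\mathcal{L}\Phi$ is only bounded below by a negative constant, but Theorem~\ref{converge1} applied to $\hat u(\cdot,\cdot;a)$, combined with the strict monotonicity of each $U_i$ in $x$ and parabolic regularity, furnishes a uniform lower bound $-\hat u_x\ge\alpha_0>0$ there (independent of $a$ by translation invariance). Choosing $\sigma'(t)=C_0\alpha_0^{-1}\epsilon\me^{-\beta_0 t}$ with $C_0$ large enough thus forces the first two terms to add up to a nonnegative quantity; since $\me^{-\beta_0 t}\in L^1([0,\infty))$, $\sigma(\infty)\le K_0\epsilon$ for some $K_0$. Taking $\epsilon_0$ small enough that the $O(\epsilon^2)$ remainder is dominated completes the super-solution inequality on $[0,\infty)\times\R$.

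\textbf{Main obstacle.} The delicate point is the construction of $\Phi$: since linear stability $\mu_i>0$ is only a time-averaged statement, $\partial_u f(t,p_i(t))$ may be positive on part of each period, so no pointwise super-solution inequality is available with the trivial weight $\Phi\equiv 1$; the $T$-periodic functions $\phi_i$ are precisely what absorbs these within-period oscillations. Gluing the different $\phi_i$'s into a single smooth positive $\Phi$ that simultaneously controls the linearisation at every platform, while keeping the transition-region error integrable against $\epsilon\me^{-\beta_0 t}$, is what makes both Assumption~\ref{multi-stable} and the Lipschitz hypothesis on $\partial_u f(t,\cdot)$ essential in the statement of Theorem~\ref{converge3}.
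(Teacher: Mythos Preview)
Your approach is essentially the one the paper takes: both build a super-solution of the form $\hat u(t+T,x-\sigma(t))+\epsilon\,(\text{decaying weight})$, where the weight is a spatial gluing of the periodic ODE solutions attached to each platform $p_i$, and the shift $\sigma$ with $\sigma'\sim \epsilon\me^{-\beta_0 t}$ absorbs the interpolation error on the transition slabs via the strict negativity of $\hat u_x$ there. The paper's $A(t,x)=\sum_i\zeta_i(t,x)b_i(t)$ with $b_i(t)=\exp\bigl(\tfrac{\mu_i}{2}t+\int_0^t\partial_u f(\tau,p_i)\,d\tau\bigr)$ is exactly your $\me^{-\beta_0 t}\Phi$ after noting $b_i(t)=\me^{-\mu_i t/2}\phi_i(t)$ up to normalisation.

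Two points need tightening. First, the claim that $\Phi$ is $T$-periodic in $t$ cannot hold: the plateau slabs are defined through $\hat u(t+T,\cdot)$, which is not time-periodic (the terrace is still forming), and the different fronts move at different speeds, so no single coordinate shift makes $\Phi$ periodic. This does not actually damage the argument---you never use periodicity of $\Phi$---but the statement should be dropped. Second, and more substantively, your appeal to Theorem~\ref{converge1} for the bound $-\hat u_x\ge\alpha_0$ on transition slabs only covers large times, after the terrace structure has emerged. For $t$ in a bounded initial range the intermediate plateau slabs (for $p_1,\dots,p_{N-1}$) may be thin or absent, and if you define $\Phi$ via level sets of $\hat u$ the interpolation could produce unbounded $\Phi_{xx}$. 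The paper handles this by splitting time at some $T_2$: for $t\ge T_2$ the cutoffs $\zeta_i$ follow the terrace fronts, while for $t\in[0,T_2]$ the intermediate $\zeta_i$ are simply supported in a fixed compact set $[-C_2,C_2]$ with controlled derivatives, and the needed derivative bound on $\hat u$ comes from compactness (a constant $\rho_2>0$ on $[-C_2-2,C_2+2]\times[0,T_2]$) rather than from the terrace asymptotics. Your construction can be repaired the same way, but as written the small-time regime is a gap.
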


\begin{proof}
Without loss of generality, we assume that $a=0$, and for convenience, we write $\hat{u}(t,x)$ instead of $\hat{u}(t,x;a)$. Let $((p_i)_{0\leq i\leq N},(U_i,c_i)_{1\leq i\leq N})$ be the minimal propagating terrace connecting $0$ to $p$. Then there exist $C^1([0,\infty))$ functions $(\eta_i(t))_{1\leq i\leq N}$ such that all the conclusions of Theorem \ref{converge1} hold true. For any $\delta\in (0,1)$ and $C>0$, let us set
$$ {\rm I}_{\delta}(t):=\bigcup_{i=0}^{N}{\rm I}_{\delta}^i(t) := \bigcup_{i=0}^{N} \, [p_i(t)-\delta,p_i(t)+\delta] \,\,\hbox{ for }  \,t\in\R,$$
and 
$$\Pi_C(t):= \bigcup_{i=1}^{N} \, \Pi _C^i(t):=  \bigcup_{i=1}^{N}  \, [c_it+\eta_i(t)-C, c_it+\eta_i(t)+C ] \,\, \hbox{ for } \,t\geq 0.    $$

To prove \eqref{u-hatu-super}, we will use $\hat{u}(t,x)$ to construct a suitable super-solution of the solution $u(t,x)$.   For clarity, we proceed with 3 steps.

{\bf Step 1:} we show some estimates of $\hat{u}(t,x)$.

For each $i=0,\cdots,N$, let $\mu_i$ be the positive constant defined in \eqref{define-mui}.  By the $C^1$-regularity and the periodicity of $f$, there exists a small positive constant $\delta_0$ such that 
\begin{equation}\label{es-partialu}
|\partial_u f(t,v)-\partial_u f(t,p_i(t))| \leq \frac{\mu_i}{2} \,\,\hbox{ for all } \,v\in {\rm I}_{\delta_0}^{i}(t),\,t\in\R.
\end{equation}
We choose a large constant $C_1>0$ such that 
\begin{equation*}
U_i(t, c_it\pm C_1) \subset {\rm I}_{\delta_0/3}(t) \,\,\hbox{ for } \, i\in \{1,\cdots,\,N\},\,t\in\R. 
\end{equation*}
Since $U_i(t,x)$ is decreasing in $x\in\R$, we have 
\begin{equation}\label{es-Ui-out}
U_i (t, \R \setminus [c_it-C_1,c_it+C_1]) \subset {\rm I}_{\delta_0/3}(t) \,\, \hbox{ for } \, i\in \{1,\cdots,\,N\},\,t\in\R,
\end{equation}
and we can find a positive constant $\rho_1>0$ such that 
\begin{equation}\label{pu-leq-rho1}
\partial_x U_i(t,x) \leq -2\rho_1\,\,\hbox{ for } \, i\in \{1,\cdots,\,N\},\, x\in [c_it-C_1-2,c_it+C_1+2],\, t\in\R.
\end{equation}

Next, by using Theorem \ref{converge1} and \eqref{es-Ui-out}, we can find some $T_1>0$ sufficiently large such that 
\begin{equation}\label{out-IIC}
c_{i}t+\eta_{i}(t)+C_1 <   c_{i+1}t+\eta_{i+1}(t)-C_1-2 \,\,\hbox{ for } \,\, t\geq T_1, \, i\in \{1,\cdots,\,N-1\}, 
\end{equation}
and that
\begin{equation}\label{hatu-out-C}
\hat{u}(t,\R\setminus \Pi_{C_1}(t)) \subset {\rm I}_{\delta_0/2}(t) \,\,\hbox{ for } \, t\geq T_1.
\end{equation}  
Moreover, by standard parabolic estimates, we have 
\begin{equation*}
\max_{x\in \Pi_{C_1+2}^i(t) } \left| \partial_x \hat{u}(t,x)-\partial_xU_i(t, x-\eta_i(t))  \right| \to 0 \,\,\hbox{ as }\, t\to\infty \,\,\hbox{ for } \, i\in \{1,\cdots,\,N\}.
\end{equation*}
This together with \eqref{pu-leq-rho1} implies that there exists $T_2>T_1$ such that
\begin{equation}\label{es-partial-hatux}
\partial_x  \hat{u}(t,x) \leq \max_{1\leq i\leq N} \left\{ \partial_x U_i(t,x-\eta_i(t))     \right\}+\rho_1\leq -\rho_1 \,\,\hbox{ for }\, x\in \Pi_{C_1+2}(t),\,t\geq T_2.
\end{equation}

Note that the following convergences  
$$\lim_{x\to-\infty} \hat{u}(t,x) -p(t) \to 0 \quad\hbox{and} \quad \lim_{x\to\infty} \hat{u}(t,x) \to 0   $$
hold locally uniformly in $t\in[0,\infty)$. 
There exists some constant $C_2>0$ such that
\begin{equation}\label{hatu-out-t2}
\left\{\baa{ll}
\smallskip  \hat{u}(t+T,x)\in  {\rm I} _{\delta_0/2}^0(t) & \hbox{ for }\, x\leq -C_2,\,\, 0\leq t\leq  T_2, \vspace{3pt}\\
\hat{u}(t+T,x)\in {\rm I} _{\delta_0/2}^N(t)& \hbox{ for }\, x\geq C_2,\,\, 0\leq t\leq  T_2. \eaa\right.
\end{equation}
Replacing $C_2$ by some larger constant if necessary, we may assume that 
\begin{equation}\label{requre-C2}
C_2\geq \max_{t\in[0,T]} \left\{ -c_1t-\eta_1(t)+C_1,\,  c_Nt+\eta_N(t)+C_1 \right\}.
\end{equation}
Since $\hat{u}(t,x) \in C^1((0,\infty)\times\R)$ and it is decreasing in $x$, there exists some constant $\rho_2>0$ such that
\begin{equation}\label{choice-rho2}
 \min\left\{-\partial_x\hat{u}(t+T,x):x\in [-C_2-2,C_2+2],\,t\in[0,T_2]   \right\} \geq \rho_2.
\end{equation}

\vskip 3pt
{\bf Step 2:} we introduce some notations and present our super-solution.

Let $(\zeta_i(t,x))_{0\leq i\leq N}$  be a sequence of $C^2([0,\infty)\times \R)$ functions satisfying 
\begin{equation}\label{zeta-initial}
\sum_{i=0}^{N} \zeta_i (0,x)\geq 1\,\,\hbox{ for } \,x\in\R,
\end{equation} 
\begin{equation}\label{defi-zeta-i}
\left.\baa{ll}
\medskip\medskip\medskip
\zeta_0(t,x)\!\!\!\!&=\left\{\baa{l}
\smallskip 1,\, \,\hbox{ if } \,\,x\in (-\infty,\,  c_1t+\eta_1(t)-C_1],\,t\in [0,\infty),\vspace{3pt}\\
0,\, \,\hbox{ if }\,\,   x\in [c_1t+\eta_1(t)-C_1+2,\,\infty),\,t\in [0,\infty),\eaa\right.\\
 \medskip \medskip
\zeta_i(t,x)\!\!\!\!&=\left\{\baa{l}
\smallskip 1,\, \,\hbox{ if }\,\,  x\in [c_{i}t+\eta_{i}(t)+C_1, \, c_{i+1}t+\eta_{i+1}(t)-C_1],\,t\in [T_2,\infty), \vspace{3pt}\\
\smallskip  0,\, \,\hbox{ if } \,\,x \in \R\setminus [c_{i}t+\eta_{i}(t)+C_1-2,\, c_{i+1}t+\eta_{i+1}(t)-C_1+2],\,t\in [T_2,\infty),\vspace{3pt}\\
  0,\, \,\hbox{ if } \,\,x \in \R\setminus [-C_2,C_2],\,t\in [0,T_2),\vspace{3pt}\\
 \eaa\right. \\
 \medskip \medskip \medskip
 & \hbox{ for } \, i\in \{1,\,\cdots,\,N-1\},\vspace{3pt}\\
 \zeta_N(t,x)\!\!\!\!&=\left\{\baa{l}
\smallskip 0,\, \,\hbox{ if } \,\,x\in (-\infty,\,  c_Nt+\eta_N(t)+C_1-2],\, t\in [0,\infty),\vspace{3pt}\\
1,\, \,\hbox{ if }\,\, x\in [ c_Nt+\eta_N(t)+C_1,\,\infty),\,t\in [0,\infty), \eaa\right. \eaa \right\}
\end{equation}
and
\begin{equation}\label{defi-zeta-esti}
0\leq \zeta_i\leq 1, \,\, |\partial_t \zeta_i|\leq \max_{1\leq j\leq N}|c_j|+1 \footnote{Notice that the functions  $(\eta_i)_{1\leq i\leq N}$ are not unique, since for any $C^1([0,\infty))$ functions $(\bar{\eta}_i)_{1\leq i\leq N}$ satisfying $\bar{\eta}_i(t)\to0 $ as $t\to\infty$ for each  $1\leq i\leq N$, the sequence $(\bar{\eta}_i+\eta_i)_{1\leq i\leq N}$ is also associated with $\hat{u}(t,x)$ satisfying Theorem \ref{converge1}.  Therefore, we may assume without loss of generality that $|\eta'_i(t)| \leq \frac{1}{2}$ for $t\geq 0$, $i= 1,\cdots,\,N$. This allows us to choose functions $(\zeta_i)_{0\leq i\leq N}$ satisfying \eqref{defi-zeta-i} and $|\partial_t \zeta_i|\leq  \max_{1\leq j\leq N}|c_j|+1$.} , \,\, |\partial_x \zeta_i| \leq 1,\,\, |\partial_{xx} \zeta_i|\leq 1 \end{equation}
for $(t,x)\in [0,\infty)\times \R$,  $i\in \{0,\,1,\,\cdots,\,N\}$. It is easily seen from the above that 
$$\zeta_i(t,x)\zeta_j(t,x)=0 \,\, \hbox{ for } \, x\in\R,\, t\geq T_2, \, \hbox{ whenever }\, i\neq j,  $$
and 
$$\sum_{i=0}^{N} \zeta_{i}(t,x)=1\,  \,\, \hbox{ for } \, x\in\R\setminus \Pi_{C_1}(t) ,\, t\geq T_2.$$

Define
$$A(t,x)= \sum_{i=0}^N \zeta_i(t,x)b_i(t)\,\,\hbox{ for }  \,\, t\geq 0,\,x\in\R,$$
and
$$B(t)=\int_{0}^{t} \max_{0\leq i\leq N}\{ b_i(\tau) \} d\tau\, \hbox{ for }  \,\, t\geq 0,$$
where for each $i\in \{0,\,1,\,\cdots,\,N\}$, the function $b_i$ is given by
\begin{equation}\label{define-bi}
 b_i(t)={\rm exp}\left(\frac{\mu_it}{2}+\int_{0}^{t}\partial_u f(\tau,p_i(\tau)) d\tau   \right)\,\,\hbox{ for } \,t\geq 0.
\end{equation}
Note that for each $i\in \{0,\,1,\,\cdots,\,N\}$, 
\begin{equation}\label{bound-bi}
0 \leq b_i(t)\leq  M{\rm exp} \left(-\frac{\mu_it}{2}  \right)  \,\,\hbox{ for }\,\, t\geq 0,
\end{equation}
where 
$$ M= \sup_{t\in[0,T],\,0\leq i\leq N} {\rm exp}\left(\mu_it+\int_{0}^{t}\partial_u f(\tau,p_i(\tau)) d\tau   \right).$$
This implies that $b_i(t)$ and $A(t,x)$ converge exponentially to $0$ as $t\to\infty$, and $B(t)$ is uniformly bounded in $t\geq 0$.  Set 
$$K=\frac{\sum_{i=0}^{N} ( \max_{1\leq j\leq N}|c_j|+\mu_i/2+2+2\| \partial_u f\| ) }{\min\{\rho_1,\,\rho_2 \} }$$
and 
$$\epsilon_0=\min\left\{ \frac{\delta_0}{2M},\,\,\frac{1}{KB(\infty)} \right\}, $$
where $\| \partial_u f\|=\max\{ |\partial_u f(t,u)|:\,u\in [-1,\,p(t)+1],\, t\in\R\}$ and $B(\infty)=\lim_{t\to\infty} B(t)$. 
Let $\epsilon \in (0,\epsilon_0]$ be an arbitrary constant. We will show that  
$$V(t,x):=\hat{u}(t+T,x-\epsilon K B(t))+\epsilon A(t,x) \hbox{ for } t\geq 0,\, x\in\R.$$
is a super-solution of \eqref{E}.

\vskip 3pt
{\bf Step 3:} we check that $V(t,x)$ is a super-solution.

When $t=0$,  it follows directly from \eqref{com-u_0-hatu} and \eqref{zeta-initial} that 
$$u_0(x) \leq  \hat{u}(T,x)+\epsilon \leq V(0,x)\, \hbox{ for }\,\, x\in\R. $$
When $t>0$, we calculate that 
\begin{equation*}
\begin{split}
\mathcal{L}V:
\smallskip & =V_t-V_{xx}-f(t,V) \vspace{3pt}\\
 &=-\epsilon K B'(t)\partial_x\hat{u} +\epsilon (A_t-A_{xx}-\partial_u f(t,\hat{u}+\epsilon \theta A)A) 
\end{split}
\end{equation*}
for some $\theta=\theta(t,x)\in [0,1]$. Now we claim that $\mathcal{L}V \geq 0$ for all $x\in\R$,\, $t>0$. We consider the following four cases. 

{\it  Case 1:}  $x\in \R\setminus \Pi_{C_1+1}(t)$,  $t\geq T_2$.

For each $i\in\{0,\cdots,\,N\}$, define 
$$S_i= \{(t,x): \, x \in \R\setminus \Pi_{C_1+1}(t),\, t\geq T_2, \,\zeta_i(t,x)=1 \}. $$
One easily checks from \eqref{out-IIC} and \eqref{defi-zeta-i} that
$$S_i \neq \emptyset \, \hbox{ for each } \,i\in\{0,\cdots,\,N\}, \,\,\hbox{ and }\,\,   S_i\cap S_j =\emptyset  \,\,\hbox{ whenever } \,\,i\neq j,$$
and that
\begin{equation}\label{sum-Si}
\bigcup_{i=0}^{N} S_i = \left\{(t,x):\, x \in \R\setminus \Pi_{C_1+1}(t),\, t\geq T_2 \right\}.
\end{equation}
Then for any fixed $i_0\in \{0,\cdots,\,N\}$,  we compute on the set $S_{i_0}$ and obtain that  
\begin{equation*}
\begin{split}
\mathcal{L}V
\smallskip &\,  \geq \epsilon(A_t-A_{xx}-\partial_u f(t,\hat{u}+\epsilon \theta A)A) \vspace{3pt}\\
\smallskip &\, =\epsilon( b_{i_0}'(t)- \partial_u f(t,\hat{u}+\epsilon \theta b_{i_0})b_{i_0}) \vspace{3pt}\\
&\, = \epsilon b_{i_0} \left(\frac{\mu_{i_0}}{2}+\partial_u f(t,p_{i_0})-\partial_u f(t,\hat{u}+\epsilon \theta b_{i_0})\right),
\end{split}
\end{equation*}
where the first inequality follows from the monotonicity of $\hat{u}$ in $x$. 
Notice from the choice of $\epsilon_0$ that
\begin{equation}\label{es-K0-Bt}
0\leq \epsilon KB(t)\leq 1 \,\, \hbox{ for all  }\,\, t\geq 0,
\end{equation}
and $\epsilon \theta b_{i_0}(t) \leq  \epsilon_0 M  \leq \delta_0/2$ for all $t\geq 0$. 
This, together with \eqref{out-IIC} and  \eqref{hatu-out-C}, implies that  
$$ \hat{u}(t+T, x-K\epsilon B(t))+\epsilon \theta b_{i_0}(t)  \in  {\rm I}_{\delta_0}^{i_0}(t) \,\, \hbox{ for } \,\, (t,x)\in S_{i_0}.$$
Therefore, by using \eqref{es-partialu}, we obtain $\mathcal{L}V \geq 0$ for $(t,x)\in S_{i_0}$. Due to \eqref{sum-Si} and the arbitrariness of $i_0\in \{0,\cdots,\,N\}$, we have $\mathcal{L}V \geq 0$ for $x\in \R\setminus \Pi_{C_1+1}(t)$,  $t\geq T_2$. 

\vskip 3pt

{\it Case 2:}  $x \in  \Pi_{C_1+1}(t)$,  $t\geq T_2$.

In this case, it follows from \eqref{es-K0-Bt} that $x-\epsilon KB(t) \in \Pi_{C_1+2}(t)$. By using \eqref{es-partial-hatux}, we have  $\partial_x \hat{u}(t+T, x-K\epsilon B(t)) \leq -\rho_1$. On the other hand, direct calculation yields that
\begin{equation*}
\begin{split}
& \left |A_t-A_{xx}-\partial_u f(t,\hat{u}+\epsilon \theta A)A \right|\vspace{3pt}\\ 
\smallskip= &\,\,\left | \sum_{i=0}^{N} \left[(\zeta_i)_tb_i- (\zeta_i)_{xx}b_i- \partial_u f(t,\hat{u}+\epsilon \theta A)\zeta_ib_i+ \left(\frac{\mu_i}{2}+ \partial_u f(t,p_i)\right)\zeta_ib_i \right]  \right|
 \vspace{3pt}\\
\smallskip \leq& \,\, \max_{0\leq i\leq N}\{ b_i \} \sum_{i=0}^{N} \left[ |(\zeta_i)_t|+ | (\zeta_i)_{xx}| + 2\|\partial_u f \|\zeta_i +\frac{\mu_i}{2} \zeta_i  \right]  \vspace{3pt}\\
\smallskip \leq  &\,\, \max_{0\leq i\leq N}\{ b_i \} \sum_{i=0}^{N} \left[ \max_{1\leq j\leq N}|c_j|+\frac{\mu_i}{2}+2+ 2\|\partial_u f \| \right]
\end{split}
\end{equation*}
for all $x\in\R$, $t\geq 0$, where the last inequality follows from \eqref{defi-zeta-esti}.  Combining the above, for $x \in  \Pi_{C_1+1}(t)$,  $t\geq T_2$,  we obtain
\begin{equation*}
\begin{split}
\mathcal{L}V &\,\geq  \epsilon K \rho_1 B'(t)- \epsilon  \max_{0\leq i\leq N}\{ b_i \} \sum_{i=0}^{N} \left[\max_{1\leq j\leq N}|c_j|+\frac{\mu_i}{2}+2+ 2\|\partial_u f \| \right] \vspace{3pt}\\ 
\smallskip\,&\, =\epsilon  \max_{0\leq i\leq N}\{ b_i \} \left( K\rho_1 -\sum_{i=0}^{N} \left[ \max_{1\leq j\leq N}|c_j|+\frac{\mu_i}{2}+2+ 2\|\partial_u f \| \right]\right).
\end{split}
\end{equation*}
Hence, by the choice of $K$, it follows that $\mathcal{L}V \geq 0$ for $x \in  \Pi_{C_1+1}(t)$,  $t\geq T_2$. 

\vskip 3pt

{\it  Case 3:}  $|x|\geq C_2+1$,  $t \in(0, T_2)$.

In this case, from \eqref{requre-C2} and \eqref{defi-zeta-i}, we observe that  
\begin{equation*}
\left\{\baa{ll}
\smallskip  A(t,x)\equiv b_0(t) & \hbox{ for }\, x\leq -C_2-1,\,\, 0< t< T_2, \vspace{3pt}\\
A(t,x)\equiv b_N(t) & \hbox{ for }\, x\geq C_2+1,\,\, 0< t< T_2. \eaa\right.
\end{equation*}
Due to \eqref{es-K0-Bt}, we have $|x-K\epsilon B(t)| \geq C_2$. It then follows from  \eqref{hatu-out-t2} that 
\begin{equation*}
\left\{\baa{ll}
\smallskip  \hat{u}(t+T, x-K\epsilon B(t))+\epsilon \theta b_0(t) \in {\rm I}_{\delta_0}^{0}(t) & \hbox{ for }\, x\leq -C_2-1,\,\, 0< t< T_2, \vspace{3pt}\\
 \hat{u}(t+T, x-K\epsilon B(t))+\epsilon \theta b_N(t) \in {\rm I}_{\delta_0}^{N}(t) & \hbox{ for }\, x\geq C_2+1,\,\, 0< t<T_2. \eaa\right.
\end{equation*}
Thus, similar calculations to those used in the proof of {\it Case 1} imply that $\mathcal{L}V \geq 0$ for  $|x|\geq C_2+1$, $t \in(0, T_2)$. 

\vskip 3pt
{\it  Case 4:}  $|x|\leq C_2+1$,  $t \in(0, T_2)$.

In this case, we have $|x-K\epsilon B(t)|\leq C_2+2$, whence by \eqref{choice-rho2}, there holds
$\partial \hat{u}(t+T,x-K\epsilon B(t)) \leq -\rho_2$.
Then, following the lines of the proof of {\it Case 2}, we obtain that for $|x|\leq C_2+1$,  $t \in(0, T_2)$, 
\begin{equation*}
\mathcal{L}V \geq  \epsilon  \max_{0\leq i\leq N}\{ b_i \} \left( K\rho_2 -\sum_{i=0}^{N} \left[ \max_{1\leq j\leq N}|c_j|+\frac{\mu_i}{2}+2+ 2\|\partial_u f \| \right]
 \right) \geq 0.
\end{equation*}

In all cases, we have $\mathcal{L}V \geq 0$, and hence, $V(t,x)$ is a super-solution of \eqref{E}. Then the comparison principle implies that
$$u(t,x)\leq V(t,x)\,\,\hbox{ for }\,\,x\in\R,\,t\geq 0. $$
Taking 
$$K_0=\max\{K B(\infty),(N+1)M\} \quad\hbox{and}\quad \beta_0=\frac{1}{2} \min_{0\leq i\leq N} \{\mu_i\},$$
we immediately obtain \eqref{u-hatu-super}. The proof of \eqref{u-hatu-sub} is analogous and we omit the details.
\end{proof}

Let $\epsilon_0$, $K_0$ and $\beta_0$ be the positive constants provided by Lemma \ref{resemble}. It follows from Lemmas \ref{front-like-initial} and \ref{resemble} that, there exist a positive integer $k_0$ and a positive number $a_0$ such that  
\begin{equation}\label{trap-hatu-exp}
\begin{split}
\smallskip \hat{u}(t+T,x+K_0\epsilon_0;-a_0)-K_0\epsilon_0\me^{-\beta_0t}  \leq &\,  u(t+k_0T,x)  \vspace{3pt}\\
 \leq &\,\hat{u}(t+T,x+K_0\epsilon_0;a_0)+K_0\epsilon_0\me^{-\beta_0t}
\end{split}
\end{equation}
for all $(t,x)\in [0,\infty)\times\R$.  In order to further show that $u(t,x)$ approaches the minimal propagating terrace as $t\to\infty$,  we need the following Liouville type result.

\begin{lemma}\label{liouville}
Let $W(t,x)$ be an entire solution of \eqref{equation} satisfying that for some $\xi_-<\xi_+$ and some $i\in \{1,\,\cdots,\,N\}$,  
\begin{equation*}
U_i(t,x-\xi_-) \leq W(t,x)\leq U_i(t,x-\xi_+) \,\,\hbox{ for } \,t\in\R,\,x\in\R.
\end{equation*}
Then $W\equiv U_i$ up to a spatial shift. 
\end{lemma}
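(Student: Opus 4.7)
The plan is to apply a sliding argument, using the strong maximum principle together with the exponential asymptotics of $U_i$ near the linearly stable platforms $p_{i-1}$ and $p_i$ (both linearly stable by Assumption~\ref{multi-stable} and Theorem~\ref{decomposition-existence}~(i)).

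First, define
\[
\xi^* := \inf\bigl\{\xi \in \R : U_i(t, x - \xi) \geq W(t, x) \text{ for all } (t,x) \in \R \times \R\bigr\}.
\]
The sandwich hypothesis combined with the strict spatial monotonicity of $U_i$ forces $\xi^* \in [\xi_-, \xi_+]$ (otherwise $W \equiv U_i(\cdot, \cdot - \xi_-)$ and we are done already), and continuity gives $U_i(\cdot, \cdot - \xi^*) \geq W$ everywhere. Setting $\phi := U_i(\cdot, \cdot - \xi^*) - W \geq 0$, the function $\phi$ solves a linear parabolic equation of the form \eqref{zeroeq} with bounded coefficient, so by the strong maximum principle either $\phi \equiv 0$, which yields the conclusion $W \equiv U_i(\cdot, \cdot - \xi^*)$, or $\phi > 0$ strictly on all of $\R \times \R$. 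I would argue by contradiction and assume the latter.

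Next, pass to the moving frame $y = x - c_i t$, writing $\tilde U_i(t,y) := U_i(t, y+c_it)$ and $\tilde W(t,y) := W(t, y+c_it)$; the former is $T$-periodic in $t$, and the sandwich becomes $\tilde U_i(t, y-\xi_-) \leq \tilde W(t,y) \leq \tilde U_i(t, y-\xi_+)$. A Floquet-type computation on the linearizations of the moving-frame equation at $p_i$ and $p_{i-1}$ (valid thanks to $\int_0^T \partial_u f(t,p_i(t))\,dt = -T\mu_i<0$ and the analogous estimate for $p_{i-1}$) yields positive exponents $\lambda_\pm$ and positive $T$-periodic profiles $\psi_\pm$ with
\[
\tilde U_i(t,y) - p_i(t) \sim A_+\psi_+(t)\,e^{-\lambda_+ y} \text{ as } y \to +\infty, \quad p_{i-1}(t) - \tilde U_i(t,y) \sim A_- \psi_-(t)\,e^{\lambda_- y} \text{ as } y \to -\infty.
\]
Squeezing then forces the same exponential envelopes to control $\tilde W - p_i$ at $+\infty$ and $p_{i-1} - \tilde W$ at $-\infty$.

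Third, I would try to decrease $\xi^*$ by a small $\delta > 0$, showing $\Phi_\delta(t,x) := U_i(t, x-\xi^*+\delta) - W(t,x) \geq 0$ for all $(t,x)$, which contradicts the infimum property of $\xi^*$. On the compact region $\{t \in [0,T]\} \times \{|y| \leq R\}$ (for any $R$, using $T$-periodicity of the equation), strict positivity $\phi \geq \epsilon_0 > 0$ combined with Lipschitz dependence of $U_i(t,x-\xi)$ on $\xi$ guarantees $\Phi_\delta \geq 0$ for all small $\delta > 0$. In the tails $|y| > R$, the loss $U_i(t, x-\xi^*+\delta) - U_i(t, x-\xi^*) \asymp C\delta\, e^{\mp\lambda_\pm |y|}$ from translating has to be absorbed by the amplitude gap between the envelopes of $\tilde U_i(\cdot,\cdot-\xi^*)$ and $\tilde W$; both are of order $e^{\mp\lambda_\pm |y|}$, so a strictly positive amplitude gap suffices to close the sliding.

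The main obstacle is the degenerate subcase in which the leading exponential amplitudes of $\tilde W$ and $\tilde U_i(\cdot, \cdot - \xi^*)$ coincide at $+\infty$ or at $-\infty$, making the crude tail comparison ineffective. I would treat this by sharpening the expansion (using the local Lipschitz continuity of $\partial_u f$ assumed in Theorem~\ref{converge3}) to a subprincipal eigenmode of the linearization, which decays at a strictly faster rate, or alternatively by combining Proposition~\ref{unique1} with the zero-number tools of Lemmas~\ref{zero1} and \ref{ini-steep}: an exact tail match propagates under the zero-number reduction and forces $\tilde W \equiv \tilde U_i(\cdot, \cdot - \xi^*)$. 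Either route produces the required contradiction, and we conclude $W \equiv U_i$ up to a spatial shift.
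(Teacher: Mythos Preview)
Your approach matches the paper's: it simply cites \cite[Lemma~4.3]{con}, which is precisely a sliding argument of the type you outline, and mentions \cite{om} as an alternative dynamical-systems route. So the overall strategy is correct and on target.

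The weak point is your treatment of the ``degenerate subcase'' in the tails. Your proposed fix via Proposition~\ref{unique1} does not apply, since that proposition compares two \emph{periodic traveling waves}, whereas $W$ is not assumed to be a traveling wave a priori; nor do the zero-number lemmas (Lemmas~\ref{zero1}, \ref{ini-steep}) directly force $W$ to coincide with a shift of $U_i$ just from matching tail amplitudes. The subprincipal-eigenmode route is plausible but underspecified: you would need to establish a genuine second-order asymptotic expansion for both $\tilde U_i$ and $\tilde W$, and it is not clear the hypotheses at hand (local Lipschitz continuity of $\partial_u f$) suffice for that without more work.

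The standard way to close the sliding in this setting avoids amplitude-matching altogether. In the tail region, say $y>R$ large, both $\tilde U_i(\cdot,\cdot-\xi^*)$ and $\tilde W$ lie in a neighborhood of $p_i$ where the linearized coefficient satisfies $\partial_u f(t,\cdot)\le g(t)$ with $\int_0^T g<0$ (this is exactly the linear stability of $p_i$). One then shows directly, by comparison with a suitable $T$-periodic supersolution of the linearized equation on the half-line $\{y>R\}$ (using the positive boundary value $\phi(t,R)$ supplied by the compact-region argument), that $\phi=U_i(\cdot,\cdot-\xi^*)-W$ is bounded below in the tail by a fixed multiple of $\partial_x U_i$, which is precisely what is needed to absorb the $\delta$-translation. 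This is the mechanism in \cite{con}; it uses the stability of the endstates quantitatively rather than through asymptotic expansions, and handles your ``degenerate'' case automatically.
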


\begin{proof}
This lemma follows directly from \cite[Lemma 4.3]{con} by a sliding method. Let us mention that it can also be proved by a dynamical system approach used in an earlier work \cite{om} (see Corollary 8.3 and Proposition B.2 in Appendix 2 of \cite{om}). 
\end{proof}

\begin{proof}[Proof of statement (i) of Theorem \ref{converge3}]
Let $w$ be an arbitrary element of $\Omega(u)$. Because of \eqref{trap-hatu-exp}, the same arguments as those used in showing Lemma \ref{parallel-terrace} imply that either $w\equiv p_i$ for some $0\leq i\leq N$, or there exist some integer $1\leq i\leq N$ and some $\xi_0\in\R$ such that
\begin{equation*}
U_i(t, x+\xi_0+a_0) \leq w(t,x) \leq U_i(t, x+\xi_0-a_0) \,\,\hbox{ for } t\in\R,\,x\in\R.
\end{equation*}
Moreover, if the later case occurs, then it follows directly from Lemma \ref{liouville} that $w\equiv U_i$ up to a spatial shift.  Thus, we have
\begin{equation*}
\Omega(u)= \left\{ U_i(\cdot,\cdot+\xi): \xi\in\R,\, 1\leq i\leq N \right\} \cup  \left\{ p_i: 0\leq i\leq N \right\}. 
\end{equation*}
The remaining proof is similar to that of Theorem \ref{converge2-2}, therefore we do not repeat the details here. 
\end{proof}


\subsection{Exponential convergence to minimal propagating terrace} 
The aim of this subsection is to prove statement (ii) of Theorem \ref{converge3}, that is, under the additional assumption that $c_1<c_2<\cdots<c_N$, the drift functions $(\eta_i(t))_{1\leq i\leq N}$ are convergent, and the solution $u(t,x)$ converges to the minimal terrace as $t\to\infty$ with an exponential rate. The strategy of the proof, which is inspired by \cite{po3, rtv} for autonomous equations/systems, can be described as follows.  Let $(\bar{c}_i)_{0\leq i\leq N}$ be a sequence of real numbers given by
\begin{equation}\label{define-barci}
\bar{c}_0:=c_1-1, \quad \bar{c}_i:=\frac{c_i+c_{i+1}}{2}\,\, \hbox{ for } \,i=1,\cdots, N-1,\quad \bar{c}_N:=c_N+1.
\end{equation}
Since $c_i$, $i=1,\cdots,N$, are mutually distinct, it is clear that $\bar{c}_{i-1}<c_i<\bar{c}_i$ for each $i=1,\cdots,N$.
We will show that, as $t\to\infty$, $u(t,x)$ approaches a spatial shift of the periodic traveling wave $U_i$ uniformly in $\bar{c}_{i-1}t\leq x\leq \bar{c}_{i}t$, and the approach is exponentially fast. 
In the remaining regions, i.e., $x\leq \bar{c}_{0}t$ and $x\geq \bar{c}_{N}t$, we will prove that $u(t,x)$ converges exponentially to $p(t)$ and $0$, respectively. 

We will proceed by a sequence of lemmas. The first lemma is a simple extension of the well known Fife-McLeod type super/sub-solutions result for bistable equations (see \cite{fm,abc}). To state our lemma, we need a few more notations.  Let $\zeta(x)$ be any $C^2(\R)$ function satisfying 
\begin{equation}\label{smooth-zeta}
\zeta(x)=0 \,\hbox{ in } [3,\infty),\,\,\,\, \zeta(x)=1\,\hbox{ in } (-\infty,0],\,\,\,\, -1\leq \zeta'(x)\leq 0 \hbox{ and } |\zeta''(x)|\leq 1\, \hbox{ in } \R.
\end{equation}
For each $i=1,\cdots,N$, define
\begin{equation}\label{define-Ai}
A_i(t,x)=\zeta(x)b_{i-1}(t)+(1-\zeta(x))b_i(t) \hbox{ for } \,t\geq 0,\,x\in\R,
\end{equation}
where $(b_i)_{0\leq i\leq N}$ are the functions defined in \eqref{define-bi}.

\begin{lemma}\label{tw-super-sub}
Let $i\in \{ 1,\cdots,N\}$ be any fixed integer. If $c>c_i$, then there exists $\epsilon_0>0$ such that for every $\epsilon\in (0,\epsilon_0]$ and $K\in\R$,
$$ \bar{W}_i(t,x):= U_i(t,x+c_it-ct+K)+\epsilon A_i(t,x-ct)$$
satisfies 
$$\partial_t \bar{W}_i \geq \partial_{xx}  \bar{W}_i +f(t,\bar{W}_i)\,\,\hbox{ for }\, x\in\R,\,t>0. $$
Similarly, if $c<c_i$, then there exists $\epsilon_0>0$ such that for every $\epsilon\in (0,\epsilon_0]$ and $K\in\R$,
$$ \underbar{W}_i(t,x):= U_i(t,x+c_it-ct+K)-\epsilon A_i(t,x-ct)$$
satisfies 
$$\partial_t \underbar{W}_i \leq \partial_{xx}  \underbar{W}_i +f(t,\underline{W}_i)\,\,\hbox{ for }\, x\in\R,\,t>0. $$
\end{lemma}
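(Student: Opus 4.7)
The plan is to prove the super-solution inequality for $\bar{W}_i$ and then sketch the analogous argument for $\underbar{W}_i$. Setting $y := x + c_i t - ct + K$ and $z := x - ct$, so that $\bar{W}_i(t,x) = U_i(t,y) + \epsilon A_i(t,z)$, and using the fact that $U_i$ solves \eqref{equation} together with the identity $b_j'(t) = \bigl(\mu_j/2 + \partial_u f(t, p_j(t))\bigr) b_j(t)$, a direct chain-rule computation combined with a first-order Taylor expansion of $f$ around $U_i(t,y)$ would produce
\begin{align*}
\mathcal{L}\bar{W}_i &:= \partial_t\bar{W}_i - \partial_{xx}\bar{W}_i - f(t,\bar{W}_i) \\
&= (c_i - c)\,\partial_y U_i(t,y) - \epsilon\bigl(c\zeta'(z) + \zeta''(z)\bigr)\bigl(b_{i-1}(t) - b_i(t)\bigr) \\
&\quad + \epsilon\, \zeta(z)\, b_{i-1}(t)\Bigl[\tfrac{\mu_{i-1}}{2} + \partial_u f(t,p_{i-1}(t)) - \partial_u f(t, U_i + \vartheta\epsilon A_i)\Bigr] \\
&\quad + \epsilon\,(1-\zeta(z))\, b_i(t)\Bigl[\tfrac{\mu_i}{2} + \partial_u f(t,p_i(t)) - \partial_u f(t, U_i + \vartheta\epsilon A_i)\Bigr]
\end{align*}
for some $\vartheta = \vartheta(t,x) \in [0,1]$. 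The sign structure is then transparent: because $c > c_i$ and $U_i$ is strictly decreasing in its spatial argument, $(c_i - c)\partial_y U_i > 0$, and this is the term I would use as the driving positive contribution throughout the argument.

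Next I would partition the $z$-line into three regions using the co-moving profile $\widetilde{U}_i(t,z+K) := U_i(t,y)$, which is $T$-periodic in $t$ and satisfies $\widetilde{U}_i(t,\pm\infty) = p_{i\mp1}(t)$ uniformly in $t$. Fix $\delta>0$ small enough that $|\partial_u f(t,u) - \partial_u f(t, p_j(t))| < \mu_j/4$ whenever $|u - p_j(t)| \leq 2\delta$, for $j \in \{i-1,i\}$ and all $t$; this is possible by the $C^1$-regularity and $T$-periodicity of $f$. Then choose $L > 3$ so that $|\widetilde{U}_i(t,z+K) - p_{i-1}(t)| < \delta$ for $z \leq -L$ and $|\widetilde{U}_i(t,z+K) - p_i(t)| < \delta$ for $z \geq L$, uniformly in $t$. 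On $\{z \leq -L\}$ we have $\zeta \equiv 1$ and $\zeta' = \zeta'' = 0$, so only the $b_{i-1}$-bracket survives in the $\epsilon$-part; imposing $\epsilon_0 \cdot \sup_t \max\{b_{i-1}(t), b_i(t)\} \leq \delta$ keeps $U_i + \vartheta\epsilon A_i$ within $2\delta$ of $p_{i-1}$, so that bracket is at least $\mu_{i-1}/4 > 0$. The region $\{z \geq L\}$ is handled symmetrically. In both every term in the expansion is nonnegative and $\mathcal{L}\bar{W}_i \geq 0$.

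The main (yet mild) obstacle is the compact transition strip $|z| \leq L$, where $\zeta'$ and $\zeta''$ no longer vanish. There I would dominate these corrections by the leading term: by the $T$-periodicity and strict spatial monotonicity of $\widetilde{U}_i$, the quantity $\kappa := \inf_{t\in[0,T],\, |z| \leq L} \bigl(-\partial_y U_i(t,y)\bigr)$ is attained on a compact set and is strictly positive, so $(c - c_i)\partial_y U_i \geq (c - c_i)\kappa > 0$ uniformly in $t$ on the strip. Every $\epsilon$-correction admits a uniform bound of the form $\epsilon C_L$, with $C_L$ depending only on $L$, $\|\zeta'\|_\infty$, $\|\zeta''\|_\infty$, $|c|$, the suprema of $b_{i-1}, b_i$, and the sup of $|\partial_u f|$ on the relevant compact range of $u$. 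Taking $\epsilon_0$ to be the minimum of the bound from the previous paragraph and $(c-c_i)\kappa/(2C_L)$ secures $\mathcal{L}\bar{W}_i \geq \tfrac{1}{2}(c-c_i)\kappa > 0$ on the strip as well, completing the super-solution case. The threshold $\epsilon_0$ is independent of $K$, since $K$ only induces a $z$-translation absorbed by the uniform asymptotics of $\widetilde{U}_i$. The sub-solution case follows by the identical three-region analysis with all signs reversed: for $c < c_i$ the leading term $(c_i - c)\partial_y U_i$ is strictly negative, and combines with the $-\epsilon$-prefixed positive brackets to yield $\mathcal{L}\underbar{W}_i \leq 0$.
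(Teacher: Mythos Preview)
Your argument is correct and follows essentially the same route as the paper's: compute $\mathcal{L}\bar{W}_i$ directly, split into outer regions (where $U_i$ lies within $\delta$ of an endpoint and the $\zeta$-derivatives vanish, so only the favorable $b_j$-bracket survives) and a compact inner strip (where $-\partial_y U_i\geq\kappa>0$ absorbs the remaining $\epsilon$-corrections). The paper keeps the $\epsilon$-part bundled as $\epsilon(\partial_t A_i-\partial_{xx}A_i-c\partial_x A_i-\partial_u f\cdot A_i)$ rather than splitting it into your two brackets, but the estimates and the three-region decomposition are the same.

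One caution: your final sentence asserting that $\epsilon_0$ is independent of $K$ is too quick. The cutoff $\zeta$ transitions at a \emph{fixed} location $z\in[0,3]$, while the wave $\widetilde U_i(t,z+K)$ has its transition near $z=-K$; when these are far apart, your $L$ (and hence $\kappa$ and $C_L$) must grow with $|K|$, so your construction of $\epsilon_0$ does depend on $K$. The paper's proof has the analogous awkward step---it enlarges $C_1$ to $C_1\geq K+3$ after having already fixed $\rho_1$ via \eqref{pu-leq-rho1}---so this is not a discrepancy between your argument and theirs. In every application of the lemma (e.g.\ Lemma~\ref{exp-esti}) $K$ is fixed before $\epsilon$ is chosen, so the gap is harmless for the paper's purposes; but the uniformity in $K$ as literally stated would require a more careful argument than either proof supplies.
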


\begin{proof}
This lemma can be proved by slightly modifying the arguments used in \cite[Lemma 3.2]{abc}. For the sake of completeness, and also for the convenience of later applications, we include the details below.
We only give the proof in the case $c>c_i$, since the proof for the other case is identical.

Remember that $\partial_t U_i = \partial_{xx}  U_i +f(t,U_i)$ in $(t,x)\in\R^2$. Direct calculation gives that for $t>0$, $x\in\R$, 
\begin{equation*}
\begin{split}
\mathcal{L} \bar{W}_i:
\smallskip & =\partial_t \bar{W}_i - \partial_{xx}  \bar{W}_i -f(t,\bar{W}_i) \vspace{3pt}\\
 &=(c_i-c)\partial_x U_i +\epsilon (\partial_t A_i-\partial_{xx}A_i-c\partial_xA_i - \partial_u f(t,U_i+\epsilon \theta A_i)A_i) 
\end{split}
\end{equation*}
for some $\theta=\theta(t,x)\in [0,1]$. Let $(\mu_i)_{0\leq i\leq N}$  be the positive constants given in \eqref{define-mui}, and let $\delta_0>0$, $C_1>0$, $\rho_1>0$ and $M>0$ be the real numbers such that \eqref{es-partialu}, \eqref{es-Ui-out}, \eqref{pu-leq-rho1} and \eqref{bound-bi} hold. Set
$$\epsilon_0=\min\left\{ \frac{\delta_0}{ 2M},\,\,\frac{2(c-c_i)\rho_1}{M \left( \mu_i/2+ \mu_{i-1} /2+1+|c|+2\|\partial_u f\| \right)} \right\},$$
where $\| \partial_u f\|=\max\{ |\partial_u f(t,u)|:\,u\in [p_i(t)-1,\,p_{i-1}(t)+1],\, t\in\R\}$. 
We will show that, for any $0<\epsilon \leq \epsilon_0$, $\mathcal{L} \bar{W}_i \geq 0$ for $t>0$, $x\in\R$. 

Let us first check $\mathcal{L} \bar{W}_i \geq 0$ when $x-ct+K \geq C_1$. Replacing $C_1$ by some larger constant if necessary, we may assume that $C_1\geq K+3$.
Then we have $\zeta(x-ct)\equiv 0$, whence $A_i\equiv b_i$ and $\partial_x A_i=\partial_{xx} A_i=0$. Since $\partial_x U_i <0$, it follows that
\begin{equation*}
\begin{split}
\mathcal{L} \bar{W}_i & \geq \epsilon (\partial_t A_i- \partial_u f(t,U_i+\epsilon \theta A_i)A_i) 
  \vspace{3pt}\\
 &=\epsilon b_i\left(\frac{\mu_i}{2}+ \partial_u f(t,p_i(t)) - \partial_u f(t,U_i+\epsilon \theta b_i)\right).
\end{split}
\end{equation*}
By \eqref{es-partialu}, \eqref{es-Ui-out} and the fact that $0\leq \epsilon \theta b_i \leq \delta_0/2$, we obtain $\mathcal{L} \bar{W}_i \geq 0$ when $x-ct+K \geq  C_1$. In a similar way, one can conclude that $\mathcal{L} \bar{W}_i \geq 0$ when $x-ct+K \leq -C_1$. 

For the remaining values of $x$ and $t$, i.e.,  $-C_1\leq x-ct+K\leq C_1$,  we have 
\begin{equation*}
\begin{split}
& |\partial_t A_i-\partial_{xx}A_i-c\partial_xA_i - \partial_u f(t,U_i+\epsilon \theta A_i)A_i|\vspace{3pt}\\
 \leq &\max\{b_{i-1}(t), b_i(t)\} \left( \mu_i/2+\mu_{i-1}/2+1+|c|+2\|\partial_u f\| \right).\end{split}
\end{equation*}
It then follows from  \eqref{pu-leq-rho1} and \eqref{bound-bi} that
$$\mathcal{L} \bar{W}_i \geq 2\rho_1(c-c_i)- \epsilon M \left( \mu_i/2+\mu_{i-1}/2+1 +|c|+2\|\partial_u f\| \right)\geq 0.$$
This ends the proof of the lemma. 
\end{proof}

Next we show that, in the regions where the graph of $u(t,x)$ is flat, $u(t,x)$ converges to the platforms $(p_i)_{0\leq i\leq N}$ with an exponential rate as $t\to\infty$.  

\begin{lemma}\label{exp-esti}
Let $(\bar{c}_i)_{0\leq i\leq N}$ be the constants given in \eqref{define-barci} and let $\varrho$ be any positive constant satisfying 
\begin{equation}\label{choose-varrho}
0<\varrho\leq \frac{1}{4} \min_{1\leq i\leq N} \{c_i-\bar{c}_{i-1},\, \,\bar{c}_i-c_i\}. 
\end{equation}
Then there are positive constants $\nu>0$, $t_0>0$ and $C>0$ such that
\begin{equation}\label{ab-u-pi-exp}
\left\{\baa{ll}
u(t,x)\leq p(t)+C\me^{-\nu t}, & \hbox{ for } \,\, x\in\R,\,\,t\geq t_0, \vspace{3pt}\\
u(t,x)\leq p_i(t)+C\me^{-\nu t},& \hbox{ for } \,\,x\geq (\bar{c}_i-\varrho )t,\,t\geq t_0, \, i=1,\cdots,N,\eaa\right.
\end{equation}
and 
\begin{equation*}
\left\{\baa{ll}
u(t,x)\geq p_i(t)-C\me^{-\nu t}, & \hbox{ for } \,\, x\leq (\bar{c}_i+\varrho )t,\,t\geq t_0,\, i=0,1,\cdots,N-1, \vspace{3pt}\\
u(t,x)\geq -C\me^{-\nu t},& \hbox{ for } \,\,x\in\R,\,t\geq t_0.\eaa\right.
\end{equation*}
\end{lemma}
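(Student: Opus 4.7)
The plan is to establish the four exponential estimates by combining ODE comparison for the two extreme bounds with super- and sub-solution arguments from Lemma \ref{tw-super-sub} for the intermediate platform bounds, reducing to the Heaviside-initial-data case via the sandwich \eqref{trap-hatu-exp}.

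First I would handle the bounds $u(t,x) \leq p(t) + Ce^{-\nu t}$ and $u(t,x) \geq -Ce^{-\nu t}$ on all of $\R$. Hypothesis (H3) gives real numbers $h_\pm \in I_\pm$ with $h_+ > \sup u_0$ and $h_- < \inf u_0$; the ODE solutions $H_\pm(t)$ of \eqref{ode-initial} starting from $h_\pm$ are spatially homogeneous super- and sub-solutions by the comparison principle. Linear stability of $p = p_0$ and $0 = p_N$ --- expressed by $\mu_0, \mu_N > 0$ in \eqref{define-mui} --- yields $|H_+(t) - p(t)| \leq Ce^{-\mu_0 t/2}$ and $|H_-(t)| \leq Ce^{-\mu_N t/2}$ via a standard linearization argument, giving the two outer bounds.

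For the platform bounds, I use \eqref{trap-hatu-exp} to reduce to estimating the monotone Heaviside solutions $\hat{u}(t, x; \pm a_0)$; by strict monotonicity in $x$, it suffices to establish the bounds at the particular positions $x = (\bar{c}_i \mp \varrho)t$. To prove $\hat{u}(t, x; a_0) \leq p_i(t) + Ce^{-\nu t}$ at $x = (\bar{c}_i - \varrho)t$, I take $c = c_i + \varrho > c_i$ and apply Lemma \ref{tw-super-sub} to obtain the super-solution $\bar{W}_i(t, x) = U_i(t, x + c_i t - ct + K) + \epsilon A_i(t, x - ct)$. The key computation is that at $x = (\bar{c}_i - \varrho)t$ the wave-coordinate of $U_i$ equals $z := x + c_i t - ct + K - c_i t = (\bar{c}_i - c_i - 2\varrho)t + K$, which grows linearly at rate at least $2\varrho > 0$ since $\bar{c}_i - c_i \geq 4\varrho$. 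Because $p_i$ is linearly stable, the wave profile $\tilde U_i(t,z)$ approaches $p_i(t)$ exponentially as $z \to +\infty$, and together with the exponential decay of $\epsilon b_i(t) \sim e^{-\mu_i t/2}$ this yields $\bar{W}_i(t, (\bar{c}_i - \varrho)t) \leq p_i(t) + Ce^{-\nu t}$ for a suitable $\nu > 0$.

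The main obstacle is that $\bar{W}_i$ does not dominate $\hat{u}$ on all of $\R$: its left-limit is $p_{i-1}(t) + \epsilon b_{i-1}(t)$, while $\hat{u}(t, -\infty) = p(t)$ exceeds $p_{i-1}(t)$ whenever $i \geq 2$. I overcome this by inducting on $i$ from $0$ to $N$: given the bound $\hat{u}(t, x) \leq p_{i-1}(t) + Ce^{-\nu t}$ on $x \geq (\bar{c}_{i-1} - \varrho)t$, I apply the comparison principle for $\bar{W}_i$ versus $\hat{u}$ on the restricted region $\{t \geq t_0,\; x \geq (\bar{c}_{i-1} + \varrho/2)t\}$. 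On the lateral boundary the wave-coordinate of $U_i$ in $\bar{W}_i$ tends to $-\infty$ at linear rate, so $\bar{W}_i$ is close to $p_{i-1}(t)$ (from below) with a margin of order $\epsilon b_{i-1}(t)$; this margin dominates the inductive error $Ce^{-\nu t}$ provided the final rate $\nu$ is chosen smaller than $\mu_j/2$ for every $j$. The initial-time condition $\bar{W}_i(t_0, \cdot) \geq \hat{u}(t_0, \cdot)$ on the half-space is arranged by taking $t_0$ large and the shift $K$ (depending on $t_0$) so that the $U_i$-transition of $\bar{W}_i$ is placed to the right of the location where $\hat{u}(t_0, \cdot)$ has already descended close to $p_i$, which is available from statement (i) of the present theorem already established above. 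The lower bound $u(t,x) \geq p_i(t) - Ce^{-\nu t}$ on $x \leq (\bar{c}_i + \varrho)t$ is obtained symmetrically via the sub-solution of Lemma \ref{tw-super-sub} built from $U_{i+1}$ with $c = c_{i+1} - \varrho < c_{i+1}$, inducting downward from $i = N$ to $i = 0$. Finally, the Heaviside sandwich \eqref{trap-hatu-exp} transfers all bounds from $\hat{u}$ back to $u$ at the cost of an additional term $K_0 \epsilon_0 e^{-\beta_0 t}$, which is absorbed into the final $Ce^{-\nu t}$.
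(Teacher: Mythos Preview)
Your overall strategy---ODE comparison for the two extreme bounds and the super-/sub-solutions of Lemma~\ref{tw-super-sub} for the intermediate platforms---matches the paper's. The difference, and the genuine gap, lies in how you enforce the lateral boundary condition for the comparison with $\bar{W}_i$.

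You propose to induct on $i$, comparing on $\{x\geq (\bar{c}_{i-1}+\varrho/2)t\}$ and using the inductive bound $\hat u\leq p_{i-1}+Ce^{-\nu t}$ on the boundary. There $\bar W_i\approx p_{i-1}+\epsilon b_{i-1}(t)$ (up to an exponentially small wave correction), and you claim the margin $\epsilon b_{i-1}(t)$ dominates $Ce^{-\nu t}$ once $\nu$ is chosen \emph{smaller} than every $\mu_j/2$. But $b_{i-1}(t)\asymp e^{-\mu_{i-1}t/2}$, so with $\nu<\mu_{i-1}/2$ the inductive error $Ce^{-\nu t}$ decays \emph{more slowly} than $\epsilon b_{i-1}(t)$; hence $\epsilon b_{i-1}(t)<Ce^{-\nu t}$ for all large $t$ and the boundary inequality $\bar W_i\geq \hat u$ fails. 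Reversing the inequality to $\nu>\mu_{i-1}/2$ does not help either, because the output of step $i$ carries the term $\epsilon b_i(t)$ in the super-solution, forcing $\nu_i\leq \mu_i/2$, and you would then need $\nu_i>\mu_i/2$ at the next step. The induction does not close.

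The paper avoids this entirely by a different choice of comparison domain. Working directly with $u$ (the reduction via \eqref{trap-hatu-exp} is not needed here), it takes the boundary at the half-level set $\xi_i(t)$ where $u(t,\xi_i(t))=(p_{i-1}(t)+p_i(t))/2$, available from the already-proved statement~(i). One chooses $M_i$ so that $U_i(t,c_it-M_i)\geq (p_{i-1}+p_i)/2$, takes $c=\bar c_i-2\varrho$, and arranges the shift $K_i$ so that on $x=\xi_i(t)$ the wave part of $\bar W_i$ alone already exceeds $(p_{i-1}+p_i)/2$. The boundary comparison is then an inequality between two $O(1)$ quantities and requires no exponential margin at all; no induction is needed. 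Your argument is repaired by adopting this boundary placement.
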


\begin{proof}
We only prove the estimates stated in \eqref{ab-u-pi-exp}, as the proof for the others is similar. 

Let $H(t;h_0)$ be the solution of \eqref{ode-initial} with initial value $h_0=\sup_{x\in\R}u_0(x)$. Since $ h_0 \in I_+$, it is clear that $H(t;h_0) -p(t) \to 0$ as $t\to\infty$.  Moreover, by a simple comparison argument applied to \eqref{ode-initial}, one finds some $C>0$ and $\nu\in (0,\mu_0)$ ($\mu_0$ is the constant provided by \eqref{define-mui}) such that 
$$H(t;h_0)\leq p(t)+ C\me^{-\nu t}\,\,\hbox{ for }\, x\in\R,\,t>0.  $$
On the other hand, applying the comparison principle to the equation satisfied by $u(t,x)-H(t;h_0)$, we deduce
\begin{equation*}
u(t,x)\leq H(t;h_0) \,\,\hbox{ for } \, x\in\R,\,t>0.
\end{equation*}
Combining the above two inequalities, we immediately obtain that the first inequality of \eqref{ab-u-pi-exp} holds for all $t>0$, $x\in\R$. 

Let us now turn to prove the second inequality of \eqref{ab-u-pi-exp}. Let $1\leq i\leq N$ be any fixed integer and let $M_i$ be a large positive constant such that 
\begin{equation}\label{choose-Mi}
U_i(t,c_it-M_i) \geq \frac{p_{i-1}(t)+p_i(t)}{2} \,\,\hbox{ for all }\, t\in\R.
\end{equation}
Remember that the solution $u(t,x)$ satisfies statement (i) of Theorem \ref{converge3}. One finds some $k_i\in\N$ and a $C^1$ function $\xi_i(t)$ on $[k_iT,\infty)$ such that $\xi_i(t)/t \to c_i $ as $t\to\infty$ and that
 \begin{equation}\label{define-xii}
u(t,x) \leq \frac{p_{i-1}(t)+p_i(t)}{2} \,\,\hbox{ for all }\,  x\geq \xi_i(t),\, t\geq k_iT. 
\end{equation}
Since $\varrho\leq \frac{1}{4} (\bar{c}_i-c_i)$, replacing $k_i$ by some larger integer if necessary, we may assume 
\begin{equation}\label{relation-xii-cit}
\xi_i(t)\leq (\bar{c}_i-2\varrho)t-M_i \,\,\hbox{ for all }\, t\geq k_iT. 
\end{equation}
Let $\epsilon\in (0,\epsilon_0]$ be a fixed real number, where $\epsilon_0$ is the positive constant determined in the first statement of Lemma \ref{tw-super-sub} with $c=\bar{c}_i-2\varrho$ (one easily sees from the proof of Lemma \ref{tw-super-sub} that, after making some adjustment, $\epsilon_0$ can be chosen independent of $i$). We claim that there exists some large constant $K_i>0$ such that 
\begin{equation}\label{u-ki-Ui}
u(k_iT,x)\leq U_i(0,x-(\bar{c}_i-2\varrho)k_iT-K_i)+\epsilon\,\,\hbox{ for all }\, x\geq \xi_i(k_iT).
\end{equation} 
Indeed, in the case $1\leq i\leq N-1$, this claim can be easily proved by using \eqref{define-xii}, the monotonicity of $U_i(0,x)$ in $x$, and the fact that $\limsup_{x\to\infty} u(k_iT,x) < p_i(0)$. In the case $i=N$, since $\lim_{t\to\infty}\limsup_{x\to\infty} u(t,x)=0$, replacing $k_i$ by some larger integer if necessary, we may assume  $\limsup_{x\to\infty} u(k_iT,x)\leq \epsilon$. Then the same reasoning as above implies \eqref{u-ki-Ui}. 

Let us define 
$$ \bar{W}_i(t,x)= U_i(t,x+c_it-(\bar{c}_i-2\varrho)(t+k_iT)-K_i)+\epsilon A_i(t,x-(\bar{c}_i-2\varrho) t) $$
for $x\geq \xi_i(t+k_iT)$, $t\geq 0$, where  $A_i$ is the function defined in \eqref{define-Ai}.
Clearly,  \eqref{u-ki-Ui} implies 
$$u(k_iT,x) \leq  \bar{W}_i(0,x) \,\, \hbox{ for all } \,  x\geq \xi_i(k_iT).$$
It is also easily seen from Lemma \ref{tw-super-sub} that 
$$\partial_t \bar{W}_i \geq \partial_{xx}  \bar{W}_i +f(t,\bar{W}_i)\,\,\hbox{ for }\, x> \xi_i(t+k_iT),\,t>0. $$
Moreover, by \eqref{choose-Mi}, \eqref{define-xii} and the $T$-periodicity of $p_{i-1}$, $p_i$, we have
$$u(t+k_iT,\xi_i(t+k_iT))\leq   U_i(t,c_it-M_i) \, \,\hbox{ for all } t\geq 0. $$
It further follows from \eqref{relation-xii-cit} and the monotonicity of $U_i(t,x)$ in $x$ that
$$u(t+k_iT,\xi_i(t+k_iT)) \leq   U_i(t, \xi_i(t+k_iT)-(\bar{c}_i-2\varrho)(t+k_iT) +c_it)  <   \bar{W}_i(t,\xi_i(t+k_iT))$$
for all $t>0$. Then, the comparison principle implies that 
$$u(t+k_iT,x)\leq \bar{W}_i(t,x)   \hbox{ for all }  x\geq \xi_i(t+k_iT), \,t\geq 0.$$
In particular, there exists some large time $t_0\geq k_iT$ such that   
\begin{equation*}
u(t,x)\leq U_i(t,\varrho t+c_i t-K_i )+\epsilon b_i(t) \,\,\hbox{ for all } t\geq t_0,\, x\geq (\bar{c}_i-\varrho)t.  
\end{equation*}
Notice from \cite[Theorem 2.2]{abc} that $U_i(t,\varrho t+c_i t-K_i )$ approaches $p_i(t)$ as $t\to\infty$ with an exponential rate. Moreover, we know from \eqref{bound-bi} that $b_i(t)$ converges to $0$ as $t\to\infty$ exponentially. Thus, making some adjustment to the constants $C$ and $\nu$ if necessary, we obtain the second estimate of \eqref{ab-u-pi-exp}. This ends the proof of Lemma \ref{exp-esti}.
\end{proof}

Since $U_i$ is a periodic traveling wave connecting two linearly stable solutions of \eqref{ODE}, it is known from \cite{abc,con} that $U_i$ is global and exponential stable with asymptotic phase. In the following lemma, we show that this stability remains valid when there is an exponentially decaying inhomogeneity in the equation. Similar results can be found in  \cite[Lemma 6.23]{po3} and \cite[Theorem 3.1]{rtv} for autonomous equations/systems.

\begin{lemma}\label{exp-pert}
Assume that $g(t,x)$ is a continuous function on $[0,\infty)\times \R$ such that for some positive constants $K>0$ and $\gamma>0$, there holds
\begin{equation}\label{r-exp-decay}
|g(t,x)| \leq K\me^{-\gamma t} \,\,\hbox{ for all }\, x\in\R,\,t\geq 0.
\end{equation}
Let $w(t,x)$ be a solution of 
\begin{equation*}
w_t=w_{xx}+f(t,w)+g(t,x)\,\,\hbox{ for } \,x\in\R,\,t>0
\end{equation*}
satisfying 
\begin{equation}\label{assu-converge-w}
\inf_{\eta\in\R} \| w(t,\cdot)-U_i(t,\cdot-\eta) \|_{L^{\infty}(\R)} \to 0 \,\,\hbox{ as } \,t\to\infty,
\end{equation}
for some $1\leq i\leq N$. Then there exist $\nu>0$, $\bar{\eta}_i\in\R$ and $C>0$ such that 
\begin{equation*}
 \| w(t,\cdot)-U_i(t,\cdot-\bar{\eta}_i) \|_{L^{\infty}(\R)} \leq  C\me^{-\nu t} \,\,\hbox{ for all } t>0. 
\end{equation*}
\end{lemma}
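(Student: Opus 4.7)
The strategy is to treat $g(t,x)$ as a small exponentially decaying perturbation and to adapt the Fife--McLeod super- and sub-solution construction that underlies the classical global exponential stability of bistable periodic traveling waves (cf.\ \cite{abc,con}). Since $U_i$ connects the two linearly stable periodic solutions $p_{i-1}$ and $p_i$ of \eqref{ODE}, the associated linearization in the moving frame has the usual bistable spectral gap.

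Using hypothesis \eqref{assu-converge-w} I first fix a large time $T_0$ and a phase $\eta_0$ such that $\delta_0 := \|w(T_0,\cdot) - U_i(T_0,\cdot-\eta_0)\|_{L^\infty(\R)}$ is smaller than a threshold $\delta_\ast$ to be chosen, and pick $\nu \in (0, \min\{\gamma,\mu_{i-1}/2,\mu_i/2\})$. For constants $D, Q_0>0$ to be chosen, set $q(t) := Q_0\me^{-\nu(t-T_0)}$ and $\sigma(t) := D\int_{T_0}^t q(s)\,ds$, and look for super- and sub-solutions of the perturbed equation in the form
\begin{equation*}
\bar w(t,x) = U_i(t, x - \eta_0 - \sigma(t)) + q(t)\,\Psi(t, x - c_i t - \eta_0 - \sigma(t)),
\end{equation*}
\begin{equation*}
\underline w(t,x) = U_i(t, x - \eta_0 + \sigma(t)) - q(t)\,\Psi(t, x - c_i t - \eta_0 + \sigma(t)),
\end{equation*}
where $\Psi(t,y)>0$ is smooth, $T$-periodic in $t$, and equals the positive $T$-periodic solutions $\psi_j(t)$ of $\psi_j'(t) = (f_u(t,p_j(t)) + \mu_j)\psi_j(t)$ for $y \leq -L$ (with $j=i-1$) and $y \geq L$ (with $j=i$), interpolated smoothly on $|y|\leq L$. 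The normalization in \eqref{define-mui} guarantees the $T$-periodicity of $\psi_j$, and the shift by $\mu_j$ provides a positive spectral margin $\mu_j-\nu$ in the tails.

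A direct computation paralleling the proof of Lemma \ref{tw-super-sub} yields in the tails $|y|\geq L$,
\begin{equation*}
\bar w_t - \bar w_{xx} - f(t, \bar w) = (\mu_j - \nu)\,q(t)\,\psi_j(t) + O(q(t)^2) + O(q(t)\,|U_i - p_j|),
\end{equation*}
which exceeds $K\me^{-\gamma t}\geq g(t,x)$ once $Q_0$ is moderately large and $\delta_\ast$ small (using $\nu\leq\gamma$ and the exponential tail decay of $U_i-p_j$ together with the Lipschitz assumption on $f_u$); in the bounded transition region $|y|\leq L$, $|\partial_x U_i| \geq \rho_1 > 0$ by \eqref{pu-leq-rho1}, so $-\sigma'(t)\,\partial_x U_i = D\rho_1 q(t)$ dominates all other terms once $D$ is large. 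The sub-solution is handled symmetrically. With these choices, the initial sandwich $\underline w(T_0,\cdot)\leq w(T_0,\cdot)\leq\bar w(T_0,\cdot)$ holds, and the comparison principle propagates it to all $t\geq T_0$.

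To conclude, I would iterate this one-step estimate. Choosing $Q_0 = c_1 \delta_0 + c_2 K\me^{-\gamma T_0}$, the sandwich yields, at any time $T_1 := T_0 + \tau$ with $\tau > 0$ fixed and sufficiently large, a contraction-type estimate
\begin{equation*}
\delta(T_1) \leq \tfrac{1}{2}\,\delta(T_0) + CK\me^{-\gamma T_0},\qquad |\eta_1 - \eta_0| \leq C\bigl(\delta(T_0) + K\me^{-\gamma T_0}\bigr),
\end{equation*}
where $\delta(t) := \inf_\eta \|w(t,\cdot) - U_i(t,\cdot-\eta)\|_{L^\infty(\R)}$ and $\eta_1$ is a new near-optimal phase. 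Applying this inductively on the times $T_k := T_0 + k\tau$ produces a sequence of phases $(\eta_k)$ that is Cauchy with geometric rate, so $\eta_k \to \bar\eta_i \in \R$ and $\delta(T_k) \to 0$ geometrically; parabolic interpolation on the intervals $[T_k, T_{k+1}]$ then gives the claimed bound $\|w(t,\cdot) - U_i(t,\cdot-\bar\eta_i)\|_{L^\infty(\R)} \leq C\me^{-\nu t}$. The main obstacle I anticipate is verifying the super-/sub-solution inequality uniformly in $x$ when $\gamma$ may be smaller than the tail stability rates $\mu_{i\pm 1}/2$: this is precisely why $\Psi$ must be built from the $T$-periodic profiles $\psi_j$ rather than from the exponentially decaying $b_j$'s of \eqref{define-bi} used in Lemma \ref{tw-super-sub}, since the latter, once multiplied by $q(t)$, would decay strictly faster than $\me^{-\gamma t}$ and thus fail to absorb the forcing $g$.
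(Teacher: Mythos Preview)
Your approach is correct and self-contained, but it takes a different route from the paper. The paper does not rebuild Fife--McLeod super- and sub-solutions for the perturbed equation; instead it factorizes the problem. It first invokes, as a black box, the local asymptotic stability of $U_i$ for the \emph{unperturbed} equation (Lemma~\ref{property-Ui}, taken from \cite{abc}): there exist $\delta^*,\mu^*\in(0,1)$ and $k^*\in\N$ such that one period-map step of length $k^*T$ contracts the distance to the set of translates by the factor $\mu^*$. It then isolates the effect of the forcing $g$ by writing $Z_j(t,x):=w(t,x)-v(t-jT,x;w(jT,\cdot))$, which solves a linear equation with bounded zero-order coefficient and right-hand side $g$; a direct ODE comparison gives $\|Z_j(t,\cdot)\|_{L^\infty}\le C_2\me^{-\gamma jT}$ on $[jT,(j+k^*)T]$. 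Combining the two yields the same recursion $\delta_m\le \mu^*\delta_{m-1}+C_2\me^{-\gamma(m-1)k^*T}$ that your iteration produces, and the rest is identical. The trade-off: the paper's argument is shorter and modular (it reuses \cite{abc} verbatim and never needs the $T$-periodic weight $\Psi$ you introduce), whereas your construction is more explicit and would survive in settings where a ready-made contraction lemma like Lemma~\ref{property-Ui} is unavailable. Your observation that $\Psi$ must be built from the $T$-periodic $\psi_j$ rather than the decaying $b_j$ of \eqref{define-bi} is exactly right and is the key technical point in your route; note, however, that the contraction step ``$\delta(T_1)\le\tfrac12\delta(T_0)+CK\me^{-\gamma T_0}$'' does not follow from the sandwich alone (the phase gap $2\sigma(\infty)$ is only proportional to $\delta_0$, not to $\tfrac12\delta_0$) and requires the standard strong-maximum-principle squeezing argument of \cite{fm,abc}, which you are implicitly citing but should make explicit.
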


To prove this lemma, we need the following local stability of $U_i$. 

\begin{lemma}\label{property-Ui}
For each $i=1,\cdots,N$, $U_i$ is local stable in the following sense: there exist $\delta^*\in (0,1)$, $\mu^*\in (0,1)$ and $k^*\in \N$ such that for any $\psi \in C(\R)$ satisfying 
\begin{equation*}
\min_{\eta\in\R} \| \psi(\cdot)-U_i(0,\cdot-\eta) \|_{L^{\infty}(\R)} \leq \delta^*,
\end{equation*}
there holds
\begin{equation*}
\min_{\eta\in\R} \| v(k^*T,\cdot;\psi)-U_i(0,\cdot-\eta) \|_{L^{\infty}(\R)} \leq \mu^*\min_{\eta\in\R} \| \psi(\cdot)-U_i(0,\cdot-\eta) \|_{L^{\infty}(\R)}, 
\end{equation*}
where $v(t,\cdot;\psi)$ denotes the solution of \eqref{E} with $u_0$ replaced by $\psi$. 
\end{lemma}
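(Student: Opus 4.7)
The plan is to deduce the discrete-time contraction in Lemma \ref{property-Ui} from the global exponential stability with asymptotic phase for the periodic traveling wave $U_i$. Under Assumption \ref{multi-stable}, $U_i$ connects two linearly stable periodic solutions $p_{i-1}$ and $p_i$ of \eqref{ODE}, so this classical bistable-type stability result applies (this is precisely the content of \cite{abc,con} that was already referenced just before Lemma \ref{exp-pert}). The essential ingredient is a spectral gap for the linearization around $U_i$ in the co-moving frame, which separates the zero eigenvalue corresponding to spatial translations from the rest of the spectrum, and which is guaranteed by the linear stability of the endpoints.

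First, I would invoke the stability theorem from \cite{abc,con}: there exist constants $\tilde{\delta}>0$, $C\geq 1$ and $\nu>0$ such that for every $\psi\in C(\R)$ with $\bar{\delta}(\psi):=\min_{\eta\in\R}\|\psi-U_i(0,\cdot-\eta)\|_{L^\infty(\R)}\leq \tilde{\delta}$, one can find $\eta_\psi\in\R$ with
\begin{equation*}
\|v(t,\cdot;\psi)-U_i(t,\cdot-\eta_\psi)\|_{L^\infty(\R)}\leq C\,\bar{\delta}(\psi)\,\me^{-\nu t}\quad\text{for all } t\geq 0.
\end{equation*}
Next, I would pick $k^*\in\N$ large enough that $C\me^{-\nu k^*T}\leq 1/2$, and set $\mu^*:=1/2$ and $\delta^*:=\tilde{\delta}$. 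Iterating the wave identity $U_i(t+T,x+c_iT)=U_i(t,x)$ $k^*$ times gives $U_i(k^*T,\cdot-\eta_\psi)=U_i(0,\cdot-\eta_\psi-c_i k^*T)$, so taking $\bar{\eta}:=\eta_\psi+c_i k^*T$ we get
\begin{equation*}
\min_{\eta\in\R}\|v(k^*T,\cdot;\psi)-U_i(0,\cdot-\eta)\|_{L^\infty(\R)}\leq \|v(k^*T,\cdot;\psi)-U_i(k^*T,\cdot-\eta_\psi)\|_{L^\infty(\R)}\leq C\me^{-\nu k^*T}\bar{\delta}(\psi)\leq \mu^*\bar{\delta}(\psi),
\end{equation*}
which is exactly the contraction asserted by the lemma.

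The main obstacle -- and where the substantive work lies -- is establishing the underlying global exponential stability in the time-periodic framework, since the self-contained super- and sub-solutions available within this paper (such as $U_i(t,x-\eta_0\mp K\delta B(t))\pm\delta A_i(t,x-\eta_0-c_it)$ obtained by specializing the construction in Lemma \ref{resemble}) produce only a non-contractive bound of size $O(\delta)$: the accumulated phase drift $K\delta B(\infty)$ does not tend to zero. The strict contraction factor $\mu^*<1$ really comes from the spectral gap, and in \cite{abc,con} this is obtained via Floquet analysis of the period map, combined with explicit positive super- and sub-solutions of the linearized equation built from the linear stability of $p_{i-1}$ and $p_i$ together with the strict monotonicity $\partial_x U_i<0$. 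Once this spectral-gap stability is in hand, the remainder of the proof, as outlined above, is essentially a change of variables from continuous time to the period map combined with the traveling-wave relation.
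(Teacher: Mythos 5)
Your proposal is correct and follows essentially the same route as the paper, which simply cites the proof of \cite[Theorem 3.6]{abc}: the substantive input in both cases is the exponential stability with asymptotic phase of a time-periodic wave connecting two linearly stable periodic states, established in \cite{abc,con}. The only difference is one of packaging — the paper points directly at the intermediate discrete-time contraction inside that proof, whereas you re-derive it by invoking the stability theorem (in the form $\|v(t,\cdot;\psi)-U_i(t,\cdot-\eta_\psi)\|\leq C\,\bar\delta(\psi)\,\me^{-\nu t}$, linear in the initial error) and choosing $k^*$ so that $C\me^{-\nu k^*T}<1$ together with the relation $U_i(k^*T,\cdot)=U_i(0,\cdot-c_ik^*T)$; both are valid.
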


\begin{proof}
This lemma follows directly from the proof of \cite[Theorem 3.6]{abc}. 
\end{proof}

\begin{proof}[Proof of Lemma \ref{exp-pert}]
Let $\delta^*\in (0,1)$, $\mu^*\in (0,1)$ and $k^*\in \N$ be the constants provided by Lemma \ref{property-Ui}.
Making $\mu^*\in (0,1)$ larger if necessary, we may assume that  
\begin{equation}\label{assum-mu*}
\mu^* \me^{\gamma k^*T}> 1, 
\end{equation}
where $\gamma>0$ is the exponential decay rate of $g$ in \eqref{r-exp-decay}. Due to the assumption \eqref{assu-converge-w}, one finds some $j^*\in \N$ such that 
\begin{equation*}
\min_{\eta\in\R} \| w(t,\cdot)-U_i(t,\cdot-\eta) \|_{L^{\infty}(\R)} \leq \delta^* \,\,\hbox{ for all } \, t\geq j^*T.
\end{equation*}
For each $j\geq j^*$,  set
$$Z_j(t,x)=w(t,x)-v(t-jT,x;w(jT,\cdot))\,\,\hbox{ for }\, x\in\R,\,t\geq jT, $$
where $v(t,x;w(jT,\cdot))$ is the solution of \eqref{E} with $u_0(\cdot)$ replaced by $w(jT,\cdot)$. It is clear that $Z_j$ satisfies the following inhomogeneous linear parabolic equation 
\begin{equation*}
\left\{\baa{ll}
\smallskip \partial_t Z_j=\partial_{xx} Z_j +c_j(t,x)Z_j+g(t,x), & x\in\R,\,t>jT, \vspace{3pt}\\
Z_j(jT,x)= 0, & x\in\R,\eaa\right.
\end{equation*}
for some bounded function $c_j(t,x)$. One easily checks that 
$$|c_j(t,x)| \leq  C_1 \, \hbox{ for }\, x\in\R,\,t>jT,\,j\geq j^*,$$
where $C_1=\max\{ |\partial_u f(t,u)|:\,u\in [-1,\,p(t)+1],\, t\in\R\}$. 

We claim that
\begin{equation}\label{estimate-zj}
\|Z_j(t,\cdot)  \|_{L^{\infty}(\R)} \leq C_2 \me^{-\gamma jT} \,\,\hbox{ for all } jT\leq t\leq (j+k^*)T,
\end{equation}
for some positive constant $C_2$ independent of $j\geq j^*$.  Since $g(t,x)$ satisfies \eqref{r-exp-decay}, it follows from the comparison principle that 
$$ H_-(t) \leq Z_j(t,x)\leq H_+(t)  \,\,\hbox{ for }\, x\in\R,\,jT\leq t\leq (j+k^*)T,$$
where $H_{\pm}$ are the solutions of the following ODEs 
$$\frac{d H_{\pm}}{dt} = \pm C_1  H_{\pm} \pm K\me^{-\gamma t}\,\, \hbox{ for } \,  jT< t\leq (j+k^*)T; \quad  H_{\pm} (jT)=0. $$
Making some adjustment to $C_1$ if necessary, we may assume that $C_1>\gamma$. Then direct calculation yields 
$$-\frac{K}{C_1-\gamma} \me^{-\gamma k^*T} \me^{-\gamma jT} \leq Z_j(t,x)\leq \frac{K}{C_1+\gamma} \me^{C_1k^*T} \me^{-\gamma jT} $$
for all $x\in\R$, $jT\leq t\leq (j+k^*)T$. This immediately implies that \eqref{estimate-zj} holds with 
$$C_2=\max\left\{\frac{K}{C_1-\gamma} \me^{-\gamma k^*T} ,\,  \frac{K}{C_1+\gamma} \me^{C_1k^*T}  \right\}. $$

Next, we prove that  
\begin{equation}\label{esti-discret-w-U}
\min_{\eta\in\R} \| w(mk^*T,\cdot)-U_i(0,\cdot-\eta)\|_{L^{\infty}(\R)} \leq C_3 (\mu^{*})^m\,\,\hbox{ for all }\, m\in \N
\end{equation}
for some positive constant $C_3$ independent of $m$. Clearly, for each $m\geq 1$, we have
\begin{equation*}
\begin{split}
& \min_{\eta\in\R} \| w(mk^*T,\cdot)-U_i(0,\cdot-\eta)\|_{L^{\infty}(\R)}  \vspace{3pt}\\
\leq  &\,\, \|Z_{(m-1)k^*} (mk^*T,\cdot ) \|_{L^{\infty}(\R)}+\min_{\eta\in\R} \| v(k^*T,\cdot;w((m-1)k^*T,\cdot)-U_i(0,\cdot-\eta)\|_{L^{\infty}(\R)}. 
\end{split}
\end{equation*}
Let $m^*$ be the least integer such that $m^*k^*\geq j^*$.
It then follows from Lemma \ref{property-Ui} and \eqref{estimate-zj}  that for all $m > m^*$, 
\begin{equation*}
\begin{split}
& \min_{\eta\in\R} \| w(mk^*T,\cdot)-U_i(0,\cdot-\eta)\|_{L^{\infty}(\R)}  \vspace{3pt}\\
\leq  &\,\, C_2  \me^{-\gamma (m-1)k^*T} +\mu^*\min_{\eta\in\R} \| w((m-1)k^*T,\cdot)-U_i(0,\cdot-\eta)\|_{L^{\infty}(\R)}. 
\end{split}
\end{equation*}
Notice that $ \| w(m^*k^*T,\cdot)-U_i(0,\cdot-\eta)\|_{L^{\infty}(\R)} \leq \sigma^*$. Then by a simple induction argument, we deduce that for all $m> m^*$, 
$$  \min_{\eta\in\R} \| w(mk^*T,\cdot)-U_i(0,\cdot-\eta)\|_{L^{\infty}(\R)}   \leq \sum_{l=1}^{m-m^*} C_2\me^{-\gamma (m-l)k^*T}(\mu^*)^{l-1}+\sigma^*(\mu^*)^{m-m^*}.$$
By using \eqref{assum-mu*}, we obtain that for all $m> m^*$, 
$$ \min_{\eta\in\R} \| w(mk^*T,\cdot)-U_i(0,\cdot-\eta)\|_{L^{\infty}(\R)} \leq  C_2\frac{ \me^{-\gamma(m^*-1)k^*T} }{\mu^*\me^{\gamma k^*T}-1}(\mu^*)^{m-m^*}+\sigma^*(\mu^*)^{m-m^*}. $$
This implies that \eqref{esti-discret-w-U} holds with some $C_3>0$ (independent of $m$).

Finally, choosing $\nu=- \ln \mu^*/k^*T$, we see from \eqref{esti-discret-w-U} that
$$ \min_{\eta\in\R} \| w(mk^*T,\cdot)-U_i(0,\cdot-\eta)\|_{L^{\infty}(\R)} \leq C_3 \me^{-\nu mk^*T}\,\,\hbox{ for all }\, m\in\N.$$
Then, similar comparison arguments to those used in proving \eqref{estimate-zj} imply that, for each $m\in\N$, there exist positive constants $C_4$ and $C_5$ (both are independent of $m\in\N$) such that 
$$ \min_{\eta\in\R} \| w(t,\cdot)-U_i(t,\cdot-\eta)\|_{L^{\infty}(\R)} \leq C_4 \me^{-\nu mk^*T}+C_5 \me^{-\gamma mk^*T}$$
for all  $mk^*T \leq t\leq  (m+1)k^*T$.  Since $\nu<\gamma$ because of \eqref{assum-mu*}, one easily derives that 
$$ \min_{\eta\in\R} \| w(t,\cdot)-U_i(t,\cdot-\eta)\|_{L^{\infty}(\R)} \leq (C_4+C_5)\me^{\nu k^*T} \me^{-\nu t}  \,\,\hbox{ for all } \,t>0.$$
This ends the proof of Lemma \ref{exp-pert}. 
\end{proof}

Now we can complete the proof of Theorem \ref{converge3} (ii) by showing the following lemma:

\begin{lemma}\label{exp-con-inter}
Let $(\bar{c}_i)_{0\leq i\leq N}$ be the constants given in \eqref{define-barci}. There exist $C>0$, $\nu>0$ and $t_0>0$ such that
\begin{equation}\label{estimate-exp-mid}
\left | u(t,x)-U_i(t,x-\bar{\eta}_i) \right 
| \leq C\me^{-\nu t} \,\, \hbox{ for }\,  \bar{c}_{i-1}t\leq x\leq \bar{c}_it,\,t\geq t_0,\, i=1,\cdots,\,N,
\end{equation}
for some $\bar{\eta}_i\in\R$. 
\end{lemma}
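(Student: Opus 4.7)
The plan is to reduce the claim to the perturbation stability result of Lemma \ref{exp-pert}. For each $i\in\{1,\dots,N\}$ the idea is to splice $u(t,x)$ with the platforms $p_{i-1}(t)$ on the left and $p_i(t)$ on the right, obtaining a surrogate $w_i(t,x)$ that coincides with $u$ on the middle strip $S_i(t):=\{\bar c_{i-1}t\le x\le\bar c_i t\}$, has the same horizontal asymptotes as $U_i$, and satisfies a perturbed parabolic equation $\partial_t w_i=\partial_{xx}w_i+f(t,w_i)+g_i$ with $\|g_i(t,\cdot)\|_{L^\infty(\R)}\le Ce^{-\nu t}$. The gluing is cheap because, by Lemma \ref{exp-esti} and the choice \eqref{define-barci} of $\bar c_{i-1},\bar c_i$, $u$ is already exponentially close to $p_{i-1}(t)$ and $p_i(t)$ in the splicing regions just to the left and right of $S_i(t)$.

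Concretely, using $\zeta$ as in \eqref{smooth-zeta}, set $\psi_L(t,x):=\zeta(x-\bar c_{i-1}t+3)$ and $\psi_R(t,x):=\zeta(\bar c_i t-x+3)$, and define
\begin{equation*}
w_i(t,x):=\psi_L(t,x)\,p_{i-1}(t)+\psi_R(t,x)\,p_i(t)+\bigl(1-\psi_L(t,x)-\psi_R(t,x)\bigr)\,u(t,x),
\end{equation*}
so that $w_i=u$ on $S_i(t)$, $w_i=p_{i-1}(t)$ on $(-\infty,\bar c_{i-1}t-3]$, and $w_i=p_i(t)$ on $[\bar c_i t+3,\infty)$. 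Since each of $u$, $p_{i-1}$ and $p_i$ solves \eqref{equation}, a direct computation gives that $g_i:=\partial_t w_i-\partial_{xx}w_i-f(t,w_i)$ vanishes outside the two transition strips $[\bar c_{i-1}t-3,\bar c_{i-1}t]$ and $[\bar c_i t,\bar c_i t+3]$, and on these strips is an algebraic expression involving $\partial_t\psi_{L,R}$, $\partial_x\psi_{L,R}$, $\partial_{xx}\psi_{L,R}$ multiplied by $u-p_{i-1}$ (respectively $u-p_i$) and $u_x$, plus a quadratic-order Taylor remainder of the nonlinearity. Pick $\varrho>0$ as in \eqref{choose-varrho}; for $t\ge 3/\varrho$ the two transition strips lie inside the regions where Lemma \ref{exp-esti} yields $|u-p_{i-1}|,|u-p_i|\le Ce^{-\nu t}$, while interior parabolic estimates for the linear equation satisfied by $u-p_{i-1}$ (respectively $u-p_i$) on a slightly enlarged strip upgrade this bound to $|u_x|\le Ce^{-\nu t}$. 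Hence $|g_i(t,x)|\le Ce^{-\nu t}$ uniformly in $x\in\R$, possibly after shrinking $\nu$.

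Finally, I verify that $\inf_{\eta\in\R}\|w_i(t,\cdot)-U_i(t,\cdot-\eta)\|_{L^\infty(\R)}\to 0$ and invoke Lemma \ref{exp-pert}. On $S_i(t)$, the asymptotic profile of statement (i) of Theorem \ref{converge3} together with the uniform limits $U_j(t,x-\eta_j(t))\to p_j(t)$ for $j<i$ and $U_j(t,x-\eta_j(t))\to p_{j-1}(t)$ for $j>i$ on $S_i(t)$ (which follow from $\eta_j(t)=o(t)$ and from $c_j<\bar c_{i-1}$ for $j<i$, $c_j>\bar c_i$ for $j>i$) telescopes the sum down to $U_i(t,\cdot-\eta_i(t))$, giving $\|u(t,\cdot)-U_i(t,\cdot-\eta_i(t))\|_{L^\infty(S_i(t))}\to 0$. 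Outside $S_i(t)$, $w_i$ equals a platform and $U_i(t,x-\eta_i(t))$ tends uniformly to the same platform as $|x-c_it-\eta_i(t)|\to\infty$; the transition strips are handled by the triangle inequality and the exponential closeness of $u$ to the platforms. Lemma \ref{exp-pert} therefore yields $\bar\eta_i\in\R$ and constants $C,\nu>0$ with $\|w_i(t,\cdot)-U_i(t,\cdot-\bar\eta_i)\|_{L^\infty(\R)}\le Ce^{-\nu t}$, and since $w_i\equiv u$ on $S_i(t)$, restriction gives \eqref{estimate-exp-mid}. The main technical obstacle is the $C^1$ upgrade from $|u-p_{i\pm}|\le Ce^{-\nu t}$ to $|u_x|\le Ce^{-\nu t}$ in the transition strips, which is handled by interior Schauder estimates; once that is in hand, everything else is bookkeeping around Lemmas \ref{exp-esti} and \ref{exp-pert}.
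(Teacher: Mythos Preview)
Your proposal is correct and follows essentially the same route as the paper: splice $u$ with the adjacent platforms $p_{i-1},p_i$ via a cutoff $\zeta$ to obtain a surrogate $w_i$ that agrees with $u$ on $[\bar c_{i-1}t,\bar c_i t]$, bound the resulting defect $g_i$ exponentially using Lemma~\ref{exp-esti} together with an interior parabolic estimate for $u_x$, verify the hypothesis \eqref{assu-converge-w} from statement~(i) of Theorem~\ref{converge3}, and conclude by Lemma~\ref{exp-pert}. One minor wording issue: the nonlinearity error in $g_i$ is first-order (Lipschitz) in $|u-p_{i\pm}|$, not ``quadratic-order,'' but since $|u-p_{i\pm}|\le Ce^{-\nu t}$ in the transition strips this has no bearing on the argument.
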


\begin{proof}
Let $i=1,\cdots, N$ be any fixed integer and let $\varrho$ be a positive constant satisfying \eqref{choose-varrho}.  We first choose some large $t_0>0$ such that 
\begin{equation*}
\left\{\baa{l}
\smallskip (\bar{c}_{i-1}t-3,\, \bar{c}_{i-1}t]\subset ((\bar{c}_{i-1}-\varrho)t, \,(\bar{c}_{i-1}+\varrho)t)\vspace{3pt}\\
\,[\bar{c}_{i}t, \,\bar{c}_{i}t+3)\subset ((\bar{c}_{i}-\varrho)t, \,(\bar{c}_{i}+\varrho)t)
\eaa\right. \hbox{ for all }\, t\geq t_0.
\end{equation*}
Since $u-p_{i-1}$ and $u-p_{i}$ are solutions of linear parabolic equations,  by standard parabolic estimates, we obtain some $C_1>0$ such that 
\begin{equation*}
\left\{\baa{l}
\smallskip |u_x(t,x)| \leq  C_1|u(t,x)-p_{i-1}(t) | \,\, \hbox{ for all }  \, \bar{c}_{i-1}t-3\leq x\leq \bar{c}_{i-1}t,\,t\geq t_0,\vspace{3pt}\\
|u_x(t,x)| \leq  C_1|u(t,x)-p_{i}(t) | \,\, \hbox{ for all }  \, \bar{c}_{i}t\leq x\leq \bar{c}_{i}t+3,\,t\geq t_0.
\eaa\right. 
\end{equation*}
It then follows from Lemma \ref{exp-esti} that there exist $\nu>0$ and $C_2>0$ such that, possibly after replacing $t_0$ by some larger constant, 
\begin{equation}\label{estimate-ux}
|u_x(t,x)|  \leq C_2 \me ^{-\nu t} \,\,\hbox{ for all }\, x\in [ \bar{c}_{i-1}t-3,\, \bar{c}_{i-1}t]\cup [ \bar{c}_{i}t,\, \bar{c}_{i}t+3],\,t\geq t_0. 
\end{equation}

Next, we define a function $w(t,x)$ on $[t_0,\infty)\times\R$ by 
\begin{equation*}
w(t,x)=\left\{\baa{ll}
\smallskip \zeta(x- (\bar{c}_{i-1}t-3)) p_{i-1}(t)+(1-  \zeta(x- (\bar{c}_{i-1}t-3)))u(t,x) & \hbox{ for }  \, x\leq c_it,\vspace{3pt}\\
\zeta(x- \bar{c}_{i}t) u(t,x)+(1-  \zeta(x- \bar{c}_{i}t))p_{i}(t) & \hbox{ for }  \, x\geq c_it,
\eaa\right. 
\end{equation*}
where $\zeta(x)$ is a $C^2(\R)$ function satisfying \eqref{smooth-zeta}. It is easily seen that $w\in C^{1,2}([t_0,\infty)\times\R)$, and that 
\begin{equation}\label{chara-w-u}
w(t,x)=\left\{\baa{ll}
\smallskip p_{i-1}(t) & \hbox{ if }  \, x\leq \bar{c}_{i-1}t-3, \,t\geq t_0,\vspace{3pt}\\
\smallskip u(t,x) & \hbox{ if }  \, \bar{c}_{i-1}t \leq x\leq \bar{c}_{i}t, \,t\geq t_0,\vspace{3pt}\\
 p_{i}(t) & \hbox{ if }  \, x\geq \bar{c}_{i}t+3, \,t\geq t_0,\vspace{3pt}\\
\eaa\right. 
\end{equation}
Set 
$$g(t,x)= w_t-w_{xx}-f(t,w) \,\,\hbox{ for } \, x\in\R,\,t\geq t_0. $$
Clearly, $g(t,x)$ is continuous on $[t_0,\infty)\times\R$ and
$$g(t,x)=0\,\hbox{ for }\,  x\in (-\infty,\, \bar{c}_{i-1}t-3]\cup [\bar{c}_{i-1}t, \, \bar{c}_{i}t] \cup [ \bar{c}_{i}t+3,\, \infty),\,t\geq t_0. $$
We claim that there exists some constant $C_3>0$ such that
\begin{equation}\label{estimate-g-mid}
|g(t,x)| \leq C_3\me^{-\nu t} 
\end{equation}
for all $x\in [ \bar{c}_{i-1}t-3,\, \bar{c}_{i-1}t]\cup [ \bar{c}_{i}t,\, \bar{c}_{i}t+3]$, $t\geq t_0$. 
Indeed, when $x\in [ \bar{c}_{i-1}t-3,\, \bar{c}_{i-1}t]$, $t\geq t_0$, it is straightforward to calculate that 
\begin{equation*}
\begin{split}
g(t,x) = &\,\, (\zeta''+\bar{c}_{i-1}\zeta')(u-p_{i-1})+(1-\zeta)(f(t,u)-f(t,p_{i-1}))   \vspace{3pt}\\
 &\,\,+ (f(t,p_{i-1})-f(t,w))+2\zeta'u_x(t,x),
\end{split}
\end{equation*}
where $\zeta$, $\zeta'$ and $\zeta''$ stand for $\zeta(x- (\bar{c}_{i-1}t-3))$, $\zeta'(x- (\bar{c}_{i-1}t-3))$ and $\zeta''(x- (\bar{c}_{i-1}t-3))$, respectively. Due to the $C^1$-regularity and the $T$-periodicity of $f$, it then follows from Lemma \ref{exp-esti} and \eqref{estimate-ux} that \eqref{estimate-g-mid} holds for $x\in [ \bar{c}_{i-1}t-3,\, \bar{c}_{i-1}t]$,  $t\geq t_0$. The proof for $x\in [ \bar{c}_{i}t,\, \bar{c}_{i}t+3]$,  $t\geq t_0$ is analogous, therefore we omit the details.    

Finally, let $\eta_i(t)$ be the $C^1([0,\infty))$ function provided by Theorem \ref{converge3} (i). By our choice of $(\bar{c}_j)_{0\leq j\leq N}$, it is easily checked that
$$\|u(t,\cdot)- U_i(t,\cdot-\eta_i(t)) \| _{L^{\infty}([\bar{c}_{i-1}t,\,\bar{c}_it])} \to 0 \,\hbox{ as }\, t\to\infty. $$
Then, by using Lemma \ref{exp-esti}, \eqref{chara-w-u} and the asymptotics of $U_i(t,c_it+x)$ as $x\to\pm\infty$,  we deduce that
$$\|w(t,\cdot)- U_i(t,\cdot-\eta_i(t)) \| _{L^{\infty}(\R)} \to 0 \,\hbox{ as }\, t\to\infty. $$
Therefore, all the conditions of Lemma \ref{exp-pert} are fulfilled. Consequently, there exist $\nu>0$, $\bar{\eta}_i\in\R$ and $C>0$ such that 
\begin{equation*}
 \| w(t,\cdot)-U_i(t,\cdot-\bar{\eta}_i) \|_{L^{\infty}(\R)} \leq  C\me^{-\nu t} \,\,\hbox{ for all } t>0. 
\end{equation*}
This together with \eqref{chara-w-u} immediately gives \eqref{estimate-exp-mid}. The proof of Lemma \ref{exp-con-inter} is thus complete.
\end{proof}

It is clear that Theorem \ref{converge3} (ii) follows directly from Lemmas \ref{exp-esti} and \ref{exp-con-inter}.



\end{document}